\newtheorem{theorem}{Theorem}[section]
\newtheorem{lemma}[theorem]{Lemma}
\newtheorem{proposition}[theorem]{Proposition}
\newtheorem{corollary}[theorem]{Corollary}
\newtheorem{definition}[theorem]{Definition}
\newtheorem{remark}[theorem]{Remark}
\newtheorem{example}[theorem]{Example}
\newenvironment{proof}{%
\noindent{\it Proof.}\hskip 10 pt%
}{%
{\quad} \hfill$\Box$\\%
}
\def\og{\leavevmode\raise.3ex\hbox{$\scriptscriptstyle\langle\!\langle$~}}
\def\fg{\leavevmode\raise.3ex\hbox{~$\!\scriptscriptstyle\,\rangle\!\rangle$}}
\def\endzm{\\ \hspace*{\fill} $\square$\bigskip}
\DeclareMathOperator{\ann}{ann}
\DeclareMathOperator{\card}{card}
\DeclareMathOperator{\CH}{CH}
\DeclareMathOperator{\des}{des}
\renewcommand{\det}{\mathrm{det}}
\DeclareMathOperator{\diag}{diag}
\DeclareMathOperator{\Div}{div}
\DeclareMathOperator{\esp}{\mathbb{E}}
\DeclareMathOperator{\GL}{GL}
\DeclareMathOperator{\HN}{HN}
\DeclareMathOperator{\Image}{Im}
\DeclareMathOperator{\indic}{1\!\!1}
\DeclareMathOperator{\Ker}{Ker}
\DeclareMathOperator{\lbr}{{[\![}}
\DeclareMathOperator{\len}{len}
\DeclareMathOperator{\Pic}{Pic}
\DeclareMathOperator{\rang}{rk}
\DeclareMathOperator{\rbr}{{]\!]}}
\DeclareMathOperator{\Spec}{Spec}
\DeclareMathOperator{\supp}{supp}
\begin{document}

\title{Convergence of Harder-Narasimhan polygons}
\author{Huayi {\sc Chen}\thanks{CMLS, Ecole Polytechnique, Palaiseau 91120, France. (huayi.chen@polytechnique.org)}}
\maketitle

\begin{abstract}
We establish in this article convergence results of normalized Harder-Narasimhan polygons both in geometric and in arithmetic frameworks by introducing the Harder-Narasimhan filtration indexed by $\mathbb R$ and the associated Borel probability measure.
\end{abstract}

\section{Introduction}

\hskip\parindent Let $X$ be a projective
variety of dimension $\ge 1$ over a field $k$
and $L$ be an ample line bundle on $X$. The
Hilbert-Samuel theorem describing the
asymptotic behaviour of $\rang
H^0(X,L^{\otimes D})$ ($D\rightarrow\infty$)
is an important result in commutative algebra
and in algebraic geometry, which is largely
studied since Hilbert's article
\cite{Hilbert90}. Although numerous variants
and generalizations of this theorem have been
developed, many proofs have a common feature
--- the technic of unscrewing
(``d\'evissage'' in French). Let us recall a
variant of Hilbert-Samuel theorem in relative
geometric framework. Suppose that $k$ is a
field and $C$ is a non-singular projective
curve over $\Spec k$. We denote by $K=k(C)$
the field of rational functions on $C$. Let
$\pi:X\rightarrow C$ be a projective and flat
$k$-morphism and $L$ be an invertible
$\mathcal O_X$-module which is ample
relatively to $\pi$. We denote by $d$ the
relative dimension of $X$ over $C$. The
Riemann-Roch theorem implies that
\[\deg(\pi_*(L^{\otimes D}))=
\frac{c_1(L)^{d+1}}{(d+1)!}D^{d+1}+O(D^d)\qquad
(D\rightarrow\infty) .\] Combining with the
classical Hilbert-Samuel theorem
\[\rang(\pi_*(L^{\otimes D}))=\rang
H^0(X_K,L_K^{\otimes
D})=\frac{c_1(L_K)^d}{d!}D^d+O(D^{d-1}),\] we
obtain the asymptotic formula
\begin{equation}\label{Equ:HS geometric}\lim_{D\rightarrow\infty}\frac{
\mu(\pi_*(L^{\otimes
D}))}{D}=\frac{c_1(L)^{d+1}}
{(d+1)c_1(L_K)^d},\end{equation} where the
{\it slope} $\mu$ of a non-zero locally free
$\mathcal O_C$-module of finite type (in
other words, non-zero vector bundle on $C$)
is by definition the quotient of its degree
by its rank. For a non-zero vector bundle $E$
on $C$, there exists invariant which is much
shaper than the slope. Namely, Harder and
Narasimhan have proved in \cite{Harder-Nara}
that there exists a non-zero subbundle
$E_{\des}$ whose slope is maximal among the
slopes of non-zero subbundles of $E$ and
which contains all non-zero subbundles of $E$
having the maximal slope. The slope of
$E_{\des}$ is denoted by $\mu_{\max}(E)$,
called the {\it maximal slope} of $E$. We say
that $E$ is {\it semistable} if and only if
$E=E_{\des}$, or equivalently
$\mu(E)=\mu_{\max}(E)$. By induction we
obtain a sequence
\[0=E_0\subsetneq E_1\subsetneq E_2\subsetneq\cdots
\subsetneq E_n=E\] of saturated subbundles of
$E$ such that
$(E_i/E_{i-1})=(E/E_{i-1})_{\des}$ for any
$1\le i\le n$. This sequence is called the
{\it Harder-Narasimhan flag} of $E$. Clearly
each sub-quotient $E_i/E_{i-1}$ is semistable
and we have
$\mu(E_1/E_0)>\mu(E_2/E_1)>\cdots>\mu(E_n/E_{n-1})$.
The last slope $\mu(E_n/E_{n-1})$ is called
the {\it minimal slope} of $E$, denoted by
$\mu_{\min}(E)$. Note that the
Harder-Narasimhan flag of $E_i$ is just
$0=E_0\subsetneq
E_1\subsetneq\cdots\subsetneq E_i$ and
therefore $\mu_{\min}(E_i)=\mu(E_i/E_{i-1})$.

Recall that the {\it Harder-Narasimhan
polygon} of $E$ is by definition the concave
function $\widetilde P_E$ on the interval
$[0,\rang E]$ whose graph is the convex hull
of points $(\rang F,\deg(F))$, where $F$ runs
over all subbundles of $E$. Therefore, the
function $\widetilde P_E$ takes zero value at
origin; it is piecewise linear and its slope
on the interval $[\rang E_{i-1},\rang E_i]$
is $\mu(E_i/E_{i-1})$. Let $P_E$ be the
function defined on $[0,1]$ whose graph is
similar to that of $\widetilde P_E$, namely
$P_E(t)=\widetilde P_E(t\rang E)/\rang E$,
called the {\it normalized Harder-Narasimhan
polygon of $E$}. Notice that $P_E(1)=\mu(E)$.
Therefore \eqref{Equ:HS geometric} can be
reformulated as
\begin{equation}\lim_{D\rightarrow\infty}\frac{P_{\pi_*(L^{\otimes
D})}(1)}{D}=\frac{c_1(L)^{d+1}}{(d+1)c_1(L_K)^d}.\end{equation}
It is then quite natural to study the
convergence at other points in $[0,1]$. Here
the major difficulty is that, unlike the
degree function $\widetilde P_E(1)\rang E$,
for other points $r\in ]0,\rang E[ $, the
function $E\rightarrow \widetilde P_E(r)\rang
E$ need not be additive with respect to short
exact sequences. Therefore the unscrewing
technic doesn't work.

The original idea of this article is to use
Borel probability measures on $\mathbb R$ to
study Harder-Narasimhan polygons.
\begin{figure}[h]
\caption{Derivative of the polygon and the
corresponding distribution function}
\label{Fig:Derivative of HN}
\begin{center}
\begin{tabular}{cc}\hspace{15 pt}
\begin{picture}(150,200)
\put(-8,93){$\scriptstyle 0$}
\put(10,100){\circle*{1}}
\put(6,93){$\scriptstyle t_{1}$}
\put(22,100){\circle*{1}}
\put(18,93){$\scriptstyle t_2$}
\put(37,100){\circle*{1}}
\put(33,93){$\scriptstyle t_3$}
\put(55,93){$\cdots$}
\put(80,100){\circle*{1}}
\put(75,93){$\scriptstyle t_{n2}$}
\put(82,93){\line(1,0){1}}
\put(93,100){\circle*{1}}
\put(88.5,93){$\scriptstyle t_{n1}$}
\put(95.5,93){\line(1,0){1}}
\put(100,100){\circle*{1}}
\put(100,93){$\scriptstyle 1$}
\put(0,34){\circle*{1}}
\put(-13,34){$\scriptstyle\mu_n$}
\put(0,70){\circle*{1}}
\put(-13,70){$\scriptstyle\mu_{n1}$}
\put(-4,70){\line(1,0){1}}
\put(0,190){\circle*{1}}
\put(-13,190){$\scriptstyle \mu_1$}
\put(0,160){\circle*{1}}
\put(-13,160){$\scriptstyle\mu_2$}
\put(-9,80){$\scriptstyle\vdots$}
\put(-9,105){$\scriptstyle\vdots$}
\put(0,120){\circle*{1}}
\put(-13,120){$\scriptstyle\mu_3$}
\put(0,100){\vector(1,0){110}}
\put(0,0){\vector(0,1){200}}
\put(0,190){\line(1,0){10}}
\put(10,160){\line(1,0){12}}
\put(22,120){\line(1,0){15}}
\put(80,70){\line(1,0){13}}
\put(93,34){\line(1,0){7}}

\linethickness{0.7 pt}
\put(10,190){\line(0,-1){30}}
\put(22,160){\line(0,-1){40}}
\put(37,120){\line(0,-1){7}}
\put(80,70){\line(0,1){7}}
\put(93,70){\line(0,-1){36}}
\put(100,34){\line(0,-1){34}}

\put(41,102){$\scriptstyle\ddots$}
\put(68,77){$\scriptstyle\ddots$}
\end{picture}
& \hspace{25 pt}\begin{picture}(200,100)
\put(0,0){\vector(1,0){200}}
\put(100,0){\vector(0,1){110}}
\put(34,0){\circle*{1}}
\put(70,0){\circle*{1}}
\put(120,0){\circle*{1}}
\put(160,0){\circle*{1}}
\put(190,0){\circle*{1}}
\put(100,10){\circle*{1}}
\put(100,22){\circle*{1}}
\put(100,37){\circle*{1}}
\put(100,80){\circle*{1}}
\put(100,93){\circle*{1}}
\put(100,100){\circle*{1}}
\put(93,100){$\scriptstyle 1$}
\put(102,91){$\scriptstyle t_{n1}$}
\put(109,91){\line(1,0){1}}
\put(102,78){$\scriptstyle t_{n2}$}
\put(109,78){\line(1,0){1}}
\put(93,35){$\scriptstyle t_{3}$}
\put(93,20){$\scriptstyle t_{2}$}
\put(93,8){$\scriptstyle t_{1}$}
\put(98,-7){$\scriptstyle 0$}
\put(32,-7){$\scriptstyle \mu_{n}$}
\put(68,-7){$\scriptstyle \mu_{n1}$}
\put(76.5,-7){\line(1,0){1}}
\put(83,-7){$\scriptstyle\cdots$}
\put(105,-7){$\scriptstyle\cdots$}
\put(118,-7){$\scriptstyle \mu_{3}$}
\put(158,-7){$\scriptstyle \mu_{2}$}
\put(188,-7){$\scriptstyle \mu_{1}$}
\put(95,55){$\scriptstyle\vdots$}
\put(80,68){$\scriptstyle\ddots$}
\put(102,40){$\scriptstyle\ddots$}
\put(14,102){$\scriptstyle E_n$}
\put(47,95){$\scriptstyle E_{n1}$}
\put(57,95){\line(1,0){1}}
\put(72,82){$\scriptstyle E_{n2}$}
\put(82,82){\line(1,0){1}}
\put(113,39){$\scriptstyle E_{3}$}
\put(137,24){$\scriptstyle E_{2}$}
\put(174,12){$\scriptstyle E_{1}$}
\put(191,2){$\scriptstyle E_{0}$}

\put(34,100){\line(0,-1){7}}
\put(70,93){\line(0,-1){13}}
\put(120,37){\line(0,-1){15}}
\put(160,22){\line(0,-1){12}}
\put(190,10){\line(0,-1){10}}
\linethickness{0.7 pt}
\put(0,100){\line(1,0){34}}
\put(34,93){\line(1,0){36}}
\put(70,80){\line(1,0){7}}
\put(120,37){\line(-1,0){7}}
\put(120,22){\line(1,0){40}}
\put(160,10){\line(1,0){30}}
\end{picture}
\end{tabular}
\end{center}
\end{figure}
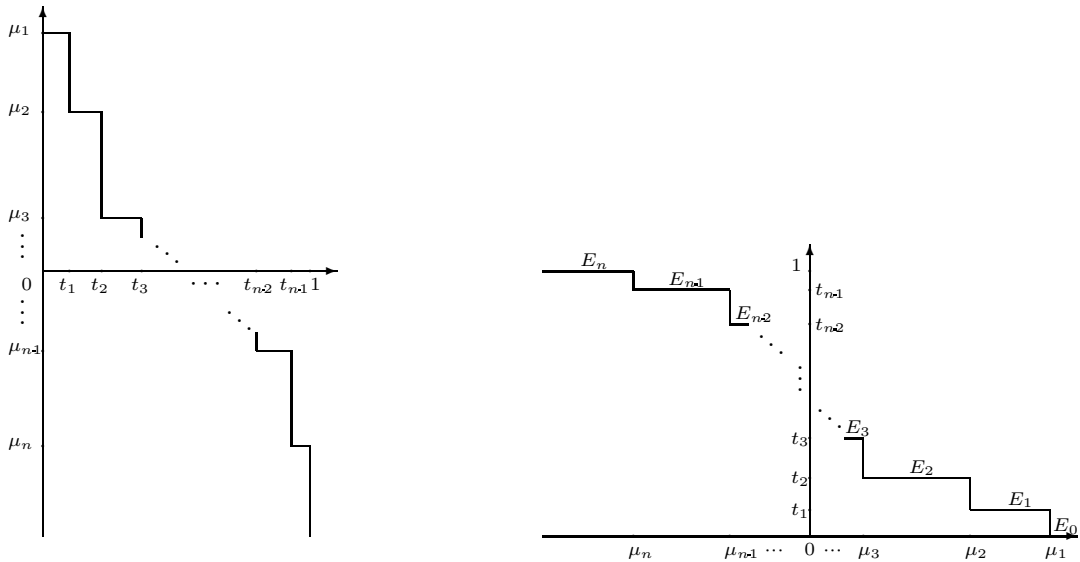
In Figure \ref{Fig:Derivative of HN}, the
left graph presents the first order
derivative of the normalized
Harder-Narasimhan polygon of $E$, where
$\mu_i=\mu(E_{i}/E_{i-1})$ for $1\le i\le n$
and $t_i=\rang E_i/\rang E$ for $0\le i\le
n$. It is a step function on $[0,1]$. The
right graph presents a decreasing step
function on $\mathbb R$ valued in $[0,1]$
whose {\it quasi-inverse} corresponds to the
left graph. Furthermore, this function is the
difference between the constant function $1$
and a probability distribution function and
therefore corresponds to a Borel probability
measure
$\nu_E=\displaystyle\sum_{i=1}^n(t_i-t_{i-1})\delta_{\mu_i}$,
where $\delta_x$ is the Dirac measure at the
point $x$. If we place suitably the
subbundles in the Harder-Narasimhan flag of
$E$ on the right graph, we obtain a
decreasing $\mathbb R$-filtration of the
vector bundle $E$, which induces naturally by
restricting to the generic fiber a decreasing
$\mathbb R$-filtration $\mathcal F^{\HN}$ of
the vector space $E_K$, called the {\it
Harder-Narasimhan filtration} of $E_K$. As we
shall show later, the filtration $\mathcal
F^{\HN}$ can be calculated explicitly from
the vector bundle $E$, namely
\[\mathcal F_r^{\HN} E_K=\sum_{
\begin{subarray}{c}
0\neq F\subset E\\
\mu_{\min}(F)\ge r
\end{subarray}}F_K.\]
From this filtration, one can recover easily
the probability measure
\[\nu_E=\frac{1}{\rang E_K}
\sum_{r\in\mathbb R}\Big(\rang(\mathcal
F_r^{\HN}E_K)-\lim_{\varepsilon\rightarrow
0+}\rang(\mathcal
F_{r+\varepsilon}^{\HN}E_K)\Big)\delta_r.\]
Furthermore, the function presented in the
right graph is just $r\mapsto\rang(\mathcal
F_r^{\HN}E_K)$. By passing to quasi-inverse
(turning over the graph), we retrieve the
first order derivative of the normalized
Harder-Narasimhan polygon. This procedure is
quite general and it works for an arbitrary
(suitably) filtered finite dimensional vector
space, where the word ``suitably'' means that
the filtration is separated, exhaustive and
left continuous, which we shall explain later
in this article. Actually, we have natural
mappings
\[\begin{array}{ccccc}\Big\{\parbox[c]{3.9 cm}{(suitably) filtered finite dimensional vector spaces}\Big\}
&\longrightarrow&\Bigg\{\parbox[c]{4.4
cm}{Borel probability measures on $\mathbb R$
which are linear combinations of Dirac
measures }\Bigg\}&\longleftrightarrow&
\Big\{\parbox[c]{1.5 cm} {polygones on
$[0,1]$}\Big\},\vspace{0.2 cm}\\
V&\longmapsto&\nu_V&\longmapsto&P_V
\end{array}\] the last
mapping being a bijection. If a probability
measure $\nu$ corresponds to the polygon $P$,
then we can verify that, for any real number
$\varepsilon>0$, the probability measure
corresponding to $\varepsilon P$ is the
direct image $T_{\varepsilon}\nu$ of $\nu$ by
the dilation mapping $x\mapsto\varepsilon x$.

Let us go back to the convergence of
polygons. To verify that a sequence of
polygons converges uniformly, it suffice to
prove that the corresponding sequence of
measures converges vaguely to a probability
measure. We state the main theorem of this
article.

\begin{theorem}\label{Thm:main theorem}
Let $f:\mathbb Z_{\ge 0}\rightarrow\mathbb
R_{\ge 0}$ be a function such that
$\displaystyle\lim_{n\rightarrow+\infty}{f(n)}/{n}=0$
and $B=\bigoplus_{n\ge 0}B_n$ be an integral
graded $K$-algebra of finite type over $K$.
Suppose that
\begin{enumerate}[i)]
\item for sufficiently large integer $n$,
the vector space $B_n\neq 0$,
\item for any positive integer $n$,
$B_n$ is equipped with an $\mathbb
R$-filtration $\mathcal F$ which is
separated, exhaustive and left continuous,
such that $B$ is an $f$-quasi-filtered graded
$K$-algebra,
\item $\sup\Big(\supp\nu_{B_n}\Big)=O(n)$.
\end{enumerate}
For any integer $n>0$, denote by
$\nu_n=T_{\frac 1n}\nu_{B_n}$. Then the
supports of $\nu_n$ are uniformly bounded and
the sequence of measures $(\nu_n)_{n\ge 1}$
converges vaguely to a Borel probability
measure on $\mathbb R$. Therefore, the
sequence of polygons $(\frac 1n P_{B_n})$
converges uniformly to a concave function on
$[0,1]$.
\end{theorem}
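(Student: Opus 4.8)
The plan is to reduce the statement to two assertions — that the supports of the measures $\nu_n$ lie in a fixed compact interval, and that $(\nu_n)_{n\ge1}$ converges vaguely to a Borel probability measure — and then to invoke the criterion recalled in the introduction (uniform convergence of polygons follows from vague convergence of the associated measures to a probability measure) together with the compatibility of $\nu\mapsto P_\nu$ with dilations, which gives $\tfrac1nP_{B_n}=P_{\nu_n}$; this yields the uniform convergence of the polygons to the concave function $P_\nu$.

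\textbf{Uniform boundedness of the supports.} The upper bound $\sup(\supp\nu_n)=\tfrac1n\sup(\supp\nu_{B_n})=O(1)$ is hypothesis iii). For the lower bound I would use that $B$ is of finite type over $K$: fix homogeneous generators $x_1,\dots,x_r$ of positive degrees $d_1,\dots,d_r$, and, since each $B_{d_j}$ is exhaustively filtered, real numbers $a_j$ with $x_j\in\mathcal F_{a_j}B_{d_j}$. The $f$-quasi-filtered hypothesis then forces every monomial in the $x_j$ lying in $B_n$ to belong to $\mathcal F_{cn-o(n)}B_n$ for a constant $c$ depending only on the $a_j,d_j$; since such monomials span $B_n$, we get $\inf(\supp\nu_{B_n})\ge cn-o(n)$, hence $\inf(\supp\nu_n)\ge c-o(1)$ is bounded below. (This is exactly where $f(n)/n\to0$ is used, to make the accumulated error sublinear.) Fix $A>0$ with $\supp\nu_n\subset[-A,A]$ for all $n$.

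\textbf{Reduction to a volume limit.} Since the $\nu_n$ are supported in the fixed compact $[-A,A]$, the family is tight, vague and weak convergence agree, and every vague limit point is a probability measure on $[-A,A]$. Such a measure being determined by the values of $t\mapsto\nu([t,+\infty))$ at its (all but countably many) continuity points, it suffices to show that for every $t\in\mathbb R$
\[
g(t):=\lim_{n\to+\infty}\nu_n\big([t,+\infty)\big)=\lim_{n\to+\infty}\frac{\dim_K\mathcal F_{nt}B_n}{\dim_K B_n}
\]
exists; then all vague limit points coincide with the probability measure $\nu$ determined by $1-g$, so $\nu_n\to\nu$ vaguely. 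The existence of $g(t)$ is an asymptotic (``volume'') statement for the filtered graded linear series $(\mathcal F_{nt}B_n)_{n\ge0}$ inside the integral finite-type algebra $B$, and it is the heart of the proof. When $f\equiv0$ the spaces $\mathcal F_{nt}B_n$ satisfy $(\mathcal F_{mt}B_m)(\mathcal F_{nt}B_n)\subset\mathcal F_{(m+n)t}B_{m+n}$, i.e. form a graded linear series on $\mathrm{Proj}(B)$, and the asymptotic theory of graded linear series (Okounkov bodies, graded semigroups) gives $\dim_K\mathcal F_{nt}B_n=\ell(t)\,n^{\kappa}+o(n^{\kappa})$, where $\kappa$ is the growth exponent of $\dim_KB_n$ — integrality of $B$ being used to guarantee that the relevant semigroup has the expected rank, by the same multiply-nonzero-sections mechanism that yields $\mu_{\max}(B_{m+n})\ge\mu_{\max}(B_m)+\mu_{\max}(B_n)-f(m+n)$. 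For general $f$ one carries along the filtration level of a section as an extra real coordinate: the $f$-quasi-filtered relations then only shift the resulting graded semigroup by $o(n)$ in that coordinate, an amount that vanishes after division by $n^{\kappa}$, so $\dim_K\mathcal F_{nt}B_n/n^{\kappa}$ still converges, whence $g(t)$ exists. The resulting $g$ is non-increasing, equals $1$ for $t<-A$ and $0$ for $t>A$, so $1-g$ is a genuine distribution function.

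\textbf{Conclusion and main obstacle.} Putting the steps together, $\nu_n\to\nu$ vaguely for a Borel probability measure $\nu$ on $\mathbb R$; by the criterion of the introduction $\tfrac1nP_{B_n}=P_{\nu_n}$ converges uniformly on $[0,1]$ to the concave function $P_\nu$. The crux — and essentially the only place where the finite-type and integrality hypotheses and the precise form $f(n)=o(n)$ are genuinely needed — is the volume limit of the previous paragraph: because $B_m\otimes B_n\to B_{m+n}$ is very far from injective, one cannot obtain super-additivity of $\dim_K\mathcal F_{nt}B_n$ by naively tensoring subspaces, and an Okounkov-body (or Fekete-type graded-semigroup) argument, with careful bookkeeping of the sublinear errors coming from $f$, appears unavoidable.
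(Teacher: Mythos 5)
Your proposal takes a genuinely different route from the paper. The paper does \emph{not} try to show that the distribution functions $t\mapsto\rang\mathcal F_{nt}B_n/\rang B_n$ converge at each fixed $t$. Instead, it fixes a single increasing, concave, Lipschitz test function $g$ and proves that the sequence of integrals $I_n=\int g\,\mathrm dT_{1/n}\nu_{B_n}$ is almost super-additive (up to an $O(\sum f(n_i))$ error), then applies a Fekete-type lemma (Corollary~\ref{Cor:limite de an sur n forme forte}) to conclude that $I_n$ converges, and finally bootstraps from such $g$ to all of $C_c(\mathbb R)$. The almost super-additivity is proved first for symmetric algebras via the combinatorial Theorem~\ref{Thm:theorme cle sur equiprobability} (a generating-function identity producing a coupling measure $\rho_{\mathbf n}$ on $\Delta_{n_1}^{(d)}\times\cdots\times\Delta_{n_r}^{(d)}$ whose marginals and sum-pushforward are all equidistributed), and then propagated to general integral $B$ of finite type by Noether normalization together with a d\'evissage via Hilbert polynomials (Lemma~\ref{Lem:suite exacte et la comdtion de convergence vague}) and the Remark~\ref{Rem:convergence vague de algebre gradue filtree} reduction to the generated-in-degree-$1$ case. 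Your approach, by contrast, reduces everything to a ``volume limit'' for the filtered graded linear series $(\mathcal F_{nt}B_n)_{n}$ and then invokes Okounkov-body / graded-semigroup asymptotics. This is a real alternative --- it is essentially the Boucksom--Chen point of view, which was developed later in the literature --- and it has the advantage that it directly names the limiting object; but it relies on substantially heavier machinery than the elementary argument in the paper, whereas the paper's coupling-plus-Fekete argument is entirely self-contained.

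There is, however, a genuine gap: the volume limit, which you correctly identify as the crux, is not actually proved. Two things are missing. First, the Okounkov-body framework as it stands handles \emph{multiplicative} filtrations ($f\equiv 0$), and passing to an $f$-quasi-filtered structure is not merely ``careful bookkeeping'': the error $f(n)=o(n)$ lives in the filtration index, not in the dimension count, and a shift of $o(n)$ in the index can a priori move $\rang\mathcal F_{nt}B_n$ by an amount that is \emph{not} $o(n^\kappa)$ unless one already knows that the limit in $t$ is continuous (which in the Okounkov picture comes from concavity of the transform --- itself something one must establish in the quasi-filtered setting). Second, the assertion that integrality of $B$ gives the semigroup its expected rank is not substantiated, and your parenthetical ``by the same mechanism that yields $\mu_{\max}(B_{m+n})\ge\cdots$'' imports a super-additivity of maximal slopes which is a fact about the Harder--Narasimhan application, not a hypothesis of the abstract Theorem~\ref{Thm:main theorem}. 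You explicitly flag that the volume limit ``appears unavoidable'' rather than proving it, so this is an acknowledged, but real, hole exactly where the proof has to earn its living. Your lower bound on the supports is also stated loosely --- to decompose an arbitrary monomial in the generators into $\ell$ factors of degrees $\ge n_0$ with $\ell\to\infty$ and $\sum f(n_i)=o(n)$ requires a grouping argument that isn't spelled out --- though that step is routine once done carefully; the paper sidesteps it by factoring products of degree-one elements and then reducing the general case via the Veronese-type subalgebras $B^{(d)}$.
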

To apply the above theorem to the convergence
of
$\Big(\displaystyle\frac{1}{D}P_{\pi_*(L^{\otimes
D})}\Big)_{D\ge 1}$, we point out that the
graded algebra $\bigoplus_{D\ge 0
}H^0(X_K,L_K^{\otimes D})$, equipped with
Harder-Narasimhan filtrations, verifies the
conditions in Theorem \ref{Thm:main theorem}
for a suitable constant function $f$. The
verification of this fact is easy. However,
the proof of the theorem requires quite
subtle technical arguments on almost
super-additive sequences and on combinatorics
of monomials, which will be presented in
Section \ref{Sec:Almost super-additive
sequence} and in Section \ref{Sec:Convergence
for symmetric algebras} respectively. The
idea is to prove that the sequence of
measures $(\nu_n)_{n\ge 1}$ is ``vaguely
super-additive'', and then apply a variant of
Fekete's lemma to conclude the vague
convergence.

We are now able to state our geometric
convergence theorem.
\begin{theorem}
With the notations above, the sequence of
polygons
$\Big(\displaystyle\frac{1}{D}P_{\pi_*(L^{\otimes
D})}\Big)_{D\ge 1}$  converges uniformly to a
concave function on $[0,1]$.
\end{theorem}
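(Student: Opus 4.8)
The plan is to deduce this from the abstract Theorem~\ref{Thm:main theorem}, applied to the section algebra $B=\bigoplus_{D\ge 0}B_D$ with $B_D=H^0(X_K,L_K^{\otimes D})$, where each $B_D$ carries the Harder-Narasimhan filtration $\mathcal F^{\HN}$ of the vector bundle $\pi_*(L^{\otimes D})$ as written in the introduction. The first observation is that, for this choice, the Borel probability measure $\nu_{B_D}$ attached to the filtered vector space $B_D$ is exactly the measure $\nu_{\pi_*(L^{\otimes D})}$ read off the Harder-Narasimhan flag of $\pi_*(L^{\otimes D})$ --- this is the very content of the correspondence between filtrations, measures and polygons recalled above --- so that $P_{B_D}=P_{\pi_*(L^{\otimes D})}$, and it is enough to check hypotheses i)--iii) of Theorem~\ref{Thm:main theorem} for a suitable constant function $f$.

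The structural checks are straightforward. Since $L$ is ample relatively to $\pi$, the $\mathcal O_C$-algebra $\bigoplus_{D\ge 0}\pi_*(L^{\otimes D})$ is of finite type, hence $B$ is an integral graded $K$-algebra of finite type over $K$ (integrality coming from $X_K$ being an integral variety); and $L_K$ being ample on $X_K$, one has $B_D\neq 0$ for $D$ large, which is i). The filtration $\mathcal F^{\HN}$ on $B_D$ has finitely many jumps, located at the slopes $\mu(E_i/E_{i-1})$ of the flag; it is left continuous by construction, separated because it vanishes above $\mu_{\max}(\pi_*(L^{\otimes D}))$, and exhaustive because it equals $B_D$ below $\mu_{\min}(\pi_*(L^{\otimes D}))$. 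This takes care of the first half of ii).

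The substantive point is the quasi-filtered condition. Given $s\in\mathcal F_a^{\HN}B_m$ and $t\in\mathcal F_b^{\HN}B_n$, one has $s\in F_K$ and $t\in G_K$ for subbundles $F\subset\pi_*(L^{\otimes m})$ and $G\subset\pi_*(L^{\otimes n})$ with $\mu_{\min}(F)\ge a$, $\mu_{\min}(G)\ge b$, and the product $st$ then lies in the generic fibre of the image $H$ of the canonical morphism $F\otimes G\to\pi_*(L^{\otimes m})\otimes\pi_*(L^{\otimes n})\to\pi_*(L^{\otimes m+n})$; as $H$ is a quotient of $F\otimes G$ one gets $\mu_{\min}(H)\ge\mu_{\min}(F\otimes G)$. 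In characteristic zero $\mu_{\min}(F\otimes G)=\mu_{\min}(F)+\mu_{\min}(G)\ge a+b$, so $B$ is $0$-quasi-filtered; in positive characteristic, where the tensor product of semistable bundles need not be semistable, one has instead $\mu_{\min}(F\otimes G)\ge\mu_{\min}(F)+\mu_{\min}(G)-c$, and the point --- which I expect to be the main obstacle --- is to see that the constant $c$ may be chosen independently of $m$ and $n$ (it should depend only on $C$ and on the ranks $\dim_K B_m,\dim_K B_n$, themselves $O(D^d)$, so some care is needed to make the estimate uniform); this yields the quasi-filtered condition with $f\equiv c$. Finally, $\sup(\supp\nu_{B_D})=\mu_{\max}(\pi_*(L^{\otimes D}))$, and a standard estimate --- using for instance a surjection $\mathrm{Sym}^{D}(\pi_*(L^{\otimes D_0}))\twoheadrightarrow\pi_*(L^{\otimes DD_0})$ for $D_0$ large, which forces $\mu_{\max}(\pi_*(L^{\otimes DD_0}))\le D\,\mu_{\max}(\pi_*(L^{\otimes D_0}))$, together with a routine argument for the remaining residues modulo $D_0$ --- shows $\mu_{\max}(\pi_*(L^{\otimes D}))=O(D)$, which is iii).

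With i)--iii) verified, Theorem~\ref{Thm:main theorem} yields directly that the supports of the measures $\nu_D=T_{1/D}\nu_{B_D}$ are uniformly bounded and that $(\nu_D)_{D\ge 1}$ converges vaguely to a Borel probability measure on $\mathbb R$, and hence that $\big(\tfrac1D P_{B_D}\big)_{D\ge 1}=\big(\tfrac1D P_{\pi_*(L^{\otimes D})}\big)_{D\ge 1}$ converges uniformly to a concave function on $[0,1]$. Apart from the uniform control of the constant $c$ in the quasi-filtered estimate in positive characteristic, the whole argument is a formal unravelling of the definitions and of the abstract theorem.
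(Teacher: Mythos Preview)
Your strategy is exactly the paper's: verify hypotheses i)--iii) of Theorem~\ref{Thm:main theorem} for $B=\bigoplus_D H^0(X_K,L_K^{\otimes D})$ with the Harder--Narasimhan filtrations, using a constant $f$. But your assessment of where the difficulty lies is inverted. Your worry about the constant $c$ in positive characteristic is unfounded: the paper shows (Proposition~\ref{Pro:estimation de mu max et mu min}) that $\mu_{\min}(E_1\otimes E_2)>\mu_{\min}(E_1)+\mu_{\min}(E_2)-a(C)$ with $a(C)=g+b(C)$ depending only on the curve, by an elementary argument --- if $\mu_{\max}(E_1)+\mu_{\max}(E_2)<0$ then $H^0(C,E_1\otimes E_2)=0$ (else a non-zero map $E_1^\vee\to E_2$ would force $\mu_{\max}(E_2)\ge-\mu_{\max}(E_1)$), so $\mu_{\max}(E_1\otimes E_2)\le g-1$ by Riemann--Roch; then twist by a line bundle to remove the sign hypothesis and dualise. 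No ranks enter, and induction on $r$ then gives the full quasi-filtered condition with $f\equiv a(C)$ (Proposition~\ref{Pro:toute algebre est quasi filtree}).

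By contrast, your sketch for iii) contains a genuine error: a surjection $E\twoheadrightarrow F$ of vector bundles on $C$ does \emph{not} imply $\mu_{\max}(F)\le\mu_{\max}(E)$. On $\mathbb P^1$ the Euler surjection $\mathcal O^2\twoheadrightarrow\mathcal O(1)$, and more generally its symmetric powers $\mathcal O^{n+1}\twoheadrightarrow\mathcal O(n)$, show the gap can be arbitrarily large; so $\mathrm{Sym}^D(\pi_*(L^{\otimes D_0}))\twoheadrightarrow\pi_*(L^{\otimes DD_0})$ gives no upper bound on $\mu_{\max}$ of the target. The paper's argument (Lemma~\ref{Lem:majoration de mu max par une croissance lineaire}) is intersection-theoretic on the total space $X$: for any line subbundle $M\hookrightarrow\pi_*(L^{\otimes n})$, the adjoint $\pi^*M\to L^{\otimes n}$ is a non-zero section of $\pi^*M^\vee\otimes L^{\otimes n}$, whose divisor is effective; pairing with $c_1(\mathscr L)^{d-1}$ for an auxiliary ample $\mathscr L$ on $X$ and pushing down to $C$ yields $\deg_C(M)\le n\cdot\deg_X\big(c_1(L)c_1(\mathscr L)^{d-1}\big)/\deg_{\mathscr L_K}X_K$, whence $\mu_{\max}(\pi_*(L^{\otimes n}))=O(n)$ via Proposition~\ref{Pro:grand sous fibre inversible}.
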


The analogue of the formula \eqref{Equ:HS
geometric} in Arakelov geometry was firstly
proved by Gillet and Soul\'e
\cite{Gillet-Soule}, using their arithmetic
Riemann-Roch theorem. Since then, this
subject has been rediscussed by many authors
such as Abbes and Bouche \cite{Abbes-Bouche},
Zhang \cite{Zhang95}, Rumely, Lau and Varley
\cite{Rumely_Lau_Varley} and Randriambololona
\cite{Randriam06}.

Let $K$ be a number field and $\mathcal O_K$
be its algebraic integer ring. We denote by
$\Sigma_\infty$ the set of all embedding of
$K$ in $\mathbb C$. If $\mathscr X$ is a
projective arithmetic variety (i.e. scheme of
finite type, projective and flat) over
$\Spec\mathcal O_K$, we call {\it Hermitian
vector bundle} on $\mathscr X$ any pair
$\overline{\mathscr E}=(\mathscr
E,(\|\cdot\|_\sigma)_{\sigma\in\Sigma_\infty})$,
where $\mathscr E$ is a locally free
$\mathcal O_{\mathscr X}$-module of finit
type, and $\|\cdot\|_\sigma$ is a continuous
Hermitian metric on $\mathscr
E_\sigma(\mathbb C)$ such that the collection
$(\|\cdot\|_\sigma)_{\sigma\in\Sigma_\infty}$
is invariant under the action of the
complex conjugation. We call {\it Hermitian
line bundle} on $\mathscr X$ any Hermitian
vector bundle $\overline{\mathscr L}$ on
$\mathscr X$ such that $\mathscr L$ is of
rank one. Notice that a Hermitian vector
bundle on $\Spec\mathcal O_K$ is nothing but
the pair $\overline
E=(E,(\|\cdot\|_{\sigma})_{\sigma\in\Sigma_\infty})$,
where $E$ is a projective $\mathcal
O_K$-module of finite type, and
$\|\cdot\|_\sigma$ is a Hermitian metric on
$E_{\sigma}:=E\otimes_{\mathcal
O_K,\sigma}\mathbb C$ such that the
collection
$(\|\cdot\|_\sigma)_{\sigma\in\Sigma_\infty}$
is invariant by the complex conjugation. If
$\overline E$ is a Hermitian vector bundle of
rank $r$ on $\Spec\mathcal O_K$, we define
its (normalized) {\it Arakelov degree} to be
\[\widehat{\deg}_n(\overline E):=
\frac{1}{[K:\mathbb
Q]}\Big(\log\#\big(E/(\mathcal
O_Ks_1+\cdots+\mathcal O_Ks_r)\big)-\frac
12\sum_{\sigma\in\Sigma_\infty}\log\det(\left<s_i,s_j\right>_\sigma)\Big),\]
where $(s_1,\cdots,s_r)\in E^r$ is an
arbitrary element in $E^r$ which defines a
base of $E_K$ over $K$ (see
\cite{BostBour96}, \cite{Bost2001},
\cite{Chambert}, \cite{Bost2004} and
\cite{Bost_Kunnemann} for details). This is
an analogue in Arakelov geometry of the
classical notion of degree of a vector bundle
on a smooth projective curve. Recall that the
{\it slope} of a non-zero Hermitian vector
bundle $\overline E$ on $\Spec\mathcal O_K$
is by definition the quotient
$\widehat{\mu}(\overline
E):=\widehat{\deg}_n(\overline E)/\rang(E)$.
We denote by $\widehat{\mu}_{\max}(\overline
E)$ the maximal value of slopes of non-zero
Hermitian subbundle of $\overline E$, and by
$\widehat{\mu}_{\min}(\overline E)$ the
minimal value of slopes of non-zero Hermitian
quotient bundle of $\overline E$. Stuhler
\cite{Stuhler76} and Grayson \cite{Grayson76}
have proved that there exists a non-zero
Hermitian subbundle $\overline E_{\des}$ of
$\overline E$ such that
$\widehat{\mu}(\overline
E_{\des})=\widehat{\mu}_{\max}(\overline E)$
and that $E_{\des}$ contains all Hermitian
subbundle of $\overline E$ having the maximal
slope. We obtain therefore a flag
\[0=E_0\subsetneq E_1\subsetneq E_2\subsetneq\cdots\subsetneq
E_n=E\] of $E$ such that
$\overline{E_i}/\overline E_{i-1}=(\overline
E/\overline E_{i-1})_{\des}$ for any integer
$1\le i\le n$, and that
\[\widehat{\mu}_{\max}(\overline E)=
\widehat{\mu}(\overline E_1/\overline E_0)
>\widehat{\mu}(\overline E_2/\overline E_1)>
\cdots>\widehat{\mu}(\overline E_n/\overline
E_{n-1})=\widehat{\mu}_{\min}(\overline E).\]
The {\it Harder-Narasimhan polygon} of
$\overline E$ is by definition the concave
function $\widetilde P_{\overline E}$ defined
on $[0,\rang E]$ whose graph is the convex
hull of points of the form $(\rang
F,\widehat{\deg}_n(\overline F))$, where
$\overline F$ runs over all non-zero
sub-$\mathcal O_K$-modules of $E$ equipped
with induced metrics. The {\it normalized
Harder-Narasimhan polygon} of $\overline E$ is the
concave function $P_{\overline E}$ defined on
$[0,1]$ such that $P_{\overline
E}(t)=\widetilde P_{\overline E}(t\rang
E)/\rang E$. Notice that we have
$P_{\overline E}(1)=\widehat{\mu}(\overline
E)$. The measure theory approach in geometric
case works without any modification in
arithmetic case. Namely, to any non-zero
Hermitian vector bundle $\overline E$ on
$\Spec\mathcal O_K$, we associate a
decreasing filtration $\mathcal F^{\HN}$ of
$E_K$, called the {\it Harder-Narasimhan
filtration}, such that
\[\mathcal F_r^{\HN}E_K=\sum_{\begin{subarray}{c}
0\neq F\subset E\\
\widehat{\mu}_{\min}(\overline F)\ge r
\end{subarray}}F_K.\]
This filtration induces a Borel probability
measure $\nu_{\overline E}$ on $\mathbb R$
such that $\nu_{\overline
E}([r,+\infty[)=\rang(\mathcal
F_r^{\HN}E_K)/\rang E$. Finally the
normalized Harder-Narasimhan polygon
$P_{\overline E}$ is uniquely determined by
$\nu_{\overline E}$.

Using Theorem \ref{Thm:main theorem}, we
obtain the following arithmetic convergence
theorem.
\begin{theorem}
Let $\pi:\mathscr X\rightarrow\Spec\mathcal
O_K$ be a projective arithmetic variety and
$\overline{\mathscr L}$ be a Hermitian line
bundle on $\mathscr X$ such that the graded
algebra $\bigoplus_{D\ge 0}H^0(\mathscr
X_K,\mathscr L_K^{\otimes D})$ is of finite
type over $K$, and that $H^0(\mathscr
X_K,\mathscr L_K^{\otimes D})\neq 0$ for
$D\gg 0$. Then the sequence of polygons
$(\frac{1}{D}P_{\pi_*(\overline{\mathscr
L}^{\otimes D})})_{D\ge 1}$ converges
uniformly to a concave function on $[0,1]$.
\end{theorem}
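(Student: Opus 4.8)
The plan is to realize the sequence $\bigl(\tfrac1D P_{\pi_*(\overline{\mathscr L}^{\otimes D})}\bigr)_{D\ge1}$ as the sequence of polygons attached to a suitable quasi-filtered graded $K$-algebra, and then to invoke Theorem~\ref{Thm:main theorem}. Set $B_D:=H^0(\mathscr X_K,\mathscr L_K^{\otimes D})$ and $B:=\bigoplus_{D\ge0}B_D$; this is an integral graded $K$-algebra of finite type (the section ring of $\mathscr L_K$ on the integral projective variety $\mathscr X_K$) with $B_D\neq0$ for $D\gg0$, so condition~i) of Theorem~\ref{Thm:main theorem} holds. For each $D$ let $\overline E_D:=\pi_*(\overline{\mathscr L}^{\otimes D})$ be the Hermitian vector bundle on $\Spec\mathcal O_K$ obtained from the torsion-free quotient of $\pi_*(\mathscr L^{\otimes D})$ together with the $L^2$-metrics induced by $\overline{\mathscr L}^{\otimes D}$ on the archimedean fibres, so that $(E_D)_K=B_D$, and equip $B_D$ with the Harder--Narasimhan $\mathbb R$-filtration $\mathcal F^{\HN}$ of $\overline E_D$, which is separated, exhaustive and left continuous. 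With these choices $\nu_{B_D}=\nu_{\overline E_D}$ and hence $P_{B_D}=P_{\overline E_D}=P_{\pi_*(\overline{\mathscr L}^{\otimes D})}$, so it remains only to verify conditions~ii) and~iii).

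Condition~iii) is a classical estimate on maximal slopes: $\sup\bigl(\supp\nu_{B_D}\bigr)=\widehat\mu_{\max}(\overline E_D)$, and one bounds $\widehat\mu_{\max}(\overline E_D)$ linearly in $D$ by writing $\overline{\mathscr L}$ as a difference of two ample Hermitian line bundles so as to reduce to the case of $\overline{\mathscr L}$ ample, in which case $\overline E_D$ is, up to a bounded perturbation in $D$, a Hermitian quotient of a tensor power of a fixed Hermitian bundle, whence the bound by sub-additivity of $\widehat\mu_{\max}$ under tensor products (valid up to a term logarithmic in the ranks).

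The essential point is condition~ii), namely that $B$ is $f$-quasi-filtered for some $f$ with $f(n)/n\to0$. Let $x\in\mathcal F^{\HN}_a B_n$ and $y\in\mathcal F^{\HN}_b B_m$. By the explicit description of $\mathcal F^{\HN}$ there are nonzero Hermitian subbundles $\overline F\subset\overline E_n$ and $\overline G\subset\overline E_m$ with $x\in F_K$, $y\in G_K$, $\widehat\mu_{\min}(\overline F)\ge a$ and $\widehat\mu_{\min}(\overline G)\ge b$. The product $xy$ lies in the image $H\subset E_{n+m}$ of $F\otimes G$ under the multiplication morphism $\varphi_{n,m}\colon\overline E_n\otimes\overline E_m\to\overline E_{n+m}$, which we equip with the subbundle metric induced from $\overline E_{n+m}$. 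Applying the slope inequality, in its dual form for the surjection $\overline F\otimes\overline G\to\overline H$, gives $\widehat\mu_{\min}(\overline H)\ge\widehat\mu_{\min}(\overline F\otimes\overline G)-\tfrac1{[K:\mathbb Q]}\sum_\sigma\log\|(\varphi_{n,m})_\sigma\|$; moreover $\widehat\mu_{\min}(\overline F\otimes\overline G)\ge\widehat\mu_{\min}(\overline F)+\widehat\mu_{\min}(\overline G)$ up to an error that is $O(\log\rang E_n+\log\rang E_m)$; and the operator norm of $(\varphi_{n,m})_\sigma$ is controlled by the pointwise submultiplicativity $\|st\|_{\sup}\le\|s\|_{\sup}\|t\|_{\sup}$ of sections of $\overline{\mathscr L}$ combined with Gromov's comparison $\|\cdot\|_{\sup}\le C_n\|\cdot\|_{L^2}$ on $H^0(\mathscr X_\sigma,\mathscr L_\sigma^{\otimes n})$, where $\log C_n=O(\log n)$, so that this term is $O(\log n+\log m)$. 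Since $\rang E_n=\dim_K B_n=O(n^{\dim\mathscr X_K})$, collecting these contributions yields $\widehat\mu_{\min}(\overline H)\ge a+b-f(n)-f(m)$ for a suitable function $f(n)=O(\log n)$, which indeed satisfies $f(n)/n\to0$; hence $xy\in H_K\subset\mathcal F^{\HN}_{a+b-f(n)-f(m)}B_{n+m}$, which is precisely the $f$-quasi-filtered condition.

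With conditions~i), ii) and~iii) established, Theorem~\ref{Thm:main theorem} shows that the measures $\nu_D:=T_{1/D}\nu_{B_D}=T_{1/D}\nu_{\pi_*(\overline{\mathscr L}^{\otimes D})}$ have uniformly bounded supports and converge vaguely to a Borel probability measure on $\mathbb R$, and therefore the polygons $\tfrac1D P_{B_D}=\tfrac1D P_{\pi_*(\overline{\mathscr L}^{\otimes D})}$ converge uniformly to a concave function on $[0,1]$. The main obstacle in this scheme is the verification of the $f$-quasi-filtered property of condition~ii): one must produce a lower bound for $\widehat\mu_{\min}$ of the image of $\overline F\otimes\overline G$ in $\overline E_{n+m}$ whose error term --- originating in the non-isometric nature of the multiplication morphism --- grows only sublinearly in $n+m$, which is exactly where the Gromov-type comparison between $L^2$-norms and sup-norms enters (in the purely geometric situation this error is bounded, so $f$ may be taken constant, as remarked for the geometric analogue).
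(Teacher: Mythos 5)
Your verification of condition~ii) --- the $f$-quasi-filtered property with $f(n)=O(\log n)$ --- is correct and is essentially the argument of Lemma~\ref{Lem:log croissance de la norem} and Remark~\ref{Rem:fquaisifiltrae arith}: the tensor-product lower bound for $\widehat{\mu}_{\min}$ from \cite{Chen_pm} with its $\log(\rang)$ defect, the slope inequality applied to the multiplication morphism, and a Gromov-type comparison between the chosen Hermitian norms and the sup-norms with distortion $O(\log D)$. One caveat: the definition of ``$f$-quasi-filtered'' requires the estimate for a product of $r$ homogeneous elements with cumulative defect $\sum_i f(n_i)$ for \emph{all} $r\ge 2$, whereas you only treat $r=2$, and iterating a two-factor bound does not directly yield $\sum_i f(n_i)$. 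The remedy is either to apply the sup/$L^2$ comparison to all $r$ factors simultaneously, exactly as Lemma~\ref{Lem:log croissance de la norem} does, or to note that $f(n)=O(\log n)$ satisfies $\sum_{\alpha}f(2^{\alpha})2^{-\alpha}<\infty$, so your two-factor bound already makes $B$ an $f$-\emph{pseudo}-filtered algebra and Theorem~\ref{Thm:convergence vague de algebre gradue pseudo filtree} of the appendix applies in place of Theorem~\ref{Thm:main theorem}.

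The genuine gap is in condition~iii). You propose to bound $\widehat{\mu}_{\max}(\overline E_D)$ linearly in $D$ by presenting $\overline E_D$, up to bounded metric distortion, as a Hermitian \emph{quotient} of a tensor power of a fixed Hermitian bundle and then invoking sub-additivity of $\widehat{\mu}_{\max}$ under tensor products. This does not close: the slope inequality for a surjection controls $\widehat{\mu}_{\min}$ of the quotient from \emph{below}, not $\widehat{\mu}_{\max}$ from above, and $\widehat{\mu}_{\max}$ of a quotient (even with the quotient metric) is in general not bounded by $\widehat{\mu}_{\max}$ of the source. The sub-additivity you cite would be usable if $\overline E_D$ were a Hermitian \emph{subbundle} of a tensor power, but it is not --- the multiplication maps run the other way. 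The paper's Lemma~\ref{Lem:bornee de widehat mu max D} argues quite differently: it fixes an arithmetically ample $\overline L$ with $c_1(\overline L)>0$, uses the effectivity of $\Div s$ for $s\in E_D$ together with $h_{\overline L}(\Div s)\ge 0$ to obtain a linear \emph{lower} bound on $-\log\|s\|_{\sigma,\sup}$, and then applies Proposition~\ref{Pro:Bost_Kunnemann} (the Bost--K\"unnemann form of Minkowski's first theorem) to convert this section-norm bound into $\widehat{\mu}_{\max}(\overline E_D)\le CD$. Some estimate of this kind is indispensable; the quotient-of-tensor-power route, as you have stated it, does not supply it.
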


Contrary to the geometric case, the
verification of the fact that the algebra
$\bigoplus_{D\ge 0}H^0(\mathscr X_K,\mathscr
L_K^{\otimes D})$ equipped with
Harder-Narasimhan filtrations is an
$f$-quasi-filtered graded algebra for a
function $f$ of logarithmic increasing speed
at infinity is subtle, which depends on the
author's recent work \cite{Chen_pm} on an
upper bound of the maximal slope of the
tensor product of several Hermitian vector
bundles.

The article is organized as follows. In the
second section, we introduce the notion of
$\mathbb R$-filtrations of a vector space
over a field and its various properties. We
also explain how to associate to each
filtered vector space of finite rank a Borel
measure on $\mathbb R$, which is a
probability measure if the vector space is
non-zero. The third section is devoted to
a generalization of Fekete's lemma on
sub-additive sequences, which is useful in
sequel. We present the main object of this
article
--- quasi-filtered graded algebras in the
fourth section. Then in the fifth section we
establish the vague convergence of measures
associated to a quasi-filtered symmetric
algebra. In the sixth section we explain how
to construct the polygon associated to a
Borel probability measure which is a linear
combination of Dirac measures. We show that
the vague convergence of probability measures
implies the uniform convergence of associated
polygons. Combining the results obtained in
previous sections, we establish in the
seventh section the uniform convergence of
polygons associated to a general
quasi-filtered graded algebra. In the eighth
and the ninth sections we apply the general
result in the seventh section to relative
geometric framework and to Arakelov geometric
framework respectively to obtain the
corresponding convergence of
Harder-Narasimhan polygons. Finally in the
tenth section, we propose another approach,
inspired by Faltings and W\"ustholz \cite{Fal-Wusth}, to
calculate explicitly the limit of the
polygons. We conclude by providing an
explicit example where the limit of the
polygons is a non-trivial quadratic curve on
$[0,1]$. In the appendix, we develop a
variant of $f$-quasi-filtered graded algebra
--- $f$-pseudo-filtered graded algebra, where
we require less algebraic conditions. With a
stronger condition on the increment of $f$,
we also obtain the convergence of polygons.
Although this approach has not been used in
this article, it may have applications
elsewhere and therefore we include it as
well. \vspace{2mm}

{\bf Acknowledgement.} The results presented
in this article is part of the author's
doctoral thesis supervised by J.-B. Bost. The
author would like to thank him for having
proposed the author to study the convergence
of Harder-Narasimhan polygons, for his
encouragement and for discussions.

\section{Filtrations of vector spaces}

\hskip\parindent We present some basic
definition and properties of filtrations of
vector spaces. Although the notion of
filtrations in a general category has been
discussed in \cite{Chen_hn}, we would like to
introduce it in an explicit way for the
particular case of vector spaces.

We fix in this section a field $K$. A
(decreasing) $\mathbb R$-{\it filtration} of
a vector space $V$ is by definition a
collection $\mathcal F=(\mathcal
F(r))_{r\in\mathbb R}$ of $K$-vector
subspaces of $V$ such that $\mathcal
F(r)\supset\mathcal F(r')$ if $r\le r'$. We
shall use the expression $\mathcal F_rV$ to
denote $\mathcal F(r)$, or simply $V_r$ if
there is no ambiguity on the filtration
$\mathcal F$. An $\mathbb R$-filtration
$\mathcal F$ is said to be {\it separated} if
$\displaystyle V_{+\infty}:{=}\displaystyle
\bigcap_{r\in\mathbb R}V_r=\{0\}$, and to be
{\it exhaustive} if $\displaystyle
V_{-\infty}:{=}
\displaystyle\bigcup_{r\in\mathbb R}V_r=V$.

Let $V$ be a vector space over $K$, $\mathcal
F$ be an $\mathbb R$-filtration of $V$. For
any element $x\in V$, we call {\it index} of
$x$ {\it relatively to} $\mathcal F$ the
element $\sup\{r\in\mathbb R\;|\;x\in\mathcal
F_r V\}$ in $\mathbb R\cup\{\pm\infty\}$ (by
convention $\sup\emptyset=-\infty$), denoted
by $\lambda_{\mathcal F}(x)$, or simply
$\lambda(x)$ if there is no ambiguity on
$\mathcal F$. The mapping $\lambda_{\mathcal
F }:V\rightarrow\mathbb R\cup\{\pm\infty\}$
is called the {\it index function} of
$\mathcal F$.

Let $x$ be an element in $V$. The set
$\{r\in\mathbb R\;|\;x\in\mathcal F_rV\}$ is
non-empty if and only if
$\lambda(x)>-\infty$. In this case, it is
either of the form $]-\infty,\lambda(x)[$ or
of the form $]-\infty,\lambda(x)]$. The
following properties of the function
$\lambda$ are easy to verify:
\begin{enumerate}[1)]
\item $\lambda(x)=+\infty$ if and only if $x\in V_{+\infty}$,
\item $\lambda(x)=-\infty$ if and only if $x\in V\setminus V_{-\infty}$,
\item $\lambda(x)>r$ if and only if $x\in\displaystyle\bigcup_{s>r}V_s$,
\item $\lambda(x)\ge r$ if and only if $x\in\displaystyle\bigcap_{s<r}V_s$.
\end{enumerate}

We say that an $\mathbb R$-filtration
$\mathcal F$ of $V$ is {\it left continuous}
if and only if for any $r\in\mathbb R$,
$\mathcal F_rV=\bigcap_{s<r} \mathcal F_sV$.
If $\mathcal F$ is an arbitrary filtration of
$V$, then the filtration $\mathcal
F^l=(\bigcap_{s<r}\mathcal F_sV)_{r\in\mathbb
R}$ is a left continuous filtration of $V$.

For any element $x\in V$ and any $r\in\mathbb
R\cup\{+\infty\}$, the fact that
$x\in\mathcal F_rV$ implies $\lambda(x)\ge
r$. The converse is true when $\mathcal F$ is
left continuous.

\begin{proposition}\label{Pro:proprietes de la fonction lambda}
Let $V$ be a vector space over $K$ and
$\mathcal F$ be a filtration of $V$. The
following assertions hold:
\begin{enumerate}[1)]
\item if $a\in K^\times$ and if $x\in
V$, then $\lambda(ax)=\lambda(x)$,
\item if $x$ and $y$ are two elements of $V$, then
$\lambda(x+y)\ge\min(\lambda(x),\lambda(y))$,
\item if $x$ and $y$ are two elements of
$V$ such that $\lambda(x)\neq\lambda(y)$,
then $x+y\neq 0$, and
$\lambda(x+y)=\min(\lambda(x),\lambda(y))$,
\item if the rank of $V$ is finite, then the image of
$\lambda$ is a finite subset of
$\mathbb R\cup\{\pm\infty\}$ whose cardinal is bounded from above by $\rang_KV+1$.
\end{enumerate}
\end{proposition}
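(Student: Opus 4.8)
The plan is to prove the four assertions in the order given, since 1) is used in 3) and assertions 2)--3) are used in 4). Assertion 1) is immediate: each $\mathcal F_rV$ is a $K$-linear subspace and $a\in K^\times$ is invertible, so $x\in\mathcal F_rV$ if and only if $ax\in\mathcal F_rV$; the sets $\{r\in\mathbb R\mid x\in\mathcal F_rV\}$ and $\{r\in\mathbb R\mid ax\in\mathcal F_rV\}$ therefore coincide, and hence so do their suprema.

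For 2) and 3) I would rely on the description recalled just before the proposition, namely that $\{r\in\mathbb R\mid x\in\mathcal F_rV\}$ is an interval of the form $]-\infty,\lambda(x)[$ or $]-\infty,\lambda(x)]$, together with the fact that $\lambda(0)=+\infty$ (as $0$ lies in every $\mathcal F_rV$). For 2), if $\min(\lambda(x),\lambda(y))=-\infty$ there is nothing to prove; otherwise, for every real $s<\min(\lambda(x),\lambda(y))$ both $x$ and $y$ lie in $\mathcal F_sV$, hence so does $x+y$, so $\lambda(x+y)\ge s$, and taking the supremum over such $s$ gives $\lambda(x+y)\ge\min(\lambda(x),\lambda(y))$. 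For 3), assume without loss of generality $\lambda(x)<\lambda(y)$; by 2) we have $\lambda(x+y)\ge\lambda(x)$, and if this inequality were strict, then writing $x=(x+y)+(-y)$ and applying 2) together with 1) (which gives $\lambda(-y)=\lambda(y)>\lambda(x)$) would yield $\lambda(x)\ge\min(\lambda(x+y),\lambda(y))>\lambda(x)$, a contradiction; hence $\lambda(x+y)=\lambda(x)=\min(\lambda(x),\lambda(y))$. As $\lambda(x)<\lambda(y)\le+\infty$, this common value differs from $+\infty=\lambda(0)$, so $x+y\neq 0$.

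For 4), put $d=\rang_KV$. The crux is the linear-independence claim: if $r_1<r_2<\cdots<r_m$ are pairwise distinct values in the image of $\lambda$ other than $+\infty$ and $x_i\in V$ satisfies $\lambda(x_i)=r_i$ for each $i$, then $x_1,\dots,x_m$ are $K$-linearly independent. Indeed, given a relation $\sum_{i=1}^m a_ix_i=0$ with the $a_i\in K$ not all zero, let $j$ be the least index with $a_j\neq 0$; iterating 2) yields $\lambda\big(\sum_{i>j}a_ix_i\big)\ge\min_{i>j}r_i>r_j$ (with the empty sum equal to $0$, of index $+\infty$), and then 3) applied to $a_jx_j$ and $\sum_{i>j}a_ix_i$ forces $\lambda\big(\sum_i a_ix_i\big)=\lambda(a_jx_j)=r_j\neq+\infty$, contradicting $\sum_i a_ix_i=0$. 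Hence $m\le d$, so the image of $\lambda$ contains at most $d$ elements apart from $+\infty$; it is in particular finite and of cardinal at most $d+1$. The only real obstacle is the bookkeeping in this last step --- correctly combining the iterated use of 2) with the single use of 3) and handling the $\pm\infty$ endpoints --- but with the linear-independence claim isolated it is routine.
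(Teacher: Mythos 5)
Your proof is correct and follows essentially the same approach as the paper's: the four assertions are handled in the same order, and 4) is deduced from the same linear-independence claim for elements with pairwise-distinct finite indices. The only variation is in 3), where you derive $\lambda(x+y)=\lambda(x)$ from the rearrangement $x=(x+y)+(-y)$ together with 1) and 2) and deduce $x+y\neq 0$ at the end, whereas the paper first shows $x+y\neq 0$ directly (else $x=-y$ contradicts 1)) and then proves $\lambda(x+y)\le\lambda(x)$ by observing that $x+y\notin\mathcal F_rV$ for $r\in{]\lambda(x),\lambda(y)[}$; both are equally valid.
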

\begin{proof} 1) For any $a\in K^\times$, $x\in\mathcal F_rV$
if and only if $ax\in\mathcal F_rV$. So
$\displaystyle\{r\in\mathbb R\;|\;
x\in\mathcal F_rV\}=\{r\in\mathbb R\;|\;
ax\in\mathcal F_rV\}$, which implies that
$\lambda(x)=\lambda(ax)$.

2) In fact, $\{t\;|\;x+y\in\mathcal
F_tV\}\supset\{r\;|\;x\in\mathcal F_rV\}
\cap\{s\;|\;y\in\mathcal F_sV\}$. Therefore
$\sup\{t\;|\;x+y\in\mathcal
F_tV\}\ge\min(\sup\{r\;|\;x\in\mathcal
F_rV\}, \sup\{s\;|\;y\in\mathcal F_sV\})$.

3) If $x+y=0$, then $x=-y$. So
$\lambda(x)=\lambda(y)$ by 1), which leads to
a contradiction. Hence $x+y\neq 0$. We may
suppose that $\lambda(x)<\lambda(y)$. For any
$r\in]\lambda(x),\lambda(y)[$, we have
$y\in\mathcal F_rV$ but $x\not\in\mathcal
F_rV$. Therefore $x+y\not\in\mathcal F_rV$,
in other words, $\lambda(x+y)\le r$. Since
$r$ is arbitrary, we obtain
$\lambda(x+y)\le\lambda(x)$. Combining with
2), we get the equality.

4) Suppose that $x_1,\cdots,x_n$ are non-zero elements in
$V$ such that
$\lambda(x_1)<\lambda(x_2)<\cdots<\lambda(x_n)<+\infty$.
After 1) and 3), for any $(a_i)_{1\le
i\le n }\in K^n\setminus\{0\}$,
\[\lambda(a_1x_1+\cdots+a_nx_n)=\min\{\lambda(x_i)\;|\;a_i\neq 0\}<+\infty,\]
which implies that $a_1x_1+\cdots+a_nx_n\neq
0$. Therefore, $x_1,\cdots,x_n$ are linearly independent. So
$n\le\rang_K V$.
\end{proof}

Using the index function $\lambda$, we give
some numerical characterizations for
filtrations of vector spaces.

\begin{proposition}\label{Pro:crietere numerique de separe et exhaustive}
Let $V$ be a vector space over $K$ equipped
with an $\mathbb R$-filtration $\mathcal F$.
Then
\begin{enumerate}[1)]
\item the filtration $\mathcal F$ is separated if and only if for any $x\in
V\setminus\{0\}$, $\lambda(x)<+\infty$,
\item the filtration $\mathcal F$ is  exhaustive if and only if for any $x\in V$, $\lambda(x)>-\infty$.
\end{enumerate}
\end{proposition}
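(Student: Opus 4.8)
The plan is to reduce both equivalences to the two characterizations of the extreme values of $\lambda$ recorded immediately above the statement: $\lambda(x)=+\infty$ if and only if $x\in V_{+\infty}$, and $\lambda(x)=-\infty$ if and only if $x\in V\setminus V_{-\infty}$. Once these are available, no further manipulation of the filtration itself is needed; the proof is a two-line bookkeeping argument in each case.

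For assertion 1), recall that by definition $\mathcal F$ is separated precisely when $V_{+\infty}=\bigcap_{r\in\mathbb R}V_r=\{0\}$. By the first characterization above, the set $\{x\in V\;|\;\lambda(x)=+\infty\}$ is exactly $V_{+\infty}$. Hence the condition $V_{+\infty}=\{0\}$ is equivalent to the assertion that $\lambda(x)=+\infty$ forces $x=0$, which (since $\lambda$ takes values in $\mathbb R\cup\{\pm\infty\}$) is in turn equivalent to requiring $\lambda(x)<+\infty$ for every $x\in V\setminus\{0\}$. I would point out explicitly that $\lambda(0)=+\infty$ always holds, since the zero vector lies in every $\mathcal F_rV$; this is exactly why the quantifier must be restricted to nonzero $x$.

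For assertion 2), $\mathcal F$ is exhaustive precisely when $V_{-\infty}=\bigcup_{r\in\mathbb R}V_r=V$. By the second characterization, $\{x\in V\;|\;\lambda(x)=-\infty\}=V\setminus V_{-\infty}$, so $V_{-\infty}=V$ holds if and only if this set is empty, i.e. if and only if $\lambda(x)>-\infty$ for every $x\in V$. Here the whole space $V$ may be quantified (not merely its nonzero vectors), because $0\in V_{-\infty}$ automatically, so $\lambda(0)=+\infty>-\infty$ causes no trouble.

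There is essentially no obstacle in this argument: the proposition is an immediate consequence of the already-noted properties of the index function. The only points deserving a word of care are to keep the two ends of the value range of $\lambda$ apart — separatedness controls only the behaviour at $+\infty$, exhaustiveness only that at $-\infty$, and a nonzero vector may well satisfy $\lambda(x)=-\infty$ when $\mathcal F$ fails to be exhaustive while still having $\lambda(x)<+\infty$ — and to treat the zero vector correctly in each case as indicated above. If one preferred to bypass the listed properties, one could argue directly from the shape of the set $\{r\in\mathbb R\;|\;x\in\mathcal F_rV\}$: for a fixed $x$ it is a down-set of $\mathbb R$, it is unbounded above (equivalently equals all of $\mathbb R$) exactly when $x\in V_{+\infty}$, and it is empty exactly when $x\in V\setminus V_{-\infty}$; separatedness rules out the former for $x\neq 0$ and exhaustiveness rules out the latter for every $x$. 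Either way the verification is routine.
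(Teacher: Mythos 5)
Your proof is correct, and it is essentially the same argument as the paper's: the paper re-derives inline the facts that $\lambda(x)=+\infty$ iff $x\in V_{+\infty}$ and $\lambda(x)=-\infty$ iff $x\notin V_{-\infty}$, whereas you simply cite the numbered properties of $\lambda$ already recorded a few lines above and unwind the definitions of separated and exhaustive. The careful remarks about the zero vector (why the quantifier excludes $0$ in 1) but not in 2)) are a nice touch that the paper leaves implicit.
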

\begin{proof} 1) If the filtration is separated, then for any non-zero element $x$ of $V$, there exists $r\in\mathbb R$ such that
$x\not\in\mathcal F_rV$, so $\lambda(x)\le
r$. Conversely for any non-zero element $x\in V$ such that
$\lambda(x)<+\infty$, if
$\lambda(x)\in\mathbb R$, then $x\not\in
\mathcal F_{\lambda(x)+1}V$, otherwise
$\lambda(x)=-\infty$ and by definition
$x\not\in\mathcal F_{r}V$ for every
$r\in\mathbb R$.

2) If the filtration is exhaustive, then for any element $x$ of $V$, there exists
$r\in\mathbb R$ such that $x\in\mathcal F_rV$.
Hence $\lambda(x)\ge r$.
Conversely for any element $x\in V$ such that $\lambda(x)>-\infty$, either we have
$\lambda(x)\in\mathbb R$, and therefore $x\in\mathcal
F_{\lambda(x)-1}V$, or we have
$\lambda(x)=+\infty$ and $x\in\mathcal F_rV$
for every $r\in\mathbb R$.
\end{proof}

\begin{proposition}\label{Pro:invariance de lambda par la filtration continue associee}
Let $V$ be a vector space over $K$ and
$\mathcal F$ be a filtration of $V$.
\begin{enumerate}[1)]
\item For any element $x$ of $V$, we have
$\lambda_{\mathcal F}(x)=\lambda_{\mathcal
F^l}(x)$.
\item If $\mathcal F$ is separated (resp.
exhaustive), then also is $\mathcal F^l$.
\end{enumerate}
\end{proposition}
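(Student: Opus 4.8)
The plan is to deduce both assertions directly from the properties of the index function established above, together with Proposition~\ref{Pro:crietere numerique de separe et exhaustive}.

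For 1), recall that $\mathcal F^l_rV=\bigcap_{s<r}\mathcal F_sV$ for every $r\in\mathbb R$. First I would observe, using property 4) of the index function (namely, $\lambda_{\mathcal F}(x)\ge r$ if and only if $x\in\bigcap_{s<r}\mathcal F_sV$), that for a fixed $x\in V$ one has the equality of subsets of $\mathbb R$
\[\{r\in\mathbb R\;|\;x\in\mathcal F^l_rV\}=\{r\in\mathbb R\;|\;\lambda_{\mathcal F}(x)\ge r\}.\]
It then remains to check that the supremum of the right-hand set equals $\lambda_{\mathcal F}(x)$, which is a short case analysis: if $\lambda_{\mathcal F}(x)=-\infty$ the set is empty and both sides are $-\infty$; if $\lambda_{\mathcal F}(x)\in\mathbb R$ the set is $]-\infty,\lambda_{\mathcal F}(x)]$ and its supremum is $\lambda_{\mathcal F}(x)$; and if $\lambda_{\mathcal F}(x)=+\infty$ the set is all of $\mathbb R$, whose supremum is $+\infty$. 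This gives $\lambda_{\mathcal F^l}(x)=\lambda_{\mathcal F}(x)$.

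For 2), I would combine 1) with Proposition~\ref{Pro:crietere numerique de separe et exhaustive}: the filtration $\mathcal F$ is separated if and only if $\lambda_{\mathcal F}(x)<+\infty$ for every non-zero $x\in V$, which by 1) is equivalent to $\lambda_{\mathcal F^l}(x)<+\infty$ for every non-zero $x$, i.e.\ to $\mathcal F^l$ being separated; and likewise $\mathcal F$ is exhaustive if and only if $\lambda_{\mathcal F}(x)>-\infty$ for every $x\in V$, which by 1) holds if and only if $\mathcal F^l$ is exhaustive.

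There is no serious obstacle here. The only point requiring a little care is in 1): the supremum defining $\lambda_{\mathcal F^l}(x)$ ranges over real $r$ only, so one must not overlook the possibility that $\lambda_{\mathcal F}(x)=\pm\infty$, which is precisely why the small case analysis above is included. Note also that 2) relies only on 1) and on the fact---already recorded before the statement of the proposition---that $\mathcal F^l$ is itself a filtration, so no further input is needed.
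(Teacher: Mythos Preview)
Your proof is correct and follows essentially the same approach as the paper. For part 1), the paper argues by two inequalities (using $\mathcal F_rV\subset\mathcal F^l_rV$ for one direction and the implication ``$x\in\mathcal F^l_rV\Rightarrow\lambda_{\mathcal F}(x)\ge r$'' for the other), while you package the same content by invoking property 4) of the index function to identify the set $\{r:x\in\mathcal F^l_rV\}$ directly and then taking the supremum; for part 2) both proofs are identical.
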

\begin{proof}
1) Since $\mathcal F_rV\subset\mathcal
F^l_rV$, we have $\lambda_{\mathcal
F}(x)\le\lambda_{{\mathcal F}^l}(x)$. On the
other hand, if $x\in\mathcal F^l_rV$, then
for any $s<r$, we have $x\in\mathcal F_sV$,
so $\lambda_{{\mathcal F}}(x)\ge r$. Hence
$\lambda_{\mathcal F}(x)\ge\lambda_{{\mathcal
F}^l}(x)$.

2) It's an easy consequence of 1) and
Proposition \ref{Pro:crietere numerique de
separe et exhaustive}.
\end{proof}

Consider now two vector spaces $V$ and $W$
over $K$. Let $\mathcal F$ be an $\mathbb
R$-filtration of $V$ and $\mathcal G$ be an
$\mathbb R$-filtration of $W$. We say that a
linear mapping $f:V\rightarrow W$ is {\it
compatible} with the filtrations $(\mathcal
F,\mathcal G)$ if for any $r\in\mathbb R$,
$f(\mathcal F_rV)\subset\mathcal G_rW$.

We introduce some functorial construction of
filtrations. Let $f:V\rightarrow W$ be a
$K$-linear mapping of vector spaces over $K$.
If $\mathcal G$ is an $\mathbb R$-filtration
of $W$, then the {\it inverse image} of
$\mathcal G$ by $f$ is by definition the
filtration $f^*\mathcal G$ of $V$ such that
$(f^*\mathcal G)_rV=f^{-1}(\mathcal G_rW)$.
Clearly, if $\mathcal G$ is left continuous,
then also is $f^*\mathcal G$. If $\mathcal F$
is an $\mathbb R$-filtration of $V$, the {\it
weak direct image} of $\mathcal F$ by $f$ is
by definition the filtration
$f_{\flat}\mathcal F$ of $W$ such that
$(f_{\flat}\mathcal F)_rW=f(\mathcal F_rV)$,
and the {\it strong direct image} of
$\mathcal F$ by $f$ is by definition the
filtration $f_*\mathcal F=(f_{\flat}\mathcal
F)^l$. Clearly the homomorphism $f$ is
compatible to filtrations $(f^*\mathcal
G,\mathcal G)$, $(\mathcal
F,f_{\flat}\mathcal F)$ and $(\mathcal
F,f_*\mathcal F)$.

\begin{proposition}
If a $K$-linear mapping $f:V\rightarrow W$ is
compatible with the filtrations $(\mathcal
F,\mathcal G)$, then for any $x\in V$, one
has $\lambda(f(x))\ge\lambda(x)$. The
converse is true if $\mathcal G$ is left
continuous.
\end{proposition}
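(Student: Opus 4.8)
The plan is to prove the two implications separately; each reduces immediately to the definition of the index function and to the two remarks recorded just before the statement --- that $x\in\mathcal F_rV$ always implies $\lambda_{\mathcal F}(x)\ge r$, and that this converse holds when the filtration is left continuous.

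For the direct implication, suppose $f$ is compatible with $(\mathcal F,\mathcal G)$ and fix $x\in V$. If $\lambda_{\mathcal F}(x)=-\infty$ there is nothing to prove, so I may assume the set $S=\{r\in\mathbb R\mid x\in\mathcal F_rV\}$ is non-empty. For each $r\in S$, compatibility gives $f(x)\in\mathcal G_rW$, hence $\lambda_{\mathcal G}(f(x))\ge r$; taking the supremum over $r\in S$ yields $\lambda_{\mathcal G}(f(x))\ge\sup S=\lambda_{\mathcal F}(x)$. The case $\lambda_{\mathcal F}(x)=+\infty$ is covered as well, since then $S$ is unbounded above and the same inequality forces $\lambda_{\mathcal G}(f(x))=+\infty$.

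For the converse, assume $\mathcal G$ is left continuous and that $\lambda_{\mathcal G}(f(x))\ge\lambda_{\mathcal F}(x)$ for every $x\in V$; I must show $f(\mathcal F_rV)\subset\mathcal G_rW$ for all $r\in\mathbb R$. Fix such an $r$ and take $x\in\mathcal F_rV$. By the first remark above, $\lambda_{\mathcal F}(x)\ge r$, hence $\lambda_{\mathcal G}(f(x))\ge r$; and since $\mathcal G$ is left continuous, property 4) of $\lambda$ gives $f(x)\in\bigcap_{s<r}\mathcal G_sW=\mathcal G_rW$. Thus $f$ is compatible with $(\mathcal F,\mathcal G)$.

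I expect no genuine obstacle here. The only points needing a little care are the bookkeeping of the values $\pm\infty$, which the supremum formulation of $\lambda$ handles uniformly, and the role of left continuity of $\mathcal G$, which is precisely what allows one to pass back from the inequality $\lambda_{\mathcal G}(f(x))\ge r$ to the membership $f(x)\in\mathcal G_rW$; dropping this hypothesis already breaks the converse, e.g. for $V=W$ and $f=\mathrm{id}$ with two filtrations agreeing everywhere except for left continuity at a single point.
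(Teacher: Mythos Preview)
Your proof is correct and follows essentially the same route as the paper: the forward implication amounts to the set inclusion $\{r\mid x\in\mathcal F_rV\}\subset\{r\mid f(x)\in\mathcal G_rW\}$, and the converse uses left continuity of $\mathcal G$ to pass from $\lambda_{\mathcal G}(f(x))\ge r$ back to $f(x)\in\mathcal G_rW$. Your version is simply more explicit about the $\pm\infty$ edge cases, which the paper leaves implicit.
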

\begin{proof}
``$\Longrightarrow$'': By definition we know
that $\{r\in\mathbb R\;|\;x\in\mathcal
F_rV\}\subset\{r\in\mathbb
R\;|\;f(x)\in\mathcal G_rW\}$ for any $x\in
V$, therefore $\lambda(x)\le\lambda(f(x))$.

``$\Longleftarrow$'': For any $r\in\mathbb R$
and any $x\in\mathcal F_rV$, we have
$\lambda(x)\ge r$, and hence
$\lambda(f(x))\ge r$. Therefore,
$f(x)\in\mathcal G_rW$ since the filtration
$\mathcal G$ is left continuous.
\end{proof}

\begin{proposition}\label{Pro:exhaustivite de filtration induite quotiente}
Let $f:V'\rightarrow V$ be an injective
homomorphism and $\pi:V\rightarrow V''$ be a
surjective homomorphism of vector spaces over
$K$. Suppose that $\mathcal F$ is an $\mathbb
R$-filtration of $V$. Then:
\begin{enumerate}[1)]
\item if $\mathcal F$ is separated, also is $f^*\mathcal
F$;
\item if $\mathcal F$ is separated and if
the rank of $V$ is finite, the filtration
$\pi_{\flat}\mathcal F$ is also separated;
\item if $\mathcal F$
is exhaustive, the filtrations
$f^*\mathcal F$, $\pi_{\flat}\mathcal F$ and
$\pi_*\mathcal F$ are all exhaustive.
\end{enumerate}
\end{proposition}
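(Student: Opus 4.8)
The plan is to translate the three assertions into the numerical criteria of Proposition~\ref{Pro:crietere numerique de separe et exhaustive}, using the index functions together with the defining formulas $(f^*\mathcal F)_rV'=f^{-1}(\mathcal F_rV)$ and $(\pi_\flat\mathcal F)_rV''=\pi(\mathcal F_rV)$. The first observation I would record is that for any $x\in V'$ one has $\lambda_{f^*\mathcal F}(x)=\lambda_{\mathcal F}(f(x))$, since $x\in(f^*\mathcal F)_rV'$ is by definition the same as $f(x)\in\mathcal F_rV$, so the two index sets coincide. The second is that the union of an (increasing) family of subspaces commutes with both direct and inverse image, i.e. $\bigcup_r f^{-1}(\mathcal F_rV)=f^{-1}\big(\bigcup_r\mathcal F_rV\big)$ and $\bigcup_r\pi(\mathcal F_rV)=\pi\big(\bigcup_r\mathcal F_rV\big)$, whereas \emph{intersection} commutes with inverse image but \emph{not} with direct image; this last failure is exactly what forces the finite-rank hypothesis in 2).

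Given these, parts 1) and 3) are purely formal. For 1): if $\mathcal F$ is separated and $x\in V'\setminus\{0\}$, then $f(x)\neq 0$ by injectivity, hence $\lambda_{\mathcal F}(f(x))<+\infty$ by Proposition~\ref{Pro:crietere numerique de separe et exhaustive}~1), hence $\lambda_{f^*\mathcal F}(x)<+\infty$, and the same proposition gives that $f^*\mathcal F$ is separated (equivalently $V'_{+\infty}=f^{-1}(V_{+\infty})=f^{-1}(0)=0$). For 3): exhaustivity of $\mathcal F$ reads $\bigcup_r\mathcal F_rV=V$, so $\bigcup_r(f^*\mathcal F)_rV'=f^{-1}(V)=V'$ and $\bigcup_r(\pi_\flat\mathcal F)_rV''=\pi(V)=V''$ (the latter using surjectivity of $\pi$), which gives exhaustivity of $f^*\mathcal F$ and of $\pi_\flat\mathcal F$; finally $\pi_*\mathcal F=(\pi_\flat\mathcal F)^l$ is exhaustive by Proposition~\ref{Pro:invariance de lambda par la filtration continue associee}~2), which asserts that passing to the associated left continuous filtration preserves exhaustivity.

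The only assertion that genuinely uses a hypothesis, and the one I expect to be the real content, is 2). The key step I would isolate is: a separated $\mathbb R$-filtration $\mathcal F$ of a space $V$ of finite rank satisfies $\mathcal F_rV=0$ for all sufficiently large $r$. Indeed, by Proposition~\ref{Pro:proprietes de la fonction lambda}~4) the image of $\lambda_{\mathcal F}$ is a finite subset of $\mathbb R\cup\{\pm\infty\}$, and by Proposition~\ref{Pro:crietere numerique de separe et exhaustive}~1) every nonzero vector of $V$ has finite index; letting $a$ be the largest finite value taken by $\lambda_{\mathcal F}$, any $x\in\mathcal F_rV$ with $r>a$ satisfies $\lambda_{\mathcal F}(x)\ge r>a$ and is therefore zero, so $\mathcal F_rV=0$. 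Consequently $(\pi_\flat\mathcal F)_rV''=\pi(\mathcal F_rV)=0$ for $r>a$, whence $V''_{+\infty}=0$ and $\pi_\flat\mathcal F$ is separated. I would also note that the finite-rank assumption cannot be removed: for an infinite-rank $V$ a separated filtration need not vanish for large $r$, and one can then arrange $\bigcap_r\pi(\mathcal F_rV)\neq 0$ even though $\bigcap_r\mathcal F_rV=0$.
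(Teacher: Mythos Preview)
Your proof is correct and follows essentially the same approach as the paper: part 2) is argued identically (finite image of $\lambda_{\mathcal F}$, hence $\mathcal F_rV=0$ for $r$ beyond the largest finite value), and part 3) is the same union argument followed by the appeal to Proposition~\ref{Pro:invariance de lambda par la filtration continue associee}~2). The only cosmetic difference is that for part 1) the paper works directly with the intersection $\bigcap_r f^{-1}(\mathcal F_rV)=f^{-1}\big(\bigcap_r\mathcal F_rV\big)$ rather than via the index function, but you record this formulation yourself in parentheses, so there is no substantive divergence.
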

\begin{proof}
1) As $\mathcal F$ is separated,
$\displaystyle\bigcap_{r\in\mathbb R}\mathcal
F_rV=\{0\}$. Since $f$ is injectif, we have
\[\bigcap_{r\in\mathbb R}(f^*\mathcal F)_rV'=\bigcap_{r\in\mathbb R}f^{-1}(\mathcal F_rV)=f^{-1}\Big(\bigcap_{r\in\mathbb R}\mathcal F_rV\Big)=f^{-1}(\{0\})=\{0\}.\]
Therefore $f^*\mathcal F$ is also separated.

2) If $\rang E<+\infty$, then
$\lambda_{\mathcal F}$ takes only a finite
number of values. Let $r_0=\sup\Big(
\lambda_{\mathcal F
}(E)\setminus\{\pm\infty\}\Big)<+\infty$. For
any real number $r>r_0$ and any $x\in
\mathcal F_rV$ we have $\lambda_{\mathcal
F}(x)\ge r>r_0$, so $\lambda_{\mathcal F
}(x)=+\infty$, i.e., $x=0$ since the
filtration $\mathcal F$ is separated.
Therefore, $\mathcal F_rV=0$ and
$(\pi_{\flat}\mathcal F)_rV''=\pi(\mathcal
F_r V)=0$.

3) Since the filtration $\mathcal F$ is
exhaustive, we have
$\displaystyle\bigcup_{r\in\mathbb
R}\mathcal F_rV=V$. Therefore,
\begin{gather*}
\bigcup_{r\in\mathbb R}(f^*\mathcal F)_rV'=
\bigcup_{r\in\mathbb R}(V'\cap \mathcal F_rV)
=V'\cap\Big(\bigcup_{r\in\mathbb R}\mathcal F_rV\Big)
=V'\cap V=V',\\
\bigcup_{r\in\mathbb R}(\pi_{\flat}\mathcal
F )_rV''=\bigcup_{r\in\mathbb R}\pi(\mathcal
F_rV)=\pi\Big( \bigcup_{r\in\mathbb
R}\mathcal F_rV\Big)=\pi(V)=V''.
\end{gather*}
So the filtrations $f^*\mathcal F$ and
$\pi_{\flat}\mathcal F$ are exhaustive.
Finally, after Proposition
\ref{Pro:invariance de lambda par la filtration continue associee} 2), $\pi_*\mathcal F=(\pi_{\flat}\mathcal
F)^l$ is exhaustive.
\end{proof}

The following proposition gives index
description of functorial constructions of
filtrations.

\begin{proposition}\label{Pro:critere numerique de
filtration induit quotient} Let $V$ and $W$
be two finite dimensional vector spaces over
$K$, $\mathcal F$ be an $\mathbb R$-filtration of $V$, $\mathcal G$ be an
$\mathbb R$-filtration of $W$ and
$\varphi:V\rightarrow W$ be a $K$-linear
mapping.
\begin{enumerate}[1)]
\item Suppose that $\varphi$ is injective.
If $\mathcal F=\varphi^*\mathcal G$, then for
any $x\in V$, one has $\lambda_{\mathcal
F}(x)=\lambda_{\mathcal G}(\varphi(x))$. The
converse is true if both filtrations
$\mathcal F$ and $\mathcal G$ are left
continuous.
\item Suppose that $\varphi$ is surjective.
If $\mathcal G=\varphi_*\mathcal F$, then for
any $y\in W$, $\displaystyle\lambda_{\mathcal
G}(y)=\sup_{x\in\varphi^{-1}(y)}\lambda_{\mathcal
F}(x)$. The converse is true if both
filtrations $\mathcal F$ and $\mathcal G$ are
left continuous.
\end{enumerate}
\end{proposition}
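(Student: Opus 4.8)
The plan is to deduce all four implications from one elementary remark: if $\mathcal H$ is a left continuous $\mathbb R$-filtration of a vector space $U$, then $\mathcal H$ is completely determined by its index function, because left continuity gives $\mathcal H_rU=\{u\in U\mid\lambda_{\mathcal H}(u)\ge r\}$ for every $r\in\mathbb R$ — this is precisely the converse statement observed earlier in the section ($u\in\mathcal H_rU$ iff $\lambda_{\mathcal H}(u)\ge r$ when $\mathcal H$ is left continuous). Since $\varphi^*\mathcal G$ is left continuous whenever $\mathcal G$ is (noted in the excerpt), and $\varphi_*\mathcal F=(\varphi_\flat\mathcal F)^l$ is left continuous by construction, each ``converse'' assertion will follow at once from the corresponding direct implication.

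For 1), the direct implication is a one-line set equality: if $\mathcal F=\varphi^*\mathcal G$ then for every $r$ one has $x\in\mathcal F_rV\Leftrightarrow\varphi(x)\in\mathcal G_rW$, so the sets $\{r\mid x\in\mathcal F_rV\}$ and $\{r\mid\varphi(x)\in\mathcal G_rW\}$ coincide and taking suprema yields $\lambda_{\mathcal F}(x)=\lambda_{\mathcal G}(\varphi(x))$. For the converse I observe that $\varphi^*\mathcal G$ is left continuous, that by the implication just proved $\lambda_{\varphi^*\mathcal G}(x)=\lambda_{\mathcal G}(\varphi(x))$, which equals $\lambda_{\mathcal F}(x)$ by hypothesis, and then conclude $\mathcal F=\varphi^*\mathcal G$ because two left continuous filtrations with the same index function are equal.

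The only computation with any content is the direct implication of 2). Using Proposition \ref{Pro:invariance de lambda par la filtration continue associee} I replace $\lambda_{\mathcal G}=\lambda_{\varphi_*\mathcal F}$ by $\lambda_{\varphi_\flat\mathcal F}$, so that it remains to prove
\[
\sup\{r\in\mathbb R\mid y\in\varphi(\mathcal F_rV)\}=\sup_{x\in\varphi^{-1}(y)}\lambda_{\mathcal F}(x),
\]
the right-hand supremum being over a non-empty set since $\varphi$ is surjective. The inequality ``$\le$'' is immediate: if $y\in\varphi(\mathcal F_rV)$, choose $x\in\varphi^{-1}(y)\cap\mathcal F_rV$, so $\lambda_{\mathcal F}(x)\ge r$. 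For ``$\ge$'': given $x\in\varphi^{-1}(y)$ and a real number $r<\lambda_{\mathcal F}(x)$, the definition of the index furnishes $s$ with $r<s$ and $x\in\mathcal F_sV\subset\mathcal F_rV$, whence $y=\varphi(x)\in\varphi(\mathcal F_rV)$; letting $r$ approach $\lambda_{\mathcal F}(x)$ and then taking the supremum over $x$ gives the claim, the cases $\lambda_{\mathcal F}(x)=\pm\infty$ being disposed of directly ($+\infty$ forces $y\in\varphi(\mathcal F_rV)$ for every $r$, and $-\infty$ contributes nothing to the supremum). The converse of 2) then proceeds exactly as in 1): $\varphi_*\mathcal F$ is left continuous, the direct implication identifies its index function with $\lambda_{\mathcal G}$, and a left continuous filtration is determined by its index function.

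The main — and rather modest — obstacle is keeping track of the values $\pm\infty$ in the supremum identity of 2); everything else is a formal unwinding of the definitions of $\varphi^*\mathcal G$, $\varphi_\flat\mathcal F$ and $\varphi_*\mathcal F$.
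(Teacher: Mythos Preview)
Your proof is correct. The direct implications in both 1) and 2) are essentially the same computations as in the paper (for 2) you pass through $\varphi_\flat\mathcal F$ via Proposition~\ref{Pro:invariance de lambda par la filtration continue associee}, which is equivalent to what the paper does implicitly).

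For the converses, however, you take a genuinely different route. The paper proves each converse by hand: it fixes a level $r$ and checks the two inclusions $\mathcal F_rV\subset\varphi^{-1}(\mathcal G_rW)$ and $\varphi^{-1}(\mathcal G_rW)\subset\mathcal F_rV$ (and similarly for $\varphi_*\mathcal F$ in 2)), using left continuity of $\mathcal F$ and $\mathcal G$ separately at the appropriate steps. Your argument is more structural: you observe once and for all that a left continuous filtration is determined by its index function, and then bootstrap --- since $\varphi^*\mathcal G$ (resp.\ $\varphi_*\mathcal F$) is left continuous, applying the direct implication you just proved computes its index function, which by hypothesis coincides with $\lambda_{\mathcal F}$ (resp.\ $\lambda_{\mathcal G}$), forcing equality of filtrations. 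This is cleaner and avoids repeating the level-by-level verification; the paper's approach is slightly more self-contained in that it does not need to invoke the general principle, but the net amount of work is the same.
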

\begin{proof} 1) ``$\Longrightarrow$'':
Since $\mathcal F =\varphi^*\mathcal G$, a
non-zero element $x\in V$ lies in $V_\lambda$
if and only if $\varphi(x)\in W_\lambda$,
hence $\lambda(x)=\sup\{r\in\mathbb
R\;|\;x\in W_r\}=\sup\{r\in \mathbb
R\;|\;\varphi(x)\in
V_r\}=\lambda(\varphi(x))$.

``$\Longleftarrow$'': If $x\in V_r$, then
$\lambda(\varphi(x))\ge\lambda(x)\ge r$. So
$\displaystyle\varphi(x)\in W_r$
since the filtration $\mathcal G$ is left
continuous. On the other hand, if $0\neq
x\in\varphi^{-1}(W_r)$, then
$\lambda(x)=\lambda(\varphi(x))\ge r$, so
$\displaystyle x\in V_r$
since the filtration $\mathcal F$ of $V$ is
left continuous. Therefore
$V_r=\varphi^{-1}(W_r)$.

2) ``$\Longrightarrow$'': If $x\in V_r$, then
$\varphi(x)\in W_r$, so
$\lambda(\varphi(x))\ge\lambda(x)$. Hence for
any $y\in W\setminus\{0\}$,
$\displaystyle\lambda(y)\ge\sup_{x\in\varphi^{-1}(y)}\lambda(x)$.
On the other hand, $y\in W_r$ implies that
$V_s\cap\varphi^{-1}(y)\neq\emptyset$ for any
$s<r$. Therefore $\displaystyle
r\le\sup_{x\in\varphi^{-1}(y)}\lambda(x)$,
and hence
$\displaystyle\lambda(y)=\sup\{r\in\mathbb
R\;|\;y\in W_r\}\le\sup_{
x\in\varphi^{-1}(y)}\lambda(x)$.

``$\Longleftarrow$'': For any non-zero
element $y$ of $W$, if $y\in W_r$, then
$\lambda(y)\ge r$, so
$\displaystyle\sup_{x\in\varphi^{-1}(y)}\lambda(x)\ge
r$. Therefore, for any $s<r$, there exists
$x\in\varphi^{-1}(y)$ such that
$\lambda(x)\ge s$. Since the filtration
$\mathcal F$ is left continuous, we have
$x\in V_s$. This implies $\displaystyle
y\in\bigcap_{s<r}\varphi(V_s)$.

On the other hand, if $y$ is a non-zero
element in $\varphi(V_s)$, then there exists
$x\in V_s$ such that $y=\varphi(x)$. So
$\lambda(y)\ge\lambda(x)\ge s$. This implies
that $y\in W_s$ since the filtration
$\mathcal G$ is left continuous. Therefore,
$\displaystyle\bigcap_{s<r}\varphi(V_s)\subset\bigcap_{s<r}
W_s=W_r$.
\end{proof}

In the following, we use Borel measures on
$\mathbb R$ to study $\mathbb R $-filtrations
of vector spaces. For any finite dimensional
vector space over $K$, equipped with a
separated and exhaustive filtration, we shall
associate a Borel probability measure on
$\mathbb R$ to each base of the vector space,
which is a linear combination of Dirac
measures. Furthermore, there exists a
``maximal base'' whose associated measure
captures full ``numerical'' information of
the filtration. This technic will play an
import role in the sequel.

If $\nu_1$ and $\nu_2$ are two bounded Borel
measures on $\mathbb R$, we say that $\nu_1$
is {\it on the right} of $\nu_2$ and we write
$\nu_1\succ\nu_2$ or $\nu_2\prec\nu_1$ if for
any increasing and bounded function $f$, we
have
\[\int_{\mathbb R}f\mathrm{d}\nu_1\ge
\int_{\mathbb R}f\mathrm{d}\nu_2,\] which is
also equivalent to say that for any
$r\in\mathbb R$, $\displaystyle\int_{\mathbb
R}\indic_{[r,+\infty[}\mathrm{d}\nu_1
\ge\int_{\mathbb
R}\indic_{[r,+\infty[}\mathrm{d}\nu_2$. We
say that $\nu_1$ is {\it strictly on the
right} of $\nu_2$ if $\nu_1\succ\nu_2$ but
$\nu_2\neq\nu_1$.

\begin{definition}\label{Def:mesure associee a une base}
Let $V$ be a vector space of rank
$0<n<+\infty$ over $K$, equipped with a
separated and exhaustive filtration $\mathcal
F$. If $\mathbf{e}=(e_i)_{1\le i\le n}$ is a
base of $V$, we define a Borel probability
measure on $\mathbb R$
\[\nu_{\mathcal F,\mathbf{e}}:=
\frac{1}{n}\sum_{i=1}^n\delta_{\lambda(e_i)},\]
called the {\it probability associated to }
$\mathcal F$ {\it relatively to}
$\mathbf{e}$. If there is no ambiguity on the
filtration, we write also $\nu_{\mathbf{e}}$
instead of $\nu_{\mathcal F,\mathbf{e}}$.
Notice that Proposition \ref{Pro:invariance
de lambda par la filtration continue
associee} implies that $\nu_{\mathcal
F,\mathbf{e}}=\nu_{\mathcal F^l,\mathbf{e}}$.
We say that a base $\mathbf{e}$ of $V$ is
{\it maximal} if for any base $\mathbf{e'}$
of $V$, we have
$\nu_{\mathbf{e}}\succ\nu_{\mathbf{e'}}$.
Clearly a base $\mathbf{e}$ is maximal for
the filtration $\mathcal F$ if and only if it
is maximal for the filtration $\mathcal F^l$.
\end{definition}

\begin{proposition}\label{Pro:critere de base maximale}
Suppose that the filtration $\mathcal F$ of
$V$ is left continuous. Then a base
$\mathbf{e}=(e_i)_{1\le i\le n}$ of $V$ is
{\it maximal} if and only if $\card(\mathbf{e}\cap V_r)=\rang
V_r$ for any real
number $r$.
\end{proposition}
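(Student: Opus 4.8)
The plan is to translate the combinatorial condition $\card(\mathbf{e}\cap V_r)=\rang V_r$ into a statement about the tail function $r\mapsto\nu_{\mathbf{e}}([r,+\infty[)$ and then to exploit that $\succ$ is a partial order. The one step requiring care is the identity
\[\nu_{\mathbf{e}}\bigl([r,+\infty[\bigr)=\frac1n\,\card(\mathbf{e}\cap V_r)\qquad(r\in\mathbb R),\]
valid for every base $\mathbf{e}=(e_i)_{1\le i\le n}$ of $V$. Indeed, $\mathcal F$ being separated and exhaustive, every $\lambda(e_i)$ is a real number (Proposition \ref{Pro:crietere numerique de separe et exhaustive}), so $\nu_{\mathbf e}$ really is a probability measure on $\mathbb R$ and the left-hand side equals $\frac1n\,\card\{\,i\mid\lambda(e_i)\ge r\,\}$; since $\lambda(e_i)\ge r$ is equivalent to $e_i\in\bigcap_{s<r}V_s$, the left continuity of $\mathcal F$ identifies this with $e_i\in V_r$. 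This is the only place left continuity is used, and it is the main (if mild) obstacle: without it one has only $e_i\in V_r\Rightarrow\lambda(e_i)\ge r$, and the tail of $\nu_{\mathbf e}$ cannot be read off from the subspaces $V_r$. I would also record the trivial bound $\card(\mathbf{e}\cap V_r)\le\rang V_r$, with equality exactly when $\mathbf{e}\cap V_r$ is a base of $V_r$, since $\mathbf{e}\cap V_r$ is a linearly independent subset of $V_r$.

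The forward implication is then immediate: if $\card(\mathbf{e}\cap V_r)=\rang V_r$ for all $r$, the identity gives $\nu_{\mathbf{e}}([r,+\infty[)=\frac1n\rang V_r$, which by the bound is $\ge\nu_{\mathbf{e}'}([r,+\infty[)$ for every base $\mathbf{e}'$ and every $r\in\mathbb R$; hence $\nu_{\mathbf{e}}\succ\nu_{\mathbf{e}'}$ for all $\mathbf{e}'$, i.e. $\mathbf{e}$ is maximal.

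For the converse, I would first produce one base realizing the equality. Since $\rang V<+\infty$, the family $(V_r)_{r\in\mathbb R}$, totally ordered by inclusion, consists of finitely many subspaces forming a flag from $V$ down to $\{0\}$; by standard linear algebra there is a base $\mathbf{e}_0$ of $V$ adapted to this flag, and then $\card(\mathbf{e}_0\cap V_r)=\rang V_r$ for every $r$. By the forward implication $\mathbf{e}_0$ is maximal. Now let $\mathbf{e}$ be any maximal base. Maximality of $\mathbf{e}$ and of $\mathbf{e}_0$ gives $\nu_{\mathbf{e}}([r,+\infty[)\ge\nu_{\mathbf{e}_0}([r,+\infty[)$ together with the reverse inequality, hence equality for all $r$; by the identity, $\card(\mathbf{e}\cap V_r)=\card(\mathbf{e}_0\cap V_r)=\rang V_r$ for every $r$, which is the assertion.
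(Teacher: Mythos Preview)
Your proof is correct. The identity $\nu_{\mathbf e}([r,+\infty[)=\frac1n\card(\mathbf e\cap V_r)$ and the implication ``condition $\Rightarrow$ maximal'' match the paper's argument exactly. Where you diverge is in the implication ``maximal $\Rightarrow$ condition'': the paper proceeds by a direct exchange argument --- if $\card(\mathbf e\cap V_r)<\rang V_r$ for some $r$, it finds $e'\in V_r$ not in the span of $\mathbf e\cap V_r$, swaps it for some $e_i\notin V_r$ appearing in the expansion of $e'$, and exhibits an increasing function witnessing $\nu_{\mathbf e'}\not\prec\nu_{\mathbf e}$. You instead construct an adapted base $\mathbf e_0$ using the finiteness of the flag, deduce it is maximal from the direction already proved, and then use antisymmetry of $\succ$ on the tail functions. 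Your route is a bit more conceptual and, as a bonus, yields existence of a maximal base (the paper proves this separately in the next proposition); the paper's swap argument is more hands-on and avoids invoking the flag description and the auxiliary base.
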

\begin{proof} ``$\Longrightarrow$'':
Since $\mathbf{e}$ is a base of $V$, we have
$\card(\mathbf{e}\cap V_r)\le\rang V_r$ for
any $r\in\mathbb R$. Suppose that there
exists a real number $r$ such that
$\card(\mathbf{e}\cap V_r)<\rang V_r$. Let
$V_r'$ be the sub-vector space of $V_r$
generated by $\mathbf{e}\cap V_r$. We have
clearly $\rang V_r'<\rang V_r$. Hence there
exists $e'\in V_r\setminus V_r'$. Since
$\mathbf{e}$ is a base of $V$, there exists
$(a_i)_{1\le i\le n}\in K^n$ such that
$e'=a_1e_1+\cdots+a_ne_n$. As $e'\not\in
V_r'$, there exists an integer $1\le i\le n$
such that $e_i\not\in V_r$ and that $a_i\neq
0$. Therefore
\[\mathbf{e}'=(e_1,\cdots,e_{i-1},e',e_{i+1},\cdots,e_n)\]
is a base of $V$. Furthermore, as $e_i\not\in
V_r$, we have $\lambda(e_i)<r$ since the
filtration is left continuous. On the other
hand, since $e'\in V_r$, we have
$\lambda(e')\ge r$. Let $g$ be an increasing
function such that
$g(\lambda(e_i))<g(\lambda(e'))$. Then we
have
\[\int_{\mathbb R} g\mathrm{d}\nu_{\mathbf{e}'}-
\int_{\mathbb R} g\mathrm{d}\nu_{\mathbf{e}}
=\frac{1}{n}\Big(g(\lambda(e'))-g(\lambda(e_i))\Big)>0,\]
which is absurd since $\mathbf{e}$ is
maximal.

``$\Longleftarrow$'': For any real number $r$
and any base $\mathbf{e}'$ of $V$, we have
$\displaystyle\card(\mathbf{e}'\cap
V_r)=n\int_{\mathbb R}\indic_{[r,+\infty[}
\mathrm{d}\nu_{\mathbf{e}'}$. Hence for any
real number $r$, we have
$\displaystyle\int_{\mathbb
R}\indic_{[r,+\infty[}
\mathrm{d}\nu_{\mathbf{e'}} \le\int_{\mathbb
R}\indic_{[r,+\infty[}\mathrm{d}\nu_{\mathbf{e}}$.
Therefore,
$\nu_{\mathbf{e}'}\prec\nu_{\mathbf{e}}$.
\end{proof}

\begin{proposition}\label{Pro:construction de base maximale a partir
une base quelconique} For any base
$\mathbf{e}=(e_1,\cdots,e_n)^T$ of $V$, there
exists an upper triangulated matrix $A\in
M_{n\times n}(K)$ with
$\diag(A)=(1,\cdots,1)$ such that
$A\mathbf{e}$ is a maximal base of $V$.
\end{proposition}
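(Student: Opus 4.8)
The plan is to construct the matrix $A$ by a finite greedy procedure that, at each step, replaces a basis vector by a linear combination (with other basis vectors, adding only "later" vectors) having strictly larger index, stopping when the criterion of Proposition~\ref{Pro:critere de base maximale} is met. Since the filtration $\mathcal F$ and its left-continuous modification $\mathcal F^l$ give the same index function (Proposition~\ref{Pro:invariance de lambda par la filtration continue associee}) and the same notion of maximal base (Definition~\ref{Def:mesure associee a une base}), I may and will assume $\mathcal F$ is left continuous, so that Proposition~\ref{Pro:critere de base maximale} applies.

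First I would set up the induction. Write $\mathbf e=(e_1,\dots,e_n)^T$. Reorder the indices (this only conjugates $A$ by a permutation, which one can absorb at the end, or one simply assumes the basis was presented in a convenient order — I will phrase it so that the reordering is harmless) so that $\lambda(e_1)\le\lambda(e_2)\le\cdots\le\lambda(e_n)$. The strategy: proceed from $i=n$ down to $i=1$, and for each $i$ try to increase $\lambda$ of $e_i$ as much as possible by adding to it a combination of $e_{i+1},\dots,e_n$, i.e. replace $e_i$ by $e_i+\sum_{j>i}a_{ij}e_j$. The key elementary observation is: if $V_r'$ denotes the span of $\{e_j : \lambda(e_j)\ge r\}$ and $\operatorname{rk}V_r'<\operatorname{rk}V_r$, then there is some $e'\in V_r\setminus V_r'$; expanding $e'$ in the basis, its nonzero coordinates on vectors $e_i$ with $\lambda(e_i)<r$ can be used (by Proposition~\ref{Pro:proprietes de la fonction lambda}, part~3)) to replace such an $e_i$ by a vector of index $\ge r$, without disturbing the vectors of larger index. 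After carrying this out I would show the process terminates (the multiset of indices strictly increases in the $\succ$-order, and by Proposition~\ref{Pro:proprietes de la fonction lambda}~4) there are only finitely many possible indices, so only finitely many possible multisets), and that the total transformation is given by an upper-triangular matrix with unit diagonal — each elementary step is of the form "add a multiple of a later basis vector to an earlier one," hence upper-triangular unipotent, and products of such matrices are again upper-triangular unipotent.

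Then I would verify that the resulting base $\mathbf e''=A\mathbf e$ satisfies the maximality criterion: $\card(\mathbf e''\cap V_r)=\operatorname{rk}V_r$ for every $r\in\mathbb R$. This is exactly the stopping condition of the greedy loop — if it failed at some $r$, the observation above would furnish one more legal elementary move, contradicting termination. One small point to handle carefully: moving the indices of earlier vectors upward could in principle change $\card(\mathbf e\cap V_{r'})$ for other values $r'$; but since an elementary step only increases indices, it can only increase each $\card(\mathbf e\cap V_{r'})$, never decrease it, so once the count is correct at some level it stays correct. Concluding, $\mathbf e''$ is a maximal base by Proposition~\ref{Pro:critere de base maximale}, and $A$ has the required shape.

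The main obstacle is bookkeeping: making the ordering/reordering of the basis genuinely compatible with "upper triangular" (one wants the elementary operations to add later vectors, in the chosen order, to earlier ones), and arguing cleanly that the greedy procedure both terminates and lands precisely on the criterion of Proposition~\ref{Pro:critere de base maximale}. An alternative, cleaner formulation avoiding explicit moves: choose for each $r$ in the (finite) image of $\lambda$ a subset of a fixed upper-triangular-completion of $\mathbf e$ that spans $V_r$, build a full flag refining the filtration, pick basis vectors adapted to this flag among upper-unipotent transforms of $\mathbf e$ — but this ultimately requires the same Gaussian-elimination argument, so I would present the greedy version.
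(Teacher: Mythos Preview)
Your greedy approach can be made to work, but the reordering step is a genuine gap, not just bookkeeping. Conjugating an upper-triangular unipotent matrix by a permutation does \emph{not} in general yield an upper-triangular matrix, so you cannot ``absorb it at the end''; the proposition must be proved for the basis in its given order. The good news is that the reordering is unnecessary. If the current base fails Proposition~\ref{Pro:critere de base maximale} at some level $r$, take $e'\in V_r\setminus V_r'$ and first subtract from $e'$ its components along those $e_j$ with $\lambda(e_j)\ge r$ (each such $e_j$ lies in $V_r$, so the result $e''$ is still in $V_r$ and is nonzero). Now every nonzero coordinate of $e''$ sits on a vector with $\lambda<r$; let $i$ be the \emph{smallest} such index. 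Then $(1/a_i)e'' = e_i + \sum_{j>i} c_j e_j$ is a genuine upper-triangular unipotent move raising $\lambda$ at position $i$ from $<r$ to $\ge r$, and your termination argument via the finite image of $\lambda$ goes through unchanged.

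The paper takes a cleaner route that sidesteps the ordering issue entirely: induction on $n=\rang V$ via the quotient $W=V/Ke_n$ with its strong direct image filtration. By induction there is an upper-triangular unipotent $\widetilde A\in M_{(n-1)\times(n-1)}(K)$ making $([e_1],\dots,[e_{n-1}])$ maximal in $W$; each resulting class $\alpha_i$ is then lifted to the representative in $\pi^{-1}(\alpha_i)$ with largest $\lambda$ (possible since $\lambda$ takes finitely many values). The total matrix has block form $\left(\begin{smallmatrix}\widetilde A & * \\ 0 & 1\end{smallmatrix}\right)$, so upper-triangularity is automatic, and maximality is checked directly via $\card(\mathbf{e}'\cap V_r)=\rang V_r$ using the relation between $\lambda$ in $V$ and in $W$ (Proposition~\ref{Pro:critere numerique de filtration induit quotient}). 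Your approach, once repaired, is more hands-on and iterative; the paper's buys structural clarity by building the triangular shape into the induction.
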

\begin{proof} We may suppose that the
filtration $\mathcal F$ is left continuous:
it suffices to replace it by $\mathcal F^l$.
We shall prove the proposition by induction
on the rank $n$ of $V$. If $n=1$, then
\[V_r=\begin{cases}
V,&r\le\lambda(e_1),\\
0,&r>\lambda(e_1).
\end{cases}\]
Hence $\card(V_r\cap\{e_1\})=\rang V_r$. In
other words, $\mathbf{e}$ is already a
maximal base.

Suppose that $n>1$. Let $W$ be the quotient
space $V/Ke_n$, equipped with the strong
direct image filtration. Then
$\widetilde{\mathbf{e}}=([e_1],\cdots,[e_{n-1}])^T$
is a base of $W$, where $[e_i]$ is the
canonical image of $e_i$ in $W$ ($1\le i\le
n-1$). By the hypothesis of induction, there
exists $\widetilde A\in
M_{(n-1)\times(n-1)}(K)$ with
$\diag(\widetilde A )=(1,\cdots,1)$ such that
$\overrightarrow{\alpha}=(\alpha_1,\cdots,\alpha_{n-1}):=\widetilde
A\widetilde{\mathbf{e}}$ is a maximal base.

Let $\pi:V\rightarrow W$ be the canonical
projection. For any $1\le i\le n-1$, choose
$e_i'\in \pi^{-1}(\alpha_i)$ such that
$\displaystyle\lambda(e_i')=\max_{x\in\pi^{-1}(\alpha_i)}\lambda(x)$.
This is always possible since the function
$\lambda$ takes only a finite number of
values. Let
$\mathbf{e}'=(e_1',\cdots,e_{n-1}',e_n)^T$.
Notice that $\mathbf{e}'$ can be written as
$A\mathbf{e}$, where
\[A=\left(\begin{array}{cc}\widetilde A&*\\
0&1\end{array}\right)\] is an upper
triangulated matrix with diagonal
$\diag(A)=(1,\cdots,1)$. Since
$\overrightarrow{\alpha}$ is a maximal base,
$\card(W_r\cap\overrightarrow{\alpha})=\rang
W_r$ for any $r\in\mathbb R$. In addition,
$e_i'\in V_r$ implies that
$\alpha_i=\pi(e_i')\in W_r$. Hence
\[\card(\mathbf{e}'\cap V_r)\ge
\begin{cases}
\rang W_r\ge\rang\pi(V_r)=\rang V_r,&e_n\not\in V_r,\\
\rang W_r+1\ge\rang\pi(V_r)+1=\rang
V_r,&e_n\in V_r.
\end{cases}\]
So we always have
$\card(V_r\cap\mathbf{e}')=\rang V_r$, and
hence $\mathbf{e}'$ is a maximal base.
\end{proof}

Proposition \ref{Pro:construction de base
maximale a partir une base quelconique} can
also be proved in the following way: the set
$X$ of complete flags of $V$ is equipped with
a transitive action of $\GL_n(K)$ and
identifies with the homogeneous space
$\GL_n(K)/B$, where $B$ is the subgroup of
upper triangulated matrices. The proposition
is then a consequence of Bruhat's
decomposition
for invertible
matrices.

\begin{remark}
Proposition \ref{Pro:construction de base
maximale a partir une base quelconique}
implies actually that there always exists a
maximal base of $V$.
\end{remark}

\begin{definition}\label{Def:mesure associee a une filtration}
If $\mathbf{e}$ is a maximal base of $V$, the
measure $\nu_{\mathcal F, \mathbf{e}}$ is
called the (probability) {\it measure}
associated to $\mathcal F$.

It is clear that the measure associated to
$\mathcal F$ doesn't depend on the choice of
the maximal base $\mathbf{e}$, we shall denote
it by $\nu_{\mathcal F,V}$ (or simply $\nu_V$
if there is no ambiguity on $\mathcal F$). If
$V$ is the zero space, then $\nu_V$ is by
convention the zero measure.
\end{definition}

Let $V$ be a finite dimensional vector space
over $K$. A left continuous $\mathbb
R$-filtration $\mathcal F$ of $V$ is
equivalent to the data of a flag
$V^{(0)}\subsetneq V^{(1)}\subsetneq\cdots
\subsetneq V^{(n)}$ together with a strictly
decreasing real number sequence $(a_i)_{1\le
i\le n}$, which describes the jumping points.
We have
\[ \mathcal F_rV=\begin{cases}V^{(0)}
&\text{if }r\in]a_1,+\infty[,\\
V^{(i)}&\text{if }r\in ]a_{i+1},a_{i}],\quad 1\le i<n,\\
V^{(n)}&\text{if }r\in]-\infty,a_n].
\end{cases}\]
The filtration $\mathcal F$ is separated
(resp. exhaustive) if and only if
$V^{(0)}=\{0\}$ (resp. $ V^{(n)}=V$). When
$\mathcal F$ is separated and exhaustive, the
measure associated to $\mathcal F$ equals to
\[\sum_{i=1}^n\frac{\rang V^{(i)}-\rang V^{(i-1)}}{\rang V}\delta_{a_i}.\]
Therefore, if $V$ is non-zero, then for any
$x\in\mathbb R$, we have the equality
\[1-\frac{\rang V_x}{\rang V}=
\nu_V\big(]-\infty,x[\big).\] The probability
distribution function of $\nu_{V}$ is
therefore
\[F(x)=1-\lim_{y\rightarrow x+}
\frac{\rang V_y}{\rang V}.\]

\begin{proposition}\label{Pro:relation de mesure associe pour suite exacte}
Let $\xymatrix{0\ar[r]&V'\ar[r]^\varphi&V
\ar[r]^\psi&V''\ar[r]&0}$ be a short exact
sequence of finite dimensional vector spaces
over $K$ equipped with left continuous
$\mathbb R$-filtrations. Suppose that the
following conditions are verified:
\begin{enumerate}[i)]
\item the space $V$ is non-zero
and the filtration $\mathcal F$ of $V$ is
separated and exhaustive;
\item the filtration of $V'$ is the inverse
image $\varphi^*\mathcal F$;
\item the filtration of $V''$ is the strong
direct image $\psi_*\mathcal F$.
\end{enumerate}
Then $\displaystyle\nu_{V}=\frac{\rang
V'}{\rang V} \nu_{V'}+\frac{\rang V''}{\rang
V}\nu_{V''}$.
\end{proposition}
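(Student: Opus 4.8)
The plan is to reduce the statement about the associated measures $\nu_{V'},\nu_V,\nu_{V''}$ to a statement about a single well-chosen maximal base of $V$, using Proposition~\ref{Pro:critere de base maximale} as the main tool. Concretely, I would first choose a maximal base $\mathbf{e}=(e_1,\dots,e_n)$ of $V$ adapted to the subspace $\varphi(V')$, in the sense that $\mathbf{e}\cap\varphi(V')$ is a base of $\varphi(V')$. Such a base exists: start from any base of $\varphi(V')$, complete it to a base of $V$, then apply Proposition~\ref{Pro:construction de base maximale a partir une base quelconique}; because the completing matrix is upper triangular with unit diagonal and the first $\rang V'$ vectors span $\varphi(V')$, the modified base still has its first $\rang V'$ vectors spanning $\varphi(V')$. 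After relabelling, write $\mathbf{e}=(\varphi(e_1'),\dots,\varphi(e_{m}'),e_{m+1},\dots,e_n)$ with $m=\rang V'$, where $(e_1',\dots,e_m')$ is a base of $V'$ and, by condition~ii), $\lambda_{\varphi^*\mathcal F}(e_i')=\lambda_{\mathcal F}(\varphi(e_i'))$ (this is exactly Proposition~\ref{Pro:critere numerique de filtration induit quotient}~1), the filtrations being left continuous).

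Next I would check that $(e_1',\dots,e_m')$ is a maximal base of $V'$ for $\varphi^*\mathcal F$. By Proposition~\ref{Pro:critere de base maximale} it suffices to see $\card\big(\{e_i'\}\cap (V')_r\big)=\rang (V')_r$ for every $r$; but $(V')_r=\varphi^{-1}(V_r)$, so $e_i'\in (V')_r$ iff $\varphi(e_i')\in V_r$, and since $\mathbf{e}$ is maximal in $V$ and its members lying in $\varphi(V')$ are exactly $\varphi(e_1'),\dots,\varphi(e_m')$, the count equals $\rang\big(V_r\cap\varphi(V')\big)=\rang\varphi\big((V')_r\big)=\rang (V')_r$. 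Hence $\nu_{V'}=\frac1m\sum_{i=1}^m\delta_{\lambda(\varphi(e_i'))}$, i.e.\ the measure attached to the ``first block'' of $\mathbf{e}$.

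For the quotient, set $\bar e_i=\psi(e_i)$ for $m+1\le i\le n$; these form a base of $V''$ since $\Ker\psi=\varphi(V')$ is spanned by the first block. I would show $(\bar e_{m+1},\dots,\bar e_n)$ is maximal for $\psi_*\mathcal F$, again via Proposition~\ref{Pro:critere de base maximale}: one needs $\card\big(\{\bar e_i\}\cap (V'')_r\big)=\rang (V'')_r$. Here $(V'')_r=(\psi_*\mathcal F)_rV''=\bigcap_{s<r}\psi(V_s)$, and by Proposition~\ref{Pro:critere numerique de filtration induit quotient}~2, $\lambda_{\psi_*\mathcal F}(\bar e_i)=\sup_{x\in\psi^{-1}(\bar e_i)}\lambda_{\mathcal F}(x)$. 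The key point is that this supremum is attained at $e_i$ itself, i.e.\ $\lambda_{\mathcal F}(e_i)=\max_{x\in e_i+\varphi(V')}\lambda_{\mathcal F}(x)$: if some $e_i+\varphi(v')$ had strictly larger index, one could replace $e_i$ by it and, using part~2) and part~3) of Proposition~\ref{Pro:proprietes de la fonction lambda} together with the adaptedness of $\mathbf{e}$, produce a base of $V$ strictly to the right of $\mathbf{e}$, contradicting maximality. Granting this, $\rang (V'')_r=\rang\psi(V_r)$ (using left continuity to pass the intersection through, as in the proof of Proposition~\ref{Pro:exhaustivite de filtration induite quotiente}), and the count of those $\bar e_i$ with $\lambda\ge r$ is $\#\{i>m:e_i\in V_r\}=\rang V_r-\rang\big(V_r\cap\varphi(V')\big)=\rang\psi(V_r)$. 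Thus $\nu_{V''}=\frac{1}{n-m}\sum_{i=m+1}^n\delta_{\lambda(e_i)}$, the measure attached to the ``second block'' of $\mathbf{e}$.

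Finally, since $\mathbf{e}$ is maximal for $\mathcal F$, $\nu_V=\frac1n\sum_{i=1}^n\delta_{\lambda(e_i)}=\frac{m}{n}\cdot\frac1m\sum_{i=1}^m\delta_{\lambda(e_i)}+\frac{n-m}{n}\cdot\frac{1}{n-m}\sum_{i=m+1}^n\delta_{\lambda(e_i)}=\frac{\rang V'}{\rang V}\nu_{V'}+\frac{\rang V''}{\rang V}\nu_{V''}$, which is the claim. I expect the main obstacle to be the ``maximality is attained fibrewise'' step for the quotient: making precise why an index increase on a single $e_i$ within its $\varphi(V')$-coset yields a genuinely larger base measure for $V$. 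The clean way is to invoke Proposition~\ref{Pro:critere de base maximale} directly — if $\card\big(\mathbf{e}\cap V_r\big)<\rang V_r$ for some $r$ then $\mathbf{e}$ is not maximal — rather than manipulating increasing test functions by hand, so I would phrase the whole argument in terms of matching the dimension counts $\card(\mathbf{e}\cap V_r)=\rang V_r$ block by block.
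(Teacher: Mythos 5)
Your proof runs the same counting argument as the paper but in the opposite direction, and the strategy is a genuine alternative. The paper \emph{assembles} a maximal base of $V$ from maximal bases $\mathbf{e}'$ of $V'$ and $\mathbf{e}''$ of $V''$: it takes $\varphi(\mathbf{e}')$ as the first block, lifts each $e_j''$ to some $e_{n+j}\in\psi^{-1}(e_j'')$ with $\lambda(e_{n+j})=\lambda(e_j'')$ (the supremum in Proposition~\ref{Pro:critere numerique de filtration induit quotient}~2) is attained because $\lambda$ takes only finitely many values), and then verifies maximality of the resulting base via exactly the count $\card(\mathbf{e}\cap V_r)=\rang V_r'+\rang V_r''=\rang V_r$ that you use. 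You instead \emph{decompose} a single maximal base of $V$ adapted to $\varphi(V')$. Both routes run on Propositions~\ref{Pro:critere de base maximale} and~\ref{Pro:critere numerique de filtration induit quotient}; yours swaps the paper's ``pick good lifts of $\mathbf{e}''$'' step for an ``an adapted maximal base of $V$ exists'' step, and that is precisely where your write-up goes wrong.

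Proposition~\ref{Pro:construction de base maximale a partir une base quelconique} produces a \emph{unit upper triangular} matrix $A$, so $(A\mathbf{e})_i=e_i+\sum_{j>i}a_{ij}e_j$. Such an $A$ preserves the subspace generated by the \emph{last} $k$ basis vectors, not the first $k$. With $e_1,\dots,e_m$ chosen in $\varphi(V')$, the vector $(A\mathbf{e})_1=e_1+a_{12}e_2+\cdots+a_{1n}e_n$ generally involves $e_{m+1},\dots,e_n$ and leaves $\varphi(V')$, so the claim ``because the completing matrix is upper triangular with unit diagonal \dots the modified base still has its first $\rang V'$ vectors spanning $\varphi(V')$'' is false, and as written you have not produced an adapted maximal base. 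The repair is easy: list a complementary family first and a base of $\varphi(V')$ \emph{last}. Then for $i>n-m$ every index $j\ge i$ also exceeds $n-m$, so $(A\mathbf{e})_i\in\varphi(V')$, and the last $m$ entries of $A\mathbf{e}$, being independent vectors of $\varphi(V')$, form a base of it. With that reordering (and the evident re-indexing downstream), the rest of your argument is correct: the $\varphi(V')$-block pulls back to a maximal base of $V'$ because $\mathbf{e}\cap V_r\cap\varphi(V')$ is a base of $V_r\cap\varphi(V')$ (for subsets $A,B$ of a basis, the intersection of the subspaces they generate is generated by $A\cap B$); the supremum $\lambda_{\psi_*\mathcal F}(\bar e_i)$ is attained at $e_i$ by the exchange argument, which uses adaptedness to guarantee that $e_i+\varphi(v')$ may replace $e_i$ in the base; and the quotient count $\card\{i:\ e_i\in V_r,\ e_i\notin\varphi(V')\}=\rang V_r-\rang(V_r\cap\varphi(V'))=\rang\psi(V_r)=\rang V_r''$ closes the block-wise matching.
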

\begin{proof} If $W$ is a finite
dimensional vector space over $K$ equipped
with an $\mathbb R$-filtration, the
filtration of $W$ is separated and exhaustive
if and only if the function
$\lambda:W\setminus\{0\}\rightarrow\mathbb
R\cup\{\pm\infty\}$ takes values in a bounded
interval in $\mathbb R$ (see Proposition
\ref{Pro:proprietes de la fonction lambda} 4)
and Proposition \ref{Pro:crietere numerique
de separe et exhaustive}). Therefore, after
Proposition \ref{Pro:exhaustivite de
filtration induite quotiente}, if $\mathcal
F$ is separated and exhaustive, then also are
$\varphi^*\mathcal F$ and $\psi_*\mathcal F$.
So the measures $\nu_{V'}$ and $\nu_{V''}$
are well defined.

Let $\mathbf{e}'=(e_i')_{1\le i\le n}$ (resp.
$\mathbf{e}''=(e_j'')_{1\le j\le m}$) be a
maximal base of $V'$ (resp. $V''$). Let
\[\mathbf{e}=(\varphi(e_1'),\cdots,
\varphi(e_n'),e_{n+1},\cdots,e_{n+m})\] be a
base of $V$ such that, for any integer $1\le
j\le m$, $\psi(e_{n+j})=e_j''$ and
$\lambda(e_{n+j})=\lambda(e_j'')$ (this is
always possible after Proposition
\ref{Pro:proprietes de la fonction lambda} 4)
and Proposition \ref{Pro:critere numerique de
filtration induit quotient} 2)). By
definition we know that
\[\nu_{\mathbf{e}}=\frac{\rang V'}{\rang V}\nu_{\mathbf{e}'}+\frac{\rang V''}{\rang V}
\nu_{\mathbf{e}''}.\] It suffices then to
verify that $\mathbf{e}$ is a maximal base.

Let $r$ be a real number. First we have
\begin{equation}
\label{Equ:suite exacte courte de lambda 1}
\card(\{\varphi(e_1'),\cdots\varphi(e_n')\}
\cap V_r)=\card(\mathbf{e}'\cap V_r')=\rang
V_r'.\end{equation} On the other hand, since
$\lambda(e_j'')=\lambda(e_{n+j})$,
$e_{j}''\in V_r''$ if and only if $e_{n+j}\in V_r$. Therefore
\begin{equation}\label{Equ:suite exacte courte de lambda 2}
\card(\{e_{n+1},\cdots,e_{n+m}\}\cap V
_r)=\card(\mathbf{e}''\cap
V_r'')=\rang V_r''.\end{equation}
The sum of the inequalities (\ref{Equ:suite
exacte courte de lambda 1}) and
(\ref{Equ:suite exacte courte de lambda 2})
gives $\card(\mathbf{e}\cap
V_r)=\rang(V_r')+ \rang(V_{r}'')=\rang
(V_r)$, so $\mathbf{e}$ is a maximal base.
\end{proof}

\section{Almost super-additive sequence}
\label{Sec:Almost super-additive sequence}
\hskip\parindent In this section we discuss
a generalization of Fekete's lemma (see
\cite{Fekete23} page 233 for a particular
case) asserting that, for any sub-additive
sequence $(a_n)_{n\ge 1}$ of real numbers
(that's to say, $a_{n+m}\le a_n+a_m$ for any
$(m,n)\in\mathbb Z_{>0}^2$), the limit
$\displaystyle\lim_{n\rightarrow+\infty}a_n/n$
exists in $\mathbb R\cup\{-\infty\}$. We
shall show that the convergence of the
sequence $(a_n/n)_{n\ge 1}$ is still valid if
the sequence $(a_n)_{n\ge 1}$ is sub-additive
up to a small error term. These technical
results are crucial to prove the convergence
theorems stated in the section of
introduction.

\begin{proposition}\label{Pro:convergence un peu faible que le lemme precedent}
Let $(a_n)_{n\ge 1}$ be a sequence in
$\mathbb R_{\ge 0}$ and $f:\mathbb
Z_{>0}\rightarrow\mathbb R$ be a function
such that
$\displaystyle\lim_{n\rightarrow\infty}{f(n)}/{n}=0$.
If there exists an integer $n_0>0$ such that,
for any integer $l\ge 2$ and any $(n_i)_{1\le
i\le l}\in\mathbb Z_{\ge n_0}^l$, we have
$a_{n_1+\cdots+n_l}\le
a_{n_1}+\cdots+a_{n_l}+f(n_1)+\cdots+f(n_l)$,
then the sequence $(a_n/n)_{n\ge 1}$ has a
limit in $\mathbb R_{\ge 0}$.
\end{proposition}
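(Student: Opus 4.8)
The plan is to reduce to the classical Fekete lemma by absorbing the error term into a modified sequence. First I would fix some integer $N \ge n_0$ and, for the argument, restrict attention to indices that are multiples of $N$ plus a bounded remainder; the cleanest route is to introduce, for each residue $r$ with $0 \le r < N$ (and $r$ large enough, say $r \ge n_0$ or $r=0$ handled separately), the subsequence $b_k^{(r)} := a_{kN + r}$ and show it is "almost super-additive up to $f$" in the variable $k$. Actually the more efficient approach: set $L := \liminf_{n\to\infty} a_n/n \in \mathbb R_{\ge 0}$ (finite since $a_n \ge 0$), and aim to prove $\limsup_{n\to\infty} a_n/n \le L$. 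Pick $\varepsilon > 0$. By definition of $\liminf$ there is an $m \ge n_0$ with $a_m / m \le L + \varepsilon$. For an arbitrary large $n \ge n_0$, write $n = qm + s$ by Euclidean division with $0 \le s < m$; if $s < n_0$ we instead write $n = (q-1)m + (m+s)$ so that the remainder $s' := m+s$ satisfies $n_0 \le s' < 2m$ (adjusting $q$ down by one, valid once $n$ is large). Then the hypothesis with $l = q'+1$ pieces, namely $q'$ copies of $m$ and one copy of $s'$, gives
\[
a_n \le q' a_m + a_{s'} + q' f(m) + f(s').
\]
Dividing by $n$ and using $q'm \le n$, hence $q'/n \le 1/m$ and $q' \le n/m$, yields
\[
\frac{a_n}{n} \le \frac{a_m + f(m)}{m} + \frac{a_{s'} + f(s')}{n}.
\]

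The first term on the right is $\le L + \varepsilon + f(m)/m$. The second term tends to $0$ as $n \to \infty$, because $s'$ ranges over the finite set $\{n_0, \dots, 2m-1\}$, so $a_{s'} + f(s')$ is bounded by a constant depending only on $m$, while the denominator $n$ goes to infinity. Therefore $\limsup_{n\to\infty} a_n/n \le L + \varepsilon + f(m)/m$. Now I would let the chosen $m$ itself run to infinity along a sequence realizing the $\liminf$: more precisely, for each $\varepsilon > 0$ and each threshold, one can choose $m$ arbitrarily large with $a_m/m \le L + \varepsilon$, so $f(m)/m$ can be made as small as we like by the hypothesis $\lim_{n\to\infty} f(n)/n = 0$. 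Hence $\limsup_{n\to\infty} a_n/n \le L + \varepsilon$ for every $\varepsilon > 0$, giving $\limsup \le L = \liminf$, so the limit exists; it lies in $\mathbb R_{\ge 0}$ since every $a_n/n \ge 0$.

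The main obstacle, and the only point requiring care, is the bookkeeping in the Euclidean division when the remainder $s$ falls below the threshold $n_0$: the hypothesis only permits decomposing $n$ into parts each $\ge n_0$, so one must trade one block of size $m$ for an enlarged remainder block of size between $n_0$ and $2m$, which is why $n$ must be taken large enough that $q \ge 1$ (indeed $q \ge 2$) after this adjustment. A secondary subtlety is ensuring that the same $m$ can be reused with $f(m)/m$ small; this is legitimate precisely because the set of $m$ with $a_m/m \le L + \varepsilon$ is infinite (it accumulates at the $\liminf$) while $f(m)/m \to 0$, so their intersection contains arbitrarily large values. Everything else is the routine division-by-$n$ estimate already familiar from the proof of Fekete's lemma.
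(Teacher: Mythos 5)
Your argument takes essentially the same route as the paper's: decompose a large index into many copies of a fixed base $m$ plus an adjusted remainder block (sized to stay $\ge n_0$), use the hypothesis, divide by $n$, send $n\to\infty$, and then let the base index run to infinity along a subsequence realizing the $\liminf$. The paper writes numbers as $pn+l$ with $n\le l<2n$ and takes $p\to\infty$; you write $n=q'm+s'$ with $n_0\le s'<2m$ and take $n\to\infty$; these are the same decomposition. The remainder bookkeeping you flag is handled correctly.

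Two small loose ends are worth tightening. First, the estimate $\frac{q'(a_m+f(m))}{n}\le\frac{a_m+f(m)}{m}$ uses $q'\le n/m$, and this direction of the inequality requires $a_m+f(m)\ge 0$; since $f$ is only assumed real-valued, this is not automatic from the statement. The paper sidesteps this by inserting absolute values on $f$ throughout. You can do the same (replacing $f$ by $|f|$ only weakens the hypothesis and $\lim |f(n)|/n=0$ still holds), or observe directly that applying the hypothesis to $n_1=\cdots=n_l=m$ gives $0\le a_{lm}\le l(a_m+f(m))$, so $a_m+f(m)\ge 0$ automatically. Second, your parenthetical ``$L$ finite since $a_n\ge 0$'' is not a valid reason: non-negativity of $a_n$ only gives $L\ge 0$, not $L<+\infty$. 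Finiteness of $L$ (and of the limit) does follow from the argument, but via the derived inequality $\limsup_n a_n/n\le (a_m+f(m))/m$ for any fixed admissible $m$, which is exactly how the paper closes the proof. Rephrase so that this bound is what establishes finiteness, rather than asserting it up front.
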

\begin{proof} If $n$, $p$ and $n\le l<2n$
are three integers $\ge n_0$, we have
\[\begin{split}\frac{a_{pn+l}}{pn+l}&\le
\frac{pa_n+a_l}{pn+l}+\frac{pf(n)+f(l)}{pn+l}\le\frac{a_n}{n}+\frac{a_l}{pn}+\frac{pf(n)+f(l)}{pn+l}\\
&\le\frac{a_n}{n}+\frac{a_l}{pn}+
\frac{|f(n)|}{n}+\frac{|f(l)|}{pn}.\end{split}\]
Since
$\displaystyle\lim_{p\rightarrow\infty}\frac{\underset{n\le
i< 2n}{\max}a_i}{pn}+\frac{\underset{n\le i<
2n}{\max}|f(i)|}{pn}=0$, we obtain, for any
integer $n>0$, that
\begin{equation}\label{Equ:limsup am sur mmajoree}
\limsup_{m\rightarrow\infty}\frac{a_m}{m}\le
\frac{a_n}{n}+\frac{|f(n)|}{n},\end{equation}
hence
\[\displaystyle\limsup_{m\rightarrow\infty}\frac{a_m}{m}
\le\liminf_{n\rightarrow\infty}\left(\frac{a_n}{n}+\frac{|f(n)|}{n}\right)\le\liminf_{n\rightarrow\infty}\frac{a_n}{n}+\limsup_{n\rightarrow\infty}\frac{|f(n)|}{n}=\liminf_{n\rightarrow\infty}\frac{a_n}{n}.\]
Therefore, the sequence $(a_n/n)_{n\ge 1}$
has a limit, which is clearly $\ge 0$, and is
finite after \eqref{Equ:limsup am sur
mmajoree}.
\end{proof}

\begin{corollary}\label{Cor:limite de an sur n forme forte}
Let $(a_n)_{n\ge 1}$ be a sequence of real
numbers and $f:\mathbb
Z_{>0}\rightarrow\mathbb R$ be a function
such that
$\displaystyle\lim_{n\rightarrow\infty}{f(n)}/{n}=0$.
If the following two conditions are verified:
\begin{enumerate}[1)]
\item there exists an integer $n_0>0$ such
that, for any integer $l\ge 2$ and any
$(n_i)_{1\le i\le l}\in\mathbb Z_{\ge
n_0}^l$, we have $a_{n_1+\cdots+n_l}\ge
a_{n_1}+\cdots+a_{n_l}-f(n_1)-\cdots-f(n_l)$,
\item there exists a constant
$\alpha>0$ such that $a_n\le \alpha n$ for
any integer $n\ge 1$,
\end{enumerate}
then the sequence $(a_n/n)_{n\ge 1}$ has a
limit in $\mathbb R$.
\end{corollary}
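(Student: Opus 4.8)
The plan is to reduce the statement to Proposition \ref{Pro:convergence un peu faique le lemme precedent} by passing to a complementary sequence. Concretely, I would set
\[ b_n := \alpha n - a_n \qquad (n \ge 1). \]
By hypothesis 2) we have $b_n \ge 0$ for every $n$, so $(b_n)_{n \ge 1}$ is a sequence in $\mathbb{R}_{\ge 0}$, as demanded by that proposition, and the same auxiliary function $f$ (with $f(n)/n \to 0$) is available.

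Next I would check that $(b_n)_{n\ge 1}$ satisfies the almost-subadditivity hypothesis of Proposition \ref{Pro:convergence un peu faique le lemme precedent}, with the very same integer $n_0$ coming from condition 1). Given $l \ge 2$ and $(n_i)_{1 \le i \le l} \in \mathbb{Z}_{\ge n_0}^l$, write $N = n_1 + \cdots + n_l$; then, applying condition 1),
\[ b_N = \alpha N - a_N \le \alpha N - \sum_{i=1}^l a_{n_i} + \sum_{i=1}^l f(n_i) = \sum_{i=1}^l (\alpha n_i - a_{n_i}) + \sum_{i=1}^l f(n_i) = \sum_{i=1}^l b_{n_i} + \sum_{i=1}^l f(n_i), \]
which is exactly the required inequality $b_{n_1+\cdots+n_l} \le b_{n_1} + \cdots + b_{n_l} + f(n_1) + \cdots + f(n_l)$.

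Applying Proposition \ref{Pro:convergence un peu faique le lemme precedent} to $(b_n)_{n\ge 1}$ then produces a limit $L := \lim_{n\to\infty} b_n/n$ lying in $\mathbb{R}_{\ge 0}$; in particular $L$ is a finite real number. Since $a_n/n = \alpha - b_n/n$ for all $n$, the sequence $(a_n/n)_{n \ge 1}$ converges to $\alpha - L \in \mathbb{R}$, which is what we wanted. There is no real obstacle here: the entire content is the observation that the substitution $a_n \rightsquigarrow \alpha n - a_n$ converts the almost-superadditive bound of 1) into the almost-subadditive bound of the preceding proposition, while hypothesis 2) supplies precisely the nonnegativity that proposition requires; the only thing to watch is that the error function $f$ and the threshold $n_0$ carry over verbatim, which they manifestly do.
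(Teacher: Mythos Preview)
Your proof is correct and follows essentially the same approach as the paper: both define $b_n=\alpha n-a_n$, verify the almost-subadditivity of $(b_n)$ from condition~1), and invoke Proposition~\ref{Pro:convergence un peu faible que le lemme precedent} to conclude. (Note the minor typo in your label reference: it should read \texttt{faible}, not \texttt{faique}.)
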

\begin{proof} Consider the sequence $(b_n=\alpha
n-a_n)_{n\ge 1}$ of positive real numbers. If
$n_1,\cdots,n_l$ are integers $\ge n_0$ and
$n=n_1+\cdots+n_l$, then
\[\begin{split}&\quad\; b_n=\alpha n-a_n=
\alpha\sum_{i=1}^ln_i-a_n \le
\alpha\sum_{i=1}^ln_i-\sum_{i=1}^l
\Big(a_{n_i}-f(n_i)\Big)\\
&=\sum_{i=1}^l \Big(\alpha
n_i-a_{n_i}+f(n_i)\Big)
=b_{n_1}+\cdots+b_{n_l}+f(n_1)+\cdots+f(n_l).
\end{split}\]
After Proposition \ref{Pro:convergence un peu
faible que le lemme precedent}, the sequence
$(b_n/n)_{n\ge 1}$ has a limit in $\mathbb
R$. As ${b_n}/{n}=\alpha-{a_n}/{n}$, the
sequence $(a_n/n)_{n\ge 1}$ also has a limit
in $\mathbb R$.
\end{proof}

\begin{corollary}\label{Cor:suite croissance lineaire la version renforcee}
Let $(a_n)_{n\ge 1}$ be a sequence of real
numbers and $c_1$, $c_2$ be two positive
constants such that
\begin{enumerate}[1)]
\item $a_{m+n}\ge a_m+a_n-c_1$ for any pair $(m,n)$ of sufficiently large
integers,
\item $a_n\le c_2n$ for any integer $n\ge 1$,
\end{enumerate}
then the sequence $(a_n/n)_{n\ge 1}$ has a
limit in $\mathbb R$.
\end{corollary}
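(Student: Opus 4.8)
The plan is to recognize this statement as the special case of Corollary~\ref{Cor:limite de an sur n forme forte} obtained by taking the error function $f$ to be \emph{constant}. Set $f(n):=c_1$ for every $n\in\mathbb Z_{>0}$; then trivially $\lim_{n\to\infty}f(n)/n=0$, and hypothesis~2) of Corollary~\ref{Cor:limite de an sur n forme forte} holds verbatim with $\alpha:=c_2$. The only thing that needs an argument is hypothesis~1): Corollary~\ref{Cor:limite de an sur n forme forte} requires the $l$-fold almost-super-additivity $a_{n_1+\cdots+n_l}\ge\sum_i a_{n_i}-\sum_i f(n_i)$ for all $l\ge 2$, whereas the present statement only supplies the pairwise inequality $a_{m+n}\ge a_m+a_n-c_1$. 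So the heart of the proof is to iterate the pairwise bound.

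First I would fix, using hypothesis~1) of the present statement, an integer $n_0>0$ such that $a_{m+n}\ge a_m+a_n-c_1$ whenever $m,n\ge n_0$. Then, for $l\ge 2$ and $(n_i)_{1\le i\le l}\in\mathbb Z_{\ge n_0}^l$, I would prove by induction on $l$ that
\[
a_{n_1+\cdots+n_l}\ \ge\ a_{n_1}+\cdots+a_{n_l}-(l-1)c_1 .
\]
The case $l=2$ is exactly the pairwise hypothesis. For $l>2$ one writes $n_1+\cdots+n_l=n_1+(n_2+\cdots+n_l)$ and observes that $n_2+\cdots+n_l\ge n_0$ (it is a sum of $l-1\ge 1$ integers each $\ge n_0$), so the pairwise bound applies to the pair $(n_1,\,n_2+\cdots+n_l)$; combining it with the inductive hypothesis for the $(l-1)$-tuple $(n_2,\dots,n_l)$ yields the displayed inequality. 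Since $(l-1)c_1\le lc_1=f(n_1)+\cdots+f(n_l)$, this shows hypothesis~1) of Corollary~\ref{Cor:limite de an sur n forme forte} is satisfied.

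With both hypotheses verified, Corollary~\ref{Cor:limite de an sur n forme forte} gives that $(a_n/n)_{n\ge 1}$ converges in $\mathbb R$, which is the assertion. Equivalently, one could bypass Corollary~\ref{Cor:limite de an sur n forme forte} and apply Proposition~\ref{Pro:convergence un peu faible que le lemme precedent} directly to the nonnegative sequence $b_n:=c_2n-a_n$, using the same iteration of the pairwise bound. There is no genuine obstacle here; the only point worth care is that the iteration introduces a cumulative error $(l-1)c_1$ growing with the number of summands, but because $f$ is taken constant this is comfortably absorbed into the per-summand error budget $\sum_i f(n_i)=lc_1$, so no extra condition on the growth of $f$ is needed.
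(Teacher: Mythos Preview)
Your proof is correct and follows essentially the same approach as the paper: define the constant function $f\equiv c_1$, iterate the pairwise inequality by induction to obtain the $l$-fold almost-super-additivity, and then invoke Corollary~\ref{Cor:limite de an sur n forme forte}. Your treatment is in fact more explicit than the paper's, which simply asserts the inductive step without spelling out that $n_2+\cdots+n_l\ge n_0$ or that $(l-1)c_1\le\sum_i f(n_i)$.
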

\begin{proof}
Let $f$ be the constant function taking value
$c_1$. By induction we obtain the following inequality for any
finite sequence  $(n_i)_{1\le i\le l}$ of
sufficiently large integers:
\[a_{n_1+\cdots+n_l}\ge a_{n_1}+\cdots+a_{n_l}-f(n_1)-\cdots-f(n_l),\]
After Corollary \ref{Cor:limite de an sur n
forme forte}, the sequence $(a_n/n)_{n\ge 1
}$ converges in $\mathbb R$.
\end{proof}

\section{Quasi-filtered graded algebras}
\label{Sec:Quasi-filtered graded algebras}
\hskip\parindent In this section we introduce
the notion of {\it quasi-filtered graded
algebras}. Such algebras are fundamental
objects in this article. We are particularly
interested in the convergence of measures
associated to a quasi-filtered graded algebra
(Sections \ref{Sec:Convergence for symmetric
algebras} to \ref{Sec:Convergence of polygons
of a quasi-filtered graded algebra}). Later
we shall show that the graded algebras that
we have mentioned in the section of
introduction, equipped with Harder-Narasimhan
filtrations, are quasi-filtered graded
algebras. The results presented in this
section is therefore a formalism which is
useful to study the Harder-Narasimhan
filtrations of graded algebras.

Let $K$ be a filed. Recall that a $\mathbb
Z_{\ge 0}$-{\it graded $K$-algebra} is a
direct sum $B=\bigoplus_{n\ge 0}B_n$ of
vector spaces over $K$ indexed by $\mathbb
Z_{\ge 0}$ equipped with a commutative
unitary $K$-algebra structure such that
$B_nB_m\subset B_{n+m}$ for any
$(m,n)\in\mathbb Z_{\ge 0}^2$. We call {\it
homogeneous element of degree $n$} any
element in $B_n$. Clearly the unit element of
$B$ is homogeneous of degree $0$. In the
following, we shall use the expression
``graded $K$-algebra'' to denote a $\mathbb
Z_{\ge 0}$-graded $K$-algebra. If $B$ is a
graded $K$-algebra, we call {\it graded}
$B$-{\it module} any $B$-module $M$ equipped
with a decomposition
$M=\bigoplus_{n\in\mathbb Z}M_n$ into direct
sum of vector subspaces over $K$ such that
$B_nM_m\subset M_{n+m}$ for any
$(n,m)\in\mathbb Z_{\ge 0 }\times\mathbb Z$.
The elements in $M_m$ are called {\it
homogeneous element of degree $m$} of $M$. If
$x$ is a non-zero homogeneous element of $M$,
we use $\mathrm{d}^{\circ}_M(x)$ or
$\mathrm{d}^\circ(x)$ to denote the
homogeneous degree of $x$. For reference on
graded algebras and graded modules, one can
consult \cite{Bourbaki85}.

\begin{definition}
Let $B=\bigoplus_{n\ge 0}B_n$ be a graded
$K$-algebra and $f:\mathbb Z_{\ge 0
}\rightarrow\mathbb R_{\ge 0}$ be a function.
We say that the $K$-algebra $B$ is {\it
$f$-quasi-filtered} if each vector space
$B_n$ is equipped with an $\mathbb
R$-filtration $(B_{n,s})_{s\in\mathbb R}$
satisfying the following
condition:\begin{quote}{\it there exists an
integer $n_0\ge 0$ such that, for any integer
$r>0$, any $(n_i)_{1\le i\le r}\in\mathbb
Z_{\ge n_0 }^r$ and any $(s_i)_{1\le i\le
r}\in\mathbb R^r$, we have
\[\prod_{i=1}^rB_{n_i,s_i}\subset
B_{N,S}\qquad \text{where}\qquad
N=\sum_{i=1}^rn_i,\quad
S=\sum_{i=1}^r\big(s_i-f(n_i)\big).\]}\end{quote}
If $B$ is an $f$-quasi-filtered graded
$K$-algebra. We say that a graded $B$-module
$M=\bigoplus_{n\in\mathbb Z}M_n$ is {\it
$f$-quasi-filtered} if for any integer $n$,
$M_n$ is equipped with an $\mathbb
R$-filtration $(M_{n,s})_{s\in\mathbb R}$ satisfying the following
condition: \begin{quote}{\it there exists an
integer $n_0\ge 0$ such that, for any integer
$r>0$, any $(n_i)_{1\le i\le r+1}\in \mathbb
Z_{\ge n_0 }^{r+1}$ and any $(s_i)_{1\le i\le
r+1}\in\mathbb R^{r+1}$, we have
\[\Big(\prod_{i=1}^rB_{n_i,s_i}\Big)
M_{n_{r+1},s_{r+1}}\subset
M_{N,S}\qquad\text{where}\qquad
N=\sum_{i=1}^{r+1}n_i,\quad
S=\sum_{i=1}^{r+1}\big(s_i-f(n_i)).
\]
}\end{quote} In particular, if $f\equiv 0$,
we say that $B$ is a {\it filtered graded}
$K$-algebra, and $M$ is a {\it filtered
graded} $B$-module.
\end{definition}

We now give some numerical criteria for a
graded algebra (or graded module) equipped
with $\mathbb R$-filtrations to be
quasi-filtered.

\begin{proposition}\label{Pro:critere numerique de structure d'algebre filre}
Let $B$ be a graded $K$-algebra and
$f:\mathbb Z_{\ge 0}\rightarrow\mathbb R_{\ge
0}$ be a function. Suppose that for each
$n\in\mathbb Z_{\ge 0}$, $B_n$ is equipped
with an exhaustive and left continuous
$\mathbb R$-filtration. Then the following
conditions are equivalent:
\begin{enumerate}[1)]
\item the graded algebra $B$ is
$f$-quasi-filtered,
\item there exists an integer
$n_0>0$ such that, for any integer $r\ge 2$
and any non-zero homogeneous elements
$a_1,\cdots,a_r$ of degree $\ge n_0$ of $B$,
if we write
$a=\displaystyle\prod_{i=1}^ra_i$, then
\begin{equation}\lambda(a)\ge
\sum_{i=1}^r\Big(\lambda(a_i)-f({\mathrm
d}^{\circ}(a_i))\Big).\label{Equ:critere
numerique de quasifiltree}
\end{equation}
\end{enumerate}
\end{proposition}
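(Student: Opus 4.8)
The plan is to establish the equivalence by unwinding the definition of $f$-quasi-filtered in terms of the subspaces $B_{n,s}$ and translating it into the language of the index function $\lambda$, using the hypothesis that each filtration on $B_n$ is exhaustive and left continuous. The direction $1)\Rightarrow 2)$ is the easier one: given non-zero homogeneous $a_1,\dots,a_r$ of degrees $n_i = \mathrm d^\circ(a_i)\ge n_0$, I would pick for each $i$ a real number $s_i$ slightly below $\lambda(a_i)$ (or equal to it, if $\lambda(a_i)$ is attained — but working with $s_i < \lambda(a_i)$ and passing to a limit avoids this case distinction). Then $a_i \in B_{n_i,s_i}$ by property 4) of the index function, so the defining inclusion $\prod_i B_{n_i,s_i}\subset B_{N,S}$ with $S = \sum_i(s_i - f(n_i))$ gives $a = \prod_i a_i \in B_{N,S}$, whence $\lambda(a)\ge S$. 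Letting each $s_i \uparrow \lambda(a_i)$ yields \eqref{Equ:critere numerique de quasifiltree}; if some $\lambda(a_i) = +\infty$ the inequality is trivially satisfied, and one should note that $\lambda(a_i)>-\infty$ since the filtrations are exhaustive (Proposition \ref{Pro:crietere numerique de separe et exhaustive}).

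For the converse $2)\Rightarrow 1)$, the goal is to recover the inclusion $\prod_{i=1}^r B_{n_i,s_i}\subset B_{N,S}$ for arbitrary $(n_i)\in\mathbb Z_{\ge n_0}^r$ and $(s_i)\in\mathbb R^r$. An element of $\prod_i B_{n_i,s_i}$ is a finite sum of products $\prod_i a_i$ with $a_i\in B_{n_i,s_i}$; since $B_{N,S}$ is a subspace, it suffices to treat a single such product, and we may assume each $a_i\ne 0$. Each $a_i$ is homogeneous of degree $n_i\ge n_0$, so \eqref{Equ:critere numerique de quasifiltree} applies and gives $\lambda(a)\ge \sum_i(\lambda(a_i) - f(n_i))$. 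Now $a_i\in B_{n_i,s_i}$ forces $\lambda(a_i)\ge s_i$, hence $\lambda(a)\ge \sum_i(s_i - f(n_i)) = S$. Finally, because the filtration on $B_N$ is left continuous, $\lambda(a)\ge S$ implies $a\in B_{N,S}$ (this is precisely the converse statement recorded in the excerpt just before Proposition \ref{Pro:proprietes de la fonction lambda}). This establishes the inclusion, with the same $n_0$ as in condition 2); if $r=1$ there is nothing to prove, so one only needs $r\ge 2$, matching the quantifier in 2).

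The main subtlety, and the only place requiring care, is the boundary behaviour of the index: $\lambda(a_i)$ need not be attained at $s_i$, and $\lambda$ may take the value $+\infty$. For $1)\Rightarrow 2)$ this is handled by the limiting argument with $s_i \uparrow \lambda(a_i)$ described above (valid since $B_{N,S}$ depends on $S$ through a \emph{decreasing} family of subspaces, so $a\in B_{N,S}$ for all $S$ below $\sum_i(\lambda(a_i)-f(n_i))$ gives $\lambda(a)$ at least that bound); for $2)\Rightarrow 1)$ the left-continuity hypothesis is exactly what converts the inequality $\lambda(a)\ge S$ back into membership $a\in B_{N,S}$, so no limiting is needed in that direction. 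I would also remark that the statement and proof extend verbatim to the case of a graded $B$-module $M$, replacing the last factor $a_r$ by a homogeneous element of $M$ and invoking the module version of the defining condition; since the paper states the module analogue in the definition, it is worth a one-line comment that the same argument applies, though I would keep the formal proof focused on the algebra case as stated.
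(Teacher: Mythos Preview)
Your proof is correct and follows essentially the same approach as the paper's: both directions are proved by translating between the defining inclusion $\prod_i B_{n_i,s_i}\subset B_{N,S}$ and the index inequality via $\lambda$, using left continuity for the passage $\lambda(a)\ge S\Rightarrow a\in B_{N,S}$. The only minor difference is that in $1)\Rightarrow 2)$ the paper invokes left continuity once more to write $a_i\in B_{n_i,\lambda(a_i)}$ directly (so no limiting in $s_i$ is needed), whereas your $s_i\uparrow\lambda(a_i)$ argument achieves the same conclusion while using only exhaustiveness in that direction.
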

\begin{proof}
The filtrations being exhaustive, the sum on
the right side of (\ref{Equ:critere numerique
de quasifiltree}) is well defined and takes
value in $\mathbb R\cup\{+\infty\}$.

``1)$\Longrightarrow$2)'': Since the
filtrations are left continuous, we have
$a_i\in\mathcal
F_{\lambda(a_i)}B_{\mathrm{d}^\circ(a_i)}$.
Let
\[d=\sum_{i=1}^r\mathrm{d}^\circ(a_i)\qquad\text{and}\qquad
\eta=\sum_{i=1}^r\Big(\lambda(a_i)-f(\mathrm{d}^\circ(a_i))\Big).\]
Since $B$ is $f$-quasi-filtered, we obtain
$a\in\mathcal F_\eta B_d$, so
$\lambda(a)\ge\eta$.

``2)$\Longrightarrow$1)'': Suppose that
$a_1,\cdots,a_r$ are homogeneous elements of
degrees $\ge n_0$ of $B$. For any integer
$1\le i\le r$ let
$d_i=\mathrm{d}^\circ(a_i)$. Let
$a=\displaystyle\prod_{i=1}^ra_i$. If for any
integer $1\le i\le r$, we have
$a_i\in\mathcal F_{t_i}B_{d_i}$, then we have
$\lambda(a_i)\ge t_i$. Therefore,
$\displaystyle\lambda(a)\ge\sum_{i=1}^r\Big(t_i-f(d_i)\Big)$.
Hence $a\in\mathcal
F_{t_1+\cdots+t_r-f(d_1)-\cdots-f(d_r)}B_{d_1+\cdots+d_r}$.
\end{proof}

Using the numerical criterion established
above, we obtain the following corollary.

\begin{corollary}\label{Cor:sous algebre algebre quotient}
Let $f:\mathbb Z_{\ge 0}\rightarrow\mathbb
R_{\ge 0}$ be a function and $B$ be an
$f$-quasi-filtered graded $K$-algebra.
Suppose that for any integer $n\ge 0$, the
filtration of $B_n$ is exhaustive and left
continuous.
\begin{enumerate}[1)]
\item Let $A$ be a sub-$K$-algebra of $B$ generated by homogeneous elements,
equipped with induced graduation. If for each
$n\in\mathbb Z_{\ge 0}$, the vector space
$A_n$ is equipped with the inverse image
filtration, then $A$ is an $f$-quasi-filtered
graded $K$-algebra.
\item Let $I$ be a homogeneous ideal of $B$
and let $C=B/I$, equipped with the quotient
graduation. If for each $n\in\mathbb Z_{\ge
0}$, the vector space $C_n$ is equipped with
the strong direct image filtration, then $C$
is an $f$-quasi-filtered graded $K$-algebra.
\end{enumerate}
\end{corollary}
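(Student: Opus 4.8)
The plan is to apply the numerical criterion of Proposition \ref{Pro:critere numerique de structure d'algebre filre} in both directions. The key observation is that the index function behaves well with respect to the filtrations $f^*\mathcal F$ (inverse image) and $\psi_*\mathcal F = (\psi_\flat\mathcal F)^l$ (strong direct image), as recorded in Proposition \ref{Pro:critere numerique de filtration induit quotient}. Concretely, for 1) the inclusion $\iota : A_n \hookrightarrow B_n$ is injective and $A_n$ carries $\iota^*\mathcal F$, so by part 1) of that proposition $\lambda_{A_n}(x) = \lambda_{B_n}(\iota(x))$ for every homogeneous $x \in A$; for 2) the projection $\psi : B_n \to C_n$ is surjective and $C_n$ carries $\psi_*\mathcal F$, so by part 2) of that proposition $\lambda_{C_n}(\psi(x)) = \sup_{y \in \psi^{-1}(\psi(x))}\lambda_{B_n}(y) \ge \lambda_{B_n}(x)$.

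\emph{First I would verify the hypotheses for the numerical criterion.} For 1): since $A$ is generated by homogeneous elements and carries the induced grading, each $A_n$ is a subspace of $B_n$; Proposition \ref{Pro:exhaustivite de filtration induite quotiente} 3) guarantees that $\iota^*\mathcal F$ is exhaustive, and inverse images of left continuous filtrations are left continuous (remarked after the definition of $f^*\mathcal G$). For 2): $C_n = B_n/I_n$ with $\psi$ surjective, Proposition \ref{Pro:exhaustivite de filtration induite quotiente} 3) gives that $\psi_*\mathcal F$ is exhaustive, and strong direct images are left continuous by construction since $(\psi_\flat\mathcal F)^l$ is. Hence in both cases Proposition \ref{Pro:critere numerique de structure d'algebre filre} applies, and it suffices to check inequality \eqref{Equ:critere numerique de quasifiltree} for products of homogeneous elements of large degree.

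\emph{Then I would check the numerical inequality.} Since $B$ is $f$-quasi-filtered, fix the integer $n_0$ from its definition (via Proposition \ref{Pro:critere numerique de structure d'algebre filre}), so that for homogeneous $b_1,\dots,b_r$ of degrees $\ge n_0$ one has $\lambda_B(b_1\cdots b_r) \ge \sum_i(\lambda_B(b_i) - f(\mathrm d^\circ(b_i)))$. For 1), take homogeneous $a_1,\dots,a_r \in A$ of degrees $\ge n_0$; then $a = \prod a_i \in A$ (as $A$ is a subalgebra) and $\mathrm d^\circ_A(a_i) = \mathrm d^\circ_B(\iota(a_i))$, so
\[
\lambda_A(a) = \lambda_B(\iota(a)) = \lambda_B\Big(\prod_i \iota(a_i)\Big) \ge \sum_i\big(\lambda_B(\iota(a_i)) - f(\mathrm d^\circ(a_i))\big) = \sum_i\big(\lambda_A(a_i) - f(\mathrm d^\circ(a_i))\big),
\]
which is \eqref{Equ:critere numerique de quasifiltree} for $A$. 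For 2), take homogeneous $c_1,\dots,c_r \in C$ of degrees $\ge n_0$, lift each to a homogeneous $b_i \in B$ of the same degree with $\lambda_B(b_i) = \lambda_C(c_i)$ — possible because $\lambda$ takes finitely many values (Proposition \ref{Pro:proprietes de la fonction lambda} 4) and the supremum characterization in Proposition \ref{Pro:critere numerique de filtration induit quotient} 2) is attained. Then $\psi(b_1\cdots b_r) = c_1\cdots c_r$, so
\[
\lambda_C(c_1\cdots c_r) = \lambda_C\big(\psi(b_1\cdots b_r)\big) \ge \lambda_B(b_1\cdots b_r) \ge \sum_i\big(\lambda_B(b_i) - f(\mathrm d^\circ(b_i))\big) = \sum_i\big(\lambda_C(c_i) - f(\mathrm d^\circ(c_i))\big),
\]
which is \eqref{Equ:critere numerique de quasifiltree} for $C$. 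By Proposition \ref{Pro:critere numerique de structure d'algebre filre} again, $A$ and $C$ are $f$-quasi-filtered.

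\emph{The main obstacle} is the lifting step in part 2): one needs a homogeneous lift $b_i$ of $c_i$ realizing the supremum $\lambda_C(c_i) = \sup_{y\in\psi^{-1}(c_i)}\lambda_B(y)$, and one must be sure this supremum is actually attained. This is exactly where the finiteness of the image of $\lambda$ on a finite-dimensional space (Proposition \ref{Pro:proprietes de la fonction lambda} 4) is essential, together with the fact that the fibre $\psi^{-1}(c_i) \subset B_n$ lives in a finite-dimensional space so its image under $\lambda$ is finite and the max exists. Everything else is a routine unwinding of the index-function formulas for inverse and strong direct images.
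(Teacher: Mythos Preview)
Your proof of part 1) is essentially identical to the paper's.

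For part 2) your strategy coincides with the paper's, but your lifting step introduces an unneeded hypothesis. You claim that a lift $b_i$ with $\lambda_B(b_i)=\lambda_C(c_i)$ exists because $\lambda$ takes finitely many values on $B_{d_i}$ (Proposition \ref{Pro:proprietes de la fonction lambda} 4)); that proposition, however, requires $B_{d_i}$ to be finite-dimensional, which is \emph{not} assumed in the corollary. The paper sidesteps this: using the supremum formula of Proposition \ref{Pro:critere numerique de filtration induit quotient} 2) it chooses, for each $i$, a sequence of lifts $(\alpha_j^{(i)})_{j\ge 1}$ in $B_{d_i}$ with $\pi(\alpha_j^{(i)})=c_i$ and $\lambda_B(\alpha_j^{(i)})\uparrow\lambda_C(c_i)$. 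Setting $\alpha_j=\prod_i\alpha_j^{(i)}$ gives
\[
\lambda_C(c_1\cdots c_r)\ \ge\ \lambda_B(\alpha_j)\ \ge\ \sum_i\big(\lambda_B(\alpha_j^{(i)})-f(d_i)\big),
\]
and letting $j\to\infty$ yields the desired inequality without any finiteness assumption. Your argument is correct whenever each $B_n$ is finite-dimensional (which is the case in every application later in the paper), but as written it does not establish the corollary in the stated generality; replacing your single lift by this approximating sequence is the fix.
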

\begin{proof}
1) After Proposition \ref{Pro:exhaustivite de
filtration induite quotiente}, the
filtrations of $A_n$ are exhaustive.
Furthermore, they are left continuous. If $a$
is a homogeneous element of $A$, then
$\mathrm{d}_A^\circ(a)=\mathrm{d}_B^\circ(a)$.
On the other hand, since the filtrations of
$A_n$ ($n\ge 0$) are inverse images
filtrations, we obtain
$\lambda_A(a)=\lambda_B(a)$. So for any
integer $r\ge 2$ and any family $(a_i)_{1\le
i\le r}$ of homogeneous elements of degree
$\ge n_0$ in $A$ with
$a=\displaystyle\prod_{i=1}^ra_i$, we have
\[\lambda_A(a)=\lambda_B(a)\ge
\sum_{i=1}^r\Big(\lambda_B(a_i)-f(\mathrm{d}_B^\circ
(a_i))\Big)=\sum_{i=1}^r\Big(\lambda_A(a_i)-
f(\mathrm{d}_A^\circ(a_i))\Big).\] So the
graded algebra $A$ is $f$-quasi-filtered.

2) After Proposition \ref{Pro:exhaustivite de
filtration induite quotiente}, the
filtrations of homogeneous components of $C$
are exhaustive. Let $\pi:B\rightarrow C$ be
the canonical homomorphism. Suppose that
$(a_i)_{1\le i\le r}$ is a family of
homogeneous elements of degree $\ge n_0$ in
$C$. For any $1\le i\le r$, let $d_i=
\mathrm{d}^\circ(a_i)$ and
$t_i=\lambda_C(a_i)$. After Proposition
\ref{Pro:critere numerique de filtration
induit quotient} 2), for any $1\le i\le r$,
there exists a sequence
$(\alpha_j^{(i)})_{j\ge 1}$ in $B_{d_i}$ such
that $\pi(\alpha_j^{(i)})=a_i$ for any $j\ge
1$ and that the sequence
$(\lambda_B(\alpha_j^{(i)}))_{j\ge 1}$ is
increasing and converge to $t_i$. Let
$a=\displaystyle\prod_{i=1}^ra_i$ and for any
$j\ge 1$, let
$\alpha_j=\displaystyle\prod_{i=1}^r\alpha_j^{(i)}$.
Clearly we have $a=\pi(\alpha_j)$ for any
$j\ge 1$. Therefore,
$\lambda_C(a)\ge\lambda_B(\alpha_j)$. On the
other hand,
$\displaystyle\lambda_B(\alpha_j)\ge\sum_{i=1}^r\Big(
\lambda_B(\alpha_j^{(i)})-f(d_i)\Big)$. Hence
$\displaystyle\lambda_C(a)\ge\sum_{i=1}^r\Big(\lambda_B(\alpha_j^{(i)})-f(d_i)\Big)$.
By passing to the limit when $j\rightarrow
+\infty$ we obtain
$\displaystyle\lambda_C(a)\ge\sum_{i=1}^r(t_i-f(d_i))$.
\end{proof}

The following assertions give numerical
criteria for quasi-filtered graded modules,
the proofs are similar.

\begin{proposition}
Let $f:\mathbb Z_{>0}\rightarrow\mathbb
R_{\ge 0}$ be a function, $B$ be an
$f$-quasi-filtered graded $K$-algebra and $M$
be a graded $B$-module. Suppose that for any
integer $n$, $M_n$ is equipped with an
exhaustive and left continuous $\mathbb
R$-filtration. Suppose in addition that for
any integer $n\ge 0$, the filtration of $B_n$
is exhaustive and left continuous. Then the
following conditions are equivalent:
\begin{enumerate}[1)]
\item the graded $B$-module $M$ is
$f$-quasi-filtered;
\item there exists an integer $n_0\ge 0$ such that,
for any integer $r\ge 1$, any family
$(a_i)_{1\le i\le r}$ of non-zero homogeneous
elements of degree $\ge n_0$ of $B$ and any
non-zero homogeneous element $x$ of degree
$\ge n_0$ of $M$, if we write $y=(a_1\cdots
a_r)x$, then
\[\lambda(y)\ge\sum_{i=1}^r\Big(\lambda(a_i)-
f(\mathrm{d}^\circ(a_i))\Big)+\lambda(x)-f(\mathrm{d}^\circ(x)).\]
\end{enumerate}
\end{proposition}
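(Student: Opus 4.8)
The plan is to mirror the proof of Proposition \ref{Pro:critere numerique de structure d'algebre filre}, simply carrying the extra module factor along as a final ``$(r+1)$-th term''. The two directions are essentially symmetric translations between the set-theoretic inclusion of filtered pieces and the numerical inequality on index functions, and the only genuine subtlety (as in the algebra case) is the left-continuity argument needed to pass back from index inequalities to inclusions.

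\medskip

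For the implication 1)$\Longrightarrow$2), I would start from non-zero homogeneous elements $a_1,\dots,a_r\in B$ and $x\in M$ of degrees $\ge n_0$. Since all the filtrations involved (on the $B_{d_i}$ and on $M_{d}$, where $d=\mathrm{d}^\circ(x)$) are left continuous, each element lies in the filtered piece indexed by its own $\lambda$: $a_i\in B_{d_i,\lambda(a_i)}$ and $x\in M_{d,\lambda(x)}$. Here one should note that the index is finite or $+\infty$ because the filtrations are exhaustive, so the sum on the right-hand side is well defined in $\mathbb R\cup\{+\infty\}$. Applying the defining inclusion of an $f$-quasi-filtered graded module with $n_i=d_i$ ($1\le i\le r$) and $n_{r+1}=d$, $s_{r+1}=\lambda(x)$, $s_i=\lambda(a_i)$, we get $y=(a_1\cdots a_r)x\in M_{N,S}$ with $N=\sum_{i=1}^r d_i+d$ and $S=\sum_{i=1}^r(\lambda(a_i)-f(d_i))+\lambda(x)-f(d)$, hence $\lambda(y)\ge S$, which is exactly the claimed inequality.

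\medskip

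For 2)$\Longrightarrow$1), I would take arbitrary $(n_i)_{1\le i\le r+1}\in\mathbb Z_{\ge n_0}^{r+1}$, arbitrary $(s_i)_{1\le i\le r+1}\in\mathbb R^{r+1}$, and a product $\big(\prod_{i=1}^r b_i\big)z$ with $b_i\in B_{n_i,s_i}$ and $z\in M_{n_{r+1},s_{r+1}}$. (One may assume all factors non-zero; if some factor is zero the product is zero and lies in every filtered piece.) From $b_i\in B_{n_i,s_i}$ one has $\lambda(b_i)\ge s_i$, and likewise $\lambda(z)\ge s_{r+1}$; the hypothesis 2) then yields $\lambda\big((\prod b_i)z\big)\ge\sum_{i=1}^r(s_i-f(n_i))+(s_{r+1}-f(n_{r+1}))=S$. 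Finally, since the filtration on $M_N$ is left continuous, $\lambda(w)\ge S$ forces $w\in M_{N,S}$ (this is precisely the ``converse'' half of the observation recorded just before Proposition \ref{Pro:proprietes de la fonction lambda}: for left-continuous filtrations, $\lambda(w)\ge r$ iff $w\in\bigcap_{s<r}\mathcal F_s$, and the latter equals $\mathcal F_r$). This gives the module inclusion defining $f$-quasi-filteredness.

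\medskip

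The only point requiring slight care — and the analogue of the ``main obstacle'', though it is mild — is making sure the left-continuity hypothesis is invoked at the right spots and that the bookkeeping with the degree-shifting parameters $n_i$, $s_i$ respects the asymmetric index set $\mathbb Z_{\ge 0}\times\mathbb Z$ for the module. Otherwise the proof is a routine transcription of the algebra case with $r$ replaced by $r+1$ and the last factor living in $M$ rather than in $B$; no new ideas beyond those already used in Proposition \ref{Pro:critere numerique de structure d'algebre filre} are needed.
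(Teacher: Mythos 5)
Your proof is correct and follows essentially the same approach the paper intends: the paper itself omits the argument, stating only that ``the proofs are similar'' to Proposition \ref{Pro:critere numerique de structure d'algebre filre}, and your proposal is exactly the straightforward transcription of that proof with the extra module factor carried along as the $(r+1)$-st term, using left-continuity in both directions precisely as the paper does in the algebra case.
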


\begin{corollary}\label{Cor:sousmodule module quotient}
Let $f:\mathbb Z_{>0}\rightarrow\mathbb
R_{\ge 0}$ be a function, $B$ be an
$f$-quasi-filtered graded $K$-algebra and $M$
be an $f$-quasi-filtered graded $B$-module.
Suppose that for any integer $n\ge 0$, the
filtrations of $B_n$ and of $M_n$ are
exhaustive and left continuous.
\begin{enumerate}[1)]
\item Let $M'$ be a graded sub-$B$-module.
If each $M_n'$ is equipped with the inverse
image filtration, then $M'$ is an
$f$-quasi-filtered graded $B$-module.
\item Let $M'$ be a homogeneous
sub-$B$-module of $M$ and let $M''=M/M'$. If
each $M_n''$ is equipped with the strong
direct image filtration, then $M''$ is an
$f$-quasi-filtered graded $B$-module.
\end{enumerate}
\end{corollary}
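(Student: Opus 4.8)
The plan is to follow the proof of Corollary \ref{Cor:sous algebre algebre quotient} line for line, reducing both assertions to the numerical criterion for quasi-filtered graded modules established just above, applied to the $f$-quasi-filtered module $M$. The preliminary observation, in both cases, is that the induced filtrations on the homogeneous components are again exhaustive and left continuous, so that the numerical criterion is available for the module being constructed. For the inverse image filtration on each $M_n'$ this is immediate: inverse image preserves left continuity, and it preserves exhaustiveness by Proposition \ref{Pro:exhaustivite de filtration induite quotiente} 3). For the strong direct image filtration on each $M_n''$, left continuity holds because a strong direct image filtration is by definition of the form $\mathcal H^l$, hence left continuous, and exhaustiveness is again Proposition \ref{Pro:exhaustivite de filtration induite quotiente} 3).

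For assertion 1), write $\iota\colon M'\hookrightarrow M$ for the inclusion. A homogeneous element $x$ of $M'$ satisfies $\mathrm{d}^\circ_{M'}(x)=\mathrm{d}^\circ_M(x)$, and since the filtration of $M_n'$ is the inverse image along the injection $\iota$, Proposition \ref{Pro:critere numerique de filtration induit quotient} 1) gives $\lambda_{M'}(x)=\lambda_M(x)$. Now fix an integer $r\ge 1$, non-zero homogeneous elements $a_1,\cdots,a_r$ of $B$ of degree $\ge n_0$, and a non-zero homogeneous element $x$ of $M'$ of degree $\ge n_0$; set $y=(a_1\cdots a_r)x$, which again lies in $M'$ since $M'$ is a sub-$B$-module. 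Applying the numerical criterion to $M$ and then replacing every index and degree by its counterpart in $M'$ via the equalities above yields
\[\lambda_{M'}(y)=\lambda_M(y)\ge\sum_{i=1}^r\big(\lambda(a_i)-f(\mathrm{d}^\circ(a_i))\big)+\lambda_{M'}(x)-f(\mathrm{d}^\circ(x)),\]
which is exactly the criterion for $M'$.

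For assertion 2), write $\pi\colon M\to M''$ for the canonical projection. Fix an integer $r\ge 1$, non-zero homogeneous elements $a_1,\cdots,a_r$ of $B$ of degree $\ge n_0$, and a non-zero homogeneous element $x''$ of $M''$ of degree $e\ge n_0$; put $\tau=\lambda_{M''}(x'')$ and $y''=(a_1\cdots a_r)x''$. Since the filtration of $M''_e$ is the strong direct image of that of $M_e$, Proposition \ref{Pro:critere numerique de filtration induit quotient} 2) supplies a sequence $(\xi_j)_{j\ge 1}$ of (necessarily non-zero) elements of $M_e$ with $\pi(\xi_j)=x''$ and $\lambda_M(\xi_j)\to\tau$ as $j\to\infty$. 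Set $y_j=(a_1\cdots a_r)\xi_j\in M$. Then $\pi(y_j)=y''$, so $\lambda_{M''}(y'')\ge\lambda_M(y_j)$, while the fact that $M$ is $f$-quasi-filtered gives
\[\lambda_M(y_j)\ge\sum_{i=1}^r\big(\lambda(a_i)-f(\mathrm{d}^\circ(a_i))\big)+\lambda_M(\xi_j)-f(e).\]
Letting $j\to\infty$ we obtain $\lambda_{M''}(y'')\ge\sum_{i=1}^r(\lambda(a_i)-f(\mathrm{d}^\circ(a_i)))+\tau-f(e)$, i.e. the criterion for $M''$.

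Essentially all of this is bookkeeping: the genuinely substantive ingredients --- the numerical criterion, Proposition \ref{Pro:exhaustivite de filtration induite quotiente} and Proposition \ref{Pro:critere numerique de filtration induit quotient} --- are already in hand. The one step deserving attention, exactly as in the algebra case, is the strong direct image step in 2): one must use the supremum description of the index function of a strong direct image filtration to lift $x''$ to elements $\xi_j$ of $M_e$ whose indices approach $\tau$, and then justify the passage to the limit in the displayed inequality; the remaining verifications (exhaustiveness, left continuity, and that $y$ respectively $y_j$ lands where claimed) are routine.
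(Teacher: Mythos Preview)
Your proof is correct and follows precisely the approach the paper intends: the paper gives no explicit proof of this corollary, stating only that ``the proofs are similar'' to those of Corollary~\ref{Cor:sous algebre algebre quotient}, and your argument is exactly that adaptation --- using the numerical criterion for modules in place of the one for algebras, with the same invocations of Proposition~\ref{Pro:exhaustivite de filtration induite quotiente} and Proposition~\ref{Pro:critere numerique de filtration induit quotient}. If anything, part~2) is slightly simpler here than in the algebra case, since only the module element $x''$ needs to be lifted along a sequence while the $a_i$ already live in $B$; you have handled this correctly.
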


\begin{corollary}\label{Cor:fonctorialite de filtrations qotiet etc}
Let $f:\mathbb Z_{>0}\rightarrow\mathbb
R_{\ge 0}$ be a function, $B$ be an
$f$-quasi-filtered graded $K$-algebra, and
$M$ be an $f$-quasi-filtered graded
$B$-module. Suppose that for any positive
integer (resp. any integer) $n$, the
filtration of $B_n$ (resp. $M_n$) is
exhaustive and left continuous.
\begin{enumerate}[1)]
\item Let $A$ be a sub-$K$-algebra of $B$ generated by
homogeneous elements, equipped with the
induced graduation. If each vector space
$A_n$ is equipped with the inverse image
filtration, then $M$ is an $f$-quasi-filtered
graded $A$-module.
\item Let $I$ be a homogeneous ideal of $B$
contained in $\ann(M)$ and $C=B/I$ which is
equipped with the quotient graduation. If
each $C_n$ is equipped with the strong direct
image filtration, then $M$ is an
$f$-quasi-filtered graded $C$-module.
\end{enumerate}
\end{corollary}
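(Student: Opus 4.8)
The plan is to reduce both assertions to the numerical criterion for quasi-filtered graded modules proved just above (the Proposition immediately preceding Corollary~\ref{Cor:sousmodule module quotient}), exactly as Corollaries~\ref{Cor:sous algebre algebre quotient} and~\ref{Cor:sousmodule module quotient} were deduced from it. In case~1), $M$ becomes a graded $A$-module by restriction of scalars; in case~2), $M$ becomes a graded $C$-module because $I\subset\ann(M)$. In either case the filtrations on the homogeneous components $M_n$ are the same as before, hence remain exhaustive and left continuous, so it only remains to check the hypotheses on the new base ring and then verify condition~2) of that criterion.

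For~1), I would first invoke Corollary~\ref{Cor:sous algebre algebre quotient}~1) to see that $A$ is an $f$-quasi-filtered graded $K$-algebra, and observe that for each $n\ge 0$ the inverse image filtration on $A_n\subset B_n$ is exhaustive (Proposition~\ref{Pro:exhaustivite de filtration induite quotiente}~3)) and left continuous (inverse image of a left continuous filtration is left continuous). Then, for non-zero homogeneous elements $a_1,\dots,a_r$ of $A$ and a non-zero homogeneous element $x$ of $M$ of sufficiently large degree, with $y=(a_1\cdots a_r)x$, one has $\mathrm{d}^\circ_A(a_i)=\mathrm{d}^\circ_B(a_i)$ and, by Proposition~\ref{Pro:critere numerique de filtration induit quotient}~1) applied to the inclusion $A_n\hookrightarrow B_n$ with its inverse image filtration, $\lambda_A(a_i)=\lambda_B(a_i)$. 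Since the product $a_1\cdots a_r$ and its action on $x$ are computed identically in $A$ and in $B$, the inequality supplied by the $B$-module structure of $M$ is literally condition~2) for the $A$-module structure (with the same $n_0$), and the criterion gives the claim.

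For~2), I would invoke Corollary~\ref{Cor:sous algebre algebre quotient}~2) for $C=B/I$, and note that for each $n\ge 0$ the strong direct image filtration on $C_n=B_n/I_n$ is exhaustive (Proposition~\ref{Pro:exhaustivite de filtration induite quotiente}~3)) and left continuous by construction, since $\pi_*\mathcal F=(\pi_\flat\mathcal F)^l$. To verify condition~2) for the graded $C$-module $M$, take non-zero homogeneous elements $c_1,\dots,c_r$ of $C$ and a non-zero homogeneous element $x$ of $M$ of large degree, put $d_i=\mathrm{d}^\circ(c_i)$ and $y=(c_1\cdots c_r)x$; by Proposition~\ref{Pro:critere numerique de filtration induit quotient}~2) applied to the projection $\pi\colon B_n\rightarrow C_n$, choose for each $i$ a sequence $(b_j^{(i)})_{j\ge 1}$ in $B_{d_i}$ with $\pi(b_j^{(i)})=c_i$ and $\lambda_B(b_j^{(i)})$ increasing to $\lambda_C(c_i)$. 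Because $I\subset\ann(M)$, every $b_j^{(i)}$ acts on $M$ through its class $c_i$, so $(b_j^{(1)}\cdots b_j^{(r)})x=y$ for all $j$; applying to this product the inequality from the $B$-module structure of $M$ gives $\lambda_M(y)\ge\sum_{i=1}^r(\lambda_B(b_j^{(i)})-f(d_i))+\lambda_M(x)-f(\mathrm{d}^\circ(x))$, and letting $j\to+\infty$ yields the required inequality with $\lambda_C(c_i)$ in place of $\lambda_B(b_j^{(i)})$. This is the same device as in the proof of Corollary~\ref{Cor:sous algebre algebre quotient}~2).

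The argument is essentially routine; the only point calling for care is the passage to the limit in~2). It works because, by Proposition~\ref{Pro:critere numerique de filtration induit quotient}~2), $\lambda_C(c_i)$ is the supremum of $\lambda_B$ over the fibre $\pi^{-1}(c_i)$ --- approached along the chosen lifts --- while $I\subset\ann(M)$ guarantees that all of these lifts act on $M$ exactly as $c_i$ does, so the left-hand side $\lambda_M(y)$ does not depend on $j$ and the limit can be taken freely on the right-hand side. One should also not omit to record that $M$ is genuinely a graded module over the new base ring and that the filtrations on $A_n$ (resp. $C_n$) for $n\ge 0$ and on $M_n$ for all $n$ stay exhaustive and left continuous, which is exactly what makes the module criterion applicable.
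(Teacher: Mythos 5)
Your proof is correct and uses exactly the approach the paper expects: the paper omits the argument for this corollary, indicating only that it is ``similar'' to the proof of Corollary~\ref{Cor:sous algebre algebre quotient}, and your reduction to the numerical criterion for quasi-filtered graded modules, with the equality $\lambda_A(a_i)=\lambda_B(a_i)$ from the inverse-image filtration in case~1) and the lifting-plus-limit argument (valid because $I\subset\ann(M)$ makes the lifts act identically on $M$) in case~2), is precisely that similar proof. The only minor remark is cosmetic: when invoking Corollary~\ref{Cor:sous algebre algebre quotient}, the function there is defined on $\mathbb Z_{\ge 0}$ whereas here $f$ is defined on $\mathbb Z_{>0}$, but since only values of $f$ at degrees $\ge n_0>0$ intervene, this has no bearing on the argument.
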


\section{Convergence for symmetric algebras}
\label{Sec:Convergence for symmetric
algebras} \hskip\parindent We now consider
the symmetric algebra of a finite dimensional
non-zero vector space, which is equipped with
certain suitable filtrations. Each
homogeneous component of the symmetric
algebra contains a special base which
consists of monomials. By introducing a
combinatoric equality on monomials (Theorem
\ref{Thm:theorme cle sur equiprobability}),
we establish a convergence result (Corollary
\ref{Cor:suradditivite de l'integrale}) for
quasi-filtered symmetric algebras. We shall
show later in Section \ref{Sec:Convergence of
polygons of a quasi-filtered graded algebra}
that the general convergence can be deduced
from this result in the special case of
quasi-filtered symmetric algebras.

For any pair of integers $(n,d)$ such that
$n\ge 0$ and $d\ge 1$, let $\Delta_n^{(d)}$
be the subset of $\mathbb Z_{\ge 0}^d$
consisting of all decompositions of $n$ into
sum of $d$ positive integers. We introduce
the lexicographic order on $\Delta_n^{(d)}$:
$(a_1,\cdots,a_d)\ge (b_1,\cdots,b_d)$ if and
only if there exists an integer $1\le i\le d$
such that $a_j=b_j$ for any $1\le j\le i$ and
that $a_{i+1}>b_{i+1}$ if $i<d$. The set
$\Delta_n^{(d)}$ is totally ordered. On the
other hand, for any integer $r\ge 2$ and any
${\mathbf{n}}=(n_i)_{1\le i\le r}\in\mathbb
Z_{\ge 0}^r$, we have a mapping from
$\Delta_{n_1}^{(d)}\times\cdots\times\Delta_{n_r}^{(d)}$
to $\Delta_{n_1+\cdots+n_r}^{(d)}$ which
sends $(\alpha_i)_{1\le i\le r}$ to
$\alpha_1+\cdots+\alpha_r$ (the addition
being that in $\mathbb Z^d$). This mapping is
not injective in general but is always
surjective. Moreover, if $(\alpha_i)_{1\le
i\le r}$ and $(\beta_i)_{1\le i\le r}$ are
two elements of
$\Delta_{n_1}^{(d)}\times\cdots\Delta_{n_r}^{(d)}$
such that  $\alpha_i\ge\beta_i$ for any $1\le
i\le r$, then
$\alpha_1+\cdots+\alpha_r\ge\beta_1+\cdots+\beta_r$.

For any $n\in \mathbb Z_{\ge 0}$, we denote
by $\Gamma_n^{(d)}$ the subset of $\mathbb
Z_{\ge 0}^{d-1}$ consisting of elements
$(a_i)_{1\le i\le d-1}$ such that $0\le
a_1+\cdots+a_{d-1}\le n$. We have a natural
mapping
$p_n^{(d)}:\Delta_n^{(d)}\rightarrow\Gamma_n^{(d)}$
defined by the projection on the first $d-1$
factors. The mapping $p_n^{(d)}$ is in fact a
bijection and its inverse is the mapping
which sends $(a_i)_{1\le i\le d-1}$ to
$(a_1,\cdots,a_{d-1},n-a_1-\cdots-a_{d-1})$.
For any $\mathbf{n}= (n_i)_{1\le i\le r
}\in\mathbb Z_{\ge 0}^r$, we have the
following commutative diagram
\begin{equation}\label{Equ:commutatif Gamma et Delta}\xymatrix{\relax
\Delta_{n_1}^{(d)}\times
\cdots\times\Delta_{n_r}^{(d)}
\ar[d]_{p_{n_1}^{(d)}\times\cdots\times
p_{n_r}^{(d)}
}\ar[r]^-{+}&\Delta_{|\mathbf{n}|}^{(d)}\ar[d]^{p_{N}^{(d)}}\\
\Gamma_{n_1}^{(d)}\times\cdots\times
\Gamma_{n_r}^{(d)}\ar[r]_-{+}&
\Gamma_{|\mathbf{n}|}^{(d)}}\end{equation}
where $|\mathbf{n}|=n_1+\cdots+n_r$ and the
operators ``$+$'' are defined by the addition
structures in the monoids $\mathbb Z_{\ge
0}^d$ and $\mathbb Z_{\ge 0}^{d-1}$
respectively.

\begin{theorem}\label{Thm:theorme cle sur equiprobability}
Let $r\ge 2$ and $d\ge 1$ be two integers.
For any $\mathbf{n}=(n_i)_{1\le i\le r
}\in\mathbb Z_{\ge 0}^r$, there exists a
probability measure $\rho_{\mathbf{n}}$ on
$\Delta_{n_1}^{(d)}\times\cdots\times
\Delta_{n_r}^{(d)}$ such that the direct
image of $\rho_{\mathbf{n}}$ by each of the
$r$ projections on
$\Delta_{n_1}^{(d)},\cdots,\Delta_{n_r}^{(d)}$
is equidistributed, and also is its direct
image on $\Delta_{|\mathbf{n}|}^{(d)}$ by the
operator ``$+$''.
\end{theorem}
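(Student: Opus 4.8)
The plan is to write down $\rho_{\mathbf n}$ explicitly as a hypergeometric-type measure and to verify the three equidistribution properties by reducing everything to the Chu--Vandermonde identity; the commutative diagram \eqref{Equ:commutatif Gamma et Delta} is not actually needed for this.

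Recall that $\card(\Delta_n^{(d)})=\binom{n+d-1}{d-1}$ (via the bijection $p_n^{(d)}$ with $\Gamma_n^{(d)}$). Think of an element of $\Delta_n^{(d)}$ as the occupancy vector of $n$ indistinguishable balls in $d$ boxes. Put $N=n_1+\cdots+n_r$, and for $\boldsymbol\alpha=(\alpha_1,\dots,\alpha_r)$ with $\alpha_i=(\alpha_{i,1},\dots,\alpha_{i,d})\in\Delta_{n_i}^{(d)}$ write $\gamma=\alpha_1+\cdots+\alpha_r\in\Delta_N^{(d)}$. I would set
\[
\rho_{\mathbf n}(\boldsymbol\alpha):=\frac{1}{\binom{N+d-1}{d-1}\binom{N}{n_1,\dots,n_r}}\,\prod_{j=1}^d\binom{\gamma_j}{\alpha_{1,j},\dots,\alpha_{r,j}}.
\]
Probabilistically: throw $N$ balls into $d$ boxes so that $\gamma$ is uniform on $\Delta_N^{(d)}$ (Bose--Einstein statistics), then split the $N$ balls uniformly into labelled groups of sizes $n_1,\dots,n_r$ and let $\alpha_i$ be the occupancy vector of group $i$. (The naive choice of independent uniform $\alpha_i$ fails, since their sum concentrates; the hypergeometric coupling is exactly what repairs this.)

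Everything then follows from two identities obtained by reading off coefficients of generating functions. First, for any $\gamma\in\Delta_N^{(d)}$,
\[
\sum_{\substack{\alpha_1+\cdots+\alpha_r=\gamma\\ |\alpha_i|=n_i}}\ \prod_{j=1}^d\binom{\gamma_j}{\alpha_{1,j},\dots,\alpha_{r,j}}=\binom{N}{n_1,\dots,n_r},
\]
the coefficient of $t_1^{n_1}\cdots t_r^{n_r}$ in $(t_1+\cdots+t_r)^N$; being independent of $\gamma$, this gives at once that $\rho_{\mathbf n}$ has total mass $1$ and that its image under ``$+$'' puts mass $1/\card(\Delta_N^{(d)})$ on every $\gamma$. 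Second, for fixed $\beta\in\Delta_{n_1}^{(d)}$,
\[
\sum_{\substack{\gamma\in\Delta_N^{(d)}\\ \gamma\ge\beta}}\ \prod_{j=1}^d\binom{\gamma_j}{\beta_j}=\binom{N+d-1}{n_1+d-1},
\]
the coefficient of $x^N$ in $\prod_j x^{\beta_j}(1-x)^{-\beta_j-1}=x^{n_1}(1-x)^{-n_1-d}$. To get the first marginal I would sum the multinomial coefficient first over $\alpha_2,\dots,\alpha_r$ using $\binom{\gamma_j}{\alpha_{1,j},\dots,\alpha_{r,j}}=\binom{\gamma_j}{\alpha_{1,j}}\binom{\gamma_j-\alpha_{1,j}}{\alpha_{2,j},\dots,\alpha_{r,j}}$ together with the first identity applied with $\gamma-\alpha_1$, then sum over $\gamma$ with the second identity, obtaining
\[
\sum_{\alpha_2,\dots,\alpha_r}\rho_{\mathbf n}(\boldsymbol\alpha)=\frac{\binom{N+d-1}{n_1+d-1}\binom{N-n_1}{n_2,\dots,n_r}}{\binom{N+d-1}{d-1}\binom{N}{n_1,\dots,n_r}}=\frac{\binom{N+d-1}{n_1+d-1}}{\binom{N+d-1}{d-1}\binom{N}{n_1}}=\frac{1}{\binom{n_1+d-1}{d-1}}=\frac{1}{\card(\Delta_{n_1}^{(d)})},
\]
the penultimate equality being $\binom{N+d-1}{n_1+d-1}\binom{n_1+d-1}{d-1}=\binom{N+d-1}{d-1}\binom{N}{n_1}$ (both sides count pairs of disjoint subsets of $\{1,\dots,N+d-1\}$ of sizes $d-1$ and $n_1$). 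By the symmetry of the formula under permutations of $1,\dots,r$ the same holds for each of the $r$ projections, so $\rho_{\mathbf n}$ has all the required properties.

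The only real content is guessing the formula; the rest is Chu--Vandermonde plus bookkeeping. An alternative route is to prove the case $r=2$ first and then induct on $r$, gluing a coupling on $\Delta_{n_1}^{(d)}\times\cdots\times\Delta_{n_{r-1}}^{(d)}$ with one on $\Delta_{n_1+\cdots+n_{r-1}}^{(d)}\times\Delta_{n_r}^{(d)}$ over their common (uniform) marginal on $\Delta_{n_1+\cdots+n_{r-1}}^{(d)}$; the direct formula above simply handles all $r$ at once, which is why I would prefer it.
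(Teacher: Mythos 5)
Your measure is exactly the one the paper constructs: passing to the paper's coordinate system $\Gamma_{n_i}^{(d)}$ (dropping the last of the $d$ coordinates, so $\alpha_{i,d}=n_i-|\alpha_i|$ and $\gamma_d=N-|\alpha_1+\cdots+\alpha_r|$) and expanding the multinomial coefficients, your weight
\[
\frac{1}{\binom{N+d-1}{d-1}\binom{N}{n_1,\dots,n_r}}\prod_{j=1}^d\binom{\gamma_j}{\alpha_{1,j},\dots,\alpha_{r,j}}
\]
coincides factor by factor with
\[
\frac{(d-1)!\,n_1!\cdots n_r!}{(N+d-1)!}\cdot\frac{(\alpha_1+\cdots+\alpha_r)!}{\alpha_1!\cdots\alpha_r!}\cdot\frac{(N-|\alpha_1+\cdots+\alpha_r|)!}{(n_1-|\alpha_1|)!\cdots(n_r-|\alpha_r|)!}.
\]
What differs is the verification. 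The paper packages all three required facts into the $rd$-variable formal series $H(\mathbf{t},\mathbf{X})$, computes its closed form $(1-(t_1+\cdots+t_r))^{-1}\prod_{i=1}^{d-1}(1-(t_1X_{1,i}+\cdots+t_rX_{r,i}))^{-1}$, and reads off equidistribution of the marginals and of the sum by specializing blocks of $X$-variables to $1$ or to a common $Y$, at the cost of several changes of summation index. You verify the same facts directly from the explicit weight via two one-variable identities: the multinomial Vandermonde $\sum\prod_j\binom{\gamma_j}{\alpha_{1,j},\dots,\alpha_{r,j}}=\binom{N}{n_1,\dots,n_r}$, whose $\gamma$-independence settles both the total mass and the pushforward under addition in one stroke, and the negative-binomial convolution $\sum_{\gamma\ge\beta}\prod_j\binom{\gamma_j}{\beta_j}=\binom{N+d-1}{n_1+d-1}$, which together with the first gives the uniform marginals (the closing binomial identity you quote is correct). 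Your probabilistic description --- choose $\gamma$ Bose--Einstein uniform on $\Delta_N^{(d)}$, then split the $N$ balls multinomially into labelled groups of sizes $n_1,\dots,n_r$ --- makes the equidistribution of the sum self-evident and gives useful intuition that the paper does not spell out. Both proofs are correct; yours is shorter and more conceptual, while the paper's has the virtue that all three properties fall out of a single generating function by mechanical specialization.
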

\begin{proof}
The theorem is trivial when $d=1$ because in
this case, for any $k\in\mathbb Z_{\ge 0}$,
$\Delta_k^{(1)}$ is the one point set
$\{k\}$. In the following, we suppose $d\ge
2$. By \eqref{Equ:commutatif Gamma et Delta},
it suffices to construct a probability
measure $\rho_{\mathbf{n}}$ on
$\Gamma_{n_1}^{(d)}\times\cdots\times
\Gamma_{n_r}^{(d)}$ such that the direct
image of $\rho_{\mathbf{n}}$ by each of the
$r$ projections on
$\Gamma_{n_1}^{(d)},\cdots,\Gamma_{n_r}^d$ is
an equidistributed measure, and also is the
direct image on $\Gamma_{|\mathbf{n}|}^{(d)}$
by the operator ``$+$''.

For any $\alpha=(a_i)_{1\le i\le
d-1}\in\mathbb Z_{\ge 0}^{d-1}$, we define
$|\alpha|=a_1+\cdots+a_{d-1}$. The set
$\Gamma_n^{(d)}$ can be written in the form
$\Gamma_n^{(d)}=\{\alpha\in\mathbb Z_{\ge
0}^{d-1}\;\Big|\; |\alpha|\le n\}$. If
$\alpha=(a_i)_{1\le i\le d-1}$ is an element
of $\mathbb Z_{\ge 0}^{d-1}$, we write
$\alpha!=a_1!\times\cdots\times a_{d-1}!$.

Consider the algebra of formal series in $rd$
variables $R=\mathbb Z\lbr
\mathbf{t},\mathbf{X}\rbr$, where
$\mathbf{t}=(t_1,\cdots,t_{r})$,
$\mathbf{X}=(X_{i,j})_{\begin{subarray}{c}1\le
i\le r,\\
1\le j\le {d-1}
\end{subarray}}$. If
$\alpha=(a_1,\cdots,a_{d-1})\in\mathbb Z_{\ge
0}^{d-1}$ and if $1\le i\le r$, we denote by
$X_i^{\alpha}$ the product
$X_{i,1}^{a_1}\times\cdots\times
X_{i,d-1}^{a_{d-1}}$. If
$\mathbf{n}=(n_i)_{1\le i\le r}$ is an
element in $\mathbb Z_{\ge 0}^r$, we denote
by $\mathbf{t}^{\mathbf n}$ the product
$t_1^{n_1}\times\cdots\times t_r^{n_r}$. Let
$H(\mathbf{t},\mathbf{X})$ be the formal
series
\[\sum_{\mathbf{n}=(n_i)\in\mathbb Z_{\ge 0}^r}\mathbf{t}^{\mathbf{n}}
\sum_{\begin{subarray}{c}(\alpha_i)\in(\mathbb Z_{\ge 0}^{d-1})^r\\
|\alpha_i|\le n_i
\end{subarray}}\frac{(\alpha_1+\cdots+\alpha_r)!}{\alpha_1!
\cdots\alpha_r!}\frac{(n_1+\cdots+n_r-|\alpha_1+\cdots+\alpha_r|)!}
{(n_1-|\alpha_1|)!\cdots(n_r-|\alpha_r|)!}\prod_{j=1}^rX_j^{\alpha_j},\]
the coefficients of which are positive
integers. If we perform the change of indices
$m_i=n_i-|\alpha_i|$ and permute the
summations by defining
$(\beta_1,\cdots,\beta_{d-1})=\alpha_1+\cdots
+\alpha_r$ and $m=m_1+\cdots+m_r$, we obtain
the following equality in $\mathbb Z\lbr
\mathbf{t},\mathbf{X}\rbr$:
\[\begin{split}
H(\mathbf{t},\mathbf{X})&=\sum_{(\alpha_i)\in(\mathbb
Z_{\ge
0}^{d-1})^r}\frac{(\alpha_1+\cdots+\alpha_r)!}{\alpha_1!\cdots\alpha_r!}
\prod_{j=1}^rt_j^{|\alpha_j|}X_j^{\alpha_j}
\sum_{\mathbf{m}=(m_i)\in\mathbb Z_{\ge 0}^r
}\frac{(m_1+\cdots+m_r)!}{m_1!\cdots m_r!
}\mathbf{t}^{\mathbf{m}}\\
&=\sum_{(\beta_i)\in\mathbb Z_{\ge 0}^{d-1}}
\prod_{i=1}^{d-1}(t_1X_{1,i}+\cdots+t_rX_{r,i})^{\beta_i}
\sum_{m\in\mathbb Z_{\ge 0}}(t_1+\cdots+t_r)^m\\
&=(1-(t_1+\cdots+t_r))^{-1}\prod_{i=1}^{d-1}(1-(t_1X_{1,i}+\cdots+t_rX_{r,i}))^{-1}.
\end{split}\]
This calculation also implies (cf.
\cite{Homander90} chap. II \S2.4) that the
Reinhardt's absolute convergence domain of
$H(\mathbf{t},\mathbf{X})$ in $\mathbb
C^{rd}$ is defined by the condition
\[\sum_{j=1}^r|t_j|<1\text{ and }\sum_{j=1}^r|t_j|
|X_{j,i}|<1.\] This observation enables us to
substitute certain variables $X_i$ by the
vector $\indic=(\underset{d-1\text{
copies}}{\underbrace{1,\cdots,1}})$ without
examining convergence problems. By the change
of variables $m_i=n_i-|\alpha_i|$ for $2\le
i\le r$,
we obtain {
\begin{equation*}%
\begin{split} &\quad\; H(\mathbf{t},\mathbf{X})|_{X_2=\cdots=X_r=\indic}\\
&=\sum_{n_1\ge 0}t_1^{n_1}\sum_{|\alpha_1|\le
n_1}X_1^{\alpha_1}
\sum_{\begin{subarray}{c}(\alpha_i)_{i=2}^r\in(\mathbb
Z_{\ge
0}^{d-1})^{r-1}\\(m_i)_{i=2}^r\in\mathbb
Z_{\ge 0}^{r-1}\end{subarray}}
\frac{(\alpha_1+\cdots+\alpha_r)!}{\alpha_1!\cdots\alpha_r!}
\frac{(n_1+m_2+\cdots+m_r-|\alpha_1|)!}
{(n_1-|\alpha_1|)!\,m_2!\cdots
m_r!}\prod_{j=2}^rt_j^{m_j+|\alpha_j|}
\\
&=\sum_{n_1\ge 0}t_1^{n_1}\sum_{|\alpha_1|\le
n_1}X_1^{\alpha_1}\sum_{(\alpha_i)_{i=2}^r\in(\mathbb
Z_{\ge
0}^{d-1})^{r-1}}\frac{(\alpha_1+\cdots+\alpha_r)!}{\alpha_1!\cdots\alpha_r!}\prod_{j=2}^rt_j^{|\alpha_j|}\\
&\qquad\quad\sum_{(m_i)_{i=2}^r\in\mathbb
Z_{\ge
0}^{r-1}}\frac{(n_1+m_2+\cdots+m_r-|\alpha_1|)!}{(n_1-|\alpha_1|)!m_2!\cdots
m_r!}\prod_{j=2}^rt_j^{m_j}.\end{split}\end{equation*}
For any $a\in\mathbb Z_{\ge 0}$, we have
\[\sum_{b\ge 0}\frac{(a+b)!}{a!b!}t^b=(1-t)^{-a-1},\]
hence we get
\[\begin{split}
&\quad\;\sum_{(\alpha_i)_{i=2}^r\in(\mathbb
Z_{\ge
0}^{d-1})^{r-1}}\frac{(\alpha_1+\cdots+\alpha_r)!}{\alpha_1!\cdots\alpha_r!}\prod_{j=2}^rt_j^{|\alpha_j|}\\
&=\sum_{(\alpha_i)_{i=2}^r\in(\mathbb Z_{\ge
0}^{d-1})^{r-1}}\frac{(\alpha_1+\cdots+\alpha_r)!}{\alpha_1!(\alpha_2+\cdots+\alpha_r)!}
\frac{(\alpha_2+\cdots+\alpha_r)!}{\alpha_2!\cdots\alpha_r!}\prod_{j=2}^rt_j^{|\alpha_j|}\\
&=\sum_{\alpha\in\mathbb Z_{\ge 0
}^{d-1}}\frac{(\alpha_1+\alpha)!}{\alpha_1!\alpha!}\sum_{\begin{subarray}{c}
(\alpha_i)_{i=2}^r\in(\mathbb Z_{\ge
0}^{d-1})^{r-1}\\
\alpha_2+\cdots+\alpha_r=\alpha
\end{subarray}}\frac{(\alpha_2+\cdots+\alpha_r)!}{\alpha_2!\cdots\alpha_r!}\prod_{j=2}^rt_j^{|\alpha_j|}\\
&=\sum_{\alpha\in\mathbb Z_{\ge 0
}^{d-1}}\frac{(\alpha_1+\alpha)!}{\alpha_1!\alpha!}(t_2+\cdots+t_r)^{|\alpha|}=
(1-(t_2+\cdots+t_r))^{-|\alpha_1|-d+1},
\end{split}\]
and
\[\begin{split}
&\quad\;\sum_{(m_i)_{i=2}^r\in\mathbb Z_{\ge
0}^{r-1}}\frac{(n_1+m_2+\cdots+m_r-|\alpha_1|)!}{(n_1-|\alpha_1|)!m_2!\cdots
m_r!}\prod_{j=2}^rt_j^{m_j}\\
&=\sum_{(m_i)_{i=2}^r\in\mathbb Z_{\ge
0}^{r-1}}\frac{(n_1+m_2+\cdots+m_r-|\alpha_1|)!}{
(n_1-|\alpha_1|)!(m_2+\cdots+m_r)!}\frac{(m_2+\cdots+m_r)!}
{m_2!\cdots m_r!}\prod_{j=2}^rt_j^{m_j}\\
&=\sum_{M\ge 0}\frac{(n_1-|\alpha_1|+M)!}{
(n_1-|\alpha_1|)!M!}\sum_{\begin{subarray}{c}
(m_i)_{i=2}^r\in\mathbb Z_{\ge 0}^{r-1}\\
m_2+\cdots+m_r=M
\end{subarray}}\frac{(m_2+\cdots+m_r)!}
{m_2!\cdots m_r!}\prod_{j=2}^rt_j^{m_j}\\
&=\sum_{M\ge 0}\frac{(n_1-|\alpha_1|+M)!}{
(n_1-|\alpha_1|)!M!}(t_2+\cdots+t_r)^M=(1-(t_2+
\cdots+t_r))^{-n_1+|\alpha_1|-1}.
\end{split}\]
Therefore
\begin{equation}\label{Equ:Y egale a 1}\begin{split}
H(\mathbf{t},\mathbf{X})|_{X_2=\cdots=X_r=\indic}&=\sum_{n_1\ge
0}t_1^{n_1}(1-(t_2+\cdots+t_r))^{-n_1-d}\sum_{|\alpha_1|\le
n_1}X_1^{\alpha_1} \\
&=\sum_{\mathbf{n}=(n_i)\in\mathbb Z_{\ge
0}^r}\mathbf{t}^{\mathbf{n}}\frac{(n_1+\cdots+n_r+d-1)!}{(n_1+d-1)!n_2!\cdots
n_r!}\sum_{|\alpha_1|\le n_1}X_1^{\alpha_1}.
\end{split}\end{equation}}
Similarly, for any $1\le j\le r$, we have
\begin{equation}\label{Equ:X egale a 1}
\begin{split}&\quad\;H(\mathbf{t},\mathbf{X})|_{
X_1=\cdots=X_{j-1}=X_{j+1}=\cdots=X_r=\indic}
\\&=\sum_{\mathbf{n}=(n_i)\in\mathbb Z_{\ge
0}^r}\mathbf{t}^{\mathbf{n}}\frac{(n_1+\cdots+n_r+d-1)!}{n_1!\cdots
n_{j-1}!(n_j+d-1)!n_{j+1}!\cdots
n_r!}\sum_{|\alpha_j|\le
n_j}X_j^{\alpha_j}.\end{split}\end{equation}
On the other hand,
\[
\begin{split}&\quad\;H(\mathbf{t},\mathbf{X})|_{X_1=\cdots=X_r=Y}
\\&=\sum_{\mathbf{n}
=(n_i)\in\mathbb Z_{\ge 0}^r}
\mathbf{t}^{\mathbf
n}\sum_{\begin{subarray}{c}(\alpha_i)\in(\mathbb
Z_{\ge 0}^{d-1})^r\\
|\alpha_i|\le n_i
\end{subarray}}\frac{(\alpha_1+\cdots+\alpha_r)!}
{\alpha_1!\cdots\alpha_r!}
\frac{(n_1+\cdots+n_r-|\alpha_1+\cdots+
\alpha_r|)!}{(n_1-|\alpha_1|)!\cdots(n_r-|\alpha_r|)!}
Y^{\alpha_1+\cdots+\alpha_r}.\end{split}\] By
the change of variables $m_i=n_i-|\alpha_i|$
for any $1\le i\le r$, we obtain
\[\begin{split}&\quad\;H(\mathbf{t},\mathbf{X})|_{X_1=\cdots=X_r=Y}
\\ &=\sum_{(\alpha_i)\in(\mathbb
Z_{\ge
0}^{d-1})^r}\bigg(\frac{(\alpha_1+\cdots+\alpha_r)!}
{\alpha_1!\cdots\alpha_r!}\prod_{j=1}^rt_j^{|\alpha_j|}
\bigg)
Y^{\alpha_1+\cdots+\alpha_r}\sum_{\mathbf{m}=(m_i)\in\mathbb
Z_{\ge
0}^{r}}\frac{(m_1+\cdots+m_r)!}{m_1!\cdots
m_r!}\mathbf{t}^{\mathbf{m}}
\\
&=\sum_{N\ge
0}(t_1+\cdots+t_r)^N\sum_{\gamma\in\mathbb
Z_{\ge 0}^{d-1}}
Y^\gamma(t_1+\cdots+t_r)^{|\gamma|}
=\sum_{M\ge
0}(t_1+\cdots+t_r)^M\sum_{|\gamma|\le
M}Y^\gamma,\end{split}\] where we have
performed the change of variables
$\gamma=\alpha_1+\cdots+\alpha_r$ and
$M=N+|\gamma|$ in the last equality.
Therefore, we have
\begin{equation}
\label{Equ:X egale a Y}
H(\mathbf{t},\mathbf{X})|_{X_1=\cdots=X_r=Y}
=\sum_{\mathbf{n}=(n_i)\in\mathbb Z_{\ge
0}^{r}}\frac{(n_1+\cdots+n_r)!}{n_1!\cdots
n_r!}t_1^{n_1}\cdots
t_r^{n_r}\sum_{|\gamma|\le
n_1+\cdots+n_r}Y^\gamma.
\end{equation}
Finally,
\begin{equation}\label{Equ:mass totale egale 1}
\begin{split}
&\;\quad H(\mathbf{t},(\indic,\cdots\indic))
=(1-(t_1+\cdots+t_r))^{-d}\\&=\sum_{N\ge
0}\frac{(N+d-1)!}{N!(d-1)!}\sum_{\begin{subarray}{c}
\mathbf{n}=(n_i)\in\mathbb Z_{\ge
0}^r\\
n_1+\cdots+n_r=N
\end{subarray}}\frac{N!}{n_1!\cdots
n_r!}\mathbf{t}^{\mathbf{n}}\\
&=\sum_{\mathbf{n}=(n_i)\in\mathbb Z_{\ge
0}^r }\frac{(n_1+\cdots+n_r+d-1)!}{
n_1!\cdots
n_r!(d-1)!}\mathbf{t}^{\mathbf{n}}.\end{split}\end{equation}
For any $\mathbf{n}=(n_i)\in\mathbb Z_{\ge
0}^{r}$, let
\[\rho_{\mathbf{n}}=\frac{(d-1)!n_1!\cdots n_r!}
{(n_1+\cdots+n_r+d-1)!}\sum_{\begin{subarray}{c}(\alpha_i)\in
(\mathbb Z_{\ge 0}^{d-1})^r\\ |\alpha_i|\le
n_i\end{subarray}}
\left(\frac{(\alpha_1+\cdots+\alpha_r)!}{\alpha_1!\cdots\alpha_r!}
\frac{(n_1+\cdots+n_r-|\alpha_1+\cdots+\alpha_r|)!}
{(n_1-|\alpha_1|)!\cdots(n_r-|\alpha_r|)!}\right)\\
\delta_{(\alpha_1,\cdots,\alpha_r)}.
\]
The definition of $H(\mathbf{t},\mathbf{X}) $
and the equalities \eqref{Equ:X egale a 1},
\eqref{Equ:X egale a Y} and \eqref{Equ:mass
totale egale 1} implies that
$\rho_{\mathbf{n}}$ verifies the required
conditions.
\end{proof}

We introduce some operators on the space of
Borel measures on $\mathbb R$ which we shall
use later. We denote by $C_c(\mathbb R)$ the
space of continuous functions with compact
support on $\mathbb R$. Recall that a {\it
Radon measure} on $\mathbb R$ is nothing but
a positive linear form on $C_c(\mathbb R)$.
Note that all bounded Borel measures on
$\mathbb R$ are Radon measures. We denote by
$\mathscr M_+$ the convex cone of Radon
measures on $\mathbb R$ (in the space of all
linear forms on $C_c(\mathbb R)$) and by
$\mathscr M_1$ the sub-space of Borel
probability measures on $\mathbb R$. Note
that $\mathscr M_1$ is a convex subset of
$\mathscr M_+$.

If $c$ is a real number, we denote by
$\varphi_c:\mathbb R\rightarrow\mathbb R$ the
mapping which sends $x$ to $x+c$. It induces
an automorphism of convex cone
$\tau_c:\mathscr M_+\rightarrow\mathscr M_+$
which sends $\nu\in\mathscr M_+$ to the
direct image of $\nu$ by $\varphi_c$. Thus we
define an action of $\mathbb R$ on $\mathscr
M_+$ which keeps $\mathscr M_1$ invariant,
and which preserves the order $\succ$ between
Borel measures.

If $\varepsilon$ is a strictly positive real
number, we denote by
$\gamma_{\varepsilon}:\mathbb
R\rightarrow\mathbb R$ the dilation mapping
which sends $x\in\mathbb R$ to $\varepsilon
x$. This mapping induces by direct image an
automorphism of the convex cone
$T_{\varepsilon}:\mathscr
M_+\rightarrow\mathscr M_+$ which keeps
$\mathscr M_1$ invariant and also preserves
the order $\succ$.

We now consider a vector space $V$ of finite
dimension $d$ over a field $K$. For any
integer $n\ge 0$, let $B_n=S^nV$ be the $n^{\mathrm{th}}$ symmetric power of $V$, equipped
with a separated, exhaustive and left
continuous $\mathbb R$-filtration. We shall
use Theorem \ref{Thm:theorme cle sur
equiprobability} to establish the almost
super-additivity of the measures associated
to $B_n$ ($n\ge 1$) under the condition that
the graded algebra $B=\bigoplus_{n\ge 0}B_n$
is quasi-filtered.

Choose a base $\mathbf{e}=(e_i)_{1\le i\le
d}$ of $V$. We then have a mapping
$\varphi_n:\Delta_n^{(d)}\rightarrow B_n$
which sends
$\alpha=(\alpha_1,\cdots,\alpha_d)$ to
$e^\alpha:=e_1^{\alpha_1}\cdots
e_d^{\alpha_d}$. The image of
$\Delta_n^{(d)}$ by $\varphi_n$ is a base of
$B_n$. There exists, for each $n\in\mathbb
N$, a maximal base
$\mathbf{u}^{(n)}=(u_{\alpha})_{\alpha\in\Delta_n^{(d)}}$
of $B_n$ such that (see Proposition
\ref{Pro:construction de base maximale a
partir une base quelconique} {\it infra})
\begin{equation}\label{Equ:forme de ualpha}u_{\alpha}\in
e^\alpha+\sum_{\beta<\alpha}Ke^\beta.\end{equation}
If ${\mathbf{n}}=(n_i)_{1\le i\le r
}\in\mathbb Z_{\ge 0}^r$ and
$N=n_1+\cdots+n_r$, for any
$\gamma\in\Delta_{N}^{(d)}$, let
$u^{({\mathbf n})}_\gamma$ be an element in
\[\left\{\left.\prod_{i=1}^ru_{\alpha_i}\;\right|\;\alpha_i\in\Delta_{n_i}^{(d)},\;\sum_{i=1}^r\alpha_i=\gamma\right\}.\]
such that
\[\lambda(u^{(\mathbf{n})}_\gamma)=\max_{\begin{subarray}{c}
\alpha_i\in\Delta_{n_i}^{(d)}\\
\alpha_1+\cdots+\alpha_r=\gamma
\end{subarray}}
\lambda(u_{\alpha_1}\cdots u_{\alpha_r}).\]
From \eqref{Equ:forme de ualpha}, we deduce
\[u_\gamma^{(\mathbf{n})}\in e^\gamma+\sum_{\delta<\gamma}Ke^\delta.\]
Hence
$\mathbf{u}^{(\mathbf{n})}:=(u_{\gamma}^{(\mathbf{n}
)})_{\gamma\in\Delta_{N}^{(d)}}$ is a base of
$B_{N}$.

\begin{proposition}
\label{Pro:suradditivite de integrale} Let
$f:\mathbb Z_{\ge 0}\rightarrow\mathbb R_{\ge
0}$ be a function, $c$ be a positive real
number and $g:\mathbb R\rightarrow\mathbb R$
be a concave increasing $c$-Lipschitz
function. Suppose that the graded algebra
$B=\bigoplus_{n\ge 0}B_n$ is
$f$-quasi-filtered. If for any integer $n\ge
0$, denote by
\[I_n=\int_\mathbb Rg\,\mathrm{d}\Big(T_{\frac 1n}\nu_{B_n}\Big),\]
then for any integer $r\ge 2$ and any
$\mathbf{n}=(n_i)\in\mathbb Z_{\ge n_0}^r$,
by writing $N=n_1+\cdots+n_r$, we have
\[NI_{N}\ge \sum_{i=1}^r \Big(n_iI_{n_i}-cf(n_i)\Big).\]
\end{proposition}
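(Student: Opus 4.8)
The plan is to transfer the inequality down to the monomial bases $\mathbf{u}^{(n)}$ of the symmetric powers $B_n=S^nV$ introduced just before the statement, and then to feed the resulting combinatorial average into Theorem~\ref{Thm:theorme cle sur equiprobability}. Throughout write $D=\rang B_N=\card\Delta_N^{(d)}$ and $D_i=\rang B_{n_i}=\card\Delta_{n_i}^{(d)}$. Since the filtrations on the finite-dimensional spaces $B_n$ are separated and exhaustive, each $\nu_{B_n}$ is finitely supported, so the $I_n$ are finite sums and all measures occurring below have bounded support; hence the order $\succ$ may be tested against the (a priori unbounded) increasing function $g$. First I would observe that, by \eqref{Equ:forme de ualpha}, the family $\mathbf{u}^{(\mathbf{n})}=(u_\gamma^{(\mathbf{n})})_{\gamma\in\Delta_N^{(d)}}$ is triangular for the lexicographic order, hence is a base of $B_N$. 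As $\nu_{B_N}$ is the measure attached to a maximal base (Definition~\ref{Def:mesure associee a une filtration}), we have $\nu_{B_N}\succ\nu_{\mathbf{u}^{(\mathbf{n})}}$, and since $T_{1/N}$ preserves $\succ$ and $g$ is increasing,
\[
N I_N\ \ge\ \frac{N}{D}\sum_{\gamma\in\Delta_N^{(d)}}g\!\left(\frac{\lambda(u_\gamma^{(\mathbf{n})})}{N}\right).
\]

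Next I would invoke the quasi-filtration hypothesis. For $\gamma\in\Delta_N^{(d)}$ put
\[
M_\gamma\ =\ \max\Big\{\,\textstyle\sum_{i=1}^r\lambda\big(u_{\alpha_i}^{(n_i)}\big)\ \Big|\ \alpha_i\in\Delta_{n_i}^{(d)},\ \alpha_1+\cdots+\alpha_r=\gamma\,\Big\}.
\]
Because every $n_i\ge n_0$, Proposition~\ref{Pro:critere numerique de structure d'algebre filre} gives $\lambda\big(u_{\alpha_1}^{(n_1)}\cdots u_{\alpha_r}^{(n_r)}\big)\ge\sum_i\big(\lambda(u_{\alpha_i}^{(n_i)})-f(n_i)\big)$ for each admissible $(\alpha_i)$; maximizing over $(\alpha_i)$ and recalling the definition of $u_\gamma^{(\mathbf{n})}$ yields $\lambda(u_\gamma^{(\mathbf{n})})\ge M_\gamma-\sum_i f(n_i)$. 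Since $g$ is increasing and $c$-Lipschitz, $g(\lambda(u_\gamma^{(\mathbf{n})})/N)\ge g(M_\gamma/N)-\tfrac{c}{N}\sum_i f(n_i)$; summing over the $D$ indices $\gamma$ and rescaling by $N/D$, the error collapses to $-c\sum_i f(n_i)$, so that
\[
N I_N\ \ge\ \frac{N}{D}\sum_{\gamma\in\Delta_N^{(d)}}g\!\left(\frac{M_\gamma}{N}\right)-c\sum_{i=1}^r f(n_i).
\]

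It then suffices to show $\tfrac{N}{D}\sum_\gamma g(M_\gamma/N)\ge\sum_i n_i I_{n_i}$, and this is where Theorem~\ref{Thm:theorme cle sur equiprobability} enters: let $\rho_{\mathbf{n}}$ be the probability measure on $\Delta_{n_1}^{(d)}\times\cdots\times\Delta_{n_r}^{(d)}$ whose image under the addition map ``$+$'' to $\Delta_N^{(d)}$ is equidistributed and whose marginal on each $\Delta_{n_i}^{(d)}$ is equidistributed. Equidistribution of the image under ``$+$'' identifies $\tfrac1D\sum_\gamma g(M_\gamma/N)$ with $\int g\big(M_{\alpha_1+\cdots+\alpha_r}/N\big)\,\mathrm d\rho_{\mathbf{n}}$, and since $\sum_i\lambda(u_{\alpha_i}^{(n_i)})$ is one of the quantities appearing in the maximum defining $M_{\alpha_1+\cdots+\alpha_r}$ while $g$ is increasing, this is bounded below by $\int g\big(\tfrac1N\sum_i\lambda(u_{\alpha_i}^{(n_i)})\big)\,\mathrm d\rho_{\mathbf{n}}$. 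As $N=n_1+\cdots+n_r$, concavity of $g$ gives the pointwise bound $g\big(\tfrac1N\sum_i\lambda(u_{\alpha_i}^{(n_i)})\big)\ge\sum_i\tfrac{n_i}{N}g\big(\tfrac1{n_i}\lambda(u_{\alpha_i}^{(n_i)})\big)$; integrating against $\rho_{\mathbf{n}}$, using that its $i$-th marginal is equidistributed on $\Delta_{n_i}^{(d)}$ and that $\mathbf{u}^{(n_i)}$ is a maximal base of $B_{n_i}$ (so $I_{n_i}=\tfrac1{D_i}\sum_{\alpha\in\Delta_{n_i}^{(d)}}g(\lambda(u_\alpha^{(n_i)})/n_i)$), one finds this integral is $\ge\sum_i\tfrac{n_i}{N}I_{n_i}$. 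Multiplying by $N$ and combining with the previous display gives $N I_N\ge\sum_{i=1}^r(n_iI_{n_i}-cf(n_i))$.

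The genuinely delicate point is Theorem~\ref{Thm:theorme cle sur equiprobability} itself, whose (generating-series) proof produces a single measure $\rho_{\mathbf{n}}$ that is equidistributed both after each projection and after the addition map; it is precisely this twofold property that lets the ``maximum over decompositions plus monotonicity'' half and the ``$N=\sum_i n_i$ plus concavity'' half of the estimate be glued together. The remaining ingredients — the passage to the triangular maximal bases $\mathbf{u}^{(\mathbf{n})}$, the numerical criterion for quasi-filtered algebras, and the Lipschitz bound absorbing the $f(n_i)$ terms — are routine bookkeeping.
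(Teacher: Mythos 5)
Your proof is correct and follows the same strategy as the paper: pass from $\nu_{B_N}$ (maximal base) to the triangular base $\mathbf{u}^{(\mathbf{n})}$, use the quasi-filtration inequality and the $c$-Lipschitz bound to absorb the $f(n_i)$ error, then invoke Theorem~\ref{Thm:theorme cle sur equiprobability} (equidistributed marginals and image under ``$+$'') together with concavity to compare with $\sum_i n_i I_{n_i}$. The only difference is cosmetic: you interpose the quantity $M_\gamma$ and apply the Lipschitz estimate before transferring to the $\rho_{\mathbf{n}}$-integral, whereas the paper performs all the estimates inside a single chain of $\rho_{\mathbf{n}}$-integrals; the content and the lemmas used are identical.
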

\begin{proof} For any integer $n\ge 0$,
denote by $\xi_n$ the equidistributed measure
on $\Delta_n^{(d)}$, by $\rho_{\mathbf{n}}$ a
measure on
$\Delta_{n_1}^{(d)}\times\cdots\times\Delta_{n_r}^{(d)}$
satisfying the conditions of Theorem
\ref{Thm:theorme cle sur equiprobability},
and by $\mathbf{u}^{(\mathbf{n})}$ the base
of $B_N$ constructed as above. Then by
Definition \ref{Def:mesure associee a une
filtration},
\[\begin{split}I_{N}&\ge\int_\mathbb
Rg\,{\mathrm{d}}\Big(T_{\frac
1N}\nu_{\mathbf{u}^{(\mathbf{n})}}\Big)
=\int_{\Delta_{N}^{(d)}}g
\left(\frac{1}{N}\lambda(u_{\gamma}^{(\mathbf{n})})\right)
\mathrm{d}\xi_{N}(\gamma)\\
&=\int_{\Delta_{n_1}^{(d)}\times\cdots\times\Delta_{n_r}^{(d)}}g
\left(\frac{1}{N}\lambda(u_{\alpha_1+\cdots+\alpha_r}^{(\mathbf{n})})
\right)
\mathrm{d}\rho_{\mathbf{n}}(\alpha_1,\cdots,\alpha_r).\end{split}\]
Since $g$ is an increasing function, by
definition of $u_{\gamma}^{(\mathbf{n})}$, we
have
\[
I_N\ge\int_{\Delta_{n_1}^{(d)}
\times\cdots\times\Delta_{n_r}^{(d)}}
g\left(\frac{1}{N}\lambda(u_{\alpha_1}\cdots
u_{\alpha_r}) \right)
\mathrm{d}\rho_{\mathbf{n}}(\alpha_1,\cdots,\alpha_n).\]
Since $B$ is an $f$-quasi-filtered graded
algebra and since $g$ is increasing, we
obtain
\[
I_N\ge\int_{\Delta_{n_1}^{(d)}\times\cdots\times\Delta_{n_r}^{(d)}}
g\left(\frac{1}{N}\sum_{i=1}^r\Big(\lambda(u_{\alpha_i})
-f(n_i)\Big) \right)
\mathrm{d}\rho_{\mathbf{n}}(\alpha_1,\cdots,\alpha_r).\]
Since the function $g$ is $c$-Lipschitz, then
\[I_N\ge\int_{\Delta_{n_1}^{(d)}\times\cdots\times\Delta_{n_r}^{(d)}}
\left[g\left(\frac{1}{N}\sum_{i=1}^r\lambda(u_{\alpha_i})
\right)-\frac{c}{N}\sum_{i=1}^rf(n_i)
\right]\mathrm{d}\rho_{\mathbf{n}}(\alpha_1,\cdots,\alpha_r).\]
Then the concavity of $g$ implies that
\[
I_N\ge\int_{\Delta_{n_1}^{(d)}\times\cdots\times\Delta_{n_r}^{(d)}}
\left[\sum_{i=1}^r\frac{n_i}{N}
g\left(\frac{\lambda(u_{\alpha_i})}{n_i}\right)\right]
\mathrm{d}\rho_{\mathbf{n}}(\alpha_1,\cdots,\alpha_r)-\frac{c}{N}
\sum_{i=1}^rf(n_i)\] Finally, since the
direct image of $\rho_{\mathbf{n}}$ by the
$r$ projections are equidistributed, we
obtain that
\[I_N\ge\sum_{i=1}^r\frac{n_i}{N}I_{n_i}-\frac{c}{N}\sum_{i=1}^rf(n_i).
\]
\end{proof}

\begin{corollary}\label{Cor:suradditivite de l'integrale}
With the notations of Proposition
\ref{Pro:suradditivite de integrale}, if the
sequence $(I_n)_{n\ge 0}$ is bounded from
above (for example if $g$ is bounded from
above, or if there exists $a\in\mathbb R$
such that $\supp
(\nu_{B_n})\subset]-\infty,na]$ for any
sufficiently large integer $n$) and if
$\displaystyle\lim_{n\rightarrow+\infty}f(n)/n=0$,
then the sequence $(I_n)_{n\ge 0}$ has a
limit when $n\rightarrow+\infty$.
\end{corollary}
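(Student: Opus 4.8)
The plan is to recognize the statement as a direct application of the Fekete-type result Corollary \ref{Cor:limite de an sur n forme forte} to the sequence $a_n:=nI_n$ ($n\ge 1$). Before invoking it I would first check that each $I_n$ is a well-defined real number: the filtration on $B_n$ is separated, exhaustive and left continuous, so by Proposition \ref{Pro:proprietes de la fonction lambda} 4) together with Proposition \ref{Pro:crietere numerique de separe et exhaustive} the index function $\lambda$ takes only finitely many \emph{finite} values on $B_n\setminus\{0\}$; hence $\nu_{B_n}$, and therefore $T_{\frac1n}\nu_{B_n}$, is a compactly supported Borel probability measure, and $I_n=\int_{\mathbb R}g\,\mathrm d\bigl(T_{\frac1n}\nu_{B_n}\bigr)\in\mathbb R$.

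Next I would simply rewrite the conclusion of Proposition \ref{Pro:suradditivite de integrale}: for every integer $r\ge 2$ and every $\mathbf n=(n_i)_{1\le i\le r}\in\mathbb Z_{\ge n_0}^r$, putting $N=n_1+\cdots+n_r$,
\[
a_N=NI_N\ge\sum_{i=1}^r\bigl(n_iI_{n_i}-cf(n_i)\bigr)=\sum_{i=1}^r a_{n_i}-\sum_{i=1}^r cf(n_i).
\]
Setting $\widetilde f:=cf$, this is exactly hypothesis 1) of Corollary \ref{Cor:limite de an sur n forme forte}, and $\widetilde f(n)/n=cf(n)/n\to 0$ by assumption. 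For hypothesis 2), the assumed upper bound $I_n\le M$ for all $n$ gives $a_n=nI_n\le Mn\le(|M|+1)n$, so one may take $\alpha=|M|+1>0$. Corollary \ref{Cor:limite de an sur n forme forte} then yields that $(a_n/n)_{n\ge 1}=(I_n)_{n\ge 1}$ converges in $\mathbb R$, which is the assertion.

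It then remains to check that the two parenthetical sufficient conditions do force $(I_n)$ to be bounded above. If $g$ is bounded above by $M_0$, then since $T_{\frac1n}\nu_{B_n}$ is a probability measure we get $I_n\le M_0$ at once. If instead $\supp(\nu_{B_n})\subset\,]-\infty,na]$ for all sufficiently large $n$, then $\supp(T_{\frac1n}\nu_{B_n})\subset\,]-\infty,a]$ for those $n$, and since $g$ is increasing $I_n\le g(a)$ for all large $n$; the finitely many remaining values are real numbers, so $\sup_{n\ge 1}I_n<+\infty$ in either case.

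I do not anticipate a genuine obstacle here: the combinatorial core is already encapsulated in Theorem \ref{Thm:theorme cle sur equiprobability} and exploited in Proposition \ref{Pro:suradditivite de integrale}, and the passage from almost-superadditivity to convergence is exactly what Corollary \ref{Cor:limite de an sur n forme forte} provides. The only points demanding a little care are the finiteness of the $I_n$ (via compact support of $\nu_{B_n}$) and the elementary verification that the stated examples guarantee the required uniform upper bound on $(I_n)$.
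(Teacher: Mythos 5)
Your proposal is correct and takes exactly the approach the paper intends: the paper's proof of this corollary is a one-line reduction to Proposition \ref{Pro:suradditivite de integrale} combined with Corollary \ref{Cor:limite de an sur n forme forte}, and you have filled in precisely the routine verifications (taking $a_n=nI_n$, $\widetilde f=cf$, and checking the uniform upper bound and the two parenthetical sufficient conditions) that the paper leaves implicit.
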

\begin{proof}
It is a consequence of
Proposition \ref{Pro:suradditivite de
integrale} and Corollary \ref{Cor:limite
de an sur n forme forte}.
\end{proof}

\section{Polygon associated to a Borel measure}

\hskip\parindent We explain in this section
how to associate to a Borel probability
measure on $\mathbb R$ a concave function on
$[0,1]$ which takes zero value at the origin.
Furthermore, if the measure is a linear
combination of Dirac measures, then the
associated concave function is piecewise
linear, therefore is a {\it polygon} on
$[0,1]$.

If $f:\mathbb R\rightarrow [0,1]$ is a right
continuous decreasing function such that
\[\lim_{x\rightarrow
-\infty}f(x)=1,\quad\text{and}\quad
\lim_{x\rightarrow+\infty}f(x)=0,\] we define
the {\it quasi-inverse} of $f$ the function
$f^*:]0,1[\rightarrow\mathbb R$ which sends
any $t\in ]0,1[$ to $\sup\{x\;|\; f(x)>t\}$.
The following properties of $f^*$ are easy to
verify.

\begin{proposition}\label{Pro:proprietes de quasi-inverse}
Let  $f:\mathbb R\rightarrow [0,1]$ be a
right continuous decreasing function such
that $\displaystyle\lim_{x\rightarrow -\infty}f(x)=1$ and
$\displaystyle\lim_{x\rightarrow+\infty}f(x)=0$. Then
\begin{enumerate}[1)]
\item for any $t\in ]0,1[$ and any
$y\in\mathbb R$, $f(y)>t$ if and only if
$y<f^*(t)$;
\item $f^*$ is a right continuous decreasing function;
\item $\displaystyle\sup_{t\in]0,1[}f^*(t)=\inf\{x\in\mathbb
R\;|\;f(x)=0\}$ and
$\displaystyle\inf_{t\in]0,1[}f^*(t)=\sup\{x\in\mathbb
R\;|\;f(x)=1\}$ (by convention
$\inf\emptyset=+\infty$ and
$\sup\emptyset=-\infty$).
\end{enumerate}
\end{proposition}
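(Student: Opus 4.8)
The plan is to prove assertion 1) first and to deduce assertions 2) and 3) from it by purely order-theoretic bookkeeping. For the ``if'' direction of 1), suppose $y<f^*(t)=\sup\{x\mid f(x)>t\}$; then $y$ is not an upper bound of $\{x\mid f(x)>t\}$, so there is some $x>y$ with $f(x)>t$, and since $f$ is decreasing, $f(y)\ge f(x)>t$. For the ``only if'' direction, suppose $f(y)>t$; this is the one place where right continuity is genuinely used: $\lim_{x\to y+}f(x)=f(y)>t$ forces $f(x)>t$ for some $x>y$, whence $f^*(t)\ge x>y$. I expect this single invocation of right continuity to be the conceptual heart of the statement; everything else is formal. (That $f^*$ is real-valued on $]0,1[$ in the first place already uses the two limit hypotheses, but this is built into the definition.)

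For 2), the inclusion $\{x\mid f(x)>t_2\}\subset\{x\mid f(x)>t_1\}$ for $t_1<t_2$ shows that $f^*$ is decreasing; hence the right limit $L:=\lim_{s\to t+}f^*(s)=\sup_{s>t}f^*(s)$ exists and satisfies $L\le f^*(t)$. For the reverse inequality I would fix any $y<f^*(t)$, apply 1) to get $f(y)>t$, choose $s$ with $t<s<1$ and $s<f(y)$ (possible since $f(y)>t$), apply 1) again to obtain $y<f^*(s)\le L$, and then let $y$ tend to $f^*(t)$ from below to conclude $f^*(t)\le L$. Thus $f^*(t)=L$, i.e. $f^*$ is right continuous at $t$.

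For 3), I route everything through 1) once more. Put $A=\{x\mid f(x)=0\}$. For $x\in A$ and $t\in{}]0,1[$ we have $f(x)=0<t$, so by 1) one gets $f^*(t)\le x$; taking the infimum over $x\in A$ and then the supremum over $t$ gives $\sup_{t}f^*(t)\le\inf A$. Conversely, since $\lim_{x\to-\infty}f(x)=1$, the set $A$ is bounded below whenever it is nonempty; for $\varepsilon>0$ the point $\inf A-\varepsilon$ then lies outside $A$, so $f(\inf A-\varepsilon)>0$, and choosing a small $t\in{}]0,1[$ with $t<f(\inf A-\varepsilon)$ and applying 1) yields $f^*(t)>\inf A-\varepsilon$; letting $\varepsilon\to 0$ gives $\sup_{t}f^*(t)\ge\inf A$. (If $A=\emptyset$ the same argument with $\inf A-\varepsilon$ replaced by an arbitrary real number shows both sides are $+\infty$.) The identity $\inf_{t}f^*(t)=\sup\{x\mid f(x)=1\}$ is entirely symmetric, exchanging the roles of $0$ and $1$ and of $\lim_{x\to-\infty}$ and $\lim_{x\to+\infty}$, and I would prove it in the same way. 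The only thing to watch throughout is the interplay between the open interval $]0,1[$, its endpoints, and the conventions $\inf\emptyset=+\infty$ and $\sup\emptyset=-\infty$.
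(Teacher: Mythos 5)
The paper offers no proof here — it merely states that ``the following properties of $f^*$ are easy to verify'' and then records the proposition — so there is nothing to compare against. Your argument is correct and complete: the equivalence in 1) is indeed the right pivot, with right continuity of $f$ used exactly once (to upgrade $f(y)>t$ from ``$y$ is in the set'' to ``$y$ is strictly below its supremum''), and you correctly close the bookkeeping needed for $f^*$ to be real-valued, for the right-limit argument in 2), and for the empty-set conventions in 3). A minor remark: in the proof of 2), the choice of $s$ must satisfy $t<s<\min(1,f(y))$, which you do ensure, and $f^*(s)\le L$ because $s>t$ and $L=\sup_{s'>t}f^*(s')$; this is exactly what your text says, so no gap. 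The symmetric half of 3) works as you describe, with the contrapositive of 1), namely $f(y)\le t\iff y\ge f^*(t)$, providing the needed upper bound on $f^*$.
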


\begin{proposition}\label{Pro:le polygone associe a une filtration}
Let $\nu$ be a Borel probability measure on
$\mathbb R$ which is a linear combination of
Dirac measures. If we denote by $f:\mathbb
R\rightarrow [0,1]$ the function
$f(x)=\nu\big(]x,+\infty[\big)$, then the
function on $[0,1]$ defined by $\displaystyle
P(\nu)(t){:=}\int_0^tf^*(s)\mathrm{d}s$ is a
polygon on $[0,1]$.
\end{proposition}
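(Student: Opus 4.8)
The plan is to reduce everything to an explicit description of the quasi-inverse $f^{*}$, after which the assertion about $P(\nu)$ becomes immediate. Write $\nu=\sum_{i=1}^{n}p_i\delta_{a_i}$ with $a_1>a_2>\cdots>a_n$ and $p_i>0$, $\sum_{i=1}^{n}p_i=1$; such a presentation exists because $\nu$ is a probability measure which is a finite linear combination of Dirac measures (discard vanishing coefficients, merge equal atoms). Set $s_0=0$ and $s_k=p_1+\cdots+p_k$ for $1\le k\le n$, so that $0=s_0<s_1<\cdots<s_n=1$. Then $f(x)=\nu(]x,+\infty[)=\sum_{i:\,a_i>x}p_i$ equals $1$ for $x<a_n$, equals $s_k$ for $a_{k+1}\le x<a_k$ (where $1\le k\le n-1$), and equals $0$ for $x\ge a_1$. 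In particular $f$ is decreasing, right continuous, and satisfies $\lim_{x\to-\infty}f(x)=1$ and $\lim_{x\to+\infty}f(x)=0$, so $f^{*}$ is defined and Proposition~\ref{Pro:proprietes de quasi-inverse} applies to $f$.

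Next I would compute $f^{*}$ from this step-function description. Fix $t\in\,]0,1[$ and let $k$ be the least index with $s_k>t$ (it exists since $s_0=0\le t<1=s_n$), so that $s_{k-1}\le t<s_k$ with $1\le k\le n$. From the formula for $f$ one gets $f(x)>t$ if and only if $x<a_k$, hence $\{x\mid f(x)>t\}=\,]-\infty,a_k[$ and $f^{*}(t)=a_k$. Thus for $1\le k\le n$ the function $f^{*}$ is constant equal to $a_k$ on $]s_{k-1},s_k[$; in particular $f^{*}$ takes its values in the finite set $\{a_1,\dots,a_n\}$ and is bounded. Consequently $s\mapsto f^{*}(s)$ is integrable on $[0,t]$ for every $t\in[0,1]$, so $P(\nu)(t)=\int_0^{t}f^{*}(s)\,\mathrm{d}s$ is a well-defined continuous function on $[0,1]$ with $P(\nu)(0)=0$.

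Finally, since $f^{*}$ equals the constant $a_k$ almost everywhere on $[s_{k-1},s_k]$, the function $P(\nu)$ is affine of slope $a_k$ on $[s_{k-1},s_k]$ for each $1\le k\le n$; thus $P(\nu)$ is piecewise linear on $[0,1]$ with breakpoints among $s_0,\dots,s_n$, and the strict decrease $a_1>a_2>\cdots>a_n$ of the successive slopes makes it concave. Together with $P(\nu)(0)=0$, this is precisely the statement that $P(\nu)$ is a polygon on $[0,1]$. None of these steps is genuinely difficult; the only points deserving attention are the determination of $f^{*}$ at the jump values $t=s_k$ (which do not affect the integral) and the boundedness of $f^{*}$ near the endpoints $0$ and $1$, which is what guarantees that $P(\nu)$ does not degenerate there.
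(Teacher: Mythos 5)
Your proof is correct and follows essentially the same route as the paper's: you observe that $f$ is a decreasing, right-continuous step function taking the boundary limits $1$ and $0$, deduce that $f^{*}$ is a bounded, decreasing, piecewise constant function, and conclude that its primitive $P(\nu)$ is piecewise linear, concave, and vanishes at the origin. The only difference is that you make the description of $f$ and $f^{*}$ fully explicit via the partial sums $s_k$, whereas the paper states the qualitative properties directly; this is a matter of level of detail, not of method.
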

\begin{proof}
Since $\nu$ is a linear combination of Dirac
measures, the function $f$ is decreasing,
right continuous, and piecewise constant.
Furthermore, $f(x)=0$ (resp. $f(x)$=1) when
$x$ is sufficiently positive (resp.
negative). Therefore, $f^*$ is decreasing,
right continuous, piecewise constant and
bounded. As $P(\nu)$ is the primitive
function of $f^*$, which takes zero value at
the origin, we obtain that $P(\nu)$ is a
polygon.
\end{proof}

Actually, $P(\nu)$ is just the {\it Legendre
transformation} of the concave function
$x\mapsto\int_{0}^xf(y)\mathrm{d}y$ (see
\cite{Homander94} II.2.2), which is called
the {\it polygon} associated to the Borel
probability measure $\nu$.

We can calculate explicitly $P(\nu)$. Suppose
that $\nu$ is of the form
$\displaystyle\sum_{i=1}^n
(t_i-t_{i-1})\delta_{a_i}$, where
$a_1>\cdots>a_n$, and $0=t_0<\cdots<t_n=1$.
Then $\displaystyle
f(x)=\indic_{]-\infty,a_n[}(x)+\sum_{i=1}^{n-1}t_i\indic_{[a_{i+1},a_i[}(x),
$ and hence $\displaystyle
f^*(t)=a_0\indic_{]0,t_1[}(t)+\sum_{i=2}^{n}a_i\indic_{[t_{i-1},t_i[}(t)
$. Therefore,
\[\displaystyle
P(\nu)(t)=\sum_{i=1}^{j-1}a_i(t_i-t_{i-1})
+a_j(t-t_{j-1}),\quad t\in
[t_{j-1},t_{j}],\quad 1\le j\le n.\]

If $V$ is a non-zero vector space of finite
rank over $K$ and $\mathcal F$ is a separated
and exhaustive filtration of $V$. We call
{\it polygon} associated to the filtration
$\mathcal F$ the polygon $P(\nu_{\mathcal
F,V})$ on $[0,1]$, denoted by $P_{\mathcal
F,V}$ (or simply $P_V$ if there is no
ambiguity on the filtration).

Suppose in addition that the filtration
$\mathcal F$ is left continuous. Then
$\mathcal F$ corresponds to a flag
\[0=V^{(0)}\subsetneq V^{(1)}\subsetneq\cdots\subsetneq V^{(n)}=V\]
and a strictly decreasing sequence
$(a_i)_{1\le i\le n}$. We have shown that its
associated probability measure is
$\displaystyle\nu_{\mathcal
F,V}=\sum_{i=1}^n\Big(\frac{\rang
V^{(i)}}{\rang V}-\frac{\rang
V^{(i-1)}}{\rang V}\Big)\delta_{a_i}$.
Therefore we have, for any integer $1\le j\le
n$ and any $t\in
\displaystyle\Big[\frac{\rang
V^{(j-1)}}{\rang V},\frac{\rang
V^{(j)}}{\rang V}\Big]$,
\[P_{\mathcal F,V}(t)=\sum_{i=1}^{j-1}
a_i\Big(\frac{\rang V^{(i)}}{\rang
V}-\frac{\rang V^{(i-1)}}{\rang
V}\Big)+a_j\Big(t-\frac{\rang
V^{(j-1)}}{\rang V}\Big).\]

For a general Borel probability measure
$\nu$, similarly to Proposition \ref{Pro:le
polygone associe a une filtration}, we can
also define a concave function $P(\nu)$. If
$\nu_1$ and $\nu_2$ are two Borel probability
measures on $\mathbb R$ such that
$\nu_1\succ\nu_2$, then $P(\nu_1)\ge
P(\nu_2)$. Furthermore, for any real number
$a$, $P(\tau_a\nu)(t)=P(\nu)(t)+at$ and for
any strictly positive number $\varepsilon$,
$P(T_{\varepsilon}\nu)=\varepsilon P(\nu)$.

In the following, we explain that the vague
convergence of Borel probability measures
implies the uniform convergence of associated
polygons. With this observation, to prove the
convergence of polygons, it suffice to
establish the vague convergence of
corresponding probability measures. We begin
by presenting some properties of Borel
probability measures.

\begin{lemma}\label{Lem:majoration de dilatation et translation}
For any function $f\in C_c(\mathbb R)$, we
have
\[\lim_{\varepsilon\rightarrow 0}
\|f\circ\gamma_{1+\varepsilon}-f\|_{\sup}=0,
\quad \lim_{\varepsilon\rightarrow
0}\|f\circ\varphi_{\varepsilon}-f\|_{\sup}=0.\]
\end{lemma}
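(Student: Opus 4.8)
The plan is to reduce both assertions to the uniform continuity of $f$. Since $f\in C_c(\mathbb R)$ is continuous with compact support, it is bounded and uniformly continuous on all of $\mathbb R$; I would record its modulus of continuity $\omega(\delta):=\sup\{|f(u)-f(v)|\;:\;|u-v|\le\delta\}$, which satisfies $\omega(\delta)\to 0$ as $\delta\to 0+$. Everything will then follow by estimating, pointwise in $x$, the distance between the argument of $f$ in the composed function and $x$ itself, and feeding that distance into $\omega$.

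The translation statement is immediate from this: for every $x\in\mathbb R$ one has $|f\circ\varphi_{\varepsilon}(x)-f(x)|=|f(x+\varepsilon)-f(x)|\le\omega(|\varepsilon|)$, so $\|f\circ\varphi_{\varepsilon}-f\|_{\sup}\le\omega(|\varepsilon|)\to 0$ as $\varepsilon\to 0$.

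For the dilation statement the displacement is $\gamma_{1+\varepsilon}(x)-x=\varepsilon x$, which is \emph{not} uniformly small in $x$; the compact support of $f$ is exactly what repairs this. I would fix $M>0$ with $\supp f\subset[-M,M]$, restrict to $|\varepsilon|\le\tfrac12$, and split $\mathbb R$ into two regions. If $|x|>2M$, then both $|x|>M$ and $|(1+\varepsilon)x|\ge\tfrac12|x|>M$, hence $f(x)=f((1+\varepsilon)x)=0$ and the difference vanishes. If $|x|\le 2M$, then $|(1+\varepsilon)x-x|=|\varepsilon|\,|x|\le 2M|\varepsilon|$, so $|f((1+\varepsilon)x)-f(x)|\le\omega(2M|\varepsilon|)$. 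Combining the two cases yields $\|f\circ\gamma_{1+\varepsilon}-f\|_{\sup}\le\omega(2M|\varepsilon|)\to 0$ as $\varepsilon\to 0$.

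The only point requiring any care — the ``main obstacle'', such as it is — is precisely this non-uniformity of $\varepsilon x$, and it is resolved by the observation that outside a fixed compact set both $f$ and $f\circ\gamma_{1+\varepsilon}$ vanish identically. No Lipschitz bound or differentiability of $f$ is used, only plain uniform continuity; so the lemma is elementary and the argument above is essentially the complete proof modulo the routine verification that $\omega(\delta)\to 0$.
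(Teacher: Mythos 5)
Your proof is correct and follows essentially the same route as the paper: both reduce the translation statement to plain uniform continuity, and both handle the dilation by observing that for $|\varepsilon|\le\frac12$ the support of $f\circ\gamma_{1+\varepsilon}-f$ is contained in the fixed compact set $[-2M,2M]$, on which $|\varepsilon x|$ is uniformly small. Your write-up is slightly more explicit in naming the modulus of continuity and giving the bound $\omega(2M|\varepsilon|)$, but the idea is identical.
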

\begin{proof}
Suppose that $\supp(f)\subset[-K,K]$ ($K>0$).
For any number $-1/2<\varepsilon<1/2$,
\[\|f\circ\gamma_{1+\varepsilon}-f\|_{\sup}=\sup_{-2K\le x\le 2K}|f(x+\varepsilon x)-f(x)|.\]
Since $f$ is uniformly continuous,
$\displaystyle\lim_{\varepsilon\rightarrow
0}\sup_{-2K\le x\le 2K}|f(x+\varepsilon
x)-f(x)|=0$, so we have
$\displaystyle\lim_{\varepsilon\rightarrow
0}\|
f\circ\gamma_{1+\varepsilon}-f\|_{\sup}=0$.
The other assertion is just the definition of
uniform continuity of $f$.
\end{proof}

\begin{definition}
If $(\nu_n)_{n\ge 1}$ is a sequence of Radon
measures on $\mathbb R$ and if $\nu$ is a
Radon measure on $\mathbb R$, we say that
$(\nu_n)_{n\ge 1}$ {\it converges vaguely} to
$\nu$ if for any function $f\in C_c(\mathbb
R)$, the sequence $(\int_{\mathbb
R}f\mathrm{d}\nu_n)_{n\ge 1}$ converges to
$\int_{\mathbb R}f\mathrm{d}\nu$.
\end{definition}

\begin{proposition}\label{Pro:convergence vague par dilatation et translation}
Let $(\nu_n)_{n\ge 1}$ be a sequence of Radon
measures on $\mathbb R$, $\nu$ be a Radon
measure on $\mathbb R$, and $(a_n)_{n\ge 1}$
be a sequence of real numbers in
$]-1,+\infty[$ which converges to $0$.
Suppose that the total masses of
$(\nu_n)_{n\ge 1}$ are uniformly bounded.
Then the following conditions are
equivalents:
\begin{enumerate}[1)]
\item the sequence $(\nu_n)_{n\ge 1}$ converges vaguely to $\nu$;
\item the sequence $(T_{1+a_n}\nu_n)_{n\ge 1}$
converges vaguely to $\nu$;
\item the sequence $(\tau_{a_n}\nu_n
)_{n\ge 1}$ converges vaguely to $\nu$.
\end{enumerate}
\end{proposition}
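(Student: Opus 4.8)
The plan is to reduce everything to a single uniform estimate and then invoke Lemma~\ref{Lem:majoration de dilatation et translation}. Fix a test function $f\in C_c(\mathbb R)$ and let $M$ be a common bound for the total masses $\nu_n(\mathbb R)$. Since $1+a_n>0$, the dilation $\gamma_{1+a_n}$ is a genuine automorphism of $\mathbb R$, so $T_{1+a_n}\nu_n$ is the direct image of $\nu_n$ by $\gamma_{1+a_n}$; the change-of-variables formula for push-forward measures then gives $\int_{\mathbb R}f\,\mathrm d(T_{1+a_n}\nu_n)=\int_{\mathbb R}(f\circ\gamma_{1+a_n})\,\mathrm d\nu_n$, and likewise $\int_{\mathbb R}f\,\mathrm d(\tau_{a_n}\nu_n)=\int_{\mathbb R}(f\circ\varphi_{a_n})\,\mathrm d\nu_n$. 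Note that $f\circ\gamma_{1+a_n}$ and $f\circ\varphi_{a_n}$ again lie in $C_c(\mathbb R)$, so all these pairings make sense.

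From these identities I would estimate
\[\Bigl|\int_{\mathbb R}f\,\mathrm d(T_{1+a_n}\nu_n)-\int_{\mathbb R}f\,\mathrm d\nu_n\Bigr|\le M\,\|f\circ\gamma_{1+a_n}-f\|_{\sup},\qquad \Bigl|\int_{\mathbb R}f\,\mathrm d(\tau_{a_n}\nu_n)-\int_{\mathbb R}f\,\mathrm d\nu_n\Bigr|\le M\,\|f\circ\varphi_{a_n}-f\|_{\sup}.\]
By Lemma~\ref{Lem:majoration de dilatation et translation}, $\|f\circ\gamma_{1+\varepsilon}-f\|_{\sup}\to 0$ and $\|f\circ\varphi_\varepsilon-f\|_{\sup}\to 0$ as $\varepsilon\to 0$; since $a_n\to 0$, both right-hand sides above tend to $0$. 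Hence
\[\lim_{n\to\infty}\Bigl(\int_{\mathbb R}f\,\mathrm d(T_{1+a_n}\nu_n)-\int_{\mathbb R}f\,\mathrm d\nu_n\Bigr)=0=\lim_{n\to\infty}\Bigl(\int_{\mathbb R}f\,\mathrm d(\tau_{a_n}\nu_n)-\int_{\mathbb R}f\,\mathrm d\nu_n\Bigr).\]
Therefore, for each $f\in C_c(\mathbb R)$, the sequence $\bigl(\int f\,\mathrm d\nu_n\bigr)$ converges to $\int f\,\mathrm d\nu$ if and only if $\bigl(\int f\,\mathrm d(T_{1+a_n}\nu_n)\bigr)$ does, if and only if $\bigl(\int f\,\mathrm d(\tau_{a_n}\nu_n)\bigr)$ does; as $f$ is arbitrary, this proves the equivalence of 1), 2) and 3).

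The argument is essentially formal, and I do not expect a serious obstacle. The one place where the hypotheses genuinely enter is the passage from ``the integrand converges to $0$ uniformly'' to ``the integral converges to $0$'': this is exactly where the uniform bound $M$ on the total masses is used, and without it the conclusion would fail. The assumption $a_n\in\,]-1,+\infty[$ serves only to guarantee that $\gamma_{1+a_n}$ is an honest dilation; no tightness or relative compactness of the family $(\nu_n)$ is needed.
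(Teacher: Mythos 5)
Your proof is correct and takes essentially the same approach as the paper: it rests on Lemma~\ref{Lem:majoration de dilatation et translation} together with the uniform bound on total masses, exactly as the paper does. The only organizational difference is that the paper first reduces to the forward implications by inverting the transformations ($\tau_{a_n}^{-1}=\tau_{-a_n}$ and $T_{1+a_n}^{-1}=T_{1-a_n/(1+a_n)}$), whereas you show directly that all three integral sequences differ by terms tending to zero; the underlying estimate is the same, and you make explicit what the paper calls ``immediate.''
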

\begin{proof}
Since $\tau_{a_n}^{-1}=\tau_{-a_n}$ and
$T_{1+a_n}^{-1}=T_{(1+a_n)^{-1}}=T_{1-\frac{a_n}{1+a_n}}$,
it suffices to verify ``1)$\Longrightarrow$
2)'' and ``1)$\Longrightarrow$3)'', which are
immediate consequences of Lemma
\ref{Lem:majoration de dilatation et
translation}.
\end{proof}

\begin{lemma}\label{Lem:convergence vague vers une proba}
Let $(\nu_n)_{n\ge 1}$ be a sequence of Borel
probability measures on $\mathbb R$ which
converges vaguely to a measure $\nu$. If the
supports of $(\nu_n)_{n\ge 1}$ are uniformly
bounded, then $\nu$ is also a probability
measure.
\end{lemma}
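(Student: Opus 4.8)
The plan is to exploit the uniform bound on supports in order to produce a single compactly supported test function that ``sees'' the whole mass of every $\nu_n$, and then to pass to the limit. First I would fix $M>0$ with $\supp(\nu_n)\subset[-M,M]$ for every $n$, and choose $g\in C_c(\mathbb R)$ with $0\le g\le 1$ and $g\equiv 1$ on $[-M,M]$ (for instance the continuous function equal to $1$ on $[-M,M]$, equal to $0$ outside $[-M-1,M+1]$, and affine in between). Since each $\nu_n$ is carried by $[-M,M]$, we have $\int_{\mathbb R}g\,\mathrm d\nu_n=\nu_n(\mathbb R)=1$ for all $n$, so the vague convergence gives $\int_{\mathbb R}g\,\mathrm d\nu=\lim_{n}\int_{\mathbb R}g\,\mathrm d\nu_n=1$; in particular $\nu(\mathbb R)\ge 1$.

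For the reverse inequality I would use the inner regularity of the Radon measure $\nu$, namely $\nu(\mathbb R)=\sup\{\int_{\mathbb R}h\,\mathrm d\nu\mid h\in C_c(\mathbb R),\ 0\le h\le 1\}$. For every such $h$, vague convergence yields $\int_{\mathbb R}h\,\mathrm d\nu=\lim_{n}\int_{\mathbb R}h\,\mathrm d\nu_n\le\limsup_{n}\nu_n(\mathbb R)=1$, hence $\nu(\mathbb R)\le 1$. Combining the two inequalities gives $\nu(\mathbb R)=1$, so $\nu$ is a Borel probability measure. (By testing against functions $h\in C_c(\mathbb R)$ with $\supp(h)\cap[-M,M]=\emptyset$ one moreover sees that $\supp(\nu)\subset[-M,M]$, although this is not needed for the statement.)

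The only genuine point---the ``main obstacle''---is that vague convergence by itself allows mass to escape to infinity, so a priori $\nu$ could have total mass strictly less than $1$; the hypothesis that the supports $\supp(\nu_n)$ are uniformly bounded is precisely what excludes this, through the cutoff function $g$. Everything else is merely the standard description of the total mass of a Radon measure as a supremum of integrals of compactly supported functions bounded by $1$.
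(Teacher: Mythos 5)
Your proof is correct and follows essentially the same route as the paper: pick a single $g\in C_c(\mathbb R)$ equal to $1$ on a bounded set containing all supports, pass to the limit in $\int g\,\mathrm d\nu_n = 1$, and invoke the supremum description of the total mass of a Radon measure for the reverse inequality. You spell out the $\nu(\mathbb R)\le 1$ half more explicitly than the paper's terse "since $\varphi$ is arbitrary," but the underlying idea is identical.
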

\begin{proof} Suppose
$\displaystyle\bigcup_{n\ge
1}\supp(\nu_n)\subset[m,M]$. If $\varphi$ is
a continuous function with compact support
which takes values in $[0,1]$ and such that
$\varphi|_{[m,M]}=1$. We have
$\displaystyle\int_{\mathbb R}\varphi
\mathrm{d}\nu=
\lim_{n\rightarrow\infty}\int_{\mathbb R}
\varphi \mathrm{d}\nu_n= 1$. Since $\varphi$
is arbitrary, we obtain $\nu(\mathbb R)=1$.
\end{proof}

\begin{proposition}\label{Pro:convergence de fonction de repartition}
Let $(\nu_n)_{n\ge 1}$ be a sequence of Borel
probability measures on $\mathbb R$ which
converges vaguely to a measure $\nu$. Suppose
that the supports of $\nu_n$ are uniformly
bounded. Let $F_n$ (resp. $F$) be the
distribution function of $\nu_n$ (resp.
$\nu$). Then there exists a numerable subset
$Z$ of $\mathbb R$ such that, for any point
$x\in\mathbb R\setminus Z$, the sequence
$(F_n(x))_{n\ge 1}$ converges to $F(x)$.
\end{proposition}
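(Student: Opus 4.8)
The plan is to take $Z$ to be the set of atoms of the limit measure $\nu$ and to show that $F_n(x)\to F(x)$ at every point outside it. First I would record that, since the supports of the $\nu_n$ lie in a common compact interval $[m,M]$, Lemma~\ref{Lem:convergence vague vers une proba} gives that $\nu$ is a Borel probability measure, and testing the vague convergence against functions of $C_c(\mathbb R)$ supported in $\mathbb R\setminus[m,M]$ shows $\nu(\mathbb R\setminus[m,M])=0$; thus every measure in play is carried by $[m,M]$. Being finite, $\nu$ has at most countably many atoms, so $Z:=\{x\in\mathbb R\;|\;\nu(\{x\})>0\}$ is numerable. I shall use that the distribution function of a Borel probability measure $\mu$ is $F_\mu(y)=\mu\big(]-\infty,y]\big)$, which is right continuous with left limits $F_\mu(y^-)=\mu\big(]-\infty,y[\big)$.

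Now fix $x\in\mathbb R\setminus Z$, so $\nu(\{x\})=0$. If $x<m$ or $x\ge M$ the statement is trivial, since then $F_n(x)$ and $F(x)$ are all equal to $0$, resp. all equal to $1$; so assume $x\in[m,M)$. For each small $\varepsilon>0$ I would pick two continuous ``ramp'' functions $g_\varepsilon,h_\varepsilon\in C_c(\mathbb R)$ with values in $[0,1]$, such that $g_\varepsilon\equiv1$ on $[m,x-\varepsilon]$, $g_\varepsilon\equiv0$ on $[x,+\infty[$, while $h_\varepsilon\equiv1$ on $[m,x]$ and $h_\varepsilon\equiv0$ on $[x+\varepsilon,+\infty[$. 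Since every $\nu_n$ and $\nu$ is carried by $[m,M]$, the inequalities $g_\varepsilon\le\indic_{]-\infty,x]}\le h_\varepsilon$ on $[m,M]$ give, for every $n$,
\[
\int_{\mathbb R}g_\varepsilon\,\mathrm d\nu_n\ \le\ F_n(x)\ \le\ \int_{\mathbb R}h_\varepsilon\,\mathrm d\nu_n,
\]
and likewise $\int_{\mathbb R}g_\varepsilon\,\mathrm d\nu\ge F(x-\varepsilon)$ and $\int_{\mathbb R}h_\varepsilon\,\mathrm d\nu\le F(x+\varepsilon)$.

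Letting $n\to\infty$ and using $\nu_n\to\nu$ vaguely (which is legitimate since $g_\varepsilon,h_\varepsilon\in C_c(\mathbb R)$), I obtain $F(x-\varepsilon)\le\liminf_n F_n(x)\le\limsup_n F_n(x)\le F(x+\varepsilon)$. Finally I let $\varepsilon\to0^+$: by right continuity $F(x+\varepsilon)\to F(x)$, and by continuity from below together with $x\notin Z$, $F(x-\varepsilon)\to F(x^-)=\nu(]-\infty,x[)=F(x)-\nu(\{x\})=F(x)$. Hence $F_n(x)\to F(x)$, which proves the proposition with $Z$ the atom set of $\nu$. This is the classical deduction of pointwise convergence of distribution functions from vague (here equivalently weak) convergence; I do not expect a genuine obstacle, the only slightly delicate point being the bookkeeping that replaces the non-compactly-supported indicator $\indic_{]-\infty,x]}$ by elements of $C_c(\mathbb R)$ — harmless precisely because the uniform boundedness of the supports confines all mass to the fixed compact set $[m,M]$.
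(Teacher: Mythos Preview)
Your proof is correct and follows essentially the same approach as the paper: both take $Z$ to be the set of atoms of $\nu$ (countable since $\nu$ is finite), and both rely on the fact that for $x\notin Z$ the indicator $\indic_{]-\infty,x]}$ is $\nu$-a.e.\ continuous. The only difference is that the paper dispatches the final step by citing \cite{Bourbaki65} IV.5 Proposition~22, whereas you unpack that citation by the explicit ramp-function sandwich argument; your version is thus more self-contained but otherwise identical in strategy.
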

\begin{proof} After Lemma \ref{Lem:convergence vague vers une proba},
$\nu$ is a probability measure. Let $Z$ be
the set of $x\in\mathbb R$ such that
$\nu(\{x\})\neq 0$. Since $\nu$ is of total
mass $1$, the set $Z$ is numerable. If $r$ is
a point in $\mathbb R\setminus Z$, the set of
discontinuous points of the function
$\indic_{]-\infty,r]}(x)$ is $\{r\}$, which
is $\mu$-negligible. After \cite{Bourbaki65}
IV.5 Proposition 22, the sequence
$(F_n(x))_{n\ge 1}$ converges to $F(x)$.
\end{proof}

\begin{proposition}
Let $(f_n)_{n\ge 1}$ be a sequence of right
continuous decreasing functions valued in
$[0,1]$ such that
\begin{enumerate}[i)]
\item
$\displaystyle \sup_{n\ge 1}\inf\{x\in\mathbb
R\;|\;f_n(x)=0\}<+\infty$,
$\displaystyle\inf_{n\ge 1}\sup\{x\in\mathbb
R\;|\;f_n(x)=1\}>-\infty$;
\item there exists a numerable subset $Z$ of
$\mathbb R$ such that, for any $x\in\mathbb
R\setminus Z$, the sequence $(f_n(x))_{n\ge
1}$ converges.
\end{enumerate}
Let $f:\mathbb R\rightarrow[0,1]$ be a right
continuous function such that
$f(x)=\displaystyle\lim_{n\rightarrow+\infty}f_n(x)$
for any $x\in\mathbb R\setminus Z$. Then
\begin{enumerate}[1)]
\item the function $f$ is decreasing;
\item if we write $\displaystyle
A:=\liminf_{n\rightarrow+\infty}\inf\{x\in\mathbb
R\;|\;f_n(x)=0\}$, $\displaystyle B:=
\limsup_{n\rightarrow+\infty}\sup\{x\in\mathbb
R\;|\;f_n(x)=1\}$, then
$f|_{]A,+\infty[}\equiv 0$,
$f|_{]-\infty,B[}\equiv 1$;
\item there exists a numerable subset
$Z'$ of $]0,1[$ such that $(f_n^*(t))_{n\ge 1
}$ converges to $f^*(t)$ for any
$t\in]0,1[\setminus Z'$;
\item the function sequence
$(\int_0^tf_n^*(s)\mathrm{d}s)_{n\ge 0}$
converges uniformly to
$\int_0^tf^*(s)\mathrm{d}s$.
\end{enumerate}
\end{proposition}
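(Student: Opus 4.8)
The plan is to prove 1)--4) in that order, using only the monotonicity and right-continuity of the $f_n$, condition~i) (which forces all the infima and suprema over $x$ appearing below to be finite, so the sets involved are non-empty, and, combined with~2), makes $f$ itself a right-continuous decreasing function tending to $1$ at $-\infty$ and to $0$ at $+\infty$, so that Proposition~\ref{Pro:proprietes de quasi-inverse} applies to $f$ as well), and the elementary properties of the quasi-inverse recorded in Proposition~\ref{Pro:proprietes de quasi-inverse}. Throughout I use that $\mathbb R\setminus Z$ is dense, $Z$ being countable.

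For~1) I would note that if $x<y$ both lie in $\mathbb R\setminus Z$ then $f_n(x)\ge f_n(y)$ for every $n$, so $f(x)\ge f(y)$ by passing to the limit; the general inequality then follows by choosing $x_j\downarrow x$, $y_j\downarrow y$ in $\mathbb R\setminus Z$ with $x_j<y_j$ and invoking right-continuity of $f$. For~2), fix $x>A$: by definition of $\liminf$ there is a subsequence $(n_k)$ with $\inf\{y\mid f_{n_k}(y)=0\}<x$, hence some $y<x$ with $f_{n_k}(y)=0$, so $f_{n_k}(x)=0$ by monotonicity; if moreover $x\notin Z$ then $f(x)=\lim_k f_{n_k}(x)=0$, and the case of an arbitrary $x>A$ follows by approaching $x$ from the right inside $\,]A,+\infty[\,\setminus Z$. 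The assertion $f|_{]-\infty,B[}\equiv 1$ is obtained symmetrically, with $\sup\{y\mid f_n(y)=1\}$, $\limsup$ and ``$y>x$'' replacing $\inf\{y\mid f_n(y)=0\}$, $\liminf$ and ``$y<x$''.

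The substance of the proposition is~3), and this is where I expect the only genuine difficulty, namely the correct choice of the exceptional set $Z'$. The tool is Proposition~\ref{Pro:proprietes de quasi-inverse} 1): for an admissible $g$ one has $g(y)>t$ iff $y<g^*(t)$. First, for any $t\in\,]0,1[$ and any $x\in\,]-\infty,f^*(t)[\,\setminus Z$ one has $f(x)>t$, hence $f_n(x)>t$ and so $x<f_n^*(t)$ for $n$ large; taking the supremum over such $x$ gives $\liminf_n f_n^*(t)\ge f^*(t)$ for \emph{every} $t$. Next, for $s<t$ in $\,]0,1[$ and $x\in\,]f^*(s),+\infty[\,\setminus Z$ one has $f(x)\le s<t$, hence $f_n(x)<t$ and so $x\ge f_n^*(t)$ for $n$ large; taking the infimum over such $x$ gives $\limsup_n f_n^*(t)\le f^*(s)$. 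Letting $s\uparrow t$, I obtain
\[f^*(t)\ \le\ \liminf_{n\to\infty}f_n^*(t)\ \le\ \limsup_{n\to\infty}f_n^*(t)\ \le\ \inf_{s<t}f^*(s)\ =\ f^*(t^-).\]
Since $f^*$ is decreasing and right-continuous (Proposition~\ref{Pro:proprietes de quasi-inverse} 2)), its set $Z'$ of discontinuity points inside $\,]0,1[$ is countable, and for $t\notin Z'$ one has $f^*(t^-)=f^*(t)$, whence $f_n^*(t)\to f^*(t)$. This $Z'$ is the analogue, in the present quasi-inverse language, of the ``set of continuity points of the limiting distribution function'' in the classical Helly-type statement, and identifying it correctly is the delicate point.

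For~4), condition~i) together with Proposition~\ref{Pro:proprietes de quasi-inverse} 3) provides constants $m\le M$ with $f_n^*(t),f^*(t)\in[m,M]$ for all $n$ and all $t\in\,]0,1[$; hence, writing $C=\max(|m|,|M|)$, the primitives $P_n(t)=\int_0^t f_n^*(s)\,\mathrm{d}s$ and $P(t)=\int_0^t f^*(s)\,\mathrm{d}s$ are $C$-Lipschitz on $[0,1]$ and vanish at $0$. By~3) the sequence $f_n^*$ converges to $f^*$ pointwise off the countable, hence Lebesgue-negligible, set $Z'$, so bounded convergence gives $P_n(t)\to P(t)$ for every $t\in[0,1]$; finally a pointwise-convergent, uniformly $C$-Lipschitz (hence equicontinuous) sequence of functions on the compact interval $[0,1]$ converges uniformly --- the usual three-$\varepsilon$ estimate, after covering $[0,1]$ by finitely many subintervals of length $<\varepsilon/(3C)$ --- which is precisely the claim of~4).
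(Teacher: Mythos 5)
Your proof is correct and follows the paper's overall strategy: the $\liminf$ bound in part~3) is obtained identically, and parts~1), 2) and 4) are routine. Where you differ is in how the countable exceptional set $Z'$ is found for the $\limsup$ bound. You introduce an auxiliary $s<t$, derive $\limsup_n f_n^*(t)\le f^*(s)$ for every such $s$, let $s\uparrow t$ to get $\limsup_n f_n^*(t)\le f^*(t^-)$, and then take $Z'$ to be the (automatically countable) set of discontinuities of the monotone function $f^*$. The paper instead fixes $t$ from the start, descends to $f^*(t)$ from the right inside $\mathbb R\setminus Z$, and characterizes the bad $t$ as those for which $f^{-1}(\{t\})$ has an interior point --- a set $Z''$ which one checks is exactly the jump set of $f^*$, so the two definitions of $Z'$ coincide; the paper also throws the analogous sets $Z_n'$ for the $f_n$ into $Z'$, but these are never used in its argument, so your version is in fact a little leaner and, being phrased in terms of $f^*$ alone, reads more like the classical Helly-type statement. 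In part~4) the paper applies dominated convergence directly to $\int_0^1|f_n^*-f^*|\,\mathrm{d}s$ and concludes uniform convergence in one line via $\bigl|\int_0^t(f_n^*-f^*)\bigr|\le\int_0^1|f_n^*-f^*|$; you instead establish pointwise convergence of the primitives and then invoke the uniform Lipschitz bound (equicontinuity) to upgrade to uniform convergence. Both are standard; the paper's route is marginally shorter, but yours is equally valid.
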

\begin{proof} 1) and 2) are easy to verify.

3) After the condition i), the function
$f_n^*$ is well defined for any $n\ge 1$.
After 2), the function $f^*$ is well defined.
If $t$ is a number in $]0,1[$ and if
$y=f^*(t)$, then there exists a a strictly
increasing sequence $(x_m)_{m\ge 1
}\subset\mathbb R\setminus Z$ which converges
to $y$. Since $x_m<y$, we have $f(x_m)>t$.
Since $x_m\in\mathbb R\setminus Z$, there
exists $N(m)\in\mathbb Z_{\ge 0}$ such that
$f_n(x_m)>t$ (i.e., $x_m<f_n^*(t)$) for any
$n>N(m)$, which implies that
$\displaystyle\liminf_{n\rightarrow+\infty}f_n^*(t)\ge
f^*(t)$.

For any integer $n\ge 1$, let $Z_n'$ be the
set of all $t\in]0,1[$ such that
$f_n^{-1}(\{t\})$ has an interior point.
Clearly $Z_n'$ is a numerable set. Let $Z''$
be the set of $t\in]0,1[$ such that
$f^{-1}(\{t\})$ has an interior point. Let
$Z'$ be the union of all $Z_n'$ and $Z''$. It
is also a numerable subset of $]0,1[$. Let
$t$ be a point in $]0,1[\setminus Z'$ and
$y=f^*(t)$. We take a strictly decreasing
sequence $(x_m)_{m\ge 1 }\subset\mathbb
R\setminus Z$ which converges to $y$. Since
$y\not\in Z''$, we have $f(x_m)<t$.
Therefore, there exists $N(m)\in\mathbb
Z_{>0}$ such that, for any $n>N(m)$,
$f_n(x_m)<t$ and {\it a fortiori} $x_m\ge
f_n^*(t)$. We then have
$\displaystyle\limsup_{n\rightarrow+\infty}f_n^*(t)\le f^*(t)$.

4) After Proposition \ref{Pro:proprietes de
quasi-inverse} 3), the condition i) implies
that the functions $f_n^*$ are uniformly
bounded. On the other hand, $f_n^*-f$
converges almost everywhere to the zero
function. After the Lebesgue's dominate
convergence theorem, we obtain that
\[\left|\int_0^tf_n^*(s)\mathrm{d}s-
\int_0^tf^*(s)\mathrm{d}s\right|\le
\int_0^1|f_n^*(s)-f^*(s)|\mathrm{d}s\]
converges to $0$ when $n\rightarrow+\infty$.
\end{proof}

\section{Convergence of polygons of a quasi-filtered
graded algebra}\label{Sec:Convergence of
polygons of a quasi-filtered graded algebra}

\hskip\parindent We establish in this section
the convergence of polygons of a
quasi-filtered graded algebra. By using the
results obtained in Section
\ref{Sec:Quasi-filtered graded algebras}, we
show that the measures associated to a
quasi-filtered graded algebra converge
vaguely to a Borel probability measure on
$\mathbb R$, and therefore, the associated
polygons converge uniformly to a concave
function on $[0,1]$.

We first recall some facts about Poincar\'e
series of a graded module, which we shall use
later. Let $A$ be an Artinian ring, $B$ be a
graded $A$-algebra of finite type and
genenrated by $B_1$, and $M$ be a non-zero
graded $B$-module of finite type. For any
$n\in\mathbb Z$, $M_n$ is an $A$-module of
finite type, therefore of finite length. We
denote by $H_M$ the Poincar\'e series
associated to $M$, i.e.,
$H_M(X)=\sum_{n\in\mathbb
Z}\len_A(M_n)X^n\in\mathbb Z\lbr X\rbr$. The
theory of Poincar\'e series affirms (cf.
\cite{Bourbaki83}) that there exists an
integer $r\ge 0$ such that $H_M(X)$ can be
written in the form
\begin{equation}\label{Equ:serie de Poincare}
H_M(X)=a_r(X)(1-X)^{-r}+a_{r-1}(X)(1-X)^{-r+1}+\cdots+a_0(X),
\end{equation}
where $a_0,\cdots,a_{r}$ are elements in $\mathbb Z[X,X^{-1}]$, $a_r$ being non-zero and having positive coefficients if $M$ is non-zero. Moreover, the values $r$ and $a_r(1)$ don't depend on the choice of
$(a_0,\cdots,a_{r})$. In fact, $r$ identifies with the dimension of $M$. We write
$c(M)=a_r(1)$. Clearly we have
\[\len_A(M_n)=\frac{c(M)}{(r-1)!}n^{r-1}+o(n^{r-1})\]
when $n\rightarrow+\infty$, and there exists a polynom $Q_M$ with coefficients in
$\mathbb Q$ such that $Q_M(n)=\len_A(M_n)$ for
sufficiently large integer $n$. If $M$ is the zero $B$-module, by convention we define $\dim(M)=-\infty$
and $c(M)=0$.

If
$\xymatrix{0\ar[r]&M'\ar[r]&M\ar[r]&M''\ar[r]&0}$
is a short exact sequence of graded
$B$-modules of finite type, we have
$H_{M}=H_{M'}+H_{M''}$. Therefore, $ \dim
M=\max(\dim M',\dim M'')$ and
\begin{equation}\label{Equ:comparaison des indices}
c(M)=c(M')\indic_{\{\dim M'\ge\dim
M''\}}+c(M'')\indic_{\{\dim M''\ge\dim M'\}}.
\end{equation}

\begin{definition}\label{Def:la condition de convergence vague}
Let $K$ be a field, $B$ be a graded
$K$-algebra of finite type which is generated
by $B_1$ and $M$ be a graded $B$-module of
finite type and of dimension $d>0$. Suppose
that for each integer $n\ge 0$, $M_n$ is
equipped with a separated, exhaustive and
left continuous $\mathbb R$-filtration. We
say that $M$ satisfies the {\it vague
convergence condition} and we write
$\mathbf{CV}(M)$ if the sequence of Radon
measures $(T_{\frac 1n}\nu_{M_n})_{n\ge 1}$
converges  vaguely. Finally, if $N$ is a
graded $B$-module which is of dimension $0$
or is zero, then by convention $N$ satisfies
the vague convergence condition (in fact, for
any sufficiently large integer $n$, we have
$N_n=0$, so $T_{\frac 1n}\nu_{N_n}$ is the
zero measure).
\end{definition}

Although not explicitly stated, in Section
\ref{Sec:Quasi-filtered graded algebras}, we
have essentially proved the vague convergence
of measures associated to a quasi-filtered
symmetric algebra. We now state this result
as follows.

\begin{proposition}\label{Pro:vague convergnce de mesures}
Let $f:\mathbb Z_{\ge 0}\rightarrow\mathbb
R_{\ge 0}$ be a function such that
$\displaystyle\lim_{n\rightarrow+\infty}f(n)/n=0$
and $V$ be a vector space of dimension
$0<d<+\infty$ over $K$. For any integer $n\ge
0$ let $B_n=S^nV$. Suppose that each vector
space $B_n$ is equipped with a separated,
exhaustive and left continuous $\mathbb
R$-filtration such that the graded algebra
$B=\bigoplus_{n\ge 0}B_n$ is
$f$-quasi-filtered. Then $B$ satisfies the
vague convergence condition.
\end{proposition}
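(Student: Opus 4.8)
The plan is to reduce vague convergence of the sequence $(T_{\frac1n}\nu_{B_n})_{n\ge 1}$ to the convergence of integrals $\int_{\mathbb R}g\,\mathrm{d}(T_{\frac1n}\nu_{B_n})$ against a well-chosen family of bounded, concave, increasing Lipschitz functions, which is exactly what Proposition \ref{Pro:suradditivite de integrale} together with Corollary \ref{Cor:suradditivite de l'integrale} delivers. For a real number $b$, set $g_b(x)=\min(x,b)$; this is concave, increasing, $1$-Lipschitz and bounded above by $b$. Writing $I_n(b)=\int_{\mathbb R}g_b\,\mathrm{d}(T_{\frac1n}\nu_{B_n})$, Proposition \ref{Pro:suradditivite de integrale} (with $c=1$) gives, for $r\ge 2$ and $\mathbf n=(n_i)\in\mathbb Z_{\ge n_0}^r$ with $N=n_1+\cdots+n_r$, the almost-super-additivity $NI_N(b)\ge\sum_{i=1}^r\bigl(n_iI_{n_i}(b)-f(n_i)\bigr)$. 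Since $g_b$ is bounded above and $f(n)/n\to 0$, Corollary \ref{Cor:suradditivite de l'integrale} shows that $\phi(b):=\lim_{n\to\infty}I_n(b)$ exists in $\mathbb R$ for every $b$.

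The next step is to pass from the functions $g_b$ to compactly supported test functions. For reals $a<b<c$, the ``tent'' function on $\mathbb R$ vanishing outside $[a,c]$ with a single apex at $b$ equals, up to a positive scalar, $\bigl(g_b-g_a\bigr)-\tfrac{b-a}{c-b}\bigl(g_c-g_b\bigr)$ --- an explicit $\mathbb R$-linear combination of three functions of the form $g_{b'}$. Hence $\int_{\mathbb R}h\,\mathrm{d}(T_{\frac1n}\nu_{B_n})$ converges as $n\to\infty$ for every such tent function $h$, and therefore for every finite $\mathbb R$-linear combination of tent functions, that is, for every compactly supported piecewise-linear function on $\mathbb R$. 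Since every $\psi\in C_c(\mathbb R)$ is a uniform limit of compactly supported piecewise-linear functions, and since the measures $T_{\frac1n}\nu_{B_n}$ all have total mass $1$, an $\varepsilon/3$-argument shows that $\bigl(\int_{\mathbb R}\psi\,\mathrm{d}(T_{\frac1n}\nu_{B_n})\bigr)_{n\ge 1}$ is a Cauchy sequence for every $\psi\in C_c(\mathbb R)$, hence convergent. The map $\psi\mapsto\lim_{n\to\infty}\int_{\mathbb R}\psi\,\mathrm{d}(T_{\frac1n}\nu_{B_n})$ is then a positive linear form on $C_c(\mathbb R)$, so by the Riesz representation theorem it is given by integration against a Radon measure $\nu$; thus $(T_{\frac1n}\nu_{B_n})_{n\ge 1}$ converges vaguely to $\nu$, i.e.\ $\mathbf{CV}(B)$ holds.

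The routine parts --- verifying that $g_b$ meets the hypotheses of Proposition \ref{Pro:suradditivite de integrale}, the density of compactly supported piecewise-linear functions in $C_c(\mathbb R)$, and the $\varepsilon/3$ estimate (which only uses $T_{\frac1n}\nu_{B_n}(\mathbb R)=1$) --- are immediate. The genuine content, and the step deserving care, is the observation in the second paragraph that suitable differences of the $g_b$'s already produce enough compactly supported functions to detect vague convergence; once this is in place, the statement follows simply by feeding the functions $g_b$ into the almost-super-additivity machinery of Sections \ref{Sec:Almost super-additive sequence} and \ref{Sec:Convergence for symmetric algebras}.
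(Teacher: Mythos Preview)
Your argument is correct and follows the same overall architecture as the paper's proof: feed concave, increasing, bounded-above Lipschitz functions into Corollary~\ref{Cor:suradditivite de l'integrale} to get convergence of their integrals, then use linearity and density in $C_c(\mathbb R)$ together with the uniform bound on total mass to reach all test functions, and conclude with Riesz representation.

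The difference lies only in which concrete functions are fed in and which dense subspace of $C_c(\mathbb R)$ is targeted. The paper works with a generic $f\in C_0^\infty(\mathbb R)$ and builds an auxiliary piecewise-quadratic function $h$ so that both $h$ and $h+f$ are concave, increasing, bounded above and Lipschitz, whence $f=(h+f)-h$ lies in the ``good'' set. You instead use only the elementary family $g_b(x)=\min(x,b)$ and observe that the differences $(g_b-g_a)-\tfrac{b-a}{c-b}(g_c-g_b)$ already produce all compactly supported tent functions, hence all piecewise-linear compactly supported functions. Your route is a bit more explicit and avoids the second-derivative bookkeeping in the paper's construction of $h$; the paper's route has the small advantage of hitting $C_0^\infty$ directly rather than passing through piecewise-linear approximants. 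Either way the substantive input is exactly Proposition~\ref{Pro:suradditivite de integrale} and Corollary~\ref{Cor:suradditivite de l'integrale}.
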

\begin{proof} For any integer $n\ge 1$,
denote by $\nu_n=T_{\frac 1n}\nu_{B_n}$. Let
$G$ be the set of Borel functions $g$ on
$\mathbb R$ such that, for any $n\in\mathbb
Z_{\ge 0}$, $g$ is integrable with respect to
$\nu_{n}$ and such that $(\int
g\mathrm{d}\nu_n)_{n\ge 0}$ converges.
Corollary \ref{Cor:suradditivite de
l'integrale} implies that $G$ contains all
increasing, bounded from above, concave and
Lipschitz functions. Cleary $G$ is a vector
space over $\mathbb R$. Suppose that $f$ is a
function in $C_0^\infty(\mathbb R)$. Let
$I=[a,b]$ be an interval containing the
support of $f$. Notice that $f'$ and $f''$
are also smooth functions and the supports of
$f'$ and of $f''$ are contained in $I$.
Therefore, $f'$ and $f''$ are bounded
functions. Let $C=\|f'\|_{\sup}$ and
$C'=\|f''\|_{\sup}/2$. Let $h$ be the
function \[ h(x)=\begin{cases}
C'(b-a)(2x-a-b)+C(x-b),&x\le
a,\\
-C'(b-x)^2+C(x-b),&a< x\le b,\\
0,&x> b.
\end{cases}
\] It's a concave, increasing and
$(2C'(b-a)+C)$-Lipschitz function which is
bounded from above by $0$. Hence $h\in G$. On
the other hand, $h+f$ is also a concave
function since $h''=-2C'$ on $I$. It is also
increasing because $h'(x)\ge 0$ on $\mathbb
R$ and $h'(x)\ge C$ on $I$. Furthermore, it
is $(2C'(b-a)+2C)$-Lipschitz and bounded from
above by $\|f\|_{\sup}$. Therefore, we have
$h+f\in G$. We then deduce $f\in G$. Finally,
since $C_0^\infty(\mathbb R)$ is dense in the
normed space $(C_c(\mathbb
R),\|\cdot\|_{\sup})$, we obtain $C_c(\mathbb
R)\subset G$.

Let $S:C_c(\mathbb R)\rightarrow\mathbb R$ be
the opeartor which associates to each
continous function $g$ with compact support
the limit of the sequence $(\int
g\mathrm{d}\nu_n)_{n\ge 1}$. It's a linear
operator. Furthermore, if $g$ is a positive
function in $C_c(\mathbb R)$, then $\int
g\mathrm{d}\nu_n\ge 0$ for any $n\in\mathbb
Z_{\ge 0}$. Therefore, we have $S(g)\ge 0$.
After Riesz's representation theorem, there
exists a unique Radon measure $\nu$ on
$\mathbb R$ such that $S(g)=\int
g\mathrm{d}\nu$. By definition the sequence
$(\nu_n)_{n\ge 1}$ converges vaguely to
$\nu$.
\end{proof}

In the following, we shall establish the
vague convergence (Theorem
\ref{Thm:convergence vague de algebre gradue
filtree}) for a general quasi-filtered graded
algebra of finite type over a field. We begin
by introducing two technical lemmas
(\ref{Lem:suite exacte et la comdtion de
convergence vague} and \ref{Lem:comparaison
des mesures par un iso}), which are useful to
prove Theorem \ref{Thm:convergence vague de
algebre gradue filtree}.

\begin{lemma}\label{Lem:suite exacte et la comdtion de convergence vague}
Let $B$ be a graded $K$-algebra of finite
type which is generated by $B_1$,
\[\xymatrix{0\ar[r]&M'
\ar[r]^\phi&M\ar[r]^\pi&M''\ar[r]&0}\] be a
short exact sequence of graded $B$-modules of
finite type. We denote by $d'=\dim M'$,
$d=\dim M$ and $d''=\dim M''$. Suppose that
for any integer $n\ge 0$ (resp. $n$), $B_n$
(resp. $M_n$) is equipped with a separated,
exhaustive and left continuous $\mathbb
R$-filtration. Suppose furthermore that for
each integer $n\ge 0$, $M_n'$ (resp. $M_n''$)
is equipped with the inverse image filtration
(resp. strong direct image filtration), then
\begin{enumerate}[1)]
\item if $d'>d''$, then $\mathbf{CV}(M')\Longleftrightarrow\mathbf{CV}(M)$;
\item if $d''>d'$, then $\mathbf{CV}(M'')\Longleftrightarrow\mathbf{CV}(M)$;
\item if $d'=d''$, then $\mathbf{CV}(M') \text{ and }\mathbf{CV}(M'')\Longrightarrow\mathbf{CV}(M)$.
\end{enumerate}
\end{lemma}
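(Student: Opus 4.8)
\emph{Proof sketch.} The plan is to work degree by degree, apply Proposition~\ref{Pro:relation de mesure associe pour suite exacte} to the exact sequence of vector spaces $0\to M_n'\to M_n\to M_n''\to 0$, and then control the three pieces of the resulting convex combination by means of the Poincar\'e series asymptotics recalled above. First one reduces to large $n$: the cases in which $M'$ or $M''$ is zero or of dimension $0$ are trivial by the convention in Definition~\ref{Def:la condition de convergence vague}, so we may assume that each of $M'$, $M''$ is either zero or of dimension $\ge 1$, and then for $n\gg 0$ every homogeneous component of a non-zero module among $M'$, $M$, $M''$ is non-zero. Since the filtration on $M_n$ is separated, exhaustive and left continuous (and $\rang M_n<\infty$), the induced filtrations on $M_n'$ (inverse image) and $M_n''$ (strong direct image) inherit these properties, so $\nu_{M_n'}$, $\nu_{M_n''}$ are well defined and Proposition~\ref{Pro:relation de mesure associe pour suite exacte} gives
\[\nu_{M_n}=\frac{\rang M_n'}{\rang M_n}\,\nu_{M_n'}+\frac{\rang M_n''}{\rang M_n}\,\nu_{M_n''}\]
(with the convention $\nu_0=0$ when one summand vanishes). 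Applying the mass-preserving linear operator $T_{\frac 1n}$ and writing $a_n=\rang M_n'/\rang M_n$, $b_n=\rang M_n''/\rang M_n=1-a_n$, we get
\[T_{\frac 1n}\nu_{M_n}=a_n\,T_{\frac 1n}\nu_{M_n'}+b_n\,T_{\frac 1n}\nu_{M_n''},\]
where $T_{\frac1n}\nu_{M_n'}$ and $T_{\frac1n}\nu_{M_n''}$ have total mass $\le 1$.

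Next I would compute $\lim_n a_n$. By the Poincar\'e series formalism, $\rang M_n'=\len_K(M_n')=\frac{c(M')}{(d'-1)!}n^{d'-1}+o(n^{d'-1})$, and likewise for $M$ and $M''$, while $\rang M_n=\rang M_n'+\rang M_n''$ by exactness; combined with \eqref{Equ:comparaison des indices} this yields: if $d'>d''$ then $a_n\to 1$, $b_n\to 0$; if $d''>d'$ then $a_n\to 0$, $b_n\to 1$; and if $d'=d''=d\ (\ge 1)$ then $a_n\to\alpha:=c(M')/(c(M')+c(M''))\in(0,1)$ and $b_n\to\beta:=1-\alpha$.

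The conclusion then follows by testing against $f\in C_c(\mathbb R)$, using two elementary facts: (i) if $c_n\to 0$ and $(\|\mu_n\|)_n$ is bounded then $c_n\mu_n\to 0$ vaguely; (ii) if $c_n\to c\neq 0$ and $\mu_n\to\mu$ vaguely then $c_n\mu_n\to c\mu$ vaguely and $\mu_n=c_n^{-1}(c_n\mu_n)\to\mu$ vaguely (legitimate since $c_n\neq 0$ for $n\gg 0$). In case $d'>d''$: if $\mathbf{CV}(M')$ holds, then $a_nT_{\frac1n}\nu_{M_n'}$ converges vaguely while $b_nT_{\frac1n}\nu_{M_n''}\to 0$ vaguely, hence $T_{\frac1n}\nu_{M_n}$ converges, i.e. $\mathbf{CV}(M)$; conversely, if $\mathbf{CV}(M)$ holds, then $T_{\frac1n}\nu_{M_n'}=a_n^{-1}\bigl(T_{\frac1n}\nu_{M_n}-b_nT_{\frac1n}\nu_{M_n''}\bigr)$ converges vaguely (since $a_n^{-1}\to 1$, $T_{\frac1n}\nu_{M_n}$ converges and $b_nT_{\frac1n}\nu_{M_n''}\to 0$), i.e. $\mathbf{CV}(M')$; this proves 1), and 2) is the same with $M'$ and $M''$ interchanged. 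For 3): if $\mathbf{CV}(M')$ and $\mathbf{CV}(M'')$ both hold, then $a_nT_{\frac1n}\nu_{M_n'}$ and $b_nT_{\frac1n}\nu_{M_n''}$ both converge vaguely, hence so does their sum $T_{\frac1n}\nu_{M_n}$, i.e. $\mathbf{CV}(M)$.

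The only genuine obstruction is in part 3): from the convergence of the convex combination alone one cannot recover the convergence of either summand separately, which is exactly why the implication there is one-directional. All remaining steps are routine; the only point requiring a little care is the legitimacy of dividing by $a_n$ (resp. $b_n$) in cases 1) and 2), which holds for $n$ large since those coefficients tend to $1$.
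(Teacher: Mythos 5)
Your proof is correct and follows essentially the same route as the paper: for the nontrivial cases you invoke Proposition~\ref{Pro:relation de mesure associe pour suite exacte} to write $T_{\frac 1n}\nu_{M_n}$ as the convex combination $a_n\,T_{\frac 1n}\nu_{M_n'}+b_n\,T_{\frac 1n}\nu_{M_n''}$, identify $\lim a_n$ and $\lim b_n$ from the Poincar\'e-series asymptotics of $\rang M_n'$, $\rang M_n''$, and then argue by cases exactly as the paper does (including the initial reduction eliminating the dimension-$0$ cases by noting $M_n=M_n''$, resp.\ $M_n=M_n'$, for $n\gg 0$). The only cosmetic difference is that you spell out the converse direction of 1) and 2) by explicitly solving for $T_{\frac 1n}\nu_{M_n'}$ and appealing to boundedness of total masses, whereas the paper states the equivalence directly; the mathematical content is identical.
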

\begin{proof} Let $\alpha'=c(
M')$, $\alpha=c(M)$, and $\alpha''=c( M'')$.
If $\dim M'=0$, then for sufficiently large
$n$, we have $M_n=M_n''$, so
$\mathbf{CV}(M'')\Longleftrightarrow\mathbf{CV}(M)$.
Hence the proposition is true when $\dim
M'=0$. Similarly it is also true when $\dim
M''=0$. In the following we suppose
$\min(d',d'')\ge 1$. We then have
$d=\max(d',d'')$. For any integer $n\ge 0$,
let
\begin{gather*}
\nu_n'=T_{\frac 1n}\nu_{M_n'},\quad
\nu_n=T_{\frac 1n}\nu_{M_n},\quad
\nu_n''=T_{\frac 1n}\nu_{M_n''}\end{gather*}
and\[ r_n'=\rang M_n',\quad r_n=\rang
M_n,\quad r_{n}''=\rang M_n''.\]
For sufficiently large integer $n$, $r_n'$,
$r_n$ and $r_n''$ are strictly positive.
Moreover, by Proposition \ref{Pro:relation de
mesure associe pour suite exacte},
$\displaystyle
\nu_n=\frac{r'_n}{r_n}\nu'_n+\frac{r''_n}{r_n}\nu''_n$.
The measures $\nu_n'$, $\nu_n$ and $\nu_n''$
are of bounded total masses, and we have the
following estimations:
\[r_n'=\frac{\alpha'}{(d'-1)!}n^{d'-1}+o(n^{d'-1}),
\quad
r_n''=\frac{\alpha''}{(d''-1)!}n^{d''-1}+o(n^{d''-1}),\quad
r_n=r_n'+r_n''.\]

1) If $d'>d''$, then
$\displaystyle\lim_{n\rightarrow+\infty}r_n'/r_n=1$,
$\displaystyle\lim_{n\rightarrow+\infty}r_n''/r_n=0$,
so $(\nu_n)_{n\ge 1}$ converges vaguely if
and only if $(\nu_n')_{n\ge 1}$ converges
vaguely, and if it is the case, they have the
same limit.

2) It is similar to 1).

3) If $d''=d'$, then
$\alpha=\alpha'+\alpha''$, and
\[\lim_{n\rightarrow+\infty}\frac{r_n'}{r_n}=
\frac{\alpha'}{\alpha'+\alpha''}, \quad
\lim_{n\rightarrow+\infty}\frac{r_n''}{r_n}=
\frac{\alpha''}{\alpha'+\alpha''}.\] If
$(\nu_n')_{n\ge 1}$ converges vaguely to
$\nu'$ and if $(\nu_n'')_{n\ge 1}$ converges
vaguely to $\nu''$, then $(\nu_n)_{n\ge 1}$
converges vaguely to
$\displaystyle\frac{\alpha'}{\alpha'+\alpha''}\nu'+
\frac{\alpha''}{\alpha'+\alpha''}\nu''$.
\end{proof}

\begin{lemma}\label{Lem:comparaison des mesures par un iso}
Let $V$ and $V'$ be two vector spaces of
finite rank over $K$, equipped with
separated, exhaustive and left continuous
$\mathbb R$-filtrations,
$\varphi:V\rightarrow V'$ be an isomorphism
of vector spaces over $K$ and $c$ be a real
number. If
$\lambda(x)\le\lambda(\varphi(x))+c$ for any
element $x\in V$, then
$\nu_V\prec\tau_{c}\nu_{V'}$.
\end{lemma}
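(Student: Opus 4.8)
The plan is to reduce the relation $\nu_V\prec\tau_c\nu_{V'}$ to a rank comparison between the filtration steps of $V$ and those of $V'$. Recall from the discussion preceding Proposition \ref{Pro:relation de mesure associe pour suite exacte} that for a non-zero finite-dimensional vector space $W$ equipped with a separated, exhaustive, left continuous $\mathbb R$-filtration one has $\nu_W\big([x,+\infty[\big)=\rang W_x/\rang W$ for every $x\in\mathbb R$; and, by the characterization of the order $\prec$ given just after the definition of $\succ$, it suffices to check $\int_{\mathbb R}\indic_{[r,+\infty[}\,\mathrm d\nu_V\le\int_{\mathbb R}\indic_{[r,+\infty[}\,\mathrm d(\tau_c\nu_{V'})$ for every real $r$. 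Since $\varphi$ is an isomorphism we have $\rang V=\rang V'$, and since $\tau_c$ is the direct image by $x\mapsto x+c$, the inverse image of $[r,+\infty[$ is $[r-c,+\infty[$, so the right-hand side equals $\rang V'_{r-c}/\rang V'$. Thus the statement amounts to the inequality $\rang V_r\le\rang V'_{r-c}$ for all $r\in\mathbb R$ (the case $V=0$ being trivial, as then $V'=0$ as well).

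To prove $\rang V_r\le\rang V'_{r-c}$, I would first note that by left continuity $V_r=\bigcap_{s<r}V_s=\{x\in V:\lambda_V(x)\ge r\}$, using property 4) of the index function stated before Proposition \ref{Pro:proprietes de la fonction lambda}; likewise $V'_{r-c}=\{y\in V':\lambda_{V'}(y)\ge r-c\}$. Now take any $x\in V_r$, so $\lambda_V(x)\ge r$. The hypothesis $\lambda_V(x)\le\lambda_{V'}(\varphi(x))+c$ gives $\lambda_{V'}(\varphi(x))\ge\lambda_V(x)-c\ge r-c$, hence $\varphi(x)\in V'_{r-c}$. Therefore $\varphi(V_r)\subseteq V'_{r-c}$, and since $\varphi$ is an injective linear map this yields $\rang V_r=\rang\varphi(V_r)\le\rang V'_{r-c}$.

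Combining, $\nu_V\big([r,+\infty[\big)=\rang V_r/\rang V\le\rang V'_{r-c}/\rang V'=(\tau_c\nu_{V'})\big([r,+\infty[\big)$ for every $r$, which is precisely $\nu_V\prec\tau_c\nu_{V'}$. I do not expect any serious obstacle here; the only points requiring a little care are the bookkeeping of the translation $\tau_c$ — namely that its action on $[r,+\infty[$ via inverse image is a shift by $-c$ — and the use of left continuity of the filtration of $V'$ to pass from the inequality $\lambda_{V'}(\varphi(x))\ge r-c$ back to the membership $\varphi(x)\in V'_{r-c}$.
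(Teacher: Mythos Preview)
Your proof is correct, but it takes a different route from the paper's. The paper argues via maximal bases: choosing a maximal base $\mathbf{e}=(e_i)$ of $V$, so that $\nu_V=\nu_{\mathbf{e}}$, it observes that $\mathbf{e}'=(\varphi(e_i))$ is a base of $V'$, whence $\tau_c\nu_{V'}\succ\tau_c\nu_{\mathbf{e}'}$ by maximality of the measure associated to $V'$; then the hypothesis $\lambda(e_i)\le\lambda(\varphi(e_i))+c$ gives $\tau_c\nu_{\mathbf{e}'}\succ\nu_{\mathbf{e}}=\nu_V$ pointwise on Dirac masses. Your argument instead goes straight to the rank formula $\nu_W([r,+\infty[)=\rang W_r/\rang W$ and proves the inclusion $\varphi(V_r)\subseteq V'_{r-c}$ via left continuity. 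Your approach is perhaps more self-contained in that it bypasses the machinery of maximal bases (Propositions \ref{Pro:critere de base maximale} and \ref{Pro:construction de base maximale a partir une base quelconique}), while the paper's approach is a one-line chain of inequalities once those tools are in hand; both yield the result with essentially the same amount of work.
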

\begin{proof}
Let $\mathbf{e}=(e_i)_{1\le i\le n}$ be a
maximal base of $V$. Then
$\mathbf{e}'=(\varphi(e_i))_{1\le i\le n}$ is
a base of $V'$. Hence
$\displaystyle\tau_{c}\nu_{V'}\succ\tau_{c}\nu_{\mathbf{e}'}=
\frac
1n\sum_{i=1}^n\delta_{\lambda(\varphi(e_i))+c}
\succ\frac{1}{n}\sum_{i=1}^n\delta_{\lambda(e_i)}=\nu_{E}$.
\end{proof}

We now present the general convergence
theorem. As we have already proved the
special case of symmetric algebras, it is
quite natural to expect that the general case
follows by using the method of unscrewing.
However, as we shall see in Remark
\ref{Rem:contreexample}, the theorem cannot
be generalized to quasi-filtered graded
modules. Even for modules generated by one
homogeneous element, the convergence of
associated probability measures fails in
general. Therefore, the first step of
unscrewing doesn't work. In fact, the major
difference between filtration and grading is
that, in a graded algebra, the homogeneous
degree of the product of two homogeneous
elements equals to the sum of homogeneous
degrees, as for (quasi-)filtrated algebra, we
only give a lower bound for the index of the
product, which doesn't prevent the product
going ``far away'' in the filtration. More
precisely, the graded algebra associated to a
filtered algebra of finite type over $K$ need
not be of finite type over $K$ in general.

The proof of the theorem below uses the
Noether's normalization theorem, which
provides a subalgebra isomorphic to a
symmetric algebra over which the algebra is
finite. It is this finiteness which prevents
the product of two element from going too
``far away''.

\begin{theorem} \label{Thm:convergence vague de algebre gradue filtree}
Let $f:\mathbb Z_{\ge 0}\rightarrow\mathbb
R_{\ge 0}$ be a function such that
$\displaystyle\lim_{n\rightarrow+\infty}{f(n)}/{n}=0$
and $B=\bigoplus_{n\ge 0}B_n$ be an integral
graded $K$-algebra of finite type over $K$,
which is generated by $B_1$ as $K$-algebra.
Suppose that
\begin{enumerate}[i)]
\item $d=\dim B$ is strictly positive,
\item for any positive integer $n$,
$B_n$ is equipped with an $\mathbb
R$-filtration $\mathcal F$ which is
separated, exhaustive and left continuous,
such that $B$ is an $f$-quasi-filtered graded
$K$-algebra,
\item $\displaystyle\limsup_{n\rightarrow+\infty}\sup_{0\neq a\in B_n}\frac{\lambda(a)}{n}<+\infty$.
\end{enumerate}
For any integer $n>0$, we denote by
$\nu_n=T_{\frac 1n}\nu_{B_n}$. Then
\begin{enumerate}[1)]
\item $\displaystyle\lim_{n\rightarrow+\infty}
\min_{a\in B_n}\frac{\lambda(a)}{n}$ exists in $\mathbb R$,
\item the
supports of $\nu_n$ ($n\ge 1$) are uniformly
bounded and the sequence of measures
$(\nu_n)_{\ge 1}$ converges vaguely to a
Borel probability measure on $\mathbb R$.
\end{enumerate}
\end{theorem}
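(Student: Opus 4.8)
The statement splits into Part 1), about $\min_{a\in B_n}\lambda(a)/n$, and Part 2), about the vague convergence of $\nu_n=T_{\frac1n}\nu_{B_n}$; the plan is to settle 1) directly by a Fekete-type argument, to deduce from it the uniform boundedness of the supports, and to reduce 2) to the already-treated case of symmetric algebras via Noether normalization and d\'evissage. For Part 1), set $\mu_n=\min_{0\ne a\in B_n}\lambda(a)$. Since $\mathcal F$ is separated and exhaustive and $B$ is integral and generated by $B_1$, one has $B_n\ne 0$ and $\mu_n\in\mathbb R$ for every $n\ge 1$, and by Proposition \ref{Pro:proprietes de la fonction lambda} 2) the minimum of the indices of the vectors of any basis of $B_n$ equals $\mu_n$, so $\mu_n=\inf\supp\nu_{B_n}$. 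Now $B_{n_1+\cdots+n_l}$ is $K$-spanned by products $a_1\cdots a_l$ with $a_i\in B_{n_i}$, whence, by hypothesis ii) (the $f$-quasi-filtered condition, with its threshold $n_0$) and Proposition \ref{Pro:proprietes de la fonction lambda} 2), for $n_i\ge n_0$ every $c\in B_{n_1+\cdots+n_l}$ satisfies $\lambda(c)\ge\sum_i\big(\mu_{n_i}-f(n_i)\big)$; hence $\mu_{n_1+\cdots+n_l}\ge\sum_i\mu_{n_i}-\sum_if(n_i)$. Hypothesis iii) gives $\mu_n\le\alpha n$ for a suitable $\alpha$ and all $n\ge1$, so Corollary \ref{Cor:limite de an sur n forme forte} applied to $(\mu_n)$ proves that $\mu_n/n$ converges in $\mathbb R$, which is Part 1). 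Consequently $\inf\supp\nu_n=\mu_n/n$ converges, while $\sup\supp\nu_n\le\big(\sup_{0\ne a\in B_n}\lambda(a)\big)/n$ is bounded above by iii), so the supports of the $\nu_n$ are uniformly bounded; granting the vague convergence $\nu_n\to\nu$, Lemma \ref{Lem:convergence vague vers une proba} then shows $\nu$ is a probability measure.

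For Part 2), by graded Noether normalization — applied if necessary to a Veronese subalgebra, an operation affecting neither limit after the harmless dilations and translations of Proposition \ref{Pro:convergence vague par dilatation et translation} — there is a graded $K$-subalgebra $A\cong S^{\bullet}V$ of $B$ generated by $A_1=V$, over which $B$ is a finite module, with $\dim A=\dim B=d$. Give each $A_n=S^nV$ the inverse image of $\mathcal F$; then $A$ is $f$-quasi-filtered by Corollary \ref{Cor:sous algebre algebre quotient} 1), the filtrations stay separated, exhaustive and left continuous by Propositions \ref{Pro:exhaustivite de filtration induite quotiente} and \ref{Pro:invariance de lambda par la filtration continue associee}, and Proposition \ref{Pro:vague convergnce de mesures} gives $\mathbf{CV}(A)$; moreover $B$ is an $f$-quasi-filtered graded $A$-module by Corollary \ref{Cor:fonctorialite de filtrations qotiet etc} 1). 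D\'evissage of the finite graded $A$-module $B$ produces a chain $0=B^{(0)}\subset B^{(1)}\subset\cdots\subset B^{(\ell)}=B$ of graded $A$-submodules with $B^{(i)}/B^{(i-1)}\cong(A/\mathfrak p_i)(a_i)$, where $\mathfrak p_i$ is a graded prime of $A$ and $a_i\in\mathbb Z$. Equip each $B^{(i)}$ ($i<\ell$) with the inverse image filtration from $B$ and each subquotient $Q_i:=B^{(i)}/B^{(i-1)}$ with the induced strong direct image filtration; applying Lemma \ref{Lem:suite exacte et la comdtion de convergence vague} to $0\to B^{(i-1)}\to B^{(i)}\to Q_i\to 0$ (with $A$ in the role of the ambient standard-graded algebra) and inducting on $i$, it suffices to prove $\mathbf{CV}(Q_i)$ for each $i$.

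This is done by induction on $d$. If $\dim(A/\mathfrak p_i)=0$, then $(Q_i)_n=0$ for $n\gg 0$ and $\mathbf{CV}(Q_i)$ is automatic. Otherwise $A/\mathfrak p_i$ is an integral standard-graded $K$-algebra of finite type of dimension $d'$ with $0<d'\le d$, equal to $d$ only if $\mathfrak p_i=0$; up to the harmless shift $a_i$, $Q_i$ is $A/\mathfrak p_i$ endowed with the filtration induced from $B$, and $\mathbf{CV}(Q_i)$ will follow from the inductive hypothesis if $d'<d$ and from Proposition \ref{Pro:vague convergnce de mesures} if $d'=d$ (so that $A/\mathfrak p_i=A\cong S^{\bullet}V$) — provided this induced filtration meets the hypotheses required there. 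That it is separated, exhaustive and left continuous, and that it inherits condition iii) from iii) for $B$ (because $(Q_i)_n$ is a subquotient of $B_{n+a_i}$), follows from Propositions \ref{Pro:exhaustivite de filtration induite quotiente} and \ref{Pro:invariance de lambda par la filtration continue associee}. The delicate point — the technical heart of the proof, and the reason (cf. Remark \ref{Rem:contreexample}) that no analogue holds for arbitrary modules — is to check that $Q_i$ is $f$-quasi-filtered as a \emph{$K$-algebra}: from Corollaries \ref{Cor:sousmodule module quotient} and \ref{Cor:fonctorialite de filtrations qotiet etc} 2) one knows it is $f$-quasi-filtered as a graded $A/\mathfrak p_i$-module, and here the finiteness of $B$ over $A$ — which bounds how far a product may travel in the filtration — must be exploited, together with Lemma \ref{Lem:comparaison des mesures par un iso}, to compare the induced filtration with the strong direct image $\pi_{*}\mathcal F_A$ of $A$ (for which $A/\mathfrak p_i$ is genuinely $f$-quasi-filtered by Corollary \ref{Cor:sous algebre algebre quotient} 2)) up to a translation that is $o(n)$, so that both have the same vague limit. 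I expect this comparison step to be where the real difficulty lies; the remaining bookkeeping is routine functoriality from the earlier sections.
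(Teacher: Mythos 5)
Your treatment of part 1) is correct and essentially the same as the paper's: show $\mu_n=\lambda^{\min}_n$ satisfies the almost-super-additivity inequality via the $f$-quasi-filtered property, observe $\mu_n\le\alpha n$ from condition iii), and invoke Corollary~\ref{Cor:limite de an sur n forme forte}. The deduction of uniform boundedness of the supports is also fine.

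The proof of part 2), however, has a genuine gap precisely at the step you flag as ``the technical heart,'' and the route you take there diverges from the paper's in a way that does not appear to be repairable. You propose a prime filtration $0=B^{(0)}\subset B^{(1)}\subset\cdots\subset B^{(\ell)}=B$ of the finite graded $A$-module $B$, with subquotients $Q_i\cong(A/\mathfrak p_i)(a_i)$, and then try to establish $\mathbf{CV}(Q_i)$. But each $Q_i$ carries the strong-direct-image filtration inherited from $B$ through the chain of inverse-image filtrations on the $B^{(j)}$, and this filtration has \emph{no} a priori relation to the strong direct image $\pi_*\mathcal F_A$ of the filtration of $A$ under $A\twoheadrightarrow A/\mathfrak p_i$. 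For $i\ge 2$ the subquotient $Q_i$ is not realised by any actual element of $B$: its cyclic $A$-generator does not lift to an element $b\in B$ with $Ab\cong A/\mathfrak p_i$, so there is no minimal-polynomial relation inside $B$ with which to control the index function on $Q_i$. The assertion that the induced filtration and $\pi_*\mathcal F_A$ differ ``by a translation that is $o(n)$'' is exactly the kind of module-level control that Remark~\ref{Rem:contreexample}~1) shows fails for quasi-filtered modules; you cite the remark as the reason the step is delicate, but then state the needed comparison as a hope rather than prove it.

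The paper avoids this entirely by never looking at subquotients of a prime filtration. Its second step works only with genuine cyclic sub-$A$-modules $Ax\subset B$, $x$ a non-zero homogeneous element: the minimal polynomial $P(X)=X^p+a_{p-1}X^{p-1}+\cdots+a_0$ of $x$ over $A$ (with $a_0\ne 0$ since $B$ is a domain) yields the identity $xy=-a_0\in A$ with $y=x^{p-1}+\cdots+a_1$, and the $f$-quasi-filtered estimates \eqref{Equ:quasi-fitlree 1} and \eqref{Equ:quasi-fitlree 2} then sandwich the measures of $(Ax)_{n+m}$ between translates of the measures of $A_n$ and $(Aa_0)_{n+mp}$, with translation amplitude $O(f(n))$. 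That sandwich \eqref{Equ:encadrement de mesures} together with Proposition~\ref{Pro:convergence vague par dilatation et translation} gives $\mathbf{CV}(Ax)$ with the same limit as $A$. The third step then chooses homogeneous $x_1,\dots,x_s$ that form a basis of $L\otimes_AB$ over $L=\mathrm{Frac}(A)$, sets $H=Ax_1+\cdots+Ax_s$, and observes that $B/H$ is torsion, hence of strictly smaller dimension; Lemma~\ref{Lem:suite exacte et la comdtion de convergence vague} then absorbs $B/H$ and assembles $\mathbf{CV}(H)$ and $\mathbf{CV}(B)$ from $\mathbf{CV}(Ax_i)$. In short: the paper reduces to cyclic \emph{sub}modules where a concrete algebraic relation inside $B$ supplies the needed comparison, and uses the free-plus-torsion decomposition instead of a prime filtration; your proposal replaces this with subquotients for which no such relation is available, and the missing comparison is not bookkeeping but the substance of the argument. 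You also omit the paper's Step~1 normalisation (replacing $\mathcal F$ by $\mathcal F^c$ to ensure $\lambda^{\min}_n-f(n)\ge 0$), which is used to keep the sandwich inequalities one-sided; that is a smaller omission but should be stated.
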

\begin{proof} For any integer $n\ge 1$, let
\[\lambda^{\max}_n=\sup_{0\neq a\in B_n}
\lambda(a)\text{ and
}\lambda^{\min}_n=\min_{a\in
B_n}\lambda(a).\] The support of $\nu_n$ is
contained in
$[\lambda^{\min}_n/n,\lambda^{\max}_n/n]$.
Since $0<d<+\infty$, for any integer $n>0$,
$B_n$ is a non-zero vector space of finite
rank, so $\lambda^{\min}_n\in\mathbb R$ since
the filtration of $B_n$ is exhaustive. On the
other hand, there exists an element $a_n$ in
$B_n$ such that
$\lambda^{\min}_n=\lambda(a_n)$. Let
\[W_n=\{b_1\cdots b_n\;|\;b_i\in
B_1\text{ for any }1\le i\le n\}.\] Since $B$
is generated by $B_1$, $B_n$ is generated as
vector space over $K$ (even as commutative
group) by $W_n$. After Proposition
\ref{Pro:proprietes de la fonction lambda}
2), we may suppose $a_n\in W_n$. Clearly
$a_n$ is non-zero for any integer $n>1$. If
$\mathbf{n}=(n_i)_{1\le i\le r}$ is a
multi-index in $\mathbb Z_{>0}^r$ and if
$N=n_1+\cdots+n_r$, we can write $a_N$ as the
product of $r$ elements $c_1,\cdots,c_r$,
where $c_i\in B_{n_i}\setminus\{0\}$.
Therefore,
\begin{equation}\label{Equ:estimation de lambda min}\lambda^{\min}_N=\lambda(a_N)
\ge\sum_{i=1}^r\Big(\lambda(c_i)-f(n_i)\Big)\ge\sum_{i=1}^r\Big(\lambda^{\min}_{n_i}-f(n_i)\Big).
\end{equation} The condition iii) implies that
$\displaystyle\limsup_{n\rightarrow+\infty}{\lambda^{\min}_n}/{n}<+\infty$,
so the sequence $(\lambda^{\min}_n/n)_{n\ge
1}$ has a limit in $\mathbb R$ (by Corollary
\ref{Cor:limite de an sur n forme forte}) and
therefore is bounded from below. On the other
hand, the condition iii) shows that the
sequence $(\lambda^{\max}_n/n)_{n\ge 1}$ is
bounded from above. Hence the supports of
measures $\nu_{n}$ ($n\ge 1$) are uniformly
bounded.

We now prove the second assertion of the
theorem. After Lemma \ref{Lem:convergence
vague vers une proba}, it suffices to verify
$\mathbf{CV}(B)$.

\vskip 5 pt \noindent{\it Step} $1$: some
simplifications.

First, after possible extension of fields, we
may suppose that $K$ is infinite.

Let $c$ be a real constant. We consider the
filtration $\mathcal F^c$ of $B$ such that
$\mathcal F^c_tB_n=\mathcal F_{t-cn}B_n$. In
other words, for any element $a\in B_n$, we
have the equality $\lambda_{\mathcal
F^c}(a)=\lambda_{\mathcal F}(a)+cn$. If
$(n_i)_{1\le i\le r}\in\mathbb Z_{>0}^r$ is
an multi-index and if for any $i$, $a_i$ is
an element in $B_{n_i}$, in writing
$N=n_1+\cdots+n_r$, $a=a_1\cdots a_r$, we
have
\[\lambda_{\mathcal F^c}(a)=\lambda_{\mathcal F}(a)+cN\ge
\sum_{i=1}^r\Big(\lambda_{\mathcal
F}(a_i)-f(n_i)\Big)+\sum_{i=1}^r cn_i=
\sum_{i=1}^r\Big(\lambda_{\mathcal
F^c}(a_i)-f(n_i)\Big),\] in other words, $B$
is $f$-quasi-filtered for the filtration
$\mathcal F^c$. On the other hand, if we
denote by $\nu_{B_n}^c$ the probability
measure associated to $B_n$ for the
filtration $\mathcal F^c$, we have
$\nu_{B_n}^c=\tau_{cn}\nu_{B_n}$. Therefore,
$T_{\frac 1n}\nu_{B_n}^c=T_{\frac
1n}\tau_{cn}\nu_{B_n}=\tau_{c}T_{\frac
1n}\nu_{B_n}$. Hence $B$ satisfies the vague
convergence condition for the filtration
$\mathcal F$ if and only if it is the case
for the filtration $\mathcal F^{c}$. After
the proof of the first assertion we obtain
$\lambda^{\min}_n=O(n)$. Since $f(n)=o(n)$,
we have $\lambda^{\min}_n-f(n)=O(n)$. In
replacing the filtration $\mathcal F$ by
$\mathcal F^c$, where $c\in\mathbb R_{>0}$ is
sufficiently large, we reduce the problem to
the case where $\lambda^{\min}_n-f(n)\ge 0$
for any $n\ge 1$. In particular, for any
homogeneous element $a$ of $B$ of homogeneous
degree $n$, we have $\lambda(a)-f(n)\ge 0$.

\vskip 5 pt \noindent{\it Step} $2$: Since
$K$ is an infinite field, by Noether's
normalization (cf. \cite{Eise} Theorem 13.3),
there exist $d$ elements $x_1,\cdots,x_d$ in
$B_1$ such that
\begin{enumerate}[1)]
\item the homomorphism from the polynomial algebra
$K[T_1,\cdots,T_d]$ to $B$, which sends $T_i$
to $x_i$, is an isomorphism of graded
$K$-algebras from $K[T_1,\cdots,T_d]$ to its
image,
\item if we denote by $A$ this image,
i.e., the sub-$K$-algebra of $B$ generated by
$x_1,\cdots,x_d$, then $B$ is a graded $
A$-module of finite type.
\end{enumerate}
The sub-$K$-algebra $A$, equipped with the
inverse image filtrations, is an
$f$-quasi-filtered graded $K$-algebra.
Moreover, $B$ is an $f$-quasi-filtered graded
$A$-module. Proposition \ref{Pro:vague
convergnce de mesures} shows that we have
$\mathbf{CV}(A)$.

Let $a$ be a non-zero homogeneous element of
$A$. We equip $Aa$ with the inverse image
filtration. Since $\dim A/Aa<\dim A$, we have
$\mathbf{CV}(Aa)$ after Lemma \ref{Lem:suite
exacte et la comdtion de convergence vague}.
Furthermore, the sequences $(T_{\frac
1n}\nu_{A_n})_{n\ge 1}$ and $(T_{\frac
1n}\nu_{(Aa)_n})_{n\ge 1}$ of probability
measures on $\mathbb R$ converge vaguely to
the same probability measure on $\mathbb R$.

If $x$ is a homogeneous element of degree
$m>0$ in $B$, then there exists a unitary
polynomial $P\in A[X]$ of degree $p\ge 1$
such that $P(x)=0$. We may suppose that $P$
is minimal and is written in the form
\[P(X)=X^p+a_{p-1}X^{p-1}+\cdots+a_0.\] Since
$P$ is minimal and since $B$ is an integral
ring, $a_0$ is non-zero. For any integer
$0\le i<p$, let $\widetilde{a}_i$ be the
component of degree $(p-i)m$ of $a_i$. If we
write
\[\widetilde P(X)=X^p+\widetilde
{a}_{p-1}X^{p-1}+\cdots+\widetilde{a}_0,\]
then we still have $\widetilde{P}(x)=0$ since
$x$ is homogeneous of degree $m$. Therefore
we can suppose that $a_i$ is homogeneous of
degree $(p-i)m$ for any $0\le i<p$. Let
$y=x^{p-1}+a_{p-1}x^{p-2}+\cdots+a_1$, which
is homogeneous of degree $(p-1)m$. Moreover
we have $xy+a_0=0$. If $u$ is a homogeneous
element of degree $n$ of $A$, then
\begin{equation}\label{Equ:quasi-fitlree 1}\lambda(ua_0)=\lambda(uxy)\ge\lambda(ux)
-f(n+m)+\lambda(y)-f((p-1)m)\ge
\lambda(ux)-f(n+m).\end{equation} We then
deduce that
$\lambda(ux)\le\lambda(ua_0)+f(n+m)$. On the
other hand,
\begin{equation}\label{Equ:quasi-fitlree 2}\lambda(ux)\ge\lambda(u)+\lambda(x)-f(m)-f(n)\ge\lambda(u)-f(n).
\end{equation} Let $M=Aa_0$ and $M'=Ax$.
The algebra $B$ being integral, for any
integer $n\ge 1$, the mapping $ux\mapsto
ua_0$ ($u\in A_n$) from $M_{n+m}'$ to
$M_{n+mp}$ is an isomorphism of vector spaces
over $K$. After \eqref{Equ:quasi-fitlree 1}
and Lemma \ref{Lem:comparaison des mesures
par un iso}, we have $\nu_{M_{n+m}'}\prec
\tau_{f(n+m)}\nu_{M_{n+mp}}$. On the other
hand, the mapping $u\mapsto ux$ ($u\in A_n$)
from $A_n$ to $M'_{n+m}$ is an isomorphism of
vector spaces over $K$. After
\eqref{Equ:quasi-fitlree 2} and Lemma
\ref{Lem:comparaison des mesures par un iso},
we obtain
$\nu_{A_n}\prec\tau_{f(n)}\nu_{M'_{n+m}}$, or
equivalently $\tau_{-f(n)}\nu_{A_n}\prec
\nu_{M'_{n+m}}$. So we have the estimation
\[\tau_{-f(n)}\nu_{A_n}\prec
\nu_{M'_{n+m}}\prec\tau_{f(n+m)}\nu_{M_{n+mp}},\]
and hence
\begin{equation*}T_{\frac 1{n+m}}\tau_{-f(n)}\nu_{A_n}\prec
T_{\frac 1{n+m}}\nu_{M'_{n+m}}\prec T_{\frac
1{n+m}}\tau_{f(n+m)}\nu_{M_{n+mp}},
\end{equation*}
or equivalently
\begin{equation}\label{Equ:encadrement de mesures}
\tau_{-f(n)/(n+m)}T_{\frac{n}{n+m}}T_{\frac
1n}\nu_{A_n}\prec T_{\frac
1{n+m}}\nu_{M'_{n+m}}\prec
\tau_{f(n+m)/(n+m)}T_{\frac{n+mp}{n+m}}T_{\frac
1{n+mp}}\nu_{M_{n+mp}}.
\end{equation}
As proved above, the sequences $(T_{\frac
1n}\nu_{A_n})_{n\ge 1}$ and $(T_{\frac
1n}\nu_{M_n})_{n\ge 1}$ converge vaguely to
the same limit $\nu$. After Proposition
\ref{Pro:convergence vague par dilatation et
translation} and the estimation
(\ref{Equ:encadrement de mesures}), we
conclude that the sequence $(T_{\frac
1{n}}\nu_{M'_n})_{n\ge 1}$ converges vaguely
to $\nu$.

\vskip 10 pt\noindent{\it Step }$3$: Since
$B$ is a finite algebra over $A$, the algebra
$L\otimes_AB$ is of finite rank over $L$,
where $L$ is the quotient field of $A$. The
$A$-module $B$ is generated by homogeneous
elements, hence there exist homogeneous
elements $x_1,\cdots,x_s$ of $B$ forming a
base of $L\otimes_AB$ over $L$. If we write
$H=Ax_1+\cdots+Ax_s$, then $H$ is a free
sub-$A$-module of base $(x_1,\cdots,x_s)$ of
$B$. Let $H'=B/H$. We have an exact sequence:
\[\xymatrix{0\ar[r]&H\ar[r]^\psi&B\ar[r]^\pi&H'\ar[r]&0}.\]
Since $1\otimes\psi:L\otimes_AH\rightarrow
L\otimes_AB$ is an isomorphism, we have
$L\otimes_AH'=0$, so $H'$ is a torsion
$A$-module. Then $\dim_AH'<\dim
A=\dim_AH=\dim_AB$. After the step $2$ we
have $\mathbf{CV}(Ax_i)$ for any $1\le i\le
s$. After Lemma \ref{Lem:suite exacte et la
comdtion de convergence vague}, we obtain
$\mathbf{CV}(H)$ and hence $\mathbf{CV}(B)$.
\end{proof}

\begin{remark}\label{Rem:convergence vague de
algebre gradue filtree} In Theorem
\ref{Thm:convergence vague de algebre gradue
filtree}, if we suppose that the vector space
$B_n$ is non-zero for sufficiently large $n$
(this condition is notably satisfied when
$B_1\neq 0$), then the condition that $B$ is
generated by $B_1$ is not necessary. In fact,
after \cite{EGAII} II, 2.1.6, there exists an
integer $d>0$ such that
$B^{(d)}=\bigoplus_{n\ge 0}B_{nd}$ is a
$B_0$-algebra generated by $B^{(d)}_1=B_d$.
Moreover, if we denote by $g:\mathbb Z_{\ge
0}\rightarrow\mathbb R_{\ge 0}$ the mapping
such that $g(n)=f(nd)$, then $B^{(d)}$ is a
$g$-quasi-filtered $K$-algebra. After Theorem
\ref{Thm:convergence vague de algebre gradue
filtree}, the algebra $B^{(d)}$ satisfies the
vague convergence condition. Hence by an
argument similar to the second step of the
proof of Theorem \ref{Thm:convergence vague
de algebre gradue filtree}, for any non-zero
homogeneous element $x$ of $B$, $B^{(d)}x$
satisfies the vague convergence condition,
and the sequence of probability measures
associated to $B^{(d)}x$ converges to the
limit of that associated to $B^{(d)}$. We
suppose that $B_n\neq 0$ for any $n\ge m_0$.
Then for any integer $m_0\le k<m_0+d$, the
$B^{(d)}$-module $B^{(d,k)}=\bigoplus_{n\ge
0}B_{nd+k}$ is non-zero. By an argument
similar to the third step of the proof of
Theorem \ref{Thm:convergence vague de algebre
gradue filtree} using the fact that $B^{(d)}$
is an integral ring, we conclude that
$B^{(d,k)}$ satisfies the vague convergence
condition, and that the limit of the sequence
of probability measures associated to
$B^{(d,k)}$ coincides with that of
probability measures associated to $B^{(d)}$.
Finally, combining all these measure
sequences, Proposition \ref{Pro:convergence
vague par dilatation et translation} shows
that the sequence of probability measures
associated to $B$ converges vaguely.
\end{remark}

\begin{remark}\label{Rem:contreexample}
\begin{enumerate}[1)]
\item Theorem
\ref{Thm:convergence vague de algebre gradue
filtree} is not true in general for a
quasi-filtered graded module. In fact, let
$B$ be the algebra $K[X]$ of polynomials in
one variable, equipped with the usual
graduation and with the filtration $\mathcal
F$ such that
\[\mathcal F_t
B_n=\begin{cases} B_n,&t\le 0,\\
0,&t>0.
\end{cases}\]
Clearly $B$ is a quasi-filtered graded
$K$-algebra. Let $M$ be a free graded
$B$-module generated by one homogeneous
element of degree $0$. If $\varphi:\mathbb
Z_{\ge 0}\rightarrow\mathbb R$ is an
increasing function, we can define a
filtration $\mathcal F^\varphi$ of $M$ such
that
\[\mathcal F_t^\varphi M_n=\begin{cases}
M_n,&t\le\varphi(n),\\
0,&t>\varphi(n).
\end{cases}\]
Then $M$ is a quasi-filtered graded
$B$-module, and for any integer $n\ge 0$,
$\nu_{M_n}=\delta_{\varphi(n)}$. Notice that
the condition $\mathbf{CV}(M)$ is equivalent
to the assertion that
$\displaystyle\lim_{n\rightarrow+\infty}
\varphi(n)/n$ exists in $\mathbb
R\cup\{+\infty\}$. If $\varphi:\mathbb Z_{\ge
0}\rightarrow\mathbb R$ is an increasing
function such that the sequence
$(\varphi(n)/n)_{n\ge 1}$ has more than one
accumulation point
--- for example, if $\varphi(n)=2^{\lfloor
\log_2n\rfloor}$, then $\mathbf{CV}(M)$ is no
longer satisfied. This counter-example shows
that it is not possible to prove Theorem
\ref{Thm:convergence vague de algebre gradue
filtree} by using the classical version of
unscrewing.
\item Theorem
\ref{Thm:convergence vague de algebre gradue
filtree} is not true in general for a
quasi-filtered graded algebra which is not
integral. In fact, if $B$ is a quasi-filtered
algebra over $K$ and if $M$ is a
quasi-filtered graded $B$-module. We suppose
that $\mathbf{CV}(B)$ is satisfied, but the
condition $\mathbf{CV}(M)$ is {\bf not}
satisfied (after 1), this is always
possible). If we denote by $C$ the nilpotent
extension of $B$ by $M$ (see \cite{Matsumura}
chap. 9 \S25), then $C$ is a filtered graded
algebra over $K$, which is of finite type.
But the condition $\mathbf{CV}(C)$ is not
satisfied.
\end{enumerate}
\end{remark}

\begin{corollary}\label{Cor:convergence de polygone cas general}
With the notations of Theorem
\ref{Thm:convergence vague de algebre gradue
filtree}, if for any $n\in\mathbb N$, we
denote by $P_n$ the polygon associated to the
probability measure $\nu_n$, then the
sequence of polygons $(P_n)_{n\ge 1}$
converges uniformly to a concave function on
$[0,1]$. If $B_n\neq 0$ for sufficiently
large $n$, the same result remains true if we
remove the condition that $B$ is generated as
$K$-algebra by $B_1$.
\end{corollary}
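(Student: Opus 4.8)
The plan is to derive the uniform convergence of the polygons from the vague convergence of the measures $\nu_n=T_{\frac1n}\nu_{B_n}$, which is already furnished by Theorem~\ref{Thm:convergence vague de algebre gradue filtree}. Let $\nu$ denote the vague limit; since the supports of the $\nu_n$ are uniformly bounded (also part of Theorem~\ref{Thm:convergence vague de algebre gradue filtree}), Lemma~\ref{Lem:convergence vague vers une proba} shows that $\nu$ is again a Borel probability measure. For each $n$ the space $B_n$ is finite dimensional and non-zero, so $\nu_{B_n}$, hence $\nu_n$, is a finite linear combination of Dirac masses; by Proposition~\ref{Pro:le polygone associe a une filtration} we therefore have $P_n=P(\nu_n)$ with $P_n(t)=\int_0^t f_n^*(s)\,\mathrm{d}s$, where $f_n(x)=\nu_n(]x,+\infty[)$ is right continuous, decreasing and valued in $[0,1]$.

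Next I would verify that the sequence $(f_n)_{n\ge1}$ meets the hypotheses of the final Proposition of Section~6 (the one asserting the uniform convergence of the primitives $\int_0^t f_n^*$). If $\bigcup_{n\ge1}\supp(\nu_n)\subset[m,M]$, then $\inf\{x\mid f_n(x)=0\}\le M$ and $\sup\{x\mid f_n(x)=1\}\ge m$ for every $n$, which is exactly condition~i). For condition~ii), observe that $f_n(x)=1-F_n(x)$ where $F_n$ is the distribution function of $\nu_n$; Proposition~\ref{Pro:convergence de fonction de repartition} then provides a countable set $Z\subset\mathbb R$ such that, for every $x\in\mathbb R\setminus Z$, the sequence $(F_n(x))_{n\ge1}$, hence $(f_n(x))_{n\ge1}$, converges to $1-F(x)=\nu(]x,+\infty[)$, where $F$ is the distribution function of $\nu$.

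The function $x\mapsto\nu(]x,+\infty[)$ is right continuous and agrees with $\lim_n f_n$ on $\mathbb R\setminus Z$, so it is a legitimate choice of limit function $f$ in the cited Proposition, and part~4) of that Proposition gives that $\int_0^t f_n^*(s)\,\mathrm{d}s$ converges to $\int_0^t f^*(s)\,\mathrm{d}s$ uniformly in $t\in[0,1]$. By Proposition~\ref{Pro:le polygone associe a une filtration}, extended (as noted in Section~6) to an arbitrary Borel probability measure, the right-hand side is precisely $P(\nu)(t)$, the concave function associated to $\nu$. Hence $P_n\to P(\nu)$ uniformly on $[0,1]$; being a uniform limit of concave functions vanishing at $0$, $P(\nu)$ is itself concave on $[0,1]$ and vanishes at the origin.

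For the last assertion, when $B_n\neq0$ for all sufficiently large $n$ but $B$ is no longer assumed to be generated by $B_1$, Remark~\ref{Rem:convergence vague de algebre gradue filtree} still yields the vague convergence of $(\nu_n)_{n\ge1}$ to a Borel probability measure with uniformly bounded supports, so the three preceding paragraphs apply verbatim. The only genuinely delicate point in the whole argument is the verification of condition~ii) for $(f_n)$, which is where the uniform boundedness of the supports is needed in order to invoke Proposition~\ref{Pro:convergence de fonction de repartition}; once that is in place, the remainder is bookkeeping around the quasi-inverse construction of Section~6.
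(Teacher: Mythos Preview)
Your proof is correct and follows exactly the route the paper has in mind: the corollary is stated without proof because it is meant to be read as a direct combination of Theorem~\ref{Thm:convergence vague de algebre gradue filtree} (vague convergence plus uniformly bounded supports), Proposition~\ref{Pro:convergence de fonction de repartition}, and the final Proposition of Section~6 on quasi-inverses, precisely as you have assembled them. One small remark: for the second assertion, Remark~\ref{Rem:convergence vague de algebre gradue filtree} only states the vague convergence explicitly, not the uniform boundedness of the supports; however the latter is indeed inherited from $B^{(d)}$ through the encadrement~\eqref{Equ:encadrement de mesures} used in Step~2 of the proof of Theorem~\ref{Thm:convergence vague de algebre gradue filtree}, so your invocation is justified.
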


\section{Convergence of Harder-Narasimhan polygons: relative geometric case}

\hskip\parindent Using the results
established in the previous section, notably
Theorem \ref{Thm:convergence vague de algebre
gradue filtree} and Remark
\ref{Rem:convergence vague de algebre gradue
filtree}, we obtain in Theorem
\ref{Thm:convergence for quasifilted algebra
geometric} the convergence of normalized
Harder-Narasimhan polygons for an algebra in
vector bundles on a non-singular projective
curve.

Let $k$ be a field, $C$ be a non-singular
projective curve of genus $g$ over $k$,
$\eta$ be the generic point of $C$ and $K$ be
the field of rational functions on $C$. As
explained in the introduction, we shall
associate to each non-zero vector bundle $E$
on $C$ an $\mathbb R$-filtration of $E_K$
which is separated, exhaustive and left
continuous. Let
\[0=E_0\subsetneq E_1\subsetneq E_2\subsetneq\cdots\subsetneq E_n=E\]
be the Harder-Narasimhan flag of $E$, which
induces a flag
\[0=E_{0,K}\subsetneq E_{1,K}\subsetneq E_{2,K}
\subsetneq\cdots\subsetneq E_{n,K}=E_K\] of
the vector space $E_K$. Furthermore, if we
write $\mu_i=\mu(E_i/E_{i-1})$ for $1\le i\le
n$, then the sequence of rational numbers
$(\mu_i)_{1\le i\le n}$ is strictly
decreasing.

Therefore, we obtain a filtration $\mathcal
F^{\HN}$ of $E_K$ such that
\[\mathcal F_s^{\HN}E_K=\begin{cases}
0,&\text{if }s>\mu_1,\\
E_{i,K},&\text{if }\mu_{i+1}<s\le\mu_i,\quad
1\le i\le n,\\
E_K&\text{if }s\le \mu_n,
\end{cases}\]
called the {\it Harder-Narasimhan filtration}
of $E_K$. Note that the normalized
Harder-Narasimhan polygon of $E$ identifies
with the polygon associated to the
Harder-Narasimhan filtration of $E_K$.

We recall that if $\varphi:F\rightarrow G$ is
a non-zero homomorphism of vector bundles on
$C$, then the inequality
$\mu_{\min}(F)\le\mu(\varphi(F))\le\mu_{\max}(G)$
holds. We obtain therefore the following
proposition.
\begin{proposition}\label{Pro:fonctorial de HN}
Let $\varphi:F\rightarrow E$ be a
homomorphism of vector bundles on $C$. For
any real number $s$, the image
$\varphi_K(F_K)$ is contained in $\mathcal
F_s^{\HN}E_K$ if $\mu_{\min}(F)\ge s$.
\end{proposition}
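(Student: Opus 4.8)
The plan is to reduce the statement to the inequality $\mu_{\min}(F)\le\mu(\psi(F))\le\mu_{\max}(G)$ recalled just before the proposition, applied to a well-chosen composite homomorphism. First I would make the target $\mathcal F_s^{\HN}E_K$ completely explicit: using the Harder-Narasimhan flag $0=E_0\subsetneq E_1\subsetneq\cdots\subsetneq E_n=E$ and the strictly decreasing slopes $\mu_i=\mu(E_i/E_{i-1})$, one has $\mathcal F_s^{\HN}E_K=E_{i,K}$, where $i$ is the largest index with $\mu_i\ge s$ (with the conventions $\mu_0=+\infty$, so that $i\ge 0$ always makes sense, and $i=n$ whenever $s\le\mu_n$). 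If $\varphi=0$, or if $i=n$, there is nothing to prove; so I assume $\varphi\neq 0$ and $i<n$.

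Next I would pass to the quotient bundle $E/E_i$, which is non-zero since $i<n$. The Harder-Narasimhan flag of $E/E_i$ is the image of the tail $E_{i+1}/E_i\subsetneq\cdots\subsetneq E_n/E_i=E/E_i$ of the Harder-Narasimhan flag of $E$, because each sub-quotient $(E_j/E_i)/(E_{j-1}/E_i)\cong E_j/E_{j-1}$ is semistable and the slopes $\mu_{i+1}>\cdots>\mu_n$ are still strictly decreasing; in particular $\mu_{\max}(E/E_i)=\mu(E_{i+1}/E_i)=\mu_{i+1}$, and by the choice of $i$ we have $\mu_{i+1}<s$.

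Now consider the composite $\psi\colon F\xrightarrow{\varphi}E\to E/E_i$. Were $\psi\neq 0$, the recalled inequality would give $\mu_{\min}(F)\le\mu_{\max}(E/E_i)=\mu_{i+1}<s$, contradicting the hypothesis $\mu_{\min}(F)\ge s$. Hence $\psi=0$, which says precisely that $\varphi(F)\subset E_i$ as subsheaves of $E$; restricting to the generic point of $C$ yields $\varphi_K(F_K)\subset E_{i,K}=\mathcal F_s^{\HN}E_K$, as desired.

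The argument is essentially immediate once the explicit shape of $\mathcal F_s^{\HN}E_K$ is in hand; the only points needing a little care are the book-keeping of the extreme cases $s>\mu_1$ (where $i=0$ and the conclusion forces $\varphi_K=0$, consistent with $\mathcal F_s^{\HN}E_K=0$) and $s\le\mu_n$ (where $\mathcal F_s^{\HN}E_K=E_K$ and the claim is trivial), together with the identification $\mu_{\max}(E/E_i)=\mu_{i+1}$, which is the one spot where the semistability of the Harder-Narasimhan sub-quotients is invoked. I do not anticipate any genuine obstacle.
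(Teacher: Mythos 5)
Your proof is correct, and it takes a slightly different route from the paper's. The paper fixes $i$ to be the \emph{smallest} index with $\varphi(F)\subset E_i$ (using that the $E_j$ are saturated, so this is the same as $\varphi_K(F_K)\subset E_{j,K}$), observes that the composite $F\to E_i\to E_i/E_{i-1}$ is then non-zero, and concludes directly from the slope inequality and semistability of $E_i/E_{i-1}$ that $s\le\mu_{\min}(F)\le\mu_{\max}(E_i/E_{i-1})=\mu_i$, hence $\varphi_K(F_K)\subset E_{i,K}\subset\mathcal F^{\HN}_sE_K$. You instead fix $i$ by the condition $\mu_i\ge s>\mu_{i+1}$ (so $\mathcal F^{\HN}_sE_K=E_{i,K}$), then argue by contradiction via the composite $F\to E/E_i$, using the fact that the tail of the flag is the Harder--Narasimhan flag of $E/E_i$ so that $\mu_{\max}(E/E_i)=\mu_{i+1}$. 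Both apply the same slope inequality; the paper's variant is marginally more economical because it only needs $\mu_{\max}$ of a single semistable subquotient, whereas yours needs the (standard but slightly longer) identification of the HN flag of the quotient $E/E_i$. Conversely, your choice of $i$ keeps the target $\mathcal F^{\HN}_sE_K$ in view from the start, which some may find cleaner to read. Either formulation is fine.
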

\begin{proof}
The case where $\varphi=0$ is trivial. We
assume hence $\varphi\neq 0$. First, for any
real number $s\in\mathbb R$, $\mathcal
F_s^{\HN}E_K\in\{E_{0,K},\cdots,E_{n,K}\}$.
Since the vector bundles $E_i$ are saturated
in $E$, $\varphi_K(F_K)\subset E_{i,K}$ if
and only if $\varphi(F)\subset E_i$.
Therefore, if $i$ is the smallest index such
that $\varphi_K(F_K)\subset E_{i,K}$, which
is always $\ge 1$ because $\varphi\neq 0$,
then
$\mu_{\min}(F)\le\mu_{\max}(E_i/E_{i-1})=\mu_i$
since the composed homomorphism
$\xymatrix{F\ar[r]^\varphi&E_i\ar[r]&E_i/E_{i-1}}$
is non-zero. Therefore we have $s\le\mu_i$,
so $\varphi_K(F_K)\subset
E_{i,K}\subset\mathcal F_s^{\HN}E_K$.
\end{proof}

Proposition \ref{Pro:fonctorial de HN}
implies in particular that, for any subbundle
$F\subset E$ such that $\mu_{\min}(F)\ge s$,
$F_K$ is contained in $\mathcal
F^{\HN}_sE_K$. Therefore we have
\[\mathcal F^{\HN}_sE_K=\sum_{\begin{subarray}{c}
0\neq F\subset E\\
\mu_{\min}(F)\ge s
\end{subarray}}F_K.\]

\begin{corollary}\label{Cor:homomorphism compatible
aux filtration de HN} Let
$\varphi:F\rightarrow E$ be a homomorphism of
vector bundles on $C$. For any real number
$s$, the $K$-linear mapping $\varphi_K$ sends
$\mathcal F^{\HN}_sF_K$ into $\mathcal
F^{\HN}_sE_K$. In other words, the
homomorphism $\varphi_K$ is compatible with
Harder-Narasimhan filtrations.
\end{corollary}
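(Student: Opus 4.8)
The plan is to deduce this directly from Proposition \ref{Pro:fonctorial de HN} together with the description of the Harder-Narasimhan filtration as a sum over subbundles. Recall that for every real number $s$ one has
\[\mathcal F^{\HN}_s F_K=\sum_{\begin{subarray}{c}
0\neq G\subset F\\
\mu_{\min}(G)\ge s
\end{subarray}}G_K,\]
so it suffices to show that $\varphi_K(G_K)\subset\mathcal F^{\HN}_sE_K$ for every non-zero subbundle $G$ of $F$ with $\mu_{\min}(G)\ge s$, and then sum over all such $G$.

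So first I would fix such a subbundle $G\subset F$, write $\iota_G:G\hookrightarrow F$ for the inclusion, and form the composed homomorphism of vector bundles $\psi=\varphi\circ\iota_G:G\rightarrow E$ on $C$. Applying Proposition \ref{Pro:fonctorial de HN} to $\psi$, with $G$ playing the role of the source bundle, the hypothesis $\mu_{\min}(G)\ge s$ yields $\psi_K(G_K)\subset\mathcal F^{\HN}_sE_K$. Since the generic fibre of $\iota_G$ is just the inclusion $G_K\hookrightarrow F_K$, one has $\psi_K(G_K)=\varphi_K(\iota_{G,K}(G_K))=\varphi_K(G_K)$, where on the right $G_K$ is viewed as a subspace of $F_K$. (If $\varphi\circ\iota_G=0$ this is the trivial case already noted in the proof of Proposition \ref{Pro:fonctorial de HN}.) Hence $\varphi_K(G_K)\subset\mathcal F^{\HN}_sE_K$ for each such $G$.

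Finally, summing over all non-zero subbundles $G\subset F$ with $\mu_{\min}(G)\ge s$ and using the displayed formula for $\mathcal F^{\HN}_sF_K$ gives
\[\varphi_K\big(\mathcal F^{\HN}_sF_K\big)=\varphi_K\bigg(\sum_{\begin{subarray}{c}
0\neq G\subset F\\
\mu_{\min}(G)\ge s
\end{subarray}}G_K\bigg)=\sum_{\begin{subarray}{c}
0\neq G\subset F\\
\mu_{\min}(G)\ge s
\end{subarray}}\varphi_K(G_K)\subset\mathcal F^{\HN}_sE_K,\]
which is exactly the compatibility of $\varphi_K$ with the Harder-Narasimhan filtrations. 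I do not expect any genuine obstacle here: the statement is a formal consequence of Proposition \ref{Pro:fonctorial de HN} and the sum description of $\mathcal F^{\HN}$, and the only point worth a moment's care is the degenerate case in which $\varphi$ vanishes on one of the summands $G$, which is subsumed in the trivial case of Proposition \ref{Pro:fonctorial de HN}.
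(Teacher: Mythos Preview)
Your proof is correct and follows essentially the same route as the paper: both reduce the claim to Proposition~\ref{Pro:fonctorial de HN}. The only cosmetic difference is that the paper applies that proposition once, to the single saturated subbundle $F_s\subset F$ with $F_{s,K}=\mathcal F^{\HN}_sF_K$ (which satisfies $\mu_{\min}(F_s)\ge s$ by construction of the Harder--Narasimhan flag), whereas you apply it to every summand $G$ in the sum formula for $\mathcal F^{\HN}_sF_K$; the substance is the same.
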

\begin{proof}
Let $F_s$ be the saturated subbundle of $F$
such that $F_{s,K}=\mathcal F^{\HN}_sF_K$. By
the definition of Harder-Narasimhan
filtrations, we know that $\mu_{\min}(F_s)\ge
s$ once $\mathcal F_s^{\HN}F_K$ is non-zero.
Therefore, the canonical mapping from
$\mathcal F^{\HN}_sF_K$ to $E_K$ factorizes
through $\mathcal F^{\HN}_sE_K$.
\end{proof}

In the following, we shall introduce some
easy estimations for the maximal and the
minimal slope of the tensor product of vector
bundles on $C$, which will be useful in
Proposition \ref{Pro:toute algebre est quasi
filtree}.

\begin{lemma}\label{Lem:majoration de degre}
Let $E$ be a non-zero vector bundle on $C$.
If $H^0(C,E)$ reduces to zero, then
$\mu_{\max}(E)\le g-1$.
\end{lemma}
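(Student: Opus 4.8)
The plan is to reduce the statement to the maximal destabilizing subbundle and then invoke the Riemann--Roch theorem on $C$. Recall that $\mu_{\max}(E)$ is by definition the slope of $E_{\des}$, and equivalently it is the supremum of $\mu(F)$ taken over all non-zero subbundles $F\subseteq E$. So it is enough to show that every non-zero subbundle $F$ of $E$ satisfies $\mu(F)\le g-1$; applying this to $F=E_{\des}$ (which is itself a subbundle of $E$) then gives the claim directly, without even passing to a supremum.

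First I would note that for any non-zero subbundle $F\subseteq E$, the inclusion $F\hookrightarrow E$ induces an injection $H^0(C,F)\hookrightarrow H^0(C,E)$, since the global sections functor is left exact. As $H^0(C,E)=0$ by hypothesis, we obtain $H^0(C,F)=0$ as well.

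Next I would apply the Riemann--Roch theorem to the vector bundle $F$ of rank $\rang F$ and degree $\deg F$ on the genus $g$ curve $C$, which yields
\[\chi(F)=h^0(C,F)-h^1(C,F)=\deg F+\rang F\,(1-g)=\rang F\,\bigl(\mu(F)+1-g\bigr).\]
Since $h^0(C,F)=0$ and $h^1(C,F)\ge 0$, the left-hand side is $\le 0$, so $\mu(F)+1-g\le 0$, that is $\mu(F)\le g-1$.

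Taking $F=E_{\des}$ (equivalently, the supremum over all non-zero subbundles) gives $\mu_{\max}(E)=\mu(E_{\des})\le g-1$, as desired. There is no genuine obstacle in this argument; it is entirely elementary, and the only point requiring care is keeping the sign conventions in the Riemann--Roch formula straight, together with the (trivial) observation that cohomology dimensions are non-negative so that $\chi(F)\le 0$ follows from the vanishing of $H^0$.
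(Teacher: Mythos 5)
Your proof is correct and follows exactly the same route as the paper: reduce to an arbitrary non-zero subbundle $F$, deduce $H^0(C,F)=0$ from left exactness of global sections, and conclude $\mu(F)\le g-1$ from Riemann--Roch together with $h^1\ge 0$. (Incidentally, the paper's displayed Riemann--Roch formula has a sign typo, writing $\rang(F)(g-1)$ where it should be $\rang(F)(1-g)$, though the subsequent inequality is stated correctly; your version keeps the signs straight throughout.)
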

\begin{proof}
As $H^{0}(C,E)=0$, for any subbundle $F$ of
$E$, we have $H^0(C,F)=0$. After Riemann-Roch
theorem, we have
$\rang_kH^0(C,F)-\rang_kH^1(C,F)=\deg(F)+\rang(F)(g-1)$.
If $H^0(C,F)=0$, then
$\deg(F)+\rang(F)(1-g)\le 0$, i.e. $\mu(F)\le
g-1$.
\end{proof}

Let
$b(C)=\min\{\deg(H)\;|\;H\in\Pic(C),\;H\text{
is ample}\}$. It is a strictly positif
integer, and the set of values
$\{\deg(H)\;|\;H\in\Pic(C)\}$ is exactly
$b(C)\mathbb Z$. We define $a(C)=b(C)+g$.

\begin{proposition}\label{Pro:grand sous fibre inversible}
For any non-zero vector bundle $E$ on $C$,
there exists a line subbundle $L$ of $E$ such
that $\deg(L)\ge\mu_{\max}(E)-a(C)$.
\end{proposition}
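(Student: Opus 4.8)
The plan is to realize $L$ as the saturation in $E$ of the image of a well-chosen nonzero homomorphism $H\to E$ from a line bundle $H$, where $\deg H$ is taken as large as possible subject to the constraint that forces $H^0(C,E\otimes H^{-1})\neq 0$ through the contrapositive of Lemma \ref{Lem:majoration de degre}.

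First I would record the effect of twisting on the maximal slope: for a line bundle $H$ on $C$ the functor $F\mapsto F\otimes H^{-1}$ is an exact autoequivalence of $\mathcal O_C$-modules which carries subbundles of $E$ bijectively onto subbundles of $E\otimes H^{-1}$, shifts degrees by $-\rang(F)\deg H$ and preserves ranks; hence $\mu(F\otimes H^{-1})=\mu(F)-\deg H$ for every subbundle $F\subset E$, and taking the supremum gives $\mu_{\max}(E\otimes H^{-1})=\mu_{\max}(E)-\deg H$. Next, using the fact recalled just above that $\{\deg H\mid H\in\Pic(C)\}=b(C)\mathbb Z$, I would pick $H\in\Pic(C)$ whose degree is the largest multiple of $b(C)$ strictly smaller than $\mu_{\max}(E)-g+1$; such a multiple exists and, since consecutive multiples of $b(C)$ differ by $b(C)$, it satisfies $\deg H>\mu_{\max}(E)-g+1-b(C)=\mu_{\max}(E)-a(C)+1$. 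For this choice $\mu_{\max}(E\otimes H^{-1})=\mu_{\max}(E)-\deg H>g-1$, so $E\otimes H^{-1}$ is a non-zero vector bundle with maximal slope $>g-1$, and Lemma \ref{Lem:majoration de degre} (in contrapositive form) yields $H^0(C,E\otimes H^{-1})\neq 0$.

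A nonzero element of $H^0(C,E\otimes H^{-1})$ is a nonzero morphism $\mathcal O_C\to E\otimes H^{-1}$, equivalently, after tensoring by $H$, a nonzero homomorphism $\varphi\colon H\to E$. I would then let $L$ be the saturation of the rank-one subsheaf $\varphi(H)$ in $E$, i.e. the unique line subbundle of $E$ through which $\varphi$ factors with torsion-free quotient. The induced map $H\to L$ is a nonzero homomorphism of line bundles on the smooth curve $C$, hence injective, so $\deg L\ge\deg H>\mu_{\max}(E)-a(C)+1>\mu_{\max}(E)-a(C)$, which is the required inequality. None of the steps is a serious obstacle; the only point needing a little attention is the arithmetic choosing $\deg H$ in the half-open interval of length $b(C)$ below $\mu_{\max}(E)-g+1$, so that simultaneously the section exists and $\deg H$ is as close as possible to $\mu_{\max}(E)-g+1$.
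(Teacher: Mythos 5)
Your argument is correct and takes essentially the same route as the paper: twist $E$ by a well-chosen line bundle so that Lemma~\ref{Lem:majoration de degre} forces a nonzero global section, untwist to get a line subsheaf, and take its saturation. One harmless slip: the largest multiple of $b(C)$ strictly smaller than $\mu_{\max}(E)-g+1$ satisfies $\deg H\ge\mu_{\max}(E)-g+1-b(C)$, not necessarily the strict inequality you wrote (equality occurs when $\mu_{\max}(E)-g+1$ itself lies in $b(C)\mathbb Z$), but since $\ge\mu_{\max}(E)-a(C)+1>\mu_{\max}(E)-a(C)$ the conclusion is unaffected; in fact your parameterization yields a slightly sharper bound than the paper, which instead takes the smallest $r$ with $\mu_{\max}(E)+rb(C)\ge g$ and sets $L=M^{\vee\otimes r}$ without explicitly mentioning saturation.
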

\begin{proof} Let
$M$ be a line bundle of degree $b(C)$ on $C$.
We write
$r=\lceil(g-\mu_{\max}(E))/b(C)\rceil$. Thus
\[\frac{g-\mu_{\max}(E)}{b(C)}\le r<\frac{g-\mu_{\max}(E)+b(C)}{b(C)}.\]
Therefore $\mu_{\max}(E\otimes M^{\otimes
r})= \mu_{\max}(E)+rb(C)\ge g$. After Lemma
\ref{Lem:majoration de degre}, we obtain
$H^0(C,E\otimes M^{\otimes r})\neq 0$. So
there exists an injective homomorphism from
$\mathcal O_C$ to $E\otimes M^{\otimes r}$.
Let $L=M^{\vee\otimes r}$. Then $L$ is a
subbundle of $E$. On the other hand, we have
$\deg(L)=-r\deg(M)=-rb(C)>\mu_{\max}(E)-g-b(C)$.
Since $a(C)=b(C)+g$, we obtain
$\deg(L)\ge\mu_{\max}(E)-a(C)$.
\end{proof}

\begin{proposition}\label{Pro:estimation de mu max
et mu min} If $E_1$ and $E_2$ are two
non-zero vector bundles on $C$, then
\begin{enumerate}[1)]
\item $\mu_{\max}(E_1\otimes E_2)<\mu_{\max}
(E_1)+\mu_{\max}(E_2)+a(C)$;
\item $\mu_{\min}(E_1\otimes E_2)>
\mu_{\min}(E_1)+\mu_{\min}(E_2)-a(C)$.
\end{enumerate}
\end{proposition}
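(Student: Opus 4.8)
The plan is to derive 1) from Proposition \ref{Pro:grand sous fibre inversible} and the slope inequality $\mu_{\min}(F)\le\mu(\varphi(F))\le\mu_{\max}(G)$ recalled just above, and then to obtain 2) from 1) by dualising. No d\'evissage is needed; everything reduces to producing one line subbundle and applying the slope inequality once.

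For 1), I would apply Proposition \ref{Pro:grand sous fibre inversible} to the vector bundle $E_1\otimes E_2$ to produce a line subbundle $L\hookrightarrow E_1\otimes E_2$ with $\deg(L)>\mu_{\max}(E_1\otimes E_2)-a(C)$; here the strict inequality is exactly what the proof of that proposition produces (there $\deg(L)=-rb(C)>\mu_{\max}(E)-g-b(C)=\mu_{\max}(E)-a(C)$). By the tensor-Hom adjunction, $\Hom_{\mathcal O_C}(L,E_1\otimes E_2)=\Hom_{\mathcal O_C}(L\otimes E_2^\vee,E_1)$ since $E_2$ is locally free, so the inclusion of $L$ corresponds to a non-zero homomorphism $\psi:L\otimes E_2^\vee\to E_1$. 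Because $L$ is a line bundle, tensoring by $L$ shifts every Harder-Narasimhan slope by $\deg(L)$, hence $\mu_{\min}(L\otimes E_2^\vee)=\deg(L)+\mu_{\min}(E_2^\vee)$; and since the dual of the Harder-Narasimhan flag is again a Harder-Narasimhan flag one has $\mu_{\min}(E_2^\vee)=-\mu_{\max}(E_2)$. Applying $\mu_{\min}(\,\cdot\,)\le\mu_{\max}(\,\cdot\,)$ to $\psi$ gives $\deg(L)-\mu_{\max}(E_2)\le\mu_{\max}(E_1)$, i.e. $\deg(L)\le\mu_{\max}(E_1)+\mu_{\max}(E_2)$. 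Combining with $\deg(L)>\mu_{\max}(E_1\otimes E_2)-a(C)$ yields assertion 1).

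For 2), I would use the identity $\mu_{\min}(E)=-\mu_{\max}(E^\vee)$ together with the canonical isomorphism $(E_1\otimes E_2)^\vee\cong E_1^\vee\otimes E_2^\vee$. Applying 1) to $E_1^\vee$ and $E_2^\vee$ gives
\[
\mu_{\max}(E_1^\vee\otimes E_2^\vee)<\mu_{\max}(E_1^\vee)+\mu_{\max}(E_2^\vee)+a(C)=-\mu_{\min}(E_1)-\mu_{\min}(E_2)+a(C),
\]
and negating both sides produces assertion 2).

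I do not expect a substantial obstacle: the only points demanding care are the bookkeeping with dual bundles and with tensoring by a line bundle (that Harder-Narasimhan slopes transform as stated, which is standard), and checking that the adjunction transports the inclusion $L\hookrightarrow E_1\otimes E_2$ into a genuinely non-zero map $L\otimes E_2^\vee\to E_1$, which it does since the adjunction is an isomorphism of $\Hom$-groups. The strict inequalities are inherited directly from the strict inequality in Proposition \ref{Pro:grand sous fibre inversible}, so they require no extra argument.
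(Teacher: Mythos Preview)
Your argument is correct, and for part 2) it coincides with the paper's. For part 1), however, you take a genuinely different route. The paper first proves the special case ``if $\mu_{\max}(E_1)+\mu_{\max}(E_2)<0$ then $\mu_{\max}(E_1\otimes E_2)<g$'' directly from Lemma~\ref{Lem:majoration de degre} (via $H^0(C,E_1\otimes E_2)\neq 0\Rightarrow$ a non-zero map $E_1^\vee\to E_2$), and then twists by a line bundle of suitable degree to reduce the general case to this one. You instead apply Proposition~\ref{Pro:grand sous fibre inversible} to $E_1\otimes E_2$ to produce a large line subbundle, and then use the tensor--Hom adjunction plus a single application of the slope inequality. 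Your approach is arguably more direct, since it packages the Riemann--Roch input into one call to Proposition~\ref{Pro:grand sous fibre inversible}; the paper's approach, by contrast, does not invoke Proposition~\ref{Pro:grand sous fibre inversible} at all here. The one point worth flagging is that the \emph{statement} of Proposition~\ref{Pro:grand sous fibre inversible} only asserts $\deg(L)\ge\mu_{\max}(E)-a(C)$, and you need the strict inequality to conclude; as you observe, this strictness is indeed delivered by the proof (where $\deg(L)=-rb(C)>\mu_{\max}(E)-g-b(C)$), so the argument goes through.
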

\begin{proof}
1) First we prove that if
$\mu_{\max}(E_1)+\mu_{\max}(E_2)<0$, then
$\mu_{\max}(E_1\otimes E_2)<g$. In fact, if
$\mu_{\max}(E_1\otimes E_2)\ge g$, then
$H^0(C,E_1\otimes E_2)\neq 0$ (see Lemma
\ref{Lem:majoration de degre}). Therefore,
there exists a non-zero homomorphism from
$E_1^\vee$ to $E_2$, which implies that
\[\mu_{\max}(E_2)\ge\mu_{\min}(E_1^\vee)=-\mu_{\max}(E_1),\]
i.e., $\mu_{\max}(E_1)+\mu_{\max}(E_2)\ge 0$.
To prove 1), we take a line bundle $L$ on $C$
such that
$-b(C)\le\mu_{\max}(E_1)+\mu_{\max}(E_2)+\deg(L)<0$.
We then have $\mu_{\max}(E_1\otimes
L)+\mu_{\max}(E_2)<0$ and hence, after the
result established above,
$\mu_{\max}(E_1\otimes L\otimes E_2)<g$.
Therefore,
\[\mu_{\max}(E_1\otimes E_2)<g-\deg(L)\le
\mu_{\max}(E_1)+\mu_{\max}(E_2)+g+b(C)=
\mu_{\max}(E_1)+\mu_{\max}(E_2)+a(C).\]

2) In fact,
\[\begin{split}
&\quad\;\mu_{\min}(E_1\otimes E_2)=-\mu_{\max}((E_1\otimes E_2)^\vee)=-\mu_{\max}(E_1^\vee\otimes E_2^\vee)\\
&>
-\Big(\mu_{\max}(E_1^\vee)+\mu_{\max}(E_2^\vee)+a(C)\Big)=
\mu_{\min}(E_1)+\mu_{\min}(E_2)-a(C).
\end{split}\]
\end{proof}

From Proposition \ref{Pro:estimation de mu
max et mu min}, we obtain by induction that
if $(E_i)_{1\le i\le r}$ is a family of
non-zero vector bundles on $C$, we have the
estimation
\[\mu_{\min}(E_1\otimes\cdots\otimes E_r)\ge
\sum_{i=1}^r\mu_{\min}(E_i)-a(C)(r-1)\ge\sum_{i=1}^r\mu_{\min}
(E_i)-a(C)r.\] Actually, if the field $k$ is
of characteristic $0$, then we have even the
equality $\mu_{\min}(E_1\otimes\cdots\otimes
E_r)=\mu_{\min}(E_1)+\cdots+\mu_{\min}(E_r)$.
This is a consequence of Ramanan and
Ramanathan's result \cite{Ramanan_Ramanathan}
asserting that the tensor product of two
semistable vector bundles on $C$ is
semistable.

\begin{proposition}
\label{Pro:toute algebre est quasi filtree}
Let $f:\mathbb Z_{\ge 0}\rightarrow\mathbb
R_{\ge 0}$ be the constant function which
sends any $n\in\mathbb Z_{\ge 0}$ to $a(C)$.
Let $\mathscr B=\bigoplus_{n\ge 0}\mathscr
B_n$ be a graded quasi-coherent $\mathcal
O_C$-algebra. Suppose that for any integer
$n\ge 0$, $\mathscr B_n$ is a vector bundle
over $C$, and we denote by $B_n=\mathscr
B_{n,K}$. Then $B=\bigoplus_{n\ge 0}B_n$,
equipped with Harder-Narasimhan filtrations,
is an $f$-quasi-filtered graded $K$-algebra.
\end{proposition}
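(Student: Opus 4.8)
The plan is to reduce to the numerical criterion of Proposition \ref{Pro:critere numerique de structure d'algebre filtre}: since every $B_n=\mathscr B_{n,K}$ carries the Harder--Narasimhan filtration, which is separated, exhaustive and left continuous, it suffices to produce an integer $n_0\ge 0$ (one may take $n_0=0$) such that for all $r\ge 2$ and all non-zero homogeneous elements $a_1,\dots,a_r$ of $B$, of respective degrees $n_1,\dots,n_r\ge n_0$, the product $a=a_1\cdots a_r$ satisfies
\[\lambda(a)\ \ge\ \sum_{i=1}^r\bigl(\lambda(a_i)-a(C)\bigr).\]
If $a=0$ the left-hand side is $+\infty$ and nothing is to be proved, so I assume $a\neq 0$.

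The key move is to replace each $a_i$ not by the line subbundle it spans (whose degree can lie far below $\lambda(a_i)$) but by the relevant Harder--Narasimhan step. For each $i$, let $E_i'\subseteq\mathscr B_{n_i}$ be the saturated subbundle with $E_{i,K}'=\mathcal F^{\HN}_{\lambda(a_i)}\mathscr B_{n_i,K}$; by left continuity of $\mathcal F^{\HN}$ one has $a_i\in E_{i,K}'$, and since $a_i\neq 0$ the subbundle $E_i'$ is non-zero. From the explicit description of $\mathcal F^{\HN}$ via the Harder--Narasimhan flag, $E_i'$ is one of the flag steps, hence $\mu_{\min}(E_i')\ge\lambda(a_i)$.

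Now I invoke functoriality. The multiplication of $\mathscr B$ restricts to an $\mathcal O_C$-linear morphism $\varphi\colon E_1'\otimes_{\mathcal O_C}\cdots\otimes_{\mathcal O_C}E_r'\to\mathscr B_N$, with $N=n_1+\cdots+n_r$; it is a non-zero homomorphism of vector bundles on $C$ because $\varphi_K(a_1\otimes\cdots\otimes a_r)=a\neq 0$. Applying Proposition \ref{Pro:fonctorial de HN} with $F=E_1'\otimes\cdots\otimes E_r'$ and $E=\mathscr B_N$, the image $\varphi_K(F_K)$ lies in $\mathcal F^{\HN}_s\mathscr B_{N,K}$ as soon as $\mu_{\min}(F)\ge s$. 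By the inductive consequence of Proposition \ref{Pro:estimation de mu max et mu min} recalled above,
\[\mu_{\min}\bigl(E_1'\otimes\cdots\otimes E_r'\bigr)\ \ge\ \sum_{i=1}^r\mu_{\min}(E_i')-a(C)r\ \ge\ \sum_{i=1}^r\bigl(\lambda(a_i)-a(C)\bigr),\]
so, taking $S=\sum_i(\lambda(a_i)-a(C))$ and using $a=\varphi_K(a_1\otimes\cdots\otimes a_r)\in\varphi_K(F_K)$, we get $a\in\mathcal F^{\HN}_S\mathscr B_{N,K}$, whence $\lambda(a)\ge S$ — the required inequality. Proposition \ref{Pro:critere numerique de structure d'algebre filtre} then gives the statement.

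The only genuinely delicate point is the choice made in the second paragraph — passing from $a_i$ to the Harder--Narasimhan step $E_i'$ rather than to the saturation of $Ka_i$ — without which no bound of the expected shape is available; the rest is a mechanical combination of Proposition \ref{Pro:fonctorial de HN} with the $r$-fold tensor-product slope estimate, the constant $a(C)$ being exactly what absorbs the defect $a(C)r$ in the minimal-slope bound for tensor products.
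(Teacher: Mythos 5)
Your proof is correct and follows essentially the same path as the paper's: for each homogeneous element $a_i$ you pass to the Harder--Narasimhan step $\mathscr B_{n_i,\lambda(a_i)}$ (your $E_i'$), use $\mu_{\min}\ge\lambda(a_i)$ on that step, invoke the $r$-fold tensor-product minimal-slope estimate from Proposition~\ref{Pro:estimation de mu max et mu min} to absorb the defect $a(C)r$, and conclude via the functoriality of the Harder--Narasimhan filtration (Proposition~\ref{Pro:fonctorial de HN}). The only surface difference is that you route the argument through the numerical criterion of Proposition~\ref{Pro:critere numerique de structure d'algebre filtre} while the paper verifies the containment of filtration steps directly (citing Corollary~\ref{Cor:homomorphism compatible aux filtration de HN}), but these are equivalent formulations of the same computation.
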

\begin{proof}
For any integer $n\ge 0$ and any real number
$s$, let $\mathscr B_{n,s}$ be the saturated
subbundle of $\mathscr B_n$ such that
$\mathscr B_{n,s,K}=\mathcal F^{\HN}_sB_n$.
Since $\mathscr B$ is an $\mathcal
O_C$-algebra, for any integer $r\ge 2$ and
any element $(n_i)_{1\le i\le r}\in\mathbb
Z_{\ge 0}^{r}$, we have a natural
homomorphism $\varphi$ from $\mathscr
B_{n_1}\otimes\cdots\otimes\mathscr B_{n_r}$
to $\mathscr B_{N}$, where
$N=n_1+\cdots+n_r$. If $(t_i)_{1\le i\le r}$
is a family of real numbers, the homomorphism
$\varphi$ induces by restriction a
homomorphism $\psi$ from $\mathscr
B_{n_1,t_1}\otimes\cdots\otimes\mathscr
B_{n_r,t_r}$ to $\mathscr B_{N}$. By the
definition of Harder-Narasimhan filtration we
obtain that if $\mathscr B_{n_i,t_i}$ is
non-zero, then $\mu_{\min}(\mathscr
B_{n_i,t_i})\ge t_i$. Therefore, by using the
convention $\mu_{\min}(0)=+\infty$, we have
$\mu_{\min}(\mathscr
B_{n_1,t_1}\otimes\cdots\otimes\mathscr
B_{n_r,t_r})\ge t_1+\cdots+t_r-a(C)r$. After
Corollary \ref{Cor:homomorphism compatible
aux filtration de HN}, $\psi_K$ is compatible
with Harder-Narasimhan filtrations, so
$\psi_K$ factorizes through $\mathcal
F_{t_1+\cdots+t_r-a(C)r} B_{N}$. Therefore,
$B$ is a quasi-filtered graded $K$-algebra.
\end{proof}

\begin{theorem}\label{Thm:convergence for quasifilted algebra
geometric} Let $\mathscr B=\bigoplus_{n\ge
0}\mathscr B_n$ be a quasi-coherent graded
$\mathcal O_C$-algebra. Suppose that the
following conditions are verified:
\begin{enumerate}[i)]
\item $\mathscr B_n$
is a vector bundle on $C$ for any integer
$n\ge 0$;
\item there exists a constant
$a>0$ such that $\mu_{\max}(\mathscr B_n)\le
an$ for any integer $n\ge 1$;
\item $\mathscr B_K$ is an integral ring which is of finite type over $K$
and $\mathscr B_n$ is non-zero for
sufficiently large integer $n$.
\end{enumerate}
For any integer $n\ge 1$, we denote by $P_n$
the Harder-Narasimhan polygon of $\mathscr
B_n$. Then
\begin{enumerate}[1)]
\item the sequence of numbers $(\frac 1n\mu_{\min}(\mathscr B_n))_{n\ge 1}$ has a limit in $\mathbb R$.
\item the sequence $(\frac 1nP_n)_{n\ge
1}$ converges uniformly on $[0,1]$.
\end{enumerate}
\end{theorem}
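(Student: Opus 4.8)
The plan is to deduce the statement from Theorem~\ref{Thm:convergence vague de algebre gradue filtree}, Remark~\ref{Rem:convergence vague de algebre gradue filtree} and Corollary~\ref{Cor:convergence de polygone cas general}, fed by Proposition~\ref{Pro:toute algebre est quasi filtree}; the substance is just a translation of the hypotheses. I would put $B_n:=\mathscr B_{n,K}$ and $B:=\bigoplus_{n\ge0}B_n=\mathscr B_K$, endowing each $B_n$ with the Harder--Narasimhan filtration $\mathcal F^{\HN}$ of $\mathscr B_n$. By hypothesis iii), $B$ is an integral graded $K$-algebra of finite type with $B_n\neq0$ for all large $n$, so $d:=\dim B\ge1$ (otherwise $B$ would be a finite-dimensional $K$-algebra, hence with only finitely many non-zero homogeneous components); by the recollections at the beginning of this section $\mathcal F^{\HN}$ is separated, exhaustive and left continuous on every non-zero $B_n$; and by Proposition~\ref{Pro:toute algebre est quasi filtree} the algebra $B$, so filtered, is $f$-quasi-filtered for the \emph{constant} function $f\equiv a(C)$, whence $f(n)/n\to0$. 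Finally the explicit description of $\mathcal F^{\HN}$ gives, for the index function $\lambda$ of $B_n$,
\[\sup_{0\neq a\in B_n}\lambda(a)=\mu_{\max}(\mathscr B_n)\qquad\text{and}\qquad\min_{a\in B_n}\lambda(a)=\mu_{\min}(\mathscr B_n),\]
since $\mathcal F^{\HN}_sB_n$ is the first step of the Harder--Narasimhan flag of $\mathscr B_n$ for $s$ just below $\mu_{\max}(\mathscr B_n)$ and is $\{0\}$ above it, while $\mathcal F^{\HN}_sB_n=B_n$ precisely for $s\le\mu_{\min}(\mathscr B_n)$. Hence hypothesis ii) is exactly $\limsup_n\mu_{\max}(\mathscr B_n)/n\le a<+\infty$, i.e.\ hypothesis iii) of Theorem~\ref{Thm:convergence vague de algebre gradue filtree}; and, as noted in this section, the normalized Harder--Narasimhan polygon $P_n$ of $\mathscr B_n$ is the polygon $P(\nu_{B_n})$ attached to $\mathcal F^{\HN}$, so $\tfrac1nP_n=P\big(T_{\frac1n}\nu_{B_n}\big)$.

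Granting these identifications, assertion 2) will be immediate from Corollary~\ref{Cor:convergence de polygone cas general}: its hypotheses are those of Theorem~\ref{Thm:convergence vague de algebre gradue filtree} with the generation-in-degree-one requirement dropped, which is allowed here because $B_n\neq0$ for $n\gg0$, and its polygons are exactly our $\tfrac1nP_n$; so $\big(\tfrac1nP_n\big)_{n\ge1}$ converges uniformly on $[0,1]$ to the concave function $P(\nu)$, where $\nu$ is the vague limit of the uniformly supported measures $T_{\frac1n}\nu_{B_n}$. For assertion 1): if $B$ happens to be generated by $B_1$, this is part 1) of Theorem~\ref{Thm:convergence vague de algebre gradue filtree} (using $\min_a\lambda(a)=\mu_{\min}(\mathscr B_n)$). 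In general I would imitate Step~3 of the proof of that theorem and Remark~\ref{Rem:convergence vague de algebre gradue filtree}: choose $d'\ge1$ with the Veronese subalgebra $B^{(d')}=\bigoplus_nB_{nd'}$ generated by $B_{d'}$, so that $B_{nd'}\,B_{md'}=B_{(n+m)d'}$; writing an element of $B_{(n+m)d'}$ of minimal index as a sum of products and invoking the numerical criterion of Proposition~\ref{Pro:critere numerique de structure d'algebre filre} then yields
\[\mu_{\min}(\mathscr B_{(n+m)d'})\ \ge\ \mu_{\min}(\mathscr B_{nd'})+\mu_{\min}(\mathscr B_{md'})-2a(C)\qquad(n,m\gg0),\]
while $\mu_{\min}(\mathscr B_{nd'})\le\mu_{\max}(\mathscr B_{nd'})\le a\,d'\,n$ by ii); Corollary~\ref{Cor:suite croissance lineaire la version renforcee} then gives $\mu_{\min}(\mathscr B_{nd'})/(nd')\to\ell$ for some $\ell\in\mathbb R$. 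Using that $B^{(d')}$ is a domain and $B$ is a finite $B^{(d')}$-module, so that each $B^{(d',k)}=\bigoplus_nB_{nd'+k}$ ($0\le k<d'$) is a finitely generated $B^{(d')}$-module, a variant of the same estimate along each residue class gives $\mu_{\min}(\mathscr B_{nd'+k})/(nd'+k)\to\ell/d'$ as well, and patching the $d'$ subsequences yields $\lim_n\mu_{\min}(\mathscr B_n)/n=\ell/d'\in\mathbb R$.

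All the genuinely hard input --- the almost super-additivity of the associated measures, the monomial identity of Theorem~\ref{Thm:theorme cle sur equiprobability}, and the Fekete-type Corollary~\ref{Cor:suite croissance lineaire la version renforcee} --- is already packaged inside Proposition~\ref{Pro:toute algebre est quasi filtree} and Theorem~\ref{Thm:convergence vague de algebre gradue filtree}, so the present statement is essentially a verification of hypotheses. The one point that calls for genuine (if routine) care is the last step above: transporting the convergence of $\mu_{\min}(\mathscr B_n)/n$ from the Veronese subalgebra $B^{(d')}$ to all residue classes modulo $d'$ --- an unscrewing entirely parallel to the one already carried out for the measures in Remark~\ref{Rem:convergence vague de algebre gradue filtree}.
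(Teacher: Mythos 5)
Your overall strategy is the same as the paper's: deduce everything from Proposition~\ref{Pro:toute algebre est quasi filtree} (which makes $B=\mathscr B_K$ an $f$-quasi-filtered algebra with $f\equiv a(C)$), Theorem~\ref{Thm:convergence vague de algebre gradue filtree}, Remark~\ref{Rem:convergence vague de algebre gradue filtree} and Corollary~\ref{Cor:convergence de polygone cas general}. Your verification of the hypotheses, the identities $\sup_{0\ne a\in B_n}\lambda(a)=\mu_{\max}(\mathscr B_n)$ and $\min_{a\in B_n}\lambda(a)=\mu_{\min}(\mathscr B_n)$, and the deduction of assertion 2) are all correct and essentially identical to the paper's (very terse) argument.

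For assertion 1), however, your discussion is both where you try to go beyond the paper and where a genuine gap remains. Two points. First, a dimensional slip: since you define $\ell$ as the limit of $\mu_{\min}(\mathscr B_{nd'})/(nd')$, the claimed limits for the residue classes and for the full sequence should read $\ell$, not $\ell/d'$. Second, and more seriously, the step you describe as ``a variant of the same estimate along each residue class'' is not in fact routine. The superadditivity inequality that feeds Corollary~\ref{Cor:suite croissance lineaire la version renforcee} requires writing a minimizer of $B_N$ as a product of factors in prescribed lower degrees; in $B^{(d')}$, which is generated by $B_{d'}$, that is available, but along a fixed residue class $k\not\equiv 0$ it is not, since summing two degrees $\equiv k\pmod{d'}$ produces a degree $\equiv 2k$, not $k$. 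The finite $B^{(d')}$-module structure $B^{(d',k)}=\sum_j B^{(d')}x_j$, combined with Proposition~\ref{Pro:proprietes de la fonction lambda}~2) and the quasi-filtered inequality, does give the lower bound $\liminf_n\mu_{\min}(\mathscr B_{nd'+k})/(nd'+k)\ge\ell$, since a minimizer of $B_{nd'+k}$ can be taken of the form $c_jx_j$ with $c_j\in B_{(n-j')d'}$. But the matching upper bound $\limsup\le\ell$ does not follow from this, nor does it follow from the vague convergence of the measures $T_{1/n}\nu_{B_n}$ (vague convergence controls the bulk of the measure, not the location of the boundary of the support, and one can lose the mass near $\mu_{\min}$ in the limit). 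Closing this requires a two-sided comparison in the spirit of inequality~\eqref{Equ:quasi-fitlree 1}, which pins $\lambda(ux)$ between $\lambda(u)-f(n)$ and $\lambda(ua_0)+f(n+m)$, so as to control $\mu_{\min}$ of the cyclic modules $Ax$ from above as well as below. You should supply this, or at least acknowledge that the residue-class transport for $\mu_{\min}$ is not ``entirely parallel'' to the measure-theoretic unscrewing of Remark~\ref{Rem:convergence vague de algebre gradue filtree}. It is fair to add that the paper's own proof of assertion 1) is equally elliptic on this point, since Remark~\ref{Rem:convergence vague de algebre gradue filtree} only treats the vague convergence condition and not part~1) of Theorem~\ref{Thm:convergence vague de algebre gradue filtree} when $B$ is not generated in degree one.
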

\begin{proof}
Let $f$ be the constant function $\mathbb
Z_{\ge 0}$ with value $a(C)$. After
Proposition \ref{Pro:toute algebre est quasi
filtree}, we obtain that $\mathscr B_K$
equipped with Harder-Narasimhan filtrations
is an $f$-quasi-filtered graded $K$-algebra.
The theorem is then proved by using Theorem \ref{Thm:convergence vague de algebre gradue filtree} (see also Remark \ref{Rem:convergence vague de
algebre gradue filtree}) and Corollary
\ref{Cor:convergence de polygone cas
general}.
\end{proof}

Let $\pi:X\rightarrow C$ be a projective and
flat morphism from an algebraic variety $X$
to $C$ and $L$ be a line bundle on $X$. We
shall apply Theorem \ref{Thm:convergence for
quasifilted algebra geometric} to the special
case where $\mathscr B$ is the direct sum of
the direct images by the morphism $\pi$ of
tensor powers of $L$.

\begin{lemma} \label{Lem:majoration de mu max par une croissance lineaire}
There exists a constant $\varepsilon$ such
that, for any integer $n> 0$,
\[\mu_{\max}(\pi_*(L^{\otimes n})) \le
\varepsilon n.\]
\end{lemma}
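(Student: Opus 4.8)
The plan is to dominate the arbitrary line bundle $L$ by a power of a fixed ample line bundle on $X$, and then to bound the maximal slope of the corresponding direct image by an elementary intersection-theoretic computation on $X$. Since $X$ is projective over $C$ and $C$ is projective over $k$, the scheme $X$ is projective over $k$; fix a line bundle $H$ on $X$ which is ample (on $X$), so that in particular $H$ is $\pi$-ample. For $a$ sufficiently large $H^{\otimes a}\otimes L^\vee$ is very ample on $X$ (Serre), hence globally generated, hence carries a nonzero global section $s$; as $X$ is integral, $s$ is a nonzerodivisor, so for every $n\ge 1$ multiplication by $s^{\otimes n}$ exhibits $L^{\otimes n}$ as an $\mathcal O_X$-submodule of $H^{\otimes an}$. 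Applying the left exact functor $\pi_*$ gives an injection of vector bundles $\pi_*(L^{\otimes n})\hookrightarrow\pi_*(H^{\otimes an})$ on $C$ --- if $\pi_*(L^{\otimes n})=0$ there is nothing to prove --- and therefore $\mu_{\max}(\pi_*(L^{\otimes n}))\le\mu_{\max}(\pi_*(H^{\otimes an}))$, since $\mu_{\max}$ is monotone under injections by the slope inequalities recalled before Proposition~\ref{Pro:fonctorial de HN}. So it suffices to bound $\mu_{\max}(\pi_*(H^{\otimes m}))$ linearly in $m$.

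For this I would apply Proposition~\ref{Pro:grand sous fibre inversible} to get a line subbundle $N\subset\pi_*(H^{\otimes m})$ with $\deg N\ge\mu_{\max}(\pi_*(H^{\otimes m}))-a(C)$. By adjunction the inclusion $N\hookrightarrow\pi_*(H^{\otimes m})$ is the same as a nonzero $\mathcal O_X$-morphism $\pi^*N\to H^{\otimes m}$, i.e.\ a nonzero global section of $H^{\otimes m}\otimes\pi^*N^\vee$ on $X$; let $D\ge 0$ be its divisor of zeros, so $\mathcal O_X(D)\cong H^{\otimes m}\otimes\pi^*N^\vee$. Let $d$ be the relative dimension of $\pi$, which is constant by flatness, so $\dim X=d+1$. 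Computing the intersection number of $D$ with $c_1(H)^{d}$, and using that $D$ is effective while $H$ is ample (so this number is $\ge 0$) together with the numerical identity $c_1(\pi^*N)\cdot c_1(H)^{d}=\deg(N)\,\big(c_1(H)^{d}\cdot[X_y]\big)$ for a fiber $X_y$, we obtain
\[0\ \le\ m\,c_1(H)^{d+1}\ -\ \deg(N)\,\big(c_1(H)^{d}\cdot[X_y]\big).\]
Here $D_H:=c_1(H)^{d+1}=(H)^{\dim X}>0$ and $\delta:=c_1(H)^{d}\cdot[X_y]=(H|_{X_y})^{d}>0$ (the latter since $H|_{X_y}$ is ample on the $d$-dimensional fiber), and both are independent of $m$; hence $\deg N\le(D_H/\delta)\,m$ and $\mu_{\max}(\pi_*(H^{\otimes m}))\le(D_H/\delta)\,m+a(C)$. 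Combining with the reduction above yields $\mu_{\max}(\pi_*(L^{\otimes n}))\le(aD_H/\delta)\,n+a(C)\le\varepsilon n$ for all $n\ge 1$, with $\varepsilon:=aD_H/\delta+a(C)$.

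The point where care is needed is circularity. The naive idea of writing $\pi_*(H^{\otimes m})$ as a quotient of $S^m(\pi_*H)$, or of a trivial bundle on $C$, is useless: $\mu_{\max}$ is not monotone under quotients (a quotient of a trivial bundle on $C$ can have arbitrarily large maximal slope), and estimating $\mu_{\max}$ by ``the degree of a twist that kills the sections'' is tautological. Proposition~\ref{Pro:grand sous fibre inversible} is precisely what breaks the loop, by converting the maximal slope into an actual line subbundle and hence, through adjunction, into an effective divisor on $X$ whose $c_1(H)^{d}$-degree grows only linearly in the twist. The rest is bookkeeping: because $L$ is arbitrary one must first compare it with a power of the ample bundle $H$, and one has to keep track that the constants $a$, $D_H$, $\delta$, $a(C)$ do not depend on $n$, so that the final bound is genuinely linear.
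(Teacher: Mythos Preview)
Your proof is correct and follows essentially the same line as the paper's: a line subbundle of the direct image gives by adjunction an effective divisor on $X$, and intersecting it with a power of an auxiliary ample class yields the linear bound via the projection formula and Proposition~\ref{Pro:grand sous fibre inversible}. The only difference is that you first reduce to the case where the line bundle is ample via an injection $L^{\otimes n}\hookrightarrow H^{\otimes an}$, whereas the paper works directly with the given $L$ and uses the ample bundle $\mathscr L$ only in the intersection pairing $c_1(\mathscr L)^{d-1}\cdot[\Div\widetilde\varphi]\ge 0$, which makes your preliminary reduction unnecessary.
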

\begin{proof}
The variety $X$ is projective over $\Spec k$.
We can hence choose an ample line bundle
$\mathscr L$ on $X$.

Let $d=\dim X$. Observe that
$\pi_*(c_1(\mathscr L)^{d-1})=(\deg_{\mathscr
L_K}X_K)[C]$ in the Chow group $\CH_1(C)$.
Suppose that $M$ is a line bundle on $C$ and
that $\varphi:M\rightarrow\pi_*( \mathscr
L^{\otimes n})$ is an injective homomorphism.
We denote by $\widetilde
\varphi:\pi^*M\rightarrow\mathscr L^{\otimes n}$ the
homomorphism of $\mathcal O_X$-modules
corresponding to $\varphi$ by adjunction,
which identifies with a non-identically zero
section of $\pi^*M^\vee\otimes\mathscr
L^{\otimes n}$, whose divisor
$\Div\widetilde\varphi$ is effective. Then
$\deg_X\Big(c_1(\mathscr
L)^{d-1}[\Div(\widetilde\varphi)]\Big)\ge 0$.
On the other hand,
$[\Div\widetilde\varphi]=-\pi^*c_1(M)+nc_1(L)$
in $\CH^1(X)$. Hence
\[\begin{split}&\quad\;\deg_X\Big(c_1(\mathscr L)^{d-1}[\Div(\widetilde\varphi)]\Big)
=\deg_X\Big((-\pi^*c_1(M)+nc_1(L))c_1(\mathscr L)^{d-1}\Big)\\
&=-\deg_C\Big(c_1(M)\pi_*(c_1(\mathscr
L)^{d-1})\Big)+n\deg_X(c_1(L)c_1(\mathscr
L)^{d-1}).
\end{split}\]
Therefore, $\displaystyle\deg_C(M)\le
n\frac{\deg_X(c_1(L) c_1(\mathscr L)^{d-1})}
{\deg_{\mathscr L_K}X_K}$. Finally, using the
comparison established in Proposition
\ref{Pro:grand sous fibre inversible}, we
deduce the upper bound of
$\mu_{\max}(\pi_*(\mathscr L^{\otimes n}))$
by a linear function on $n$.
\end{proof}

\begin{theorem}\label{Thm:convergence of polygone geometric}
Suppose that $H^0(X_K,L_K^{\otimes n })\neq
0$ for sufficiently large integer $n$ and
that the graded algebra $\bigoplus_{n\ge
0}H^0(X_K,L_K^{\otimes n})$ is of finite type
over $K$ (this condition is satisfied notably
when $L_K$ is ample). For any integer $n\ge
1$, let $P_n$ be the Harder-Narasimhan
polygon of $\pi_*(L^{\otimes n})$. Then the
sequence of numbers $(\frac 1n\mu_{\min}(\pi_*(L^{\otimes n})))_{n\ge 1}$ has a limit in $\mathbb R$ and the sequence of polygons $(\frac 1nP_n)_{n\ge 1}$
converges uniformly on $[0,1]$.
\end{theorem}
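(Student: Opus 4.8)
The plan is to deduce the theorem directly from Theorem~\ref{Thm:convergence for quasifilted algebra geometric}, applied to the graded quasi-coherent $\mathcal O_C$-algebra $\mathscr B=\bigoplus_{n\ge 0}\mathscr B_n$ with $\mathscr B_n=\pi_*(L^{\otimes n})$. It therefore suffices to check the three hypotheses i), ii), iii) of that theorem. For i) one must see that each $\pi_*(L^{\otimes n})$ is a vector bundle on $C$: since $\pi$ is projective this sheaf is coherent, and since $C$ is a non-singular curve it is locally free as soon as it is torsion-free; but $X$ being an integral variety, $L^{\otimes n}$ is torsion-free on $X$, and as $\pi$ is flat (hence dominant) over the integral base $C$ a non-zero regular function on an open subset of $C$ pulls back to a non-zero function on the corresponding open subset of $X$, so no non-zero local section of $\pi_*(L^{\otimes n})$ can be annihilated by it. Hence i) holds.

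For ii), this is precisely Lemma~\ref{Lem:majoration de mu max par une croissance lineaire}: there is $\varepsilon>0$ with $\mu_{\max}(\pi_*(L^{\otimes n}))\le\varepsilon n$ for all $n\ge 1$, which is hypothesis ii) with $a=\varepsilon$. For iii), I would invoke flat base change along $\Spec K\to C$ to identify $(\pi_*(L^{\otimes n}))_K$ with $H^0(X_K,L_K^{\otimes n})$, so that $\mathscr B_K=\bigoplus_{n\ge 0}H^0(X_K,L_K^{\otimes n})$; by hypothesis this graded $K$-algebra is of finite type and non-zero in large degree, and being the section ring of the line bundle $L_K$ on the integral projective $K$-variety $X_K$ it is an integral domain. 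Thus iii) holds, and Theorem~\ref{Thm:convergence for quasifilted algebra geometric} applies, giving at once that $\frac 1n\mu_{\min}(\pi_*(L^{\otimes n}))$ converges in $\mathbb R$ and that the polygons $\frac 1nP_n$ converge uniformly on $[0,1]$. (When $L_K$ is ample, finite generation of $\bigoplus_{n\ge 0}H^0(X_K,L_K^{\otimes n})$ is classical, which justifies the parenthetical remark in the statement.)

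There is essentially no new obstacle in this argument: all the analytic substance --- the vague convergence of the measures $T_{\frac 1n}\nu_{\mathscr B_n}$ and the resulting uniform convergence of polygons --- is already contained in Theorem~\ref{Thm:convergence vague de algebre gradue filtree} and Corollary~\ref{Cor:convergence de polygone cas general}, packaged by Theorem~\ref{Thm:convergence for quasifilted algebra geometric}. The only points requiring genuine care are the two bookkeeping facts above, namely that $\pi_*(L^{\otimes n})$ is really locally free (which is where integrality, or at least reducedness, of $X$ is used) and that the generic-fibre identification $(\pi_*(L^{\otimes n}))_K\cong H^0(X_K,L_K^{\otimes n})$ is legitimate, so that the finiteness and integrality hypotheses can be transported from $\mathscr B_K$ to $\mathscr B$.
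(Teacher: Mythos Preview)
Your proposal is correct and follows exactly the same route as the paper: reduce to Theorem~\ref{Thm:convergence for quasifilted algebra geometric} applied to $\mathscr B_n=\pi_*(L^{\otimes n})$, invoking Lemma~\ref{Lem:majoration de mu max par une croissance lineaire} for condition~ii). The paper's own proof is in fact terser, taking for granted the local freeness of $\pi_*(L^{\otimes n})$ and the identification $\mathscr B_K\cong\bigoplus_n H^0(X_K,L_K^{\otimes n})$ that you spell out.
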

\begin{proof}
After Lemma \ref{Lem:majoration de mu max par
une croissance lineaire},
$\mu_{\max}(\pi_*(L^{\otimes n}))=O(n)$
($n\rightarrow+\infty$). Therefore, the
algebra $\mathscr B:=\bigoplus_{n\ge
0}\pi_*(L^{\otimes n})$ verifies the
conditions in Theorem \ref{Thm:convergence
for quasifilted algebra geometric}.
\end{proof}

The convergence of polygons $(\frac 1n P_n)$
suggests that the sequence of (normalized)
maximal slopes $(\frac
1n\mu_{\max}(\pi_*(L^{\otimes n})))_{n\ge 1}$
converges. However, this is
not a formal consequence of Theorem
\ref{Thm:convergence of polygone geometric}.
In Proposition \ref{Pro:convergence geometric
de mu max et mu min}, we shall justify the
convergence of this sequence by using
the same generalization of Fekete's lemma for
almost super-additive sequences.

\begin{lemma}\label{Lem:minoration de mumax de produit tensoriel}
Let $E_1$ and $E_2$ be two vector bundles on
$X$. If $\pi_*(E_1)$ and $\pi_*(E_2)$ are
non-zero, then
\[\mu_{\max}(\pi_*(E_1\otimes E_2))\ge
\mu_{\max}(\pi_*E_1)+\mu_{\max}(\pi_*E_2)-2a(C),\]
where $a(C)$ is the constant in Proposition
\ref{Pro:grand sous fibre inversible}.
\end{lemma}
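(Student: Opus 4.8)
The plan is to reduce everything to Proposition \ref{Pro:grand sous fibre inversible}. First I would observe that, $X$ being an algebraic variety (hence integral) and $\pi$ being projective, flat and dominant, the $\mathcal O_C$-modules $\pi_*E_1$, $\pi_*E_2$ and $\pi_*(E_1\otimes E_2)$ are coherent and torsion-free, hence locally free on the smooth curve $C$; in particular $\mu_{\max}(\pi_*(E_1\otimes E_2))$ is defined and dominates the degree of any line subbundle of $\pi_*(E_1\otimes E_2)$, and more generally the degree of the saturation of any non-zero rank-one subsheaf. So it suffices to produce inside $\pi_*(E_1\otimes E_2)$ a non-zero rank-one subsheaf of degree at least $\mu_{\max}(\pi_*E_1)+\mu_{\max}(\pi_*E_2)-2a(C)$. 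Applying Proposition \ref{Pro:grand sous fibre inversible} to the non-zero vector bundles $\pi_*E_1$ and $\pi_*E_2$ gives line subbundles $L_i\hookrightarrow\pi_*E_i$ with $\deg(L_i)\ge\mu_{\max}(\pi_*E_i)-a(C)$ for $i=1,2$.

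Next I would pass through the adjunction between $\pi^*$ and $\pi_*$. The inclusion $L_i\hookrightarrow\pi_*E_i$ corresponds to a non-zero $\mathcal O_X$-linear map $\pi^*L_i\rightarrow E_i$; tensoring the two produces a map $\pi^*(L_1\otimes L_2)\rightarrow E_1\otimes E_2$, and its adjoint is a map $\vartheta\colon L_1\otimes L_2\rightarrow\pi_*(E_1\otimes E_2)$ --- equivalently, the composite of the natural inclusion $L_1\otimes L_2\hookrightarrow\pi_*E_1\otimes\pi_*E_2$ with the multiplication map $\pi_*E_1\otimes\pi_*E_2\rightarrow\pi_*(E_1\otimes E_2)$. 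Granting that $\vartheta\neq 0$, the rest is formal: a non-zero morphism out of the line bundle $L_1\otimes L_2$ on the smooth curve $C$ has torsion kernel, hence is injective, so $\pi_*(E_1\otimes E_2)$ contains $L_1\otimes L_2$, and passing to its saturation yields a line subbundle of degree at least $\deg(L_1)+\deg(L_2)\ge\mu_{\max}(\pi_*E_1)+\mu_{\max}(\pi_*E_2)-2a(C)$, which is what we want.

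The one delicate step --- and the one I expect to be the main obstacle --- is the non-vanishing of $\vartheta$. I would check it at the generic point $\eta$ of $C$. By flat base change, $(\pi_*E_i)\otimes_{\mathcal O_C}K\cong H^0(X_K,E_{i,K})$ and $(\pi_*(E_1\otimes E_2))\otimes_{\mathcal O_C}K\cong H^0(X_K,E_{1,K}\otimes E_{2,K})$; under these identifications the injection $L_i\hookrightarrow\pi_*E_i$ yields a non-zero section $s_i\in H^0(X_K,E_{i,K})$, and $\vartheta$ becomes on stalks at $\eta$ the $K$-linear map sending a generator of $(L_1\otimes L_2)_\eta$ to the image of $s_1\otimes s_2$ in $H^0(X_K,E_{1,K}\otimes E_{2,K})$. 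Since $X$ is an algebraic variety, $X_K$ is integral, so $s_1$ and $s_2$ are non-zero at its generic point; hence $s_1\otimes s_2$ is non-zero there, so $\vartheta\otimes_{\mathcal O_C}K\neq 0$ and \emph{a fortiori} $\vartheta\neq 0$. (If one prefers not to assume $X_K$ integral it suffices that it be reduced, which may always be arranged.) This completes the plan.
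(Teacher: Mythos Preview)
Your proof is correct and follows essentially the same route as the paper: apply Proposition~\ref{Pro:grand sous fibre inversible} to obtain line subbundles $L_i\hookrightarrow\pi_*E_i$, pass via adjunction to non-zero sections of $\pi^*L_i^\vee\otimes E_i$ on $X$, and use integrality of $X$ (equivalently of $X_K$) to see that their tensor product is a non-zero section, giving a non-zero map $L_1\otimes L_2\to\pi_*(E_1\otimes E_2)$. The paper phrases the non-vanishing step slightly more tersely (``sections which do not vanish everywhere on $X$'') and concludes via $\mu_{\max}((M_1\otimes M_2)^\vee\otimes\pi_*(E_1\otimes E_2))\ge 0$ rather than saturating, but these are cosmetic differences.
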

\begin{proof}
Since $\pi_*(E_1)$ and $\pi_*(E_2)$ are
non-zero, also is $\pi_*(E_1\otimes E_2)$.
After Proposition \ref{Pro:grand sous fibre
inversible}, there exist two line bundles
$M_1$ and $M_2$ on $C$ and two injective
homomorphisms $M_1\rightarrow\pi_*(E_1)$ and
$M_2\rightarrow\pi_*(E_2)$ such that $\deg
M_1\ge\mu_{\max}(\pi_*E_1)-a(C)$ and $\deg
M_2\ge\mu_{\max}(\pi_*E_2)-a(C)$. Since both
$M_1^\vee\otimes\pi_*(E_1)$ and
$M_2^\vee\otimes\pi_*(E_2)$ have global
sections which do not vanish everywhere on
$C$, then both $\pi^*(M_1)^\vee\otimes E_1$
and $\pi^*(M_2)^\vee\otimes E_2$ have global
sections which do not vanish everywhere on
$X$. Therefore, $H^0(X,\pi^*(M_1\otimes
M_2)^\vee\otimes (E_1\otimes E_2))=H^0(C,
(M_1\otimes M_2)^\vee\otimes\pi_*(E_1\otimes
E_2))\neq 0$. So we have
$0\le\mu_{\max}((M_1\otimes
M_2)^\vee\otimes\pi_*(E_1\otimes E_2))$, and
hence $\mu_{\max}(\pi_*(E_1\otimes E_2))\ge
\deg M_1+\deg M_2\ge
\mu_{\max}(\pi_*(E_1))+\mu_{\max}(\pi_*(E_2))-2a(C)$.
\end{proof}

\begin{proposition}\label{Pro:convergence geometric de mu max et mu min}
Let $\pi:X\rightarrow C$ be a projective and
flat morphism from an algebraic variety $X$
to $C$ and $L$ be a line bundle on $X$ verifying the conditions of Theorem \ref{Thm:convergence of polygone geometric}. Then
the sequence $(\frac 1n\mu_{\max}(\pi_*(L^{\otimes
n})))_{n\ge 1}$ has a limit in $\mathbb R$.
\end{proposition}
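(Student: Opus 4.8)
The plan is to recognize this as yet another instance of the almost-super-additive version of Fekete's lemma, that is, Corollary \ref{Cor:suite croissance lineaire la version renforcee}, applied to the sequence $a_n=\mu_{\max}(\pi_*(L^{\otimes n}))$. So the whole proof amounts to checking the two hypotheses of that corollary for this sequence.

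First I would settle well-definedness. By the hypotheses carried over from Theorem \ref{Thm:convergence of polygone geometric}, one has $H^0(X_K,L_K^{\otimes n})=\pi_*(L^{\otimes n})_K\neq 0$ for all $n$ large enough, hence $\pi_*(L^{\otimes n})$ is a non-zero vector bundle on $C$ and its maximal slope is a well-defined real number for $n\gg 0$. Since the existence of $\lim_{n\to\infty}a_n/n$ is insensitive to finitely many terms, I would simply assign arbitrary real values (say $0$) to the remaining small $n$, or alternatively replace the sequence by its shift $(a_{n+n_0})_{n\ge 1}$ with $n_0$ chosen so that $\pi_*(L^{\otimes m})\neq 0$ for every $m\ge n_0$.

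For the super-additivity I would apply Lemma \ref{Lem:minoration de mumax de produit tensoriel} with $E_1=L^{\otimes m}$ and $E_2=L^{\otimes n}$: since $E_1\otimes E_2=L^{\otimes(m+n)}$, it gives at once
\[a_{m+n}\ \ge\ a_m+a_n-2a(C)\]
for every pair $(m,n)$ of sufficiently large integers, which is exactly condition~1) of Corollary \ref{Cor:suite croissance lineaire la version renforcee} with $c_1=2a(C)$. For the linear upper bound, condition~2), I would invoke Lemma \ref{Lem:majoration de mu max par une croissance lineaire}, which supplies a constant $\varepsilon>0$ with $\mu_{\max}(\pi_*(L^{\otimes n}))\le\varepsilon n$ for all $n\ge 1$; enlarging $\varepsilon$ if necessary also absorbs the finitely many terms where I redefined $a_n$. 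Corollary \ref{Cor:suite croissance lineaire la version renforcee} then yields that $(a_n/n)_{n\ge 1}=\big(\frac{1}{n}\mu_{\max}(\pi_*(L^{\otimes n}))\big)_{n\ge1}$ converges in $\mathbb R$, which is the assertion.

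I do not expect a genuine obstacle: the statement is a repackaging of Lemmas \ref{Lem:minoration de mumax de produit tensoriel} and \ref{Lem:majoration de mu max par une croissance lineaire} through the general Fekete-type result, and this is precisely the paradigm already used for $\mu_{\min}$ in Theorem \ref{Thm:convergence for quasifilted algebra geometric}. The only point deserving a moment's attention is the mismatch between "for sufficiently large $(m,n)$" in the super-additivity hypothesis and "for all $n\ge1$" in the linear-bound hypothesis of the corollary, but both conditions appear in exactly the needed form, so nothing further is required.
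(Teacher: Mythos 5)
Your proposal is correct and follows exactly the paper's own proof: both identify $a_n=\mu_{\max}(\pi_*(L^{\otimes n}))$, obtain the almost-super-additivity $a_{m+n}\ge a_m+a_n-2a(C)$ from Lemma \ref{Lem:minoration de mumax de produit tensoriel}, the linear upper bound from Lemma \ref{Lem:majoration de mu max par une croissance lineaire}, and conclude via Corollary \ref{Cor:suite croissance lineaire la version renforcee}. Your extra care about the finitely many $n$ for which $\pi_*(L^{\otimes n})$ might vanish is a harmless refinement the paper leaves implicit.
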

\begin{proof}
Denote by $a_n=\mu_{\max}(\pi_*(L^{\otimes
n}))$ for any integer $n\ge 1$. After Lemma
\ref{Lem:majoration de mu max par une
croissance lineaire}, there exists a constant
$\varepsilon>0$ such that $a_n\le \varepsilon
n$ for sufficiently large $n$. On the other
hand, Lemma \ref{Lem:minoration de mumax de
produit tensoriel} shows that $a_{m+n}\ge
a_m+a_n-2a(C)$ for all integers $m$ and $n$.
After Corollary \ref{Cor:suite croissance
lineaire la version renforcee}, the sequence
$(a_n/n)_{n\ge 1}$ has a limit in $\mathbb
R$.
\end{proof}

\section{Convergence of Harder-Narasimhan polygons: arithmetic case}

\hskip\parindent In this section, we
establish the analogue of the results in the
previous section in Arakelov geometry. Let
$K$ be a number field and $\mathcal O_K$ be
its integer ring. We denote by $\Sigma_f$ the
set of all finite places of $K$, which
coincides with the set of all closed points
in $\Spec\mathcal O_K$. Let $\Sigma_\infty$
be the set of all embeddings of $K$ into
$\mathbb C$. Suppose that $E$ is a projective
$\mathcal O_K$-module of finite type, then
for any finite place $\mathfrak p$ of $K$,
the structure of $\mathcal O_K$-module on $E$
induces an ultranorm on $E_{K_{\mathfrak p}
}:=E\otimes_{\mathcal O_K}K_{\mathfrak p}$.

We have explained that to any non-zero
Hermitian vector bundle $\overline E$ on
$\Spec\mathcal O_K$, we can associated a flag
$0=E_0\subsetneq
E_1\subsetneq\cdots\subsetneq E_n=E$ of $E$
such that $\overline E_i/\overline E_{i-1}$
is semistable for any integer $1\le i\le n $
and that
\[\widehat{\mu}(\overline E_1/\overline E_0)>
\widehat{\mu}(\overline E_2/\overline
E_1)>\cdots>\widehat{\mu}(\overline
E_n/\overline E_{n-1}).\] If we write
$\mu_i=\widehat{\mu}(\overline E_i/\overline
E_{i-1} )$ for $1\le i\le n$, then the
sequence of real numbers $(\mu_i)_{1\le i\le
n }$ is strictly decreasing. Furthermore, the
flag of $E$ above induces a flag
$0=E_{0,K}\subsetneq E_{1,K}\subsetneq\cdots
\subsetneq E_{n,K}=E_K$ of the vector space
$E_K$. We obtain therefore a filtration
$\mathcal F^{\HN}$ of $E_K$ such that
\[\mathcal F_{s}^{\HN}E_K=\begin{cases}
0,&\text{if }s>\mu_1,\\
E_{i,K},&\text{if }\mu_{i+1}<s\le\mu_i,\quad
1\le i\le n,\\
E_K,&\text{if }s\le \mu_n,
\end{cases}\]
called the {\it Harder-Narasimhan filtration}
of $E_K$. Notice that the normalized
Harder-Narasimhan polygon of $\overline E$
coincides with the polygon associated to the
Harder-Narasimhan filtration of $E_K$.

Let $\overline F$ and $\overline G$ be two
non-zero Hermitian vector bundles on
$\Spec\mathcal O_K$ and
$\varphi:F_K\rightarrow G_K$ be a linear
mapping. For any place $\mathfrak
p\in\Sigma_f$, $\varphi$ induces a linear
mapping $\varphi_{\mathfrak
p}:F_{K_{\mathfrak p}}\rightarrow
G_{K_{\mathfrak p}}$. For any embedding
$\sigma\in \Sigma_\infty$, $\varphi$ induces
a linear mapping
$\varphi_\sigma:F_{\sigma}\rightarrow
G_{\sigma}$. We define the {\it height} of
$\varphi$ to be
\[h(\varphi)=\frac{1}{[K:\mathbb Q]}\Big(\sum_{\mathfrak p
\in\Sigma_f}\log\|\varphi_{\mathfrak p}\|+
\sum_{\sigma\in\Sigma_\infty}\log\|\varphi_\sigma\|\Big).\]
Notice that if $\varphi$ comes from an
$\mathcal O_K$-linear homomorphism $\phi$,
i.e. $\varphi=\phi_K$, then for any
$\mathfrak p\in\Sigma_f$,
$\log\|\varphi_{\mathfrak p}\|\le 0$. We
recall the slope inequalities:
\begin{enumerate}[1)]
\item if $\varphi$ is injective, then $\widehat{\mu}_{\max}
(\overline
F)\le\widehat{\mu}_{\max}(\overline
G)+h(\varphi)$;
\item if $\varphi$ is surjective, then $\widehat{\mu}_{\min}
(\overline
F)\le\widehat{\mu}_{\min}(\overline
G)+h(\varphi)$;
\item if $\varphi$ is non-zero, then $\widehat{\mu}_{\min}(
\overline F)\le\widehat{\mu}_{\max}(\overline
G)+h(\varphi)$.
\end{enumerate}
For the proof of the first inequality, one
can consult \cite{Bost2001}. The second
inequality is obtained by applying the first
one on $\varphi^\vee:G_K^\vee\rightarrow
F_K^\vee$. Finally if we apply the first two
inequalities on the two homomorphisms in the
decomposition $F_K\twoheadrightarrow
\varphi(F_K)\hookrightarrow G_K$
respectively, we obtain the third inequality.
Using the seconde slope inequality, we obtain
the following proposition.

\begin{proposition}\label{Pro:fonctoriality of arithmeticcase}
Let $\overline F$ and $\overline E$ be two
Hermitian vector bundles. If
$\varphi:\overline F_K\rightarrow\overline
E_K$ is a $K$-linear homomorphism, then for
any real number
$s\le\widehat{\mu}_{\min}(\overline
F)-h(\varphi)$, the image $\varphi(F_K)$ is
contained in $\mathcal F_s^{\HN}E_K$.
\end{proposition}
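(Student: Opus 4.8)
The plan is to transcribe the proof of Proposition~\ref{Pro:fonctorial de HN} into the arithmetic setting, replacing the geometric bound $\mu_{\min}(F)\le\mu(\varphi(F))\le\mu_{\max}(G)$ by the third arithmetic slope inequality $\widehat{\mu}_{\min}(\overline F)\le\widehat{\mu}_{\max}(\overline G)+h(\psi)$ recalled above.

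First I would dispose of the trivial case $\varphi=0$ and assume $\varphi\ne 0$. For any $s\in\mathbb R$, the subspace $\mathcal F_s^{\HN}E_K$ is one of the members of the flag $0=E_{0,K}\subsetneq E_{1,K}\subsetneq\cdots\subsetneq E_{n,K}=E_K$, and from the explicit description of $\mathcal F^{\HN}$ one reads off that $\mathcal F_s^{\HN}E_K\supseteq E_{i,K}$ if and only if $s\le\mu_i$, where $\mu_i=\widehat{\mu}(\overline{E_i}/\overline{E_{i-1}})$. Let $i$ be the least index with $\varphi(F_K)\subseteq E_{i,K}$; since $\varphi\ne 0$ and $E_{0,K}=0$ we have $i\ge 1$. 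It therefore suffices to prove $\mu_i\ge\widehat{\mu}_{\min}(\overline F)-h(\varphi)$: granting this, $\mu_i\ge s$, hence $\varphi(F_K)\subseteq E_{i,K}\subseteq\mathcal F_s^{\HN}E_K$, which is the claim.

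To bound $\mu_i$ from below I would introduce the composite $K$-linear map $\psi=q_K\circ\varphi'$, where $\varphi':F_K\to E_{i,K}$ is $\varphi$ with codomain restricted to $E_{i,K}$ (legitimate since $\varphi(F_K)\subseteq E_{i,K}$), and $q:\overline{E_i}\to\overline{E_i}/\overline{E_{i-1}}$ is the canonical quotient of Hermitian vector bundles, the target being equipped with the quotient metrics. By minimality of $i$ we have $\varphi(F_K)\not\subseteq E_{i-1,K}$, so $\psi\ne 0$, and the third slope inequality gives
\[\widehat{\mu}_{\min}(\overline F)\le\widehat{\mu}_{\max}\big(\overline{E_i}/\overline{E_{i-1}}\big)+h(\psi).\]
Since $\overline{E_i}/\overline{E_{i-1}}$ is semistable, $\widehat{\mu}_{\max}(\overline{E_i}/\overline{E_{i-1}})=\widehat{\mu}(\overline{E_i}/\overline{E_{i-1}})=\mu_i$. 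It then remains to see $h(\psi)\le h(\varphi)$. Place-by-place submultiplicativity of operator norms gives $h(\psi)\le h(q_K)+h(\varphi')$; moreover $h(\varphi')=h(\varphi)$, because the saturated subbundle $\overline{E_i}$ carries at each $\sigma\in\Sigma_\infty$ the metric induced from $\overline E$ and at each $\mathfrak p\in\Sigma_f$ the ultranorm induced from its $\mathcal O_K$-module structure, which coincides with the restriction of the one on $E_{K_\mathfrak p}$, so restricting the codomain changes no local operator norm; and $h(q_K)\le 0$, because $q$ is $\mathcal O_K$-linear (whence $\log\|q_\mathfrak p\|\le 0$ for $\mathfrak p\in\Sigma_f$) while the quotient metrics force $\|q_\sigma\|\le 1$ for $\sigma\in\Sigma_\infty$. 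Combining, $\widehat{\mu}_{\min}(\overline F)\le\mu_i+h(\varphi)$, which is exactly the inequality needed.

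The argument is essentially formal once the slope inequalities are available; the only point demanding genuine care is the last one, namely checking that passing from $\varphi$ to $\psi$ does not increase the height, i.e. that the inclusion $\overline{E_i}\hookrightarrow\overline E$ is fibrewise isometric at all places and that the quotient $\overline{E_i}\to\overline{E_i}/\overline{E_{i-1}}$ is norm-nonincreasing everywhere. Once the proposition is established one deduces, exactly as in the geometric case, that $\mathcal F_s^{\HN}E_K=\sum_{0\ne F\subset E,\ \widehat{\mu}_{\min}(\overline F)\ge s}F_K$ and that $\varphi_K$ is compatible with Harder--Narasimhan filtrations whenever $h(\varphi)\le 0$, in particular when $\varphi$ comes from an $\mathcal O_K$-linear homomorphism.
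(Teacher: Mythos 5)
Your proof is correct and follows the same route as the paper: reduce to the least index $i$ with $\varphi(F_K)\subseteq E_{i,K}$, compose with the projection onto the semistable quotient $\overline{E_i}/\overline{E_{i-1}}$, and apply the third slope inequality. You supply slightly more detail than the paper (explicitly justifying $h(\psi)\le h(\varphi)$ via the submultiplicativity of local operator norms and the non-expansiveness of restriction and quotient maps), which the paper compresses into the single inequality $\widehat{\mu}_{\max}(\overline{E_i}/\overline{E_{i-1}})+h(\psi)\le\mu_i+h(\varphi)$.
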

\begin{proof}
The case where $\varphi=0$ is trivial.
Suppose that $\varphi\neq 0$. Let $i$ be the
smallest index such that $\varphi(F_K)\subset
E_{i,K}$, which is always $\ge 1$ since
$\varphi\neq 0$. Consider the composed
homomorphism $\xymatrix{\psi:\relax
F_K\ar[r]^-\varphi&E_{i,K}\ar[r]&(E_i/E_{i-1}})_K$,
which is non-zero. By slope inequality, $s\le
\widehat{\mu}_{\min}(\overline
F)\le\widehat{\mu}_{\max}(\overline
E_i/\overline E_{i-1})+h(\psi)\le
\mu_i+h(\varphi)$, or equivalently
$s-h(\varphi)\le\mu_i$. Therefore
$\varphi_K(F_K)\subset E_{i,K}\subset\mathcal
F_{s-h(\varphi)}^{\HN}E_K$.
\end{proof}

Proposition \ref{Pro:fonctoriality of
arithmeticcase} implies that, for any
Hermitian subbundle $\overline F$ of
$\overline E$ such that
$\widehat{\mu}_{\min}(\overline F)\ge s$,
$F_K$ is contained in $\mathcal
F_{s}^{\HN}E_K$ (the height of the inclusion
mapping $F_K\rightarrow E_K$ is bounded from
above by $0$). Therefore we obtain the
relation
\[\mathcal F_s^{\HN}E_K=\sum_{\begin{subarray}{c}
0\neq F\subset E\\
\widehat{\mu}_{\min}(\overline F)\ge s
\end{subarray}}F_K.\]

\begin{corollary}\label{Cor:fonctorialtie arithe}
Let $\overline F$ and $\overline E$ be two
Hermitian vector bundles on $\Spec\mathcal
O_K$ and $\varphi:F_K\rightarrow E_K$ be a
$K$-linear mapping. Then for any real number
$s$, $\varphi$ sends $\mathcal F^{\HN}_sF_K$
into $\mathcal F^{\HN}_{s-h(\varphi)}E_K$.
\end{corollary}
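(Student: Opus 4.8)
The plan is to mimic the proof of Corollary \ref{Cor:homomorphism compatible aux filtration de HN} from the geometric section, replacing the functoriality input Proposition \ref{Pro:fonctorial de HN} by its arithmetic analogue Proposition \ref{Pro:fonctoriality of arithmeticcase}. Fix a real number $s$. If $\mathcal F^{\HN}_sF_K=0$ there is nothing to prove, so assume it is non-zero and let $\overline{F_s}$ be the saturated Hermitian subbundle of $\overline F$ with $F_{s,K}=\mathcal F^{\HN}_sF_K$, equipped with the metrics and the $\mathcal O_K$-module structure induced from $\overline F$. By the construction of the Harder-Narasimhan filtration of $F_K$ (the subspaces that occur are exactly the $F_{i,K}$, and the corresponding saturated subbundles $\overline F_i$ satisfy $\widehat{\mu}_{\min}(\overline F_i)=\widehat{\mu}(\overline F_i/\overline F_{i-1})=\mu_i$), one has $\widehat{\mu}_{\min}(\overline{F_s})\ge s$.

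Next I would restrict $\varphi$ to $F_{s,K}$ and bound the height of this restriction. For every finite place $\mathfrak p$, the operator norm of $(\varphi|_{F_{s,K}})_{\mathfrak p}$ on $F_{s,K_{\mathfrak p}}$ is at most that of $\varphi_{\mathfrak p}$ on $F_{K_{\mathfrak p}}$, since $F_{s,K_{\mathfrak p}}$ is a subspace and the ultranorm it carries is the restriction of the one on $F_{K_{\mathfrak p}}$; similarly, at each $\sigma\in\Sigma_\infty$ the operator norm of $\varphi_\sigma$ restricted to $F_{s,\sigma}$ is bounded by $\|\varphi_\sigma\|$ because the Hermitian metric on $F_{s,\sigma}$ is induced from that on $F_\sigma$. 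Summing over all places of $K$ gives $h(\varphi|_{F_{s,K}})\le h(\varphi)$.

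Finally, combining the two estimates yields $s-h(\varphi)\le\widehat{\mu}_{\min}(\overline{F_s})-h(\varphi|_{F_{s,K}})$, so Proposition \ref{Pro:fonctoriality of arithmeticcase} applied to the $K$-linear homomorphism $\varphi|_{F_{s,K}}\colon F_{s,K}\to E_K$ with the real number $s-h(\varphi)$ shows that $\varphi(\mathcal F^{\HN}_sF_K)=\varphi(F_{s,K})\subset\mathcal F^{\HN}_{s-h(\varphi)}E_K$, which is exactly the claim. The only step requiring any care is the height comparison $h(\varphi|_{F_{s,K}})\le h(\varphi)$, i.e. that passing to a Hermitian subbundle with induced metrics and induced lattice cannot increase any local operator norm; granting this, the corollary follows immediately.
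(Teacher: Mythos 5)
Your proof is correct and takes essentially the same route as the paper: both pass to the saturated Hermitian subbundle $\overline{F_s}$ with $\widehat{\mu}_{\min}(\overline{F_s})\ge s$ and apply Proposition~\ref{Pro:fonctoriality of arithmeticcase} to the restriction of $\varphi$. You fill in the one detail the paper leaves implicit, namely that the height of the restricted map does not exceed $h(\varphi)$ because both the $\mathfrak p$-adic ultranorms and the Hermitian metrics on the subbundle are induced by restriction; this is a correct and useful clarification.
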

\begin{proof}
Let $F_s$ be the saturated subbundle of $F$ such that
$F_{s,K}=\mathcal F_s^{\HN}F_K$. By the definition of
Harder-Narasimhan filtrations we know that
$\widehat{\mu}_{\min}(F_s)\ge s$ if $\mathcal
F_s^{\HN}F_K$ is non-zero. Therefore, the canonical
mapping from $\mathcal F_s^{\HN}F_K$ to $E_K$
factorizes through $\mathcal
F_{s-h(\varphi)}^{\HN}E_K$.
\end{proof}

In Corollary \ref{Cor:fonctorialtie arithe},
if the homomorphism $\varphi$ is an
isomorphism, then
$\tau_{h(\varphi)}\nu_{\mathcal
F^{\HN},E_K}\succ\nu_{\mathcal F^{\HN},F_K}$.
Therefore, for any $t\in [0,1]$,
$P_{\overline F}(t)\le P_{\overline
E}(t)+h(\varphi)t$. In particular, if $E$ is
a non-zero vector bundle on $\Spec\mathcal
O_K$ and if
$h=(\|\cdot\|_\sigma)_{\sigma\in\Sigma_\infty}$
and
$h'=(\|\cdot\|_{\sigma}')_{\sigma\in\Sigma_\infty}$
are two Hermitian structures on $E$, then for
any $t\in[0,1]$,
\begin{equation}\label{Equ:diffference of polygons}\big|P_{(E,h)}(t)-P_{(E,h')}(t)\big|\le\frac{t}{[K:\mathbb
Q]} \sum_{\sigma\in\Sigma_\infty}\sup_{0\neq
x\in E_{\sigma,\mathbb C}
}\Big|\log\|x\|_\sigma-\log\|x\|_\sigma'\Big|\end{equation}

Let $(\overline{\mathscr B}_n)_{n\ge 0}$ be a
collection of non-zero Hermitian vector bundles on
$\Spec\mathcal O_K$. For any integer $n\ge 0$ and any
$s\in\mathbb R$, we denote by $\mathscr B_{n,s}$ the
saturated subbundle of $\mathscr B_n$ such that
$B_{n,s}:=\mathscr B_{n,s,K}=\mathscr F^{\HN}_s\mathscr
B_{n,K}$. Suppose that $B=\bigoplus_{n\ge 0}\mathscr
B_{n,K}$ is equipped with a structure of commutative
$\mathbb Z_{\ge 0}$-graded algebra over $K$. For any
integer $r\ge 2$ and any element
$\mathbf{n}=(n_i)_{1\le i\le r}\in\mathbb N^r$, we have
a homomorphism $\varphi_{\mathbf{n}}$ from $
B_{n_1}\otimes\cdots\otimes B_{n_r}$ to
$B_{|\mathbf{n}|}$ defined by the structure of algebra.
After \cite{Chen_pm}, if $(s_i)_{1\le i\le r}$ is an
element in $\mathbb R^r$, we obtain, by using the
convention $\widehat{\mu}_{\min}(0)=+\infty$,
\[\widehat{\mu}_{\min}(\overline{\mathscr
B}_{n_1,s_1}\otimes\cdots\otimes\overline{\mathscr
B}_{n_r,s_r}) \ge\sum_{i=1}^r
\widehat{\mu}_{\min}(\overline{\mathscr B
}_{n_i,s_i})-\sum_{i=1}^r
\log(\rang(B_{n_i,s_i}))\ge\sum_{i=1}^r\Big(s_i-\log(\rang(
B_{n_i})) \Big).\] If $\overline E$ is a
Hermitian vector subbundle of
$\overline{\mathscr B}_{|\mathbf{n}|}$ such
that $E_K$ coincides with the image of
$B_{n_1,s_1}\otimes\cdots\otimes B_{n_r,s_r}$
in $B_{|\mathbf{n}|}$, after the slope
inequality, we have
\[\widehat{\mu}_{\min}
(\overline E)\ge
\widehat{\mu}_{\min}(\overline{\mathscr
B}_{n_1,s_1}\otimes\cdots\otimes
\overline{\mathscr
B}_{n_r,s_r})-h(\varphi_{\mathbf n})\ge
\sum_{i=1}^r\Big(s_i-\log(\rang(
B_{n_i}))\Big)-h(\varphi_{\mathbf{n}}).
\]
Suppose that $g:\mathbb Z_{\ge
0}\rightarrow\mathbb R_{\ge 0}$ is a function
such that $h(\varphi_{\mathbf{n}})\le
g(n_1)+\cdots+g(n_r)$ for $n_i$ sufficiently
large. For any integer $n\ge 1$, let
$f(n)=g(n)+\log(\rang( B_n))$. Then $B$ is an
$f$-quasi-filtered graded $K$-algebra.

\begin{theorem}\label{Thm:convergence de polygone
arithmetique} For any integer $n\ge 0$,
denote by $P_n$ the Harder-Narasimhan polygon
of $\overline{\mathscr B}_n$. Suppose that
$\displaystyle\lim_{n\rightarrow+\infty}
f(n)/n=0$ and that the sequence
$(\widehat{\mu}_{\max}(\overline{\mathscr B
}_n)/n)_{n\ge 1}$ is bounded. If $B$ is an
integral $K$-algebra of finite type and if
$B_n\neq 0$ for sufficiently large $n$, then the sequence $(\widehat{\mu}_{\min}(\overline{\mathscr B}_n)/n)_{n\ge 1}$ has a limit in $\mathbb R$ and
the function sequence $(P_n/n)_{n\ge 1}$
converges uniformly on $[0,1]$.
\end{theorem}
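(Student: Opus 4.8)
The plan is to deduce the statement from Theorem \ref{Thm:convergence vague de algebre gradue filtree}, its extension in Remark \ref{Rem:convergence vague de algebre gradue filtree}, and Corollary \ref{Cor:convergence de polygone cas general}, in complete parallel with the relative geometric case (Theorem \ref{Thm:convergence for quasifilted algebra geometric}). What makes the machinery of Sections \ref{Sec:Almost super-additive sequence}--\ref{Sec:Convergence of polygons of a quasi-filtered graded algebra} applicable is the dictionary between the index function of the Harder-Narasimhan filtration and the extreme slopes: for a non-zero Hermitian vector bundle $\overline E$ on $\Spec\mathcal O_K$, the filtration $\mathcal F^{\HN}$ of $E_K$ is constant equal to $E_{i,K}$ on each half-open interval $]\mu_{i+1},\mu_i]$ (with $\mu_{n+1}=-\infty$), vanishes for $s>\mu_1$ and equals $E_K$ for $s\le\mu_n$; hence it is separated, exhaustive and left continuous, its index function $\lambda=\lambda_{\mathcal F^{\HN}}$ satisfies
\[\sup_{0\neq a\in E_K}\lambda(a)=\mu_1=\widehat{\mu}_{\max}(\overline E),\qquad
\min_{a\in E_K}\lambda(a)=\mu_n=\widehat{\mu}_{\min}(\overline E),\]
the probability measure it induces is $\nu_{\overline E}$, and the polygon $P(\nu_{\overline E})$ is the normalized Harder-Narasimhan polygon $P_{\overline E}$.

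First I would verify the hypotheses of Theorem \ref{Thm:convergence vague de algebre gradue filtree} for $B=\bigoplus_{n\ge 0}\mathscr B_{n,K}$, each $B_n$ carrying $\mathcal F^{\HN}$. The algebra $B$ is integral and of finite type over $K$ by assumption; since $B_n\neq 0$ for $n\gg 0$ it is not Artinian, so $\dim B\ge 1$, giving condition i). Condition ii) holds because each $\mathcal F^{\HN}$ on $B_n$ is separated, exhaustive and left continuous by the remark above, and because $B$ is $f$-quasi-filtered with $\lim_{n\rightarrow+\infty}f(n)/n=0$ --- this is exactly the content of the discussion preceding the theorem, which uses the slope inequalities and the tensor-product bound of \cite{Chen_pm} for $\widehat{\mu}_{\min}$, together with $\log\rang(B_n)=O(\log n)=o(n)$ (the rank growing at most polynomially in $n$). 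Condition iii), namely $\limsup_{n\rightarrow+\infty}\sup_{0\neq a\in B_n}\lambda(a)/n<+\infty$, is precisely $\limsup_{n\rightarrow+\infty}\widehat{\mu}_{\max}(\overline{\mathscr B}_n)/n<+\infty$ by the dictionary, which holds since $(\widehat{\mu}_{\max}(\overline{\mathscr B}_n)/n)_{n\ge 1}$ is assumed bounded. As $B$ need not be generated by $B_1$, I would invoke Theorem \ref{Thm:convergence vague de algebre gradue filtree} in the form of Remark \ref{Rem:convergence vague de algebre gradue filtree}, which is available precisely because $B_n\neq 0$ for $n\gg 0$ and $B$ is integral.

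Applying it, part 1) gives that $\min_{a\in B_n}\lambda(a)/n$ converges in $\mathbb R$, i.e. $(\widehat{\mu}_{\min}(\overline{\mathscr B}_n)/n)_{n\ge 1}$ has a limit in $\mathbb R$; and part 2) gives that the measures $\nu_n=T_{\frac 1n}\nu_{B_n}$ have uniformly bounded supports and converge vaguely to a Borel probability measure on $\mathbb R$. Since $\nu_{B_n}=\nu_{\overline{\mathscr B}_n}$ and $P(\nu_n)=P(T_{\frac 1n}\nu_{\overline{\mathscr B}_n})=\frac 1n P(\nu_{\overline{\mathscr B}_n})=\frac 1n P_n$, Corollary \ref{Cor:convergence de polygone cas general} (which packages the fact that vague convergence of probability measures with uniformly bounded supports implies uniform convergence of the associated polygons) yields that $(P_n/n)_{n\ge 1}$ converges uniformly on $[0,1]$ to a concave function, completing the argument. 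I do not expect a serious obstacle in this proof itself: the genuinely hard inputs --- the super-additivity estimate of \cite{Chen_pm} that keeps $f$ of logarithmic growth, and the vague convergence of Theorem \ref{Thm:convergence vague de algebre gradue filtree} --- are already in hand, and the only subtlety is the correct matching of $\sup\lambda$, $\min\lambda$, $\nu_{B_n}$, $P_n$ with $\widehat{\mu}_{\max}$, $\widehat{\mu}_{\min}$, $\nu_{\overline{\mathscr B}_n}$ and the normalized Harder-Narasimhan polygon (note in particular that the polygons denoted $P_n$ in Corollary \ref{Cor:convergence de polygone cas general} differ from the present $P_n$ by the factor $n$).
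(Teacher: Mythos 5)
Your proposal is correct and follows exactly the route the paper takes: the paper's proof of this theorem is a three-sentence citation of Theorem \ref{Thm:convergence vague de algebre gradue filtree}, Remark \ref{Rem:convergence vague de algebre gradue filtree}, and Corollary \ref{Cor:convergence de polygone cas general}, after identifying $P_n$ with the polygon of the filtered space $B_n$. You have simply spelled out the dictionary $\sup\lambda\leftrightarrow\widehat{\mu}_{\max}$, $\min\lambda\leftrightarrow\widehat{\mu}_{\min}$, and the verification of hypotheses i)--iii) that the paper leaves implicit, and you correctly flag the factor-of-$n$ discrepancy in the notation $P_n$ between the two statements.
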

\begin{proof}
In fact, $P_n$ coincides with the polygon
associated to the filtered space $B_n$. The
theorem results therefore from Theorem \ref{Thm:convergence vague de algebre gradue filtree} (see also Remark \ref{Rem:convergence vague de
algebre gradue filtree}) and Corollary
\ref{Cor:convergence de polygone cas
general}.
\end{proof}

In the following, we shall establish the analogue of
Theorem \ref{Thm:convergence of polygone geometric} in
Arakelov geometry. Let $\pi:\mathscr
X\rightarrow\Spec\mathcal O_K$ be a scheme of finite
type and flat over $\Spec\mathcal O_K$ such that
$\mathscr X_K$ is proper. Let $\overline{\mathscr L}$
be a Hermitian line bundle on $\mathscr X$. For any
integer $D\ge 0$, let $E_D$ be the projective $\mathcal
O_K$-module $\pi_*(\mathscr L^{\otimes D})$. Suppose
that $E_D\neq 0$ for sufficiently large $D$ and that
the algebra $B:=\bigoplus_{D\ge 0}E_{D,K}$ is of finite
type over $K$. Clearly $B$ is integral. We denote by
$\|\cdot\|_{\sigma,\sup}$ the norm on $E_{D,\sigma}$
such that $\|s\|_{\sigma,\sup}=\sup_{x\in \mathscr
X_\sigma(\mathbb C)}\|s_x\|_{\sigma}$ for any $s\in
E_{D,\sigma}=H^0(\mathscr X_{\sigma,\mathbb C},\mathscr
L _{\sigma,\mathbb C}^{\otimes D})$. In general this is
not a Hermitian norm. For any integer $D\ge 0$ and any
$\sigma\in\Sigma_\infty$, we choose a Hermitian norm
$\|\cdot\|_\sigma$ on $E_{D,\sigma}$ such that
\begin{equation}\label{Equ:distance between norm sup and norm herm}
\sup_{0\neq s\in E_{D,\sigma}
}\Big|\log\|s\|_{\sigma}-\log\|s\|_{\sigma,\sup}\Big|=O(\log
D ) \qquad(D\rightarrow+\infty).\end{equation} This is
always possible by Gromov's inequality in smooth metric
case (see \cite{Gillet-Soule} Lemma 30), or by John's
or L\"owner's ellipsoid argument in general case (see
\cite{Gaudron07}, \cite{Thompson96}). Suppose in
addition that the collection
$h_D=(\|\cdot\|_{\sigma})_{\sigma\in\Sigma_\infty}$ is
invariant by the complex conjugation. Then $\overline
E_D=(E_D,h_D)$ becomes a Hermitian vector bundle on
$\Spec\mathcal O_K$. For any integer $r\ge 2$ and any
element $\mathbf{n}=(n_i)_{1\le i\le r}\in\mathbb N^r$,
let $\varphi_{\mathbf{n}}$ be the canonical
homomorphism from $ E_{n_1,K}\otimes\cdots\otimes
E_{n_r,K}$ to $E_{|\mathbf{n}|,K}$. For any integer
$D\ge 1$ and any $\sigma\in\Sigma_D$, we denote by
\[A_{D,\sigma}=\displaystyle\sup_{0\neq s\in
E_{D,\sigma}
}\Big|\log\|s\|_{\sigma}-\log\|s\|_{\sigma,\sup}\Big|.\]
From \eqref{Equ:distance between norm sup and norm
herm} we know that there exists an integer $n_0\ge 2$
and a real number $\varepsilon >0$ such that
$A_{D,\sigma}\le\varepsilon\log D$ for any $D\ge n_0$
and any $\sigma\in\Sigma_\infty$.

\begin{lemma}\label{Lem:log croissance de la norem}
We have the following inequality
\begin{equation}\label{Equ:Majoration de la
norme de produit tensoriel}h(\varphi_{\mathbf{n}})\le
\frac{1}{[K:\mathbb Q]} \sum_{\sigma\in\Sigma_\infty}
A_{|\mathbf{n}|,\sigma}+\sum_{i=1}^r\Big(
A_{n_i,\sigma}+\frac12\log(\rang(E_{n_i}))\Big).\end{equation}
\end{lemma}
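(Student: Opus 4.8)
The plan is to estimate $h(\varphi_{\mathbf n})$ place by place, using the decomposition of the height into its finite and archimedean contributions. At a finite place $\mathfrak p$ the map $\varphi_{\mathbf n}$ is $\phi_K$ for the $\mathcal O_K$-linear multiplication morphism $\phi\colon E_{n_1}\otimes_{\mathcal O_K}\cdots\otimes_{\mathcal O_K}E_{n_r}\to E_{|\mathbf n|}$, so by the remark made just before the lemma one has $\log\|\varphi_{\mathbf n,\mathfrak p}\|\le 0$. Hence the finite places contribute non-positively, and it suffices to estimate $\log\|\varphi_{\mathbf n,\sigma}\|$ for each $\sigma\in\Sigma_\infty$, the operator norm being taken for the tensor-product Hermitian metric on the source $E_{n_1,\sigma}\otimes\cdots\otimes E_{n_r,\sigma}$ and the chosen Hermitian metric $\|\cdot\|_\sigma$ on $E_{|\mathbf n|,\sigma}$.

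Fix $\sigma$, and for each $1\le i\le r$ choose a $\|\cdot\|_\sigma$-orthonormal basis $(e^{(i)}_k)_{1\le k\le\rang E_{n_i}}$ of $E_{n_i,\sigma}$. The decomposable tensors $e_{\mathbf k}=e^{(1)}_{k_1}\otimes\cdots\otimes e^{(r)}_{k_r}$ form an orthonormal basis of the tensor product, so for $t=\sum_{\mathbf k}c_{\mathbf k}e_{\mathbf k}$ we have $\|t\|_\sigma=\big(\sum_{\mathbf k}|c_{\mathbf k}|^2\big)^{1/2}$, and the triangle inequality followed by Cauchy--Schwarz gives
\[\|\varphi_{\mathbf n,\sigma}(t)\|_\sigma\le\sum_{\mathbf k}|c_{\mathbf k}|\,\|\varphi_{\mathbf n,\sigma}(e_{\mathbf k})\|_\sigma\le\|t\|_\sigma\Big(\sum_{\mathbf k}\|\varphi_{\mathbf n,\sigma}(e_{\mathbf k})\|_\sigma^2\Big)^{1/2}.\]
This reduces the matter to estimating $\|\varphi_{\mathbf n,\sigma}(e_{\mathbf k})\|_\sigma=\|e^{(1)}_{k_1}\cdots e^{(r)}_{k_r}\|_\sigma$, the Hermitian norm of an honest product of global sections.

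For that estimate I would combine three facts. First, the definition of $A_{|\mathbf n|,\sigma}$ yields $\|\cdot\|_\sigma\le e^{A_{|\mathbf n|,\sigma}}\|\cdot\|_{\sigma,\sup}$ on $E_{|\mathbf n|,\sigma}$. Second, the sup-norm is submultiplicative: since $(s_1\cdots s_r)_x=(s_1)_x\otimes\cdots\otimes(s_r)_x$ and the metric on $\mathscr L^{\otimes|\mathbf n|}_\sigma$ is the tensor-power metric, $\|s_1\cdots s_r\|_{\sigma,\sup}\le\prod_{i=1}^r\|s_i\|_{\sigma,\sup}$. Third, again by the definition of $A_{n_i,\sigma}$, $\|e^{(i)}_{k_i}\|_{\sigma,\sup}\le e^{A_{n_i,\sigma}}\|e^{(i)}_{k_i}\|_\sigma=e^{A_{n_i,\sigma}}$. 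Together these give $\|\varphi_{\mathbf n,\sigma}(e_{\mathbf k})\|_\sigma\le e^{A_{|\mathbf n|,\sigma}+\sum_{i}A_{n_i,\sigma}}$ for every multi-index $\mathbf k$, and since the number of such $\mathbf k$ is $\prod_i\rang E_{n_i}$, the displayed bound becomes $\|\varphi_{\mathbf n,\sigma}\|\le\big(\prod_i\rang E_{n_i}\big)^{1/2}e^{A_{|\mathbf n|,\sigma}+\sum_i A_{n_i,\sigma}}$, that is,
\[\log\|\varphi_{\mathbf n,\sigma}\|\le A_{|\mathbf n|,\sigma}+\sum_{i=1}^r\Big(A_{n_i,\sigma}+\tfrac12\log\rang E_{n_i}\Big).\]
Summing over $\sigma\in\Sigma_\infty$, dividing by $[K:\mathbb Q]$, and adding the non-positive finite-place terms gives precisely \eqref{Equ:Majoration de la norme de produit tensoriel}.

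I do not anticipate a genuine obstacle; the single point requiring care is that the source of $\varphi_{\mathbf n}$ carries the tensor-product Hermitian metric, for which decomposable tensors do not form an admissible test family, so one cannot bound the operator norm by testing on pure tensors alone. Expanding in an orthonormal basis and invoking Cauchy--Schwarz is exactly the step that forces the combinatorial correction terms $\tfrac12\log\rang E_{n_i}$ into the estimate.
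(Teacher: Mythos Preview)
Your proof is correct and follows essentially the same approach as the paper: both bound the finite-place contributions by zero, estimate $\|s_1\cdots s_r\|_\sigma$ on pure tensors via the chain $\|\cdot\|_\sigma\to\|\cdot\|_{\sigma,\sup}\to$ submultiplicativity $\to\|\cdot\|_{\sigma,\sup}\to\|\cdot\|_\sigma$, and then pass from pure tensors to the full operator norm by expanding in an orthonormal basis of decomposable tensors and applying Cauchy--Schwarz. Your write-up is in fact more explicit than the paper's about this last step, which the paper compresses into a single sentence.
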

\begin{proof}
Since $\varphi_{\mathbf{n}}$ comes from a
homomorphism of $\mathcal O_K$-modules,
$\|\varphi_{\mathbf{n}}\|_{\mathfrak p}\le 1$
for any finite place $\mathfrak p$ of $K$.
Consider now an embedding
$\sigma\in\Sigma_\infty$. If $(s_i)_{1\le
i\le r}\in E_{n_1,\sigma}\times \cdots\times
E_{n_r,\sigma}$, then
\[\begin{split}&\quad\;\log\|s_1\cdots s_r\|_{\sigma}\le
\log\|s_1\cdots
s_r\|_{\sigma,\sup}+A_{|\mathbf{n}|,\sigma}
\le\sum_{i=1}^r\log\|s_i\|_{\sigma,\sup}
+A_{|\mathbf{n}|,\sigma}\\&\le\sum_{i=1}^r\Big(\log\|s_i\|_\sigma
+A_{n_i,\sigma}\Big)+A_{|\mathbf{n}|,\sigma}
=\log\|s_1\otimes\cdots\otimes s_r
\|_\sigma+A_{|\mathbf{n}|,\sigma}+\sum_{i=1}^r
A_{n_i,\sigma}.
\end{split}
\]
Since $E_{n_1,\sigma}\otimes\cdots\otimes
E_{n_r,\sigma}$ contains an orthogonal base
which consists of
$\rang(E_{n_1})\cdots\rang(E_{n_r})$ elements
of the forme $s_1\otimes\cdots\otimes s_r$,
using Cauchy-Schwarz inequality, we obtain
\[\log\|\varphi_{\mathbf{n}}\|_{\sigma}
\le A_{|\mathbf{n}|,\sigma}+\sum_{i=1}^r\Big(
A_{n_i,\sigma}+\frac12\log(\rang(E_{n_i}))\Big).\]
Therefore, \eqref{Equ:Majoration de la norme de produit
tensoriel} holds.
\end{proof}

\begin{remark}\label{Rem:fquaisifiltrae arith}
Lemma \ref{Lem:log croissance de la norem} implies that
\[\displaystyle
h(\varphi_{\mathbf{n}})\le\sum_{i=1}^r\Big(
2\varepsilon\log n_i+\frac
12\log(\rang(E_{n_i}))\Big),\quad \text{for any }
\mathbf{n}\in\mathbb N_{\ge n_0}^r.\] Therefore, if we
define $f(n)=2\varepsilon\log n+\displaystyle\frac
32\log(\rang(E_n))$, then the graded algebra $B$
equipped with Harder-Narasimhan filtrations is
$f$-quasi-filtered. Notice that the function $f$
satisfies
$\displaystyle\lim_{n\rightarrow\infty}f(n)/n=0$.
\end{remark}

We recall a result in \cite{Bost_Kunnemann},
which is a reformulation of Minkowski's first
theorem in Arakelov geometry.
\begin{proposition}[\cite{Bost_Kunnemann}]
\label{Pro:Bost_Kunnemann} Let $\overline
E=(E,(\|\cdot\|_\sigma)_{\sigma\in\Sigma_\infty})$ be a
non-zero Hermitian vector bundle on $\Spec\mathcal
O_K$. The following inequality holds:
\begin{equation}
\widehat{\mu}_{\max}(\overline
E)-\frac12\log([K:\mathbb Q]\rang
E)-\frac{\log|\Delta_K|}{2[K:\mathbb
Q]}\le-\frac12\log\sup_{0\neq s\in
E}\Big(\sum_{\sigma\in\Sigma_\infty}\|s\|_\sigma^2\Big)
\le\widehat{\mu}_{\max}(\overline E)-\frac
12\log[K:\mathbb Q] .
\end{equation}
\end{proposition}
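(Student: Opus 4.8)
Throughout, read the middle term of the statement as $-\tfrac12\log\lambda(\overline E)$, where $\lambda(\overline E):=\inf_{0\neq s\in E}\sum_{\sigma\in\Sigma_\infty}\|s\|_\sigma^2$ is the first minimum of the lattice $E$ for the positive-definite quadratic form $q(s)=\sum_{\sigma\in\Sigma_\infty}\|s\|_\sigma^2$ on the Euclidean space $E\otimes_{\mathbb Z}\mathbb R$, of real dimension $[K:\mathbb Q]\rang E$; this infimum is attained (if the supremum in the statement is read literally the assertion is vacuous, so this is the intended reading). The plan is to treat the two inequalities separately, the right-hand one by the slope inequality for line subbundles and the left-hand one by Minkowski's first theorem applied not to $\overline E$ but to its maximal destabilizing subbundle $\overline E_{\des}$.

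\noindent\emph{Right-hand inequality.} Fix $s\in E\setminus\{0\}$ and let $\overline L$ be the saturated Hermitian line subbundle of $\overline E$ with $L_K=Ks$. Since $L\supset\mathcal O_Ks$ with the same generic fibre and the induced metrics, $\widehat{\deg}_n(\overline L)=\widehat{\deg}_n(\overline{\mathcal O_Ks})+[K:\mathbb Q]^{-1}\log\#(L/\mathcal O_Ks)\ge-[K:\mathbb Q]^{-1}\sum_{\sigma}\log\|s\|_\sigma$, and $\widehat{\mu}_{\max}(\overline E)\ge\widehat{\mu}(\overline L)=\widehat{\deg}_n(\overline L)$, so $\sum_{\sigma}\log\|s\|_\sigma\ge-[K:\mathbb Q]\,\widehat{\mu}_{\max}(\overline E)$. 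The arithmetic--geometric mean inequality then gives
\[
\sum_{\sigma\in\Sigma_\infty}\|s\|_\sigma^2\ge[K:\mathbb Q]\Big(\prod_{\sigma\in\Sigma_\infty}\|s\|_\sigma^2\Big)^{1/[K:\mathbb Q]}=[K:\mathbb Q]\exp\Big(\tfrac{2}{[K:\mathbb Q]}\sum_{\sigma}\log\|s\|_\sigma\Big)\ge[K:\mathbb Q]\,e^{-2\widehat{\mu}_{\max}(\overline E)}.
\]
Taking the infimum over $s$ and then $-\tfrac12\log$ yields $-\tfrac12\log\lambda(\overline E)\le-\tfrac12\log[K:\mathbb Q]+\widehat{\mu}_{\max}(\overline E)$.

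\noindent\emph{Left-hand inequality.} With $\overline E_{\des}$ as above, $\widehat{\mu}_{\max}(\overline E)=\widehat{\mu}(\overline E_{\des})=\widehat{\deg}_n(\overline E_{\des})/\rho$, where $\rho:=\rang E_{\des}\le\rang E$. A first-minimum vector of the sublattice $E_{\des}\subset E$ lies in $E$, so $\lambda(\overline E)\le\lambda(\overline E_{\des})$. Now $E_{\des}$ is a lattice of rank $N':=\rho[K:\mathbb Q]$ in $E_{\des}\otimes_{\mathbb Z}\mathbb R$ whose covolume for $q$ satisfies $\mathrm{covol}(E_{\des},q)^2=|\Delta_K|^{\rho}\,e^{-2[K:\mathbb Q]\widehat{\deg}_n(\overline E_{\des})}$ — essentially the defining property of the normalized Arakelov degree, which I would record by a direct local computation at the archimedean and finite places. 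Minkowski's first theorem, in the crude form obtained by inscribing a cube just large enough to contain a nonzero lattice point in a ball, produces a nonzero $s\in E_{\des}$ with $q(s)\le N'\,\mathrm{covol}(E_{\des},q)^{2/N'}$; substituting the covolume and using $\rho\le\rang E$,
\[
\lambda(\overline E)\le\lambda(\overline E_{\des})\le N'\,|\Delta_K|^{1/[K:\mathbb Q]}\,e^{-2\widehat{\mu}_{\max}(\overline E)}\le[K:\mathbb Q]\,\rang E\cdot|\Delta_K|^{1/[K:\mathbb Q]}\,e^{-2\widehat{\mu}_{\max}(\overline E)},
\]
and $-\tfrac12\log$ gives $\widehat{\mu}_{\max}(\overline E)-\tfrac12\log([K:\mathbb Q]\rang E)-\tfrac{\log|\Delta_K|}{2[K:\mathbb Q]}\le-\tfrac12\log\lambda(\overline E)$.

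The slope inequality and the arithmetic--geometric mean step are routine; the step I expect to demand the most care is the covolume identity above, together with the bookkeeping of the constant $N'$ in Minkowski's bound. It is precisely because Minkowski is applied to $\overline E_{\des}$ (where $\widehat{\deg}_n/\rang$ equals $\widehat{\mu}_{\max}$) rather than to $\overline E$, and because $\rho\le\rang E$ lets the crude cube constant $N'=\rho[K:\mathbb Q]$ be absorbed into $[K:\mathbb Q]\rang E$, that the constants come out exactly as written; getting the covolume normalization right at the complex places, where $\|\cdot\|_\sigma=\|\cdot\|_{\bar\sigma}$, is the one genuinely delicate ingredient.
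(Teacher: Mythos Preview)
The paper does not prove this proposition: it is stated with the attribution \cite{Bost_Kunnemann} and introduced as ``a result in \cite{Bost_Kunnemann}, which is a reformulation of Minkowski's first theorem in Arakelov geometry,'' with no proof block following. So there is no in-paper argument to compare against.

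Your reading of the middle term as $-\tfrac12\log\inf_{0\neq s\in E}\sum_\sigma\|s\|_\sigma^2$ is correct and is confirmed by the way the proposition is invoked later (e.g.\ in the proof of Lemma~\ref{Lem:bornee de widehat mu max D}, where the $\sup$ appears \emph{outside} the logarithm and the inequality is used to bound $\widehat\mu_{\max}$ from above via a \emph{single} section, and in Lemma~\ref{Lem:suradditivite de mumax} where the right inequality is applied to one specific section). As written with the $\sup$ inside, the middle term would be $-\infty$ and both inequalities would be vacuous.

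Your argument is the standard one and is correct. The right-hand inequality via the line subbundle $\mathcal O_Ks$ plus AM--GM is exactly how this half is usually done. For the left-hand inequality, your key observation --- apply Minkowski to $\overline E_{\des}$ rather than to $\overline E$, so that $\widehat{\deg}_n/\rang$ becomes $\widehat\mu_{\max}$, and then absorb $N'=\rho[K:\mathbb Q]\le[K:\mathbb Q]\rang E$ into the constant --- is precisely the point of the ``reformulation'' the paper alludes to. The covolume identity $\mathrm{covol}(E_{\des},q)^2=|\Delta_K|^{\rho}e^{-2[K:\mathbb Q]\widehat{\deg}_n(\overline E_{\des})}$ and the cube form $\lambda\le N'\,\mathrm{covol}^{2/N'}$ of Minkowski are both standard; your caveat about the normalization at complex places is well-placed but does not affect the outcome. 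In short: the paper offers no proof, and yours is the expected one.
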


\begin{lemma}\label{Lem:bornee de widehat mu max D}
There exists a constant $C$ such that
$\widehat{\mu}_{\max}(\overline E_D)\le CD$
for any sufficiently large integer $D$.
\end{lemma}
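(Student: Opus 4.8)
The plan is to bound $\widehat{\mu}_{\max}(\overline E_D)$ by applying the first inequality of Proposition \ref{Pro:Bost_Kunnemann} to $\overline E_D$, thereby reducing the statement to a lower bound on the first minimum of the Euclidean lattice underlying $\overline E_D$, and then to prove that lower bound geometrically. Writing $\Lambda_D=\inf_{0\neq s\in E_D}\sum_{\sigma\in\Sigma_\infty}\|s\|_\sigma^2$ for the (squared) first minimum, Proposition \ref{Pro:Bost_Kunnemann} gives, for $D\gg 0$ (so that $E_D\neq 0$),
\[\widehat{\mu}_{\max}(\overline E_D)\le-\tfrac12\log\Lambda_D+\tfrac12\log\big([K:\mathbb Q]\rang E_D\big)+\frac{\log|\Delta_K|}{2[K:\mathbb Q]}.\]
Since $B=\bigoplus_{D\ge 0}E_{D,K}$ is a $K$-algebra of finite type, $\rang E_D$ is bounded above by a polynomial in $D$, so $\log\rang E_D=O(\log D)$, and the discriminant term is a constant; hence it suffices to produce $c>0$ with $-\log\Lambda_D\le cD$ for $D\gg 0$, i.e.\ with $\max_{\sigma\in\Sigma_\infty}\log\|s\|_\sigma\ge-cD$ for every non-zero $s\in E_D$.

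First I would replace the Hermitian norms by the sup-norms: by the choice of the metrics, \eqref{Equ:distance between norm sup and norm herm} shows $\big|\log\|s\|_\sigma-\log\|s\|_{\sigma,\sup}\big|=O(\log D)$ uniformly in $0\neq s\in E_{D,\sigma}$, so it is enough to find $c'>0$ with $\max_\sigma\log\|s\|_{\sigma,\sup}\ge-c'D$ for every non-zero $s$ in the lattice $E_D=\pi_*(\mathscr L^{\otimes D})$.

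This last estimate is the heart of the argument, and I would prove it by an arithmetic (Liouville-type) evaluation. Fix a projective model with a projective embedding, write $\mathscr L_K\cong\mathscr A_{1,K}\otimes\mathscr A_{2,K}^{\vee}$ with $\mathscr A_1,\mathscr A_2$ very ample line bundles endowed with smooth Hermitian structures, and choose, for each $D$, a non-zero $t\in H^0(\mathscr X_K,\mathscr A_{2,K}^{\otimes D})$ in the natural integral lattice; then $st$ is a non-zero section of $\mathscr A_{1,K}^{\otimes D}$ lying in $\pi_*(\mathscr A_1^{\otimes D})$, and elementary projective estimates give $\log\|t\|_{\sigma,\sup}\le O(D)$, so it is enough to bound $\log\|st\|_{\sigma,\sup}$ below by $-O(D)$. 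For that, choose, after a finite extension $K'/K$ if needed, an integral point $P\colon\Spec\mathcal O_{K'}\to\mathscr X$ meeting a smooth point of the generic fibre. If $st$ does not vanish at $P$, then $P^{*}(st)$ is a non-zero vector of the rank-one Hermitian lattice $(P^{*}\overline{\mathscr A}_1)^{\otimes D}$ over $\Spec\mathcal O_{K'}$; the non-negativity of its finite-place contribution gives $\sum_{\sigma'}\log\|P^{*}(st)\|_{\sigma'}\ge-[K':\mathbb Q]\,D\,\widehat{\deg}_n(P^{*}\overline{\mathscr A}_1)$, and since $\|P^{*}(st)\|_{\sigma'}\le\|st\|_{\sigma,\sup}$ with $\sigma=\sigma'|_{K}$, this yields $\max_\sigma\log\|st\|_{\sigma,\sup}\ge-O(D)$. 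If $st$ vanishes at $P$ to order $k$, one replaces $P^{*}(st)$ by its leading jet $j^{k}_{P}(st)$, a non-zero element of $(P^{*}\overline{\mathscr A}_1^{\otimes D})|_{P}\otimes S^{k}\Omega_{P}$ (with $\Omega_P$ the cotangent space of $\mathscr X_{K'}$ at $P$, equipped with a fixed metric), and runs the same estimate; the decisive point is that, $\mathscr A_1$ being ample, the vanishing order $k$ of a non-zero section of $\mathscr A_{1}^{\otimes D}$ at a fixed point is $O(D)$ (bounded by $D$ times a degree in the projective embedding), so the extra contribution coming from $S^{k}\Omega_P$ is again $O(D)$.

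Combining the three steps gives $\widehat{\mu}_{\max}(\overline E_D)\le CD+O(\log D)=O(D)$ for $D\gg 0$, which is the claim. The \emph{main obstacle} is precisely the last step: one must handle sections vanishing at the chosen integral point — hence the passage to jets — and keep the jet order, and with it every archimedean and finite error term, linear in $D$; this is where the reduction to ample bundles and the projective/Liouville estimates carry the weight. A variant avoiding the jet bookkeeping would instead apply a slope inequality to the multiplication map $\pi_*(\overline{\mathscr L}^{\otimes D})\otimes\pi_*(\overline{\mathscr A}_2^{\otimes D})\to\pi_*(\overline{\mathscr A}_1^{\otimes D})$ together with the (easier, by arithmetic ampleness) linear bounds on $\widehat{\mu}_{\max}$ and $\widehat{\mu}_{\min}$ of push-forwards of ample Hermitian line bundles, but this merely relocates the same positivity input.
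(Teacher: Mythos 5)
Your reduction steps coincide with the paper's: both invoke Proposition \ref{Pro:Bost_Kunnemann} (Minkowski) to reduce the claim on $\widehat{\mu}_{\max}(\overline E_D)$ to a linear lower bound on the archimedean sup-norms of non-zero sections, and both use \eqref{Equ:distance between norm sup and norm herm} to absorb the discrepancy between the Hermitian and sup norms into an $O(\log D)$ term. Where you part ways is in the heart of the argument. The paper obtains $\max_\sigma \log\|s\|_{\sigma,\sup}\ge -C_1 D$ in three lines via arithmetic intersection theory: fixing an arithmetically ample Hermitian line bundle $\overline L$ on $\mathscr X$ with $c_1(\overline L)>0$, the non-negativity of the height $h_{\overline L}(\Div s)$ of the effective divisor $\Div s$ expands to $\widehat c_1(\overline L)^d\cdot\widehat c_1(\overline{\mathscr L}^{\otimes D})+\int_{\mathscr X(\mathbb C)}\log\|s\|\,c_1(\overline L)^d\ge 0$, and then $c_1(\overline L)>0$ lets one bound the integral above by $\max_\sigma\log\|s\|_{\sigma,\sup}\cdot\int c_1(\overline L)^d$, which immediately yields the linear bound with $C_1=\widehat c_1(\overline L)^d\cdot\widehat c_1(\overline{\mathscr L})\big(\int c_1(\overline L)^d\big)^{-1}$. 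You instead run a Liouville-type evaluation at an integral point, passing to a difference $\mathscr A_1\otimes\mathscr A_2^\vee$ of very ample bundles, clearing the denominator by a controlled auxiliary section, and handling vanishing by jets. This route is viable in spirit (such evaluation estimates are standard in transcendence-theoretic settings), and your slope-inequality variant at the end is closer in efficiency to the paper's argument, but the jet bookkeeping, the choice of integral model for the embedding and for the point $P$, the order-of-vanishing bound, and the comparison $\|P^*(st)\|_{\sigma'}\le\|st\|_{\sigma,\sup}$ all demand care that the paper's approach bypasses entirely. In short: your proof, if completed, buys an intersection-theory-free argument at the cost of considerably more case analysis; the paper's proof buys brevity at the cost of assuming the arithmetic Bézout/height formalism.
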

\begin{proof}
Let $\overline L$ be a Hermitian line bundle
on $\mathscr X$ which is arithmetically ample
and such that $c_1(\overline L)>0$. Suppose
that $s$ is a section of ${ \mathscr
L}^{\otimes D}$ on $\mathscr X$, then $\Div
s$ is an effective divisor of $\mathscr X$.
Therefore, we have
\[h_{\overline L}(\Div s)=\widehat{c}_1(\overline L)^d\cdot
\widehat{c}_1(\overline{\mathscr L}^{\otimes
D })+\int_{\mathscr X(\mathbb
C)}\log\|s\|c_1(\overline L)^d\ge 0.\] On the
other hand, since $c_1(\overline L)>0$, we
obtain
\[\int_{\mathscr X(\mathbb C)}\log\|s\|c_1(\overline L)^d
\le
\max_{\sigma\in\Sigma_\infty}\log\|s\|_{\sigma,\sup}
\int_{\mathscr X(\mathbb C)}c_1(\overline
L)^d.\] Therefore, by defining $\displaystyle
C_1=\widehat{c}_1(\overline
L)^d\cdot\widehat{c}_1(\overline{\mathscr L
})\left(\int_{\mathscr X(\mathbb
C)}c_1(\overline L)^d\right)^{-1}$, we have
$-\max_{\sigma}\log\|s\|_{\sigma,\sup}\le
C_1D$, which implies
$-\log\|s\|_\sigma\le-\log\|s\|_{\sigma,\sup}+A_{D,\sigma}
\le C_1D+\varepsilon \log D$ for any
$\sigma\in\Sigma_\infty$. We then obtain
after Proposition \ref{Pro:Bost_Kunnemann}
that
\[\begin{split}\widehat{\mu}_{\max}(\overline E_D)
&\le -\sup_{0\neq s\in E_D}\frac 12\log\Big(
\sum_{\sigma\in\Sigma_\infty}\|s\|_\sigma^2\Big)
+\frac{1}{2}\log([K:\mathbb Q]\rang
E_D)+\frac{\log|\Delta_K|}{2[K:\mathbb
Q]}\\&\le C_1D+\varepsilon\log D+\frac
12\log(\rang
E_D)+\frac{\log|\Delta_K|}{2[K:\mathbb
Q]}=O(D).
\end{split}
\]
\end{proof}

\begin{theorem}\label{Thm:convergence arithmetice de polygon}  For any sufficiently large integer $D$,
we denote by $P_D$ the normalized
Harder-Narasimhan polygon of $\overline E_D$.
Then the sequence $(\widehat{\mu}_{\min}(\overline E_D)/D)_{D\ge 1}$ has a limit in $\mathbb R$ and the sequence of polygons $(P_D/D)_{D\ge
1}$ converges uniformly to a concave function
on $[0,1]$.
\end{theorem}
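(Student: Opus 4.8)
The plan is to obtain the statement as a direct application of Theorem \ref{Thm:convergence de polygone arithmetique} to the collection of Hermitian vector bundles $(\overline{\mathscr B}_n)_{n\ge 0}:=(\overline E_n)_{n\ge 0}$ on $\Spec\mathcal O_K$, taking for $f$ the function $f(n)=2\varepsilon\log n+\frac 32\log(\rang(E_n))$ exhibited in Remark \ref{Rem:fquaisifiltrae arith}. Since $P_D$ is, by construction, the polygon associated to the filtered vector space $E_{D,K}$ equipped with its Harder-Narasimhan filtration, establishing the hypotheses of that theorem immediately yields both the convergence of $(\widehat{\mu}_{\min}(\overline E_D)/D)_{D\ge 1}$ in $\mathbb R$ and the uniform convergence of $(P_D/D)_{D\ge 1}$ to a concave function on $[0,1]$.

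So the work consists in checking those hypotheses. First, $B=\bigoplus_{D\ge 0}E_{D,K}$ equipped with the Harder-Narasimhan filtrations is $f$-quasi-filtered: this is precisely Remark \ref{Rem:fquaisifiltrae arith}, which rests on the height bound of Lemma \ref{Lem:log croissance de la norem} for the multiplication maps $\varphi_{\mathbf n}$ together with the comparison \eqref{Equ:distance between norm sup and norm herm} between $\|\cdot\|_\sigma$ and $\|\cdot\|_{\sigma,\sup}$. Next, $B$ being of finite type over $K$ by hypothesis, the integers $\rang(E_n)=\dim_K B_n$ grow at most polynomially in $n$, whence $\log(\rang(E_n))=O(\log n)$ and therefore $\lim_{n\to+\infty}f(n)/n=0$. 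The boundedness of $(\widehat{\mu}_{\max}(\overline E_n)/n)_{n\ge 1}$ is exactly Lemma \ref{Lem:bornee de widehat mu max D}. Finally, as recalled above $B$ is an integral $K$-algebra of finite type, and $B_n=E_{n,K}\neq 0$ for $n$ sufficiently large, again by hypothesis. Theorem \ref{Thm:convergence de polygone arithmetique} then applies and gives the conclusion.

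One point deserves a word: the Hermitian structure $h_D$ on $E_D$ was only fixed up to the estimate \eqref{Equ:distance between norm sup and norm herm}, yet the resulting limit is independent of this choice. Indeed, if $h_D$ and $h_D'$ are two admissible choices, then by \eqref{Equ:diffference of polygons} the polygons $P_{(E_D,h_D)}$ and $P_{(E_D,h_D')}$ differ uniformly on $[0,1]$ by a quantity bounded, up to the factor $1/[K:\mathbb Q]$, by $\sum_{\sigma\in\Sigma_\infty}(A_{D,\sigma}+A_{D,\sigma}')=O(\log D)$, so the normalized polygons $\frac 1D P_{(E_D,h_D)}$ and $\frac 1D P_{(E_D,h_D')}$ share the same limit. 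I do not expect a genuine obstacle here: all the substantial input --- the slope estimate for tensor products of \cite{Chen_pm} feeding Lemma \ref{Lem:log croissance de la norem}, and the arithmetic ampleness bound of Lemma \ref{Lem:bornee de widehat mu max D} --- is already in place, so the proof is essentially the assembly of these results, the only mild subtlety being the polynomial growth of $\rang(E_n)$, which ensures $f(n)=o(n)$.
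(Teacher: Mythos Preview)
Your proof is correct and follows exactly the paper's approach: the paper's proof simply notes that $P_D$ coincides with the polygon associated to the Harder-Narasimhan filtration of $E_{D,K}$ and invokes Theorem \ref{Thm:convergence de polygone arithmetique}, the hypotheses of that theorem having already been verified in Remark \ref{Rem:fquaisifiltrae arith} (including $f(n)=o(n)$) and Lemma \ref{Lem:bornee de widehat mu max D}. Your additional paragraph on independence of the Hermitian structure is also correct and is treated by the paper in the remark immediately following the theorem, not in the proof itself.
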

\begin{proof}
Notice that $P_D$ coincides with the polygon
associated to the Harder-Narasimhan
filtration of $E_{D,K}$. Therefore, the
theorem follows from Theorem \ref{Thm:convergence de polygone
arithmetique}.
\end{proof}

\begin{remark}
The limit of polygons in Theorem \ref{Thm:convergence
arithmetice de polygon} does not depend on the choice
of Hermitian metrics $\|\cdot\|_\sigma$. Suppose that
for any integer $D\ge 0$ and any
$\sigma\in\Sigma_\infty$, we choose another Hermitian
metric $\|\cdot\|^*_\sigma$ on $E_{D,\sigma}$ such that
the collection
$h_D^*:=(\|\cdot\|^*_\sigma)_{\sigma\in\Sigma_\infty}$
is invariant under complex conjugation and such that
\[A_{D,\sigma}^*:=\sup_{0\neq s\in E_{D,\sigma}}
\Big|\log\|s\|_\sigma^*-\log\|s\|_{\sigma,\sup}\Big|=O(\log
D ).
\]
We denote by $P_D^*$ the normalized
Harder-Narasimhan polygon of $(E_D,h_D^*)$.
After \eqref{Equ:diffference of polygons}, we
have
\[|P_D^*(t)-P_D(t)|\le\frac{1}{[K:\mathbb Q]}
\sum_{\sigma\in\Sigma_\infty}\Big(A_{D,\sigma}+A_{D,\sigma}^*\Big).\]
Since
$\displaystyle\lim_{D\rightarrow+\infty}A_{D,\sigma}/D=
\lim_{D\rightarrow+\infty}A_{D,\sigma}^*/D=0$,
we know that the two sequences
$(P_D^*/D)_{D\ge 1}$ and $(P_D/D)_{D\ge 1}$
converge to the same limit. Similarly the slope inequality implies that
\[\lim_{D\rightarrow+\infty}\frac 1D\Big|
\widehat{\mu}_{\min}( E_D,h_D)-\widehat{\mu}_{\min}(E_D, h_D^*)\Big|=0.\]
\end{remark}

We establish now (Proposition \ref{Pro:pentes
max asymp}) the analogue of Proposition
\ref{Pro:convergence geometric de mu max et
mu min} in Arakelov geometry.

\begin{lemma}\label{Lem:suradditivite de mumax}
For any integer $r\ge 2$ and any element
$\mathbf{n}=(n_i)_{1\le i\le r}\in\mathbb
N_{\ge n_0}^r$, we have
\begin{equation}\widehat{\mu}_{\max}(\overline
E_{|\mathbf{n}|})\ge\sum_{i=1}^r\widehat{\mu}_{\max}(\overline
E_{n_i})-2\varepsilon\sum_{i=1}^r\log n_i-
\sum_{i=1}^r\Big(\frac 12\log([K:\mathbb Q
]\rang
E_{n_i})+\frac{\log|\Delta_K|}{2[K:\mathbb Q
]}\Big).\end{equation}
\end{lemma}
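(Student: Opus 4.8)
This lemma is the Arakelov counterpart of Lemma \ref{Lem:minoration de mumax de produit tensoriel}, with Minkowski's first theorem (Proposition \ref{Pro:Bost_Kunnemann}) playing the role that Proposition \ref{Pro:grand sous fibre inversible} played in the geometric setting. The plan is to produce, for each index $i$, a nonzero section $s_i$ of $E_{n_i}$ of controlled norm, to form the product $t=s_1\cdots s_r$ in the graded algebra $B$, and then to bound $\widehat{\mu}_{\max}(\overline E_{|\mathbf n|})$ from below by the Arakelov slope of the rank-one sub-$\mathcal O_K$-module $\mathcal O_K t$ of $E_{|\mathbf n|}$.

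First I would apply Proposition \ref{Pro:Bost_Kunnemann} to each $\overline E_{n_i}$. Setting
\[c_i:=\widehat{\mu}_{\max}(\overline E_{n_i})-\tfrac12\log([K:\mathbb Q]\rang E_{n_i})-\tfrac{\log|\Delta_K|}{2[K:\mathbb Q]},\]
Minkowski's first theorem in the form of Proposition \ref{Pro:Bost_Kunnemann} furnishes a nonzero $s_i\in E_{n_i}$ with $-\tfrac12\log\sum_{\sigma\in\Sigma_\infty}\|s_i\|_\sigma^2\ge c_i$; in particular $\log\|s_i\|_\sigma\le -c_i$ for every $\sigma\in\Sigma_\infty$, whence, by definition of $A_{n_i,\sigma}$, also $\log\|s_i\|_{\sigma,\sup}\le -c_i+A_{n_i,\sigma}$. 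Next I would put $t=s_1\cdots s_r$, the product taken in $B$. Since $B$ is integral, $t\neq 0$; and since the multiplication maps arise from morphisms of $\mathcal O_K$-modules (so $\|\varphi_{\mathbf n}\|_{\mathfrak p}\le 1$ at every finite place, exactly as in the proof of Lemma \ref{Lem:log croissance de la norem}), $t$ in fact lies in the lattice $E_{|\mathbf n|}$. Using the submultiplicativity $\|s_1\cdots s_r\|_{\sigma,\sup}\le\prod_{i=1}^r\|s_i\|_{\sigma,\sup}$ of the supremum norms on sections of $\mathscr L^{\otimes\bullet}$, together with $\log\|t\|_\sigma\le\log\|t\|_{\sigma,\sup}+A_{|\mathbf n|,\sigma}$, I obtain for every $\sigma\in\Sigma_\infty$ the estimate $\log\|t\|_\sigma\le\sum_{i=1}^r(-c_i+A_{n_i,\sigma})+A_{|\mathbf n|,\sigma}$.

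To conclude, the free rank-one submodule $\mathcal O_K t\subset E_{|\mathbf n|}$ with its restricted metrics has Arakelov degree $\widehat{\deg}_n(\overline{\mathcal O_K t})=-\tfrac{1}{[K:\mathbb Q]}\sum_{\sigma\in\Sigma_\infty}\log\|t\|_\sigma$, and its saturation is a Hermitian subbundle of $\overline E_{|\mathbf n|}$ of degree at least this; hence $\widehat{\mu}_{\max}(\overline E_{|\mathbf n|})\ge -\tfrac{1}{[K:\mathbb Q]}\sum_\sigma\log\|t\|_\sigma$. Substituting the bound above and using $\#\Sigma_\infty=[K:\mathbb Q]$ gives
\[\widehat{\mu}_{\max}(\overline E_{|\mathbf n|})\ge\sum_{i=1}^rc_i-\frac{1}{[K:\mathbb Q]}\sum_{\sigma\in\Sigma_\infty}\Big(\sum_{i=1}^rA_{n_i,\sigma}+A_{|\mathbf n|,\sigma}\Big).\]
Finally I would estimate the error term: by the choice of $\varepsilon$ and $n_0$ one has $A_{n_i,\sigma}\le\varepsilon\log n_i$ and $A_{|\mathbf n|,\sigma}\le\varepsilon\log(n_1+\cdots+n_r)\le\varepsilon\sum_{i=1}^r\log n_i$, the last inequality valid because each $n_i\ge n_0\ge 2$ (so $\log(a+b)\le\log a+\log b$ for $a,b\ge 2$, whence by induction $\log(\sum n_i)\le\sum\log n_i$). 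Summing over $\sigma$ and dividing by $[K:\mathbb Q]$ bounds the error by $2\varepsilon\sum_{i=1}^r\log n_i$, and unwinding the definition of $c_i$ yields exactly the asserted inequality.

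The remaining verifications are routine; the only points requiring care are that the product $t$ is simultaneously nonzero and integral — which is where integrality of $B$ and the integral model of the multiplication map enter, precisely as in Lemma \ref{Lem:log croissance de la norem} — and that the directions of the inequalities relating Hermitian norms, supremum norms and the quantities $A_{D,\sigma}$ are tracked consistently. I do not expect a genuine obstacle: the lemma is essentially the argument of Lemma \ref{Lem:minoration de mumax de produit tensoriel} transcribed into Arakelov-theoretic language, with Minkowski's theorem replacing the existence of a large line subbundle.
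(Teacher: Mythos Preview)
Your proof is correct and follows essentially the same approach as the paper: pick small integral sections $s_i\in E_{n_i}$ via Minkowski's first theorem (Proposition~\ref{Pro:Bost_Kunnemann}), multiply them in the graded algebra, control the norms of the product through the sup norms and the constants $A_{D,\sigma}$, and read off a lower bound for $\widehat{\mu}_{\max}(\overline E_{|\mathbf n|})$ from the resulting rank-one sub-$\mathcal O_K$-module. The paper phrases the last two steps through the quantity $\sum_\sigma\|s_1\cdots s_r\|_\sigma^2$ and both inequalities of Proposition~\ref{Pro:Bost_Kunnemann}, whereas you bound each $\|t\|_\sigma$ separately and invoke the Arakelov degree of $\mathcal O_Kt$ directly; these are equivalent bookkeeping choices and your handling of the error term $A_{|\mathbf n|,\sigma}\le\varepsilon\log|\mathbf n|\le\varepsilon\sum_i\log n_i$ (using $n_i\ge n_0\ge 2$) matches the paper's estimate $|\mathbf n|^{2\varepsilon}\prod_i n_i^{2\varepsilon}\le\prod_i n_i^{4\varepsilon}$.
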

\begin{proof}
Suppose that for any integer $1\le i\le r$,
$s_i$ is a non-zero element in $E_{n_i}$.
Then for any $\sigma\in\Sigma_\infty$,
\[\begin{split}\|s_1\cdots s_r\|_{\sigma}
&\le \|s_1\cdots
s_r\|_{\sigma,\sup}\exp(A_{|\mathbf{n}|,\sigma})
\le\prod_{i=1}^r\|s_i\|_{\sigma,\sup}\exp(A_{|\mathbf{n}|,\sigma})\\
&\le\exp(A_{|\mathbf{n}|,\sigma})\prod_{i=1}^r\Big(\|s_i\|_{\sigma}\exp(A_{n_i,\sigma})\Big)
\le|\mathbf{n}|^{\varepsilon}n_1^{\varepsilon
}\cdots
n_r^{\varepsilon}\prod_{i=1}^r\|s_i\|_{\sigma}.
\end{split}\]
Therefore,
\begin{gather*}\sum_{\sigma\in\Sigma_\infty}\|s_1\cdots
s_r\|_{\sigma}^2 \le
|\mathbf{n}|^{2\varepsilon}\Big(\prod_{i=1}^r
n_i^{2\varepsilon}\Big)\sum_{\sigma\in\Sigma_\infty}\sum_{j=1}^r\|s_j\|_\sigma^2
\le \Big(\prod_{i=1}^r
n_i^{4\varepsilon}\Big)\Big(\prod_{j=1}^r\sum_{\sigma\in\Sigma_\infty}\|s_j\|_\sigma^2\Big),\\
\text{and}\quad-\frac
12\log\Big(\sum_{\sigma\in\Sigma_\infty}\|s_1\cdots
s_r\|_{\sigma}^2\Big)\ge
-2\varepsilon\sum_{i=1}^r\log
n_i-\sum_{i=1}^r\frac
12\log\Big(\sum_{\sigma\in\Sigma_\infty}\|s_i\|_\sigma^2\Big).
\end{gather*}
After Proposition \ref{Pro:Bost_Kunnemann},
we obtain
\[\begin{split}
\widehat{\mu}_{\max}(\overline
E_{|\mathbf{n}|})&\ge-\frac
12\log\Big(\sum_{\sigma\in\Sigma_\infty}\|s_1\cdots
s_r\|_{\sigma}^2\Big) \ge
-2\varepsilon\sum_{i=1}^r\log
n_i-\sum_{i=1}^r\frac
12\log\Big(\sum_{\sigma\in\Sigma_\infty}\|s_i\|_\sigma^2\Big)\\
&\ge\sum_{i=1}^r\widehat{\mu}_{\max}(\overline
E_{n_i})-2\varepsilon\sum_{i=1}^r\log n_i-
\sum_{i=1}^r\Big(\frac 12\log([K:\mathbb Q
]\rang
E_{n_i})+\frac{\log|\Delta_K|}{2[K:\mathbb Q
]}\Big).
\end{split}\]
\end{proof}

\begin{proposition}\label{Pro:pentes max asymp}
The
sequence $(\frac 1D\widehat{\mu}_{\max}(\overline
E_D))_{D\ge 1}$ has
a limit in $\mathbb R$.
\end{proposition}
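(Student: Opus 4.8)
The plan is to recognize the sequence $a_D:=\widehat{\mu}_{\max}(\overline E_D)$ as an almost super-additive sequence with logarithmic error term and to apply Corollary \ref{Cor:limite de an sur n forme forte}. Since $E_D\neq 0$ for sufficiently large $D$, after discarding finitely many indices we may assume $E_D\neq 0$ for every $D\ge 1$, so that $a_D\in\mathbb R$ is well defined for all $D$.

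First I would set
\[
f(D)=2\varepsilon\log D+\frac12\log\big([K:\mathbb Q]\rang E_D\big)+\frac{\log|\Delta_K|}{2[K:\mathbb Q]}.
\]
The key observation is that $\lim_{D\to+\infty}f(D)/D=0$: indeed $B=\bigoplus_{D\ge 0}E_{D,K}$ is a graded $K$-algebra of finite type, hence its Hilbert function $D\mapsto\rang E_D$ is bounded above by a polynomial in $D$, so $\log(\rang E_D)=O(\log D)$ and therefore $f(D)=O(\log D)$ (this is the same kind of estimate already used in Remark \ref{Rem:fquaisifiltrae arith}). With this choice of $f$, Lemma \ref{Lem:suradditivite de mumax} asserts precisely that there is an integer $n_0>0$ such that for any $r\ge 2$ and any $\mathbf n=(n_i)_{1\le i\le r}\in\mathbb N_{\ge n_0}^r$,
\[
a_{n_1+\cdots+n_r}\ge a_{n_1}+\cdots+a_{n_r}-f(n_1)-\cdots-f(n_r),
\]
which is exactly condition $1)$ of Corollary \ref{Cor:limite de an sur n forme forte}.

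Next I would check condition $2)$. By Lemma \ref{Lem:bornee de widehat mu max D} there is a constant with $a_D\le C D$ for all sufficiently large $D$; since each of the remaining finitely many values $a_D$ is a finite real number, we may enlarge this constant to a positive $\alpha$ so that $a_D\le\alpha D$ holds for every $D\ge 1$. Both hypotheses of Corollary \ref{Cor:limite de an sur n forme forte} are thus satisfied, and we conclude that $(a_D/D)_{D\ge 1}=\big(\tfrac1D\widehat{\mu}_{\max}(\overline E_D)\big)_{D\ge 1}$ converges to a finite limit.

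I expect no serious obstacle: Lemmas \ref{Lem:suradditivite de mumax} and \ref{Lem:bornee de widehat mu max D} do all the substantive work, and the only point to verify independently is the polynomial growth of $\rang E_D$, which is immediate from the finite type hypothesis on $B$. The only thing requiring mild care is the bookkeeping with small indices---where $E_D$ may vanish, or where the linear bound of Lemma \ref{Lem:bornee de widehat mu max D} is only asserted asymptotically---which is dealt with by discarding or absorbing finitely many terms as above.
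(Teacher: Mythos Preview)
Your proof is correct and follows exactly the paper's approach: it is a direct application of Lemma \ref{Lem:suradditivite de mumax} together with Corollary \ref{Cor:limite de an sur n forme forte}. You have merely spelled out the details more carefully, in particular making explicit the upper bound from Lemma \ref{Lem:bornee de widehat mu max D} needed for condition 2) of that corollary and the logarithmic growth of $f$.
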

\begin{proof}
This is a direct consequence of Lemma
\ref{Lem:suradditivite de mumax} and Corollary
\ref{Cor:limite de an sur n forme forte}.
\end{proof}

From the slope inequality we know immediately
that the limits in Proposition
\ref{Pro:pentes max asymp} do not depend on
the choice of Hermitian metrics
$\|\cdot\|_\sigma$.

\section{Calculation of the limit of polygons for a bigraded algebra}

\hskip\parindent We present in this section
an explicit calculation of the limit of
polygons in the case where the bigraded
algebra associated to the quasi-filtered
graded algebra is of finite type. The method
used in this section is inspired by an
article of Faltings and W{\"u}stholz
\cite{Fal-Wusth}, which applies the theory of
Poincar\'e series in two variables.

\begin{definition} Let $A$ be a commutative
ring. We call {\it bigraded} $A$-{algebra}
any $\mathbb N^2$-graded commutative
$A$-algebra. If $B$ is a bigraded
$A$-algebra, we call {\it bigraded}
$B$-module any $B$-module $M$ equipped with a
$\mathbb Z^2$-graduation in $A$-modules such
that, for any $(n,d)\in\mathbb N^2$ and any
$(n',d')\in\mathbb Z^2$, we have
$B_{n,d}M_{n',d'}\subset M_{n+n',d+d'}$. We
call {\it homogeneous sub-$B$-module} of $M$
any sub-$B$-module $M'$ of $M$ such that
$\displaystyle M'=\bigoplus_{(n,d)\in\mathbb
Z^2}M'\cap M_{n,d}$. $M'$ is therefore
canonically equipped with a structure of
graded $B$-module. In particular, if $B$ is a
bigraded $A$-algebra, then $B$ is canonically
equipped with a structure of bigraded
$B$-module. The homogeneous sub-$B$-modules
of $B$ are called {\it homogeneous ideals} of
$B$.

If $B$ is a bigraded $A$-algebra and if $M$ is a
bigraded $B$-module, for any $(n,d)\in\mathbb Z^2$, we
denote by $M(n,d)$ the graded $B$-module such that
$M(n,d)_{n',d'}=M_{n+n',d+d'}$ for any
$(n',d')\in\mathbb Z^2$.
\end{definition}

Let $\mathbf{f}$ be a mapping from
$\{1,\cdots, n\}$ to $\mathbb N^2$. The ring
$A[T_1,\cdots,T_n]$ of polynomials is
canonically equipped with an $\mathbb
N^2$-graduation such that $T_i$ is
homogeneous of bidegree $\mathbf{f}(i)$. We
obtain hence a bigraded $A$-algebra, denote
by $A[\mathbf{f}]$.

If $B$ is a bigraded $A$-algebra of finite
type, then $B$ is generated by a finite
number of homogeneous elements
$x_1,\cdots,x_m$. We suppose that $x_i$ is of
bidegree $(n_i,d_i)$. Let
$\mathbf{f}:\{1,\cdots,m\}\rightarrow\mathbb
N^2$ be the function which sends $i$ to
$(n_i,d_i)$. Then the surjective homomorphism
of $A$-algebras from $A[\mathbf{f}]\cong
A[T_1,\cdots,T_m]$ to $B$ which sends $T_i$
to $x_i$ is compatible with $\mathbb
N^2$-graduations. It is therefore a
homomorphism of bigraded algebras. In this
case, any bigraded $B$-module $M$ can be
considered as a bigraded
$A[\mathbf{f}]$-module, which is of finite
type if $M$ is a $B$-module of finite type.

\begin{definition}\label{Def:serie de poincare}
Let $\mathbf{f}=(f_1,f_2)$ be a mapping from
$\{1,\cdots,m\}$ to $\mathbb N^2$ and $M$ be
a bigraded $A[\mathbf{f}]$-module of finite
type whose homogeneous component are all
$A$-modules of finite length. We call {\it
Poincar\'e series} of $M$ the element
$P_M\in\mathbb \mathbb Z\lbr
X,Y\rbr[X^{-1},Y^{-1}]$ defined by the
formula
$P_M=\displaystyle\sum_{(n,d)\in\mathbb
Z^2}\len_A(M_{n,d})X^nY^d$. We write
$\displaystyle
Q_M=P_M\prod_{i=1}^m(1-X^{f_1(i)}Y^{f_2(i)})$.
\end{definition}

\begin{proposition}\label{Pro:Qm est un polynome}
We have $Q_M\in\mathbb Z[X,Y,X^{-1},Y^{-1}]$.
\end{proposition}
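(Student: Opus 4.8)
The plan is to argue by induction on $m$, the number of variables, using the standard device of multiplication by the last variable. First I would record the a priori statement: since $M$ is generated over the $\mathbb N^2$-graded ring $A[\mathbf f]$ by finitely many homogeneous elements, the support of $P_M$ is bounded below in each coordinate, so $P_M$, and hence $Q_M$, is a well-defined element of $\mathbb Z\lbr X,Y\rbr[X^{-1},Y^{-1}]$; the assertion is that this Laurent series is in fact a Laurent polynomial. The base case $m=0$ is then immediate: there $A[\mathbf f]=A$ is concentrated in bidegree $(0,0)$, a finite-type bigraded $A$-module can have only finitely many nonzero homogeneous pieces, each of finite length, so $P_M$ is already a Laurent polynomial and $Q_M=P_M$.

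For the inductive step, write $\mathbf f'=\mathbf f|_{\{1,\dots,m-1\}}$, so that $A[\mathbf f']=A[T_1,\dots,T_{m-1}]$. If $\mathbf f(m)=(0,0)$ the factor $1-X^{f_1(m)}Y^{f_2(m)}$ vanishes and $Q_M=0$, so we may assume $\mathbf f(m)\neq(0,0)$. Multiplication by $T_m$, after the bidegree shift that makes it homogeneous, gives a four-term exact sequence of bigraded $A[\mathbf f]$-modules
\[0\longrightarrow K\longrightarrow M(-\mathbf f(m))\stackrel{\cdot\, T_m}{\longrightarrow} M\longrightarrow C\longrightarrow 0,\]
with $K=(0:_M T_m)$ (suitably shifted) and $C=M/T_mM$. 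Since the alternating sum of lengths over an exact sequence of finite-length $A$-modules vanishes, reading this sequence in each bidegree and multiplying by $X^nY^d$ yields
\[P_K-X^{f_1(m)}Y^{f_2(m)}P_M+P_M-P_C=0,\qquad\text{i.e.}\qquad (1-X^{f_1(m)}Y^{f_2(m)})\,P_M=P_C-P_K.\]
Both $K$ and $C$ are annihilated by $T_m$, hence are naturally bigraded $A[\mathbf f']$-modules, and their homogeneous components, being subquotients of shifts of $M$, have finite length over $A$. Multiplying the last identity by $\prod_{i=1}^{m-1}(1-X^{f_1(i)}Y^{f_2(i)})$ gives $Q_M=Q^{\mathbf f'}_C-Q^{\mathbf f'}_K$, the superscript indicating the $Q$-construction performed with respect to $\mathbf f'$; applying the inductive hypothesis to $C$ and $K$ then finishes the argument.

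The step that requires genuine care — and what I expect to be the main obstacle — is the verification that $C$ and, above all, $K$ are of finite type over $A[\mathbf f']$, which is precisely what licenses the appeal to the inductive hypothesis. For $C=M/T_mM$ this is easy: any finite homogeneous generating set of $M$ over $A[\mathbf f]$ still generates $C$ over $A[\mathbf f']$, because $T_m$ acts as $0$ on $C$. For the submodule $K$ one needs a finiteness input: when $A$ is Noetherian — the case in all applications of this paper, where $A$ is a field or an Artinian ring — $M$ is a Noetherian $A[\mathbf f]$-module and $K$ is automatically finitely generated; for a general commutative ring $A$ one must use the finite-length hypothesis on the homogeneous components of $M$ to bound the degrees of generators of $K$. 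Once this finiteness is secured, the induction goes through verbatim and yields $Q_M\in\mathbb Z[X,Y,X^{-1},Y^{-1}]$.
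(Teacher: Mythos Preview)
Your argument is essentially the same as the paper's: induction on $m$, the four-term exact sequence coming from multiplication by $T_m$, and the additivity of Poincar\'e series. The one place where you are less complete than the paper is exactly the point you flag yourself --- the finiteness of the kernel $K$ over $A[\mathbf f']$ for a general base ring $A$. You say that ``for a general commutative ring $A$ one must use the finite-length hypothesis on the homogeneous components of $M$ to bound the degrees of generators of $K$,'' but you do not actually carry this out. The paper closes this gap with a short, clean reduction at the outset: replace $A$ by $A/\ann_A(M)$; then a finite homogeneous generating set of $M$ sits in a finite direct sum $M'$ of homogeneous pieces, $M'$ has finite length over $A$, and $\ann_A(M')=\ann_A(M)=0$, which forces $A$ itself to be Artinian (a faithful finite-length module makes the ring Artinian). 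Once $A$ is Artinian, $A[\mathbf f]$ is Noetherian and your Noetherian argument for $K$ applies verbatim. With that reduction inserted, your proof and the paper's coincide.
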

\begin{proof} By replacing
$A$ with $A/\ann_A(M)$, we reduce the problem to the
case where $\ann_A(M)=0$. Since $M$ is an
$A[\mathbf{f}]$-module of finite type, there exist
integers $a<b$ such that $M$ is generated as
$A[\mathbf{f}]$-module by $\displaystyle
M'=\bigoplus_{(n,d)\in[a,b]^2\cap\mathbb Z^2}M_{n,d}$.
Since $M'$ is an $A$-module of finite length, and since
$\ann_A(M')=\ann_A(M)=0$, the ring $A$ is Artinian, so
is Notherian.

We deduce by induction on $m$. If $m=0$, then
$A[\mathbf{f}]=A$. Since $M$ is an $A$-module
of finite type, we have $P_M\in\mathbb
Z[X,Y,X^{-1},Y^{-1}]$. Suppose that the
proposition has been proved for
$1,\cdots,m-1$. Let $\mathbf{f}'$ be the
restriction of $\mathbf{f}$ on
$\{1,\cdots,m-1\}$. We write
$(n_m,d_m)=\mathbf{f}(m)$. The mapping
$T_m:M(-n_m,-d_m)\longrightarrow M$ is a
homomorphism of bigraded
$A[\mathbf{f}]$-modules. Let $N$ be its
kernel (considered as homogeneous
sub-$A[\mathbf{f}]$-module of $M$). We have
an exact sequence
\[\xymatrix{0\ar[r]&N(-n_m,-d_m)\ar[r]&
M(-n_m,-d_m)\ar[r]&M\ar[r]&M/T_mM\ar[r]&0}.\]
Therefore,
$P_M-X^{n_m}Y^{d_m}P_M=P_{M/T_mM}-X^{n_m}Y^{d_m}P_N$.
Since $M/T_mM$ and $N$ are $A[f']=A[f]/(T_m)$-modules
of finite type, by induction hypothesis, we obtain
\[Q_M=Q_{M/T_mM}-X^{n_m}Y^{d_m}Q_N\in\mathbb
Z[X,Y,X^{-1},Y^{-1}].\]
\end{proof}

\begin{remark} \label{Rem:algebre engendre par les degre 1}
Let
$\mathbf{f}=(f_1,f_2):\{1,\cdots,m\}\rightarrow\mathbb
N^2$ be a mapping such that $f_1\equiv 1$ and $M$ be a
bigraded $A[\mathbf{f}]$-module of finite type, whose
homogeneous components are $A$-modules of finite
length. The algebra $A[\mathbf{f}]$, equipped with the
first graduation, is the usually graded algebra of
polynomials in $m$ variables. We can also consider the
first graduation of $M$ for which the $n^{\text{th}}$
homogeneous component of $M$ is $\bigoplus_{d\in\mathbb
Z}M_{n,d}$. This homogeneous component is an $A$-module
of finite length since there exist only a finite number
of integers $d$ such that $M_{n,d}\neq 0$. With this
graduation, $M$ is a graded module of finite type over
the polynomial algebra $A[T_1,\cdots,T_m]$ (with the
usual grading). If we denote by $H_M$ the Poncar\'e
series associated to $M$ (for the first grading), we
have $H_M(X)=P_M(X,1)$. The notions $\dim M$ and $c(M)$
are hence defined, as in Section \ref{Sec:Convergence
of polygons of a quasi-filtered graded algebra}.
\end{remark}

The following theorem is an analogue in the two
variables case of the formula \eqref{Equ:serie de
Poincare} for Poincar\'e series.

\begin{theorem}\label{Thm:ecriture de serie de poincare a deux variables}
With the notations of Remark \ref{Rem:algebre
engendre par les degre 1}, the series $P_M$
is written in the forme
\[P_M(X,Y)=\sum_{r=0}^{h}\sum_{\begin{subarray}{c}
\alpha\subset\{1,\cdots,m\}\\
\#\alpha=r
\end{subarray}}I_{\alpha}(X,Y)\prod_{i\in\alpha}(1-XY^{f_2(i)})^{-1},\]
where
\begin{enumerate}[1)]
\item $I_{\alpha}\in\mathbb Z[X,Y,X^{-1},Y^{-1}]$,
\item if $\#\alpha=h$, the coefficients of $I_\alpha$
are positive,
\item if $M\neq 0$, there exists
at least an subset
$\alpha\subset\{1,\cdots,m\}$ of cardinal $h$
such that $I_\alpha\neq 0$.
\end{enumerate}
\end{theorem}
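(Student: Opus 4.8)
The plan is to argue by induction on $m$, refining the computation in the proof of Proposition~\ref{Pro:Qm est un polynome}; throughout, $h$ denotes $\dim M$ (in the sense of Remark~\ref{Rem:algebre engendre par les degre 1}), which is finite by the one‑variable theory recalled around \eqref{Equ:serie de Poincare}. Exactly as in Proposition~\ref{Pro:Qm est un polynome} one first replaces $A$ by $A/\ann_A(M)$, which changes neither $P_M$ nor $h$ and makes $A$ Artinian, hence Noetherian, so that $M$ is a Noetherian module. The case $m=0$ is immediate: $A[\mathbf f]=A$, $P_M$ is itself a Laurent polynomial whose coefficients are lengths (hence non‑negative), $h=0$ whenever $M\neq0$, and the only subset of $\emptyset$ is $\alpha=\emptyset$, for which one takes $I_\emptyset=P_M$.

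For the inductive step, put $d_m=f_2(m)$ and let $N'=\bigcup_{k\ge1}\ker(T_m^{\,k})$ be the $T_m$‑torsion submodule of $M$; since $M$ is Noetherian, $N'=\ker(T_m^K)$ for some $K$, and $T_m$ acts as a non‑zero‑divisor on $M/N'$. From $0\to N'\to M\to M/N'\to0$ one gets $P_M=P_{N'}+P_{M/N'}$ and, by \eqref{Equ:comparaison des indices}, $h=\max(\dim N',\dim(M/N'))$. The module $N'$ carries the finite filtration by the submodules $\ker(T_m^k)$, whose successive quotients are annihilated by $T_m$ and are therefore bigraded $A[\mathbf f']$‑modules of finite type, where $\mathbf f'=\mathbf f|_{\{1,\dots,m-1\}}$, with finite‑length homogeneous components; applying the induction hypothesis to each quotient and summing yields an expression $P_{N'}=\sum_{\beta\subset\{1,\dots,m-1\}}K_\beta\prod_{i\in\beta}(1-XY^{f_2(i)})^{-1}$ with $K_\beta\in\mathbb Z[X,Y,X^{-1},Y^{-1}]$, where the only index sets $\beta$ that occur satisfy $\#\beta\le\dim N'\le h$, and where each $K_\beta$ with $\#\beta=h$ is a sum of top‑dimensional numerators supplied by the induction hypothesis, hence has non‑negative coefficients. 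For $M/N'$, the short exact sequence $0\to(M/N')(-1,-d_m)\xrightarrow{\,T_m\,}M/N'\to C\to0$ with $C=(M/N')/T_m(M/N')$ has $C$ an $A[\mathbf f']$‑module and gives $(1-XY^{d_m})P_{M/N'}=P_C$; moreover, when $M/N'\neq0$, one checks that $C\neq0$ and $\dim C=\dim(M/N')-1$, because $T_m(M/N')\cong(M/N')(-1,-d_m)$ while $C$ does not vanish in the lowest first‑degree of $M/N'$ (this also forces $\dim(M/N')\ge1$ when $M/N'\neq0$). Applying the induction hypothesis to $C$ and dividing by $1-XY^{d_m}$ --- which merely adjoins the index $m$ to each denominator set --- produces $P_{M/N'}=\sum_{\beta\subset\{1,\dots,m-1\}}J_\beta\prod_{i\in\beta\cup\{m\}}(1-XY^{f_2(i)})^{-1}$ with $J_\beta\in\mathbb Z[X,Y,X^{-1},Y^{-1}]$, where only sets with $\#(\beta\cup\{m\})\le\dim(M/N')\le h$ occur and the numerators with $\#(\beta\cup\{m\})=\dim(M/N')$ are non‑negative.

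Adding these, $P_M=\sum_{\alpha\subset\{1,\dots,m\}}I_\alpha\prod_{i\in\alpha}(1-XY^{f_2(i)})^{-1}$, where $I_\alpha=K_\alpha$ if $m\notin\alpha$ and $I_\alpha=J_{\alpha\setminus\{m\}}$ if $m\in\alpha$; this is a decomposition of the required shape with $I_\alpha\in\mathbb Z[X,Y,X^{-1},Y^{-1}]$ and only $\#\alpha\le h$ occurring, proving 1). For 2), note that the two families of index sets --- those avoiding $m$ (coming from $N'$) and those containing $m$ (coming from $M/N'$) --- are disjoint, and in each case an $I_\alpha$ with $\#\alpha=h$ is one of the top‑dimensional numerators described above, hence has non‑negative coefficients. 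For 3), if $M\neq0$ then $h=\max(\dim N',\dim(M/N'))$, so either $\dim N'=h$ or $\dim(M/N')=h$: in the first case some subquotient of the $T_m$‑torsion filtration has dimension $h$ and, by the induction hypothesis, contributes a non‑zero non‑negative numerator to some $I_\alpha$ with $\#\alpha=h$ and $m\notin\alpha$, which cannot be cancelled by the remaining non‑negative contributions; in the second case $h\ge1$, $C\neq0$ has dimension $h-1$, and the induction hypothesis yields $J_\beta\neq0$ for some $\beta$ with $\#\beta=h-1$, whence $I_{\beta\cup\{m\}}\neq0$ with $\#(\beta\cup\{m\})=h$.

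The point that requires care is precisely this dimension bookkeeping: the crude recursion of Proposition~\ref{Pro:Qm est un polynome} --- kill $T_m$ and pass to $m-1$ variables --- increases the number of denominator factors by one relative to $\dim$ exactly when $T_m$ is a non‑zero‑divisor, so one must separate off the $T_m$‑torsion $N'$, treat $M/N'$ through the injective operator $T_m$, and carry along the positivity and non‑vanishing of the top‑dimensional numerators. The remaining ingredients (finite generation of the various submodules and quotients, the length computations, and the rationality of the series) are routine consequences of Proposition~\ref{Pro:Qm est un polynome} and the one‑variable theory.
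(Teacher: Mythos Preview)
Your argument is correct and rests on the same core idea as the paper --- reduce to $m-1$ variables via the action of $T_m$ --- but your organisation is more direct. The paper runs a double induction, first on $m$ and then (for fixed $m$) lexicographically on $(\dim M,c(M))$, and wraps the inner step in a contradiction: assuming $\mathbb P(M)$ fails, one passes successively to $M_j=M/\Ker T_m^{\,j}$, uses the inner induction whenever $\dim(N_j/N_{j-1})=\dim M$, and otherwise waits for the Noetherian chain $N_j=\Ker T_m^{\,j}$ to stabilise, at which point $T_m$ is injective on $M_j$ and $(1-XY^{d_m})P_{M_j}=P_{M_j/T_mM_j}$ gives $\mathbb P(M_j)$, a contradiction. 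You short-circuit this by going straight to the stable object: take $N'=\bigcup_k\Ker T_m^{\,k}$ once and for all, treat its finite $T_m$-torsion filtration by induction on $m$ applied to each subquotient, and treat $M/N'$ via the injectivity of $T_m$ and induction on $m$ applied to $C=(M/N')/T_m(M/N')$. Because the resulting index sets are disjoint (those avoiding $m$ versus those containing $m$), positivity and non-vanishing of the top numerators survive the summation without cancellation, which is exactly the point the paper handles through Lemma~\ref{Lem:proprites de P} and the inner induction. What your approach buys is a single induction with no contradiction and no auxiliary invariant $(\dim M,c(M))$; what the paper's layered induction buys is that one never needs to introduce the full torsion submodule explicitly, each step only peeling off $\Ker T_m$. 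Your verification that $\dim(M/N')\ge 1$ whenever $M/N'\neq 0$, and hence $\dim C=\dim(M/N')-1$, is the one genuinely needed check beyond Proposition~\ref{Pro:Qm est un polynome}, and you handle it correctly.
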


\begin{remark}
With the notations of Theorem
\ref{Thm:ecriture de serie de poincare a deux
variables}, we have
\[H_M(X)=\sum_{r=0}^h\Bigg(\sum_{\begin{subarray}{c}\alpha\subset\{1,\cdots,m\}\\
\#\alpha=r
\end{subarray}}I_\alpha(X,1)\Bigg)(1-X)^{-r}.\]
Therefore, if $M$ is non-zero, then $\dim
M=h$ and
$\displaystyle c(M)=\sum_{\begin{subarray}{c}\alpha\subset\{1,\cdots,m\}\\
\#\alpha=h
\end{subarray}}I_\alpha(1,1)$.
\end{remark}

To simplify the proof of Theorem
\ref{Thm:ecriture de serie de poincare a deux
variables}, we introduce the following
notation. If $M$ is a bigraded
$A[\mathbf{f}]$-module satisfying the
assertion of Theorem \ref{Thm:ecriture de
serie de poincare a deux variables}, we say
that $M$ verifies the the condition $\mathbb
P$, noted by $\mathbb P(M)$. The assertion of
Theorem \ref{Thm:ecriture de serie de
poincare a deux variables} then becomes:
\[\text{\it For any }A[\mathbf{f}]\text{\it -module }M,
\text{\it{ we have }} \mathbb P(M).\]

For any integer $m>0$, let $\Theta_m$ be the
set $\{(i,j)\in\mathbb Z^2\;|\;0\le i\le m,\;
j>0\}\cup\{(-\infty,0)\}$. We equip it with
the lexicographic relation ``$\le$'' as
follows:
\[(i,j)\le(i',j')\text{ if and only if }i<i'\text{ or if }i=i',\;j\le j'.\]
We verify easily that it is an order relation on
$\Theta_m$ and that the set $\Theta_m$ is totally
ordered for this relation. We use the expression
$(i,j)<(i',j')$ to present the condition
$(i,j)\le(i',j')\text{ but }(i,j)\neq(i',j')$.

\begin{lemma}\label{Lem:proprites de P}
Let
$\xymatrix{0\ar[r]&M'\ar[r]&M\ar[r]&M''\ar[r]&0}$
be a short exact sequence of bigraded
$A[\mathbf{f}]$-module. Suppose that
$M_{n,d}'$ and $M_{n,d}''$ are $A$-modules of
finite length  for any $(n,d)\in\mathbb Z^2$.
Then $M_{n,d}$ are $A$-modules of finite
length, and
\begin{enumerate}[1)]
\item $\dim M=\max(\dim M',\dim M'')$,
\item \[c(M)=\begin{cases}
c(M')+c(M''),&\dim M'=\dim M'',\\
c(M'),&\dim M'>\dim M'',\\
c(M''),&\dim M''>\dim M'.
\end{cases}\]
\item $\mathbb P(M')\text{ and }\mathbb P(M'')\Longrightarrow\mathbb P(M)$.
\end{enumerate}
\end{lemma}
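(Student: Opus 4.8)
The plan is to prove Lemma \ref{Lem:proprites de P} by combining the additivity of Poincaré series over short exact sequences with the characterization of $\dim$ and $c$ already recalled in Section \ref{Sec:Convergence of polygons of a quasi-filtered graded algebra}, and then to handle condition $\mathbb P$ by a direct manipulation of the rational-function expressions. First, since $M'$ and $M''$ have homogeneous components of finite length and the sequence is exact in each bidegree, each $M_{n,d}$ sits in a short exact sequence of $A$-modules of finite length, hence is itself of finite length, and $\len_A(M_{n,d})=\len_A(M'_{n,d})+\len_A(M''_{n,d})$. Summing against $X^nY^d$ gives $P_M=P_{M'}+P_{M''}$ in $\mathbb Z\lbr X,Y\rbr[X^{-1},Y^{-1}]$. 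Specializing $Y=1$ (using Remark \ref{Rem:algebre engendre par les degre 1}, which identifies $H_M(X)=P_M(X,1)$), we get $H_M=H_{M'}+H_{M''}$, so assertions 1) and 2) follow immediately from \eqref{Equ:comparaison des indices} together with the fact that $\dim M$ equals the order of the pole of $H_M$ at $X=1$ and $c(M)=a_{\dim M}(1)$.

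For assertion 3), assume $\mathbb P(M')$ and $\mathbb P(M'')$ hold, with decompositions having pole orders $h'=\dim M'$ and $h''=\dim M''$ respectively and with the stated positivity of top-degree coefficients. Since $P_M=P_{M'}+P_{M''}$, I would add the two decompositions term by term: for each subset $\alpha\subset\{1,\dots,m\}$ the coefficient function $I_\alpha$ for $M$ is $I_\alpha'+I_\alpha''$ (with the convention that $I_\alpha'=0$ if $\#\alpha>h'$, etc.), and this clearly lies in $\mathbb Z[X,Y,X^{-1},Y^{-1}]$, giving 1) of condition $\mathbb P$ with $h:=\max(h',h'')=\dim M$ by part 1) already proved. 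The delicate points are 2) and 3) of condition $\mathbb P$, i.e. positivity of the coefficients of $I_\alpha$ when $\#\alpha=h$ and the existence of a nonzero such $I_\alpha$ when $M\neq 0$. If $h'>h''$, then for $\#\alpha=h$ only $I_\alpha'$ contributes, and positivity and non-vanishing are inherited from $\mathbb P(M')$; the case $h''>h'$ is symmetric. If $h'=h''=h$, then $I_\alpha=I_\alpha'+I_\alpha''$ is a sum of two polynomials with positive coefficients, hence has positive coefficients, and non-vanishing of at least one $I_\alpha$ with $\#\alpha=h$ follows since $M\neq 0$ forces $M'\neq 0$ or $M''\neq 0$, and either one supplies a nonzero top-degree term which cannot be cancelled because all such terms have nonnegative coefficients.

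The main obstacle I anticipate is purely bookkeeping: one must be careful that the decomposition in Theorem \ref{Thm:ecriture de serie de poincare a deux variables} is \emph{not unique} as an expression (the rational functions $\prod_{i\in\alpha}(1-XY^{f_2(i)})^{-1}$ are not linearly independent over $\mathbb Z[X,Y,X^{-1},Y^{-1}]$ in an obvious way), so "adding term by term" should be understood as the statement that a \emph{valid} decomposition for $M$ is obtained by formally summing \emph{chosen} valid decompositions for $M'$ and $M''$ — this is legitimate because condition $\mathbb P$ only asserts existence of such a decomposition, not a canonical one. Once this is granted, the argument is elementary. I would also note that the edge cases $M'=0$ or $M''=0$ (where $\dim$ is $-\infty$ and $c=0$) are trivially consistent with the formulas and with $\mathbb P$, so they need only a one-line remark. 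The whole proof should therefore reduce to: (i) finite length and additivity of $P_M$; (ii) specialize $Y=1$ and invoke \eqref{Equ:comparaison des indices}; (iii) sum decompositions and check signs in the three cases $h'>h''$, $h'<h''$, $h'=h''$.
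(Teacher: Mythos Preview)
Your proposal is correct and follows essentially the same approach as the paper: additivity of the Poincar\'e series $P_M=P_{M'}+P_{M''}$, specialization to $H_M=H_{M'}+H_{M''}$ for 1) and 2) via \eqref{Equ:comparaison des indices}, and summing the decompositions for 3). The paper's own proof is a two-line sketch of exactly this argument; your version simply fills in the details (in particular the case analysis on $h',h''$ and the remark on non-uniqueness of the decomposition) that the paper leaves implicit.
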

\begin{proof}
In fact, we have $P_M=P_{M'}+P_{M''}$ and
$H_M=H_{M'}+H_{M''}$. By definition we know
that 1) and 2) are true. Finally, 3) is a
consequence of 1) and of the fact that
$P_M=P_{M'}+P_{M''}$.
\end{proof}

{\noindent{\it Proof of Theorem \ref{Thm:ecriture de
serie de poincare a deux variables}.\ \ \ }} By the
same argument as that for the proof of Proposition
\ref{Pro:Qm est un polynome}, we can suppose that $A$
is an Artinian ring. We shall prove the theorem by
induction on $m$. First we prove that the theorem is
true in the case where $\dim M\le 0$. If $M$ is of
dimension $\le 0$, then the Poincar\'e series
$H_M(X)=P_M(X,1)$ of $M$ is an element of $\mathbb
Z[X,X^{-1}]$, and $P_M\in\mathbb Z[X,Y,X^{-1},Y^{-1}]$.
Hence we have $\mathbb P(M)$. Since $\dim M\le m$, the
theorem is true when $m=0$. Suppose that the theorem is
true for bigraded modules of an $A$-algebra of
polynomials in $j$ variables ($0\le j<m$). Let
$\mathbf{f}=(f_1,f_2):\{1,\cdots,m\}\rightarrow\mathbb
N^2$ be a mapping such that $f_1\equiv 1$ and let $M$
be a bigraded $A[\mathbf{f}]=A[T_1,\cdots,T_m]$-module
of finite type such that $M_{(n,d)}$ is of finite
length over $A$ for any $(n,d)\in\mathbb Z^2$. Suppose
that $f_2(m)=d$.

We begin another procedure of induction on $(\dim M,c(
M))$. We have already proved $\mathbb P(M)$ for $\dim
M\le 0$. Suppose that we have proved $\mathbb P(M)$ for
$(\dim M,c( M))<(r,s)$, where $0<r\le m$, $s>0$. In the
following, we shall prove $\mathbb P(M)$ in the case
where $(\dim M,c(M))=(r,s)$. Consider the homothetic
transformation $T_m:M(-1,-d)\longrightarrow M$, which
is a homomorphism of bigraded $A[\mathbf{f}]$-modules.
We denote by $\mathbf{f}'$ the restriction of
$\mathbf{f}$ on $\{1,\cdots,m-1\}$. Let $N_1$ be the
kernel of $T_m$ (considered as homogeneous
sub-$A[\mathbf{f}]$-module). It is a bigraded
$A[\mathbf{f}']$-module of finite type. After the
induction hypothesis, we have $\mathbb P(N_1)$. Let
$M_1=M/N_1$. After Lemma \ref{Lem:proprites de P} 3),
to prove $\mathbb P(M)$, it suffices to prove $\mathbb
P(M_1)$. If $\dim N_1=\dim M$, then either $\dim
M_1<\dim M$, or $\dim M_1=\dim M$ and $c(M_1)=c( M)-c(
N_1)<c(M)$. So we always have $(\dim M_1,c(M_1))<(\dim
M,c(M))$. After the induction hypothesis, we have
$\mathbb P(M_1)$. Otherwise we have $\dim N_1<\dim M$
and $(\dim M_1,c(M_1))=(\dim M,c(M))$. If $\mathbb
P(M)$ is not true, by iterating the procedure above, we
obtain an increasing sequence of homogeneous submodules
\begin{equation}
\label{Equ:la suite croissante de
sous-modules avec dimension
diminue}N_1\subset N_2\subset\cdots
N_j\subset N_{j+1}\subset\cdots
\end{equation}
of $M$ such that (we define $M_0=M$)
\begin{enumerate}[i)]
\item $N_j=\Ker T_m^j$,
\item $\dim N_j<\dim M$,
\item $M_j:=M/N_j$ don't satisfy
the condition $\mathbb P$, and $(\dim
M_j,c(M_j))=(\dim M,c(M))$.
\end{enumerate}
Since $A[\mathbf{f}]$ is a Noetherian ring,
the sequence (\ref{Equ:la suite croissante de
sous-modules avec dimension diminue}) is
stationary. In other words, there exists
$j\in\mathbb N$ such that $M_j=M_{j+1}$.
Since $M_{j+1}$ identifies canonically with
the image of $M_j$ by the homothetic
transformation $T_m$,we have the exact
sequence
\[\xymatrix{0\ar[r]&M_j(-1,-d)\ar[r]^>>>>>{T_m}
&M_j\ar[r]&M_j/T_mM_j\ar[r]&0}.\] We write
$N'=M_j/T_mM_j$. It is actually an
$A[\mathbf{f}']$-module of finite type. After induction
hypothesis, we have $\mathbb P(N')$. Finally, since
$(1-XY^d)P_{M_j}(X,Y)=P_{N'}(X,Y)$, we have $\mathbb
P(M_j)$, which is absurd. Hence we have $\mathbb P(M)$.
\endzm

Let $P$ be the formal series in $\mathbb Z\lbr
X,Y\rbr[X^{-1},Y^{-1}]$ with positive coefficients.
Then $P$ is written in the forme $\displaystyle
P(X,Y)=\sum_{(n,d)\in\mathbb Z^2}a_{n,d}(P)X^nY^d$. For
any $n\in\mathbb N$, we write $S_n(P)=\sum_{d\in\mathbb
Z}a_{n,d}(P)$ and denote by $\nu_{n,P}$ the Borel
measure on $\mathbb R$ defined by
\[\displaystyle\nu_{n,P}=\sum_{d\in\mathbb
Z}\frac{a_{n,d}(P)}{S_n(P)}\delta_{d/n}.\] If
$S_n(P)=0$, then $\nu_{n,P}$ is by convention the zero
measure.

\begin{remark}
We keep the notations of Theorem
\ref{Thm:ecriture de serie de poincare a deux
variables} in supposing that $A$ is a field.
If for any integer $n$, we equip the space $
M_{n,\bullet}:=\bigoplus_{d\in\mathbb
Z}M_{n,d}$ with the $\mathbb R$-filtration
$\mathcal F$ defined by $\mathcal
F_{\lambda}M_{n,\bullet}=\bigoplus_{d\ge\lambda}M_{n,d}$,
then the measure $\nu_{n,P}$ identifies with
$T_{\frac 1n}\nu_{M_{n,\bullet}}$. This
observation is crucial because it enables us
to use the Poincar\'e series to study
measures of a bigraded algebra over a field.
\end{remark}

\begin{proposition}
If $P$ is a series in $\mathbb Z\lbr X,Y\rbr$
of the forme $\displaystyle
P(X,Y)=\prod_{i=1}^m(1-XY^{d_i})^{-1}$, then
\begin{enumerate}[1)]
\item the Borel measures $\nu_{n,P}$
converge vaguely to a Borel measure $\nu_{P}$ when
$n\rightarrow+\infty$;
\item the sequence of functions
$\displaystyle\Big(F_{n,P}:x\longmapsto 1-\int_{\mathbb
R}\indic_{]-\infty,x]}\mathrm{d}\nu_{n,P}\Big)_{n\ge
1}$ converges simply to $\displaystyle F_P:x\longmapsto
1-\int_{\mathbb R}\indic_{]-\infty,x]}\mathrm{d}\nu_P$.
\end{enumerate}
\end{proposition}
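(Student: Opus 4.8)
The plan is to realise each $\nu_{n,P}$ as the measure attached to a homogeneous component of a suitably filtered symmetric algebra, so that part 1) becomes a special case of the convergence theorem for symmetric algebras, and then to upgrade the resulting vague convergence to pointwise convergence of the functions $F_{n,P}$ by showing that the limit measure has no atom. Concretely, fix a field $K$, write $d=(d_i)_{1\le i\le m}$, $|\alpha|=\sum_i\alpha_i$, $\langle\alpha,d\rangle=\sum_i\alpha_i d_i$, and let $V$ be the $K$-vector space with basis $(e_i)_{1\le i\le m}$ equipped with the separated, exhaustive and left continuous filtration $\mathcal F_\lambda V=\mathrm{Span}_K(e_i\mid d_i\ge\lambda)$, so that $\lambda_{\mathcal F}(e_i)=d_i$. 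For $n\ge 0$ set $B_n=S^nV$, filtered by $\mathcal F_\lambda B_n=\mathrm{Span}_K(e^\alpha\mid|\alpha|=n,\ \langle\alpha,d\rangle\ge\lambda)$ with $e^\alpha=e_1^{\alpha_1}\cdots e_m^{\alpha_m}$; this is again separated, exhaustive and left continuous, with $\lambda(e^\alpha)=\langle\alpha,d\rangle$, and it is exactly the filtration of the Remark preceding the statement applied to $M=K[T_1,\dots,T_m]$ with $T_i$ of bidegree $(1,d_i)$. By Proposition~\ref{Pro:critere de base maximale} the monomial basis $(e^\alpha)_{|\alpha|=n}$ of $B_n$ is maximal, hence $\nu_{B_n}=\frac1{S_n(P)}\sum_{|\alpha|=n}\delta_{\langle\alpha,d\rangle}$ and $T_{\frac1n}\nu_{B_n}=\nu_{n,P}$. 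Using Proposition~\ref{Pro:proprietes de la fonction lambda}~2) one checks at once that $\lambda(ab)\ge\lambda(a)+\lambda(b)$ for homogeneous $a,b$, so by Proposition~\ref{Pro:critere numerique de structure d'algebre filre} the graded algebra $B=\bigoplus_{n\ge 0}B_n=S(V)$ is a filtered (that is, $0$-quasi-filtered) graded $K$-algebra; and clearly $\supp(\nu_{n,P})\subset[\min_i d_i,\max_i d_i]$ for every $n$.

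\emph{Part 1).} Since $V$ has finite dimension $m\ge 1$ and $B=S(V)$ is $f$-quasi-filtered for $f\equiv 0$, Proposition~\ref{Pro:vague convergnce de mesures} (a special case of Theorem~\ref{Thm:convergence vague de algebre gradue filtree}) shows that $(\nu_{n,P})_{n\ge 1}=(T_{\frac1n}\nu_{B_n})_{n\ge 1}$ converges vaguely to a Radon measure $\nu_P$; as these are probability measures with uniformly bounded supports, Lemma~\ref{Lem:convergence vague vers une proba} shows $\nu_P$ is a Borel probability measure.

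\emph{Part 2).} If $d_1=\cdots=d_m=d$ then $\langle\alpha,d\rangle=nd$ whenever $|\alpha|=n$, so $\nu_{n,P}=\delta_d$ and $F_{n,P}=\indic_{]-\infty,d[}=F_P$ for every $n$, and there is nothing to prove. Otherwise, reordering the factors of $P$ (which changes neither $P$ nor any $\nu_{n,P}$) we may assume $\delta:=|d_1-d_2|>0$. By Proposition~\ref{Pro:convergence de fonction de repartition} there is a countable set $Z$, the set of atoms of $\nu_P$, with $\nu_{n,P}(]-\infty,x])\to\nu_P(]-\infty,x])$ for all $x\notin Z$; since $F_{n,P}(x)=1-\nu_{n,P}(]-\infty,x])$ and $F_P(x)=1-\nu_P(]-\infty,x])$, it suffices to prove $Z=\emptyset$. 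Fix $x_0\in\mathbb R$ and choose continuity points $a<x_0<b$ of $\nu_P$ (they are dense, $Z$ being countable), so $\nu_{n,P}(]a,b])\to\nu_P(]a,b])$. Writing $\mu_n$ for the uniform probability on $\{\alpha\in\mathbb Z_{\ge0}^m\mid|\alpha|=n\}$, so that $\nu_{n,P}$ is the image of $\mu_n$ under $\alpha\mapsto\langle\alpha,d\rangle/n$, I estimate $\nu_{n,P}(]a,b])=\mu_n(\{na<\langle\alpha,d\rangle\le nb\})$ by conditioning on $(\alpha_3,\dots,\alpha_m)$: once these are fixed, with sum $l\le n$, the pair $(\alpha_1,\alpha_2)$ is uniform over the $n-l+1$ lattice points with $\alpha_1+\alpha_2=n-l$, and then $\langle\alpha,d\rangle$ runs over $n-l+1$ distinct values in arithmetic progression of common difference $\delta$, of which at most $\lfloor n(b-a)/\delta\rfloor+1$ lie in $]na,nb]$; hence the conditional probability is at most $(n(b-a)/\delta+1)/(n-l+1)$. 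On the event $\alpha_1+\alpha_2\ge n/m$, whose $\mu_n$-probability tends to $1$ (Chebyshev, $E_{\mu_n}[\alpha_1+\alpha_2]=2n/m$ with variance $O(n)$), this bound is $m(b-a)/\delta+o(1)$; therefore $\nu_P(]a,b])=\lim_n\nu_{n,P}(]a,b])\le m(b-a)/\delta$. Letting $a\uparrow x_0$ and $b\downarrow x_0$ through continuity points and using continuity from above of $\nu_P$ gives $\nu_P(\{x_0\})=0$. As $x_0$ was arbitrary, $\nu_P$ is atomless, so $Z=\emptyset$, which finishes the proof.

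The only delicate point, and the one I expect to be the main obstacle, is the atomlessness of $\nu_P$ (equivalently the continuity of $F_P$): vague convergence alone controls the $F_{n,P}$ only at continuity points of the limit, and the conditioning estimate above is what rules out atoms and yields convergence at \emph{every} point. An alternative to that estimate is to identify $\nu_P$ with the image of the normalized Lebesgue measure on the standard $(m-1)$-simplex $\{t\in\mathbb R_{\ge0}^m\mid\sum_i t_i=1\}$ under $t\mapsto\langle t,d\rangle$ (by the equidistribution of the $\tfrac1n$-lattice points of the simplex, which is the content underlying the symmetric-algebra convergence used in Part 1)); then atomlessness amounts to the fact that a non-constant affine function has Lebesgue-null level sets inside a positive-dimensional simplex.
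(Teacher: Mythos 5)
Your Part~1) is correct, though it detours through heavy machinery: you realise $\nu_{n,P}$ as $T_{\frac1n}\nu_{B_n}$ for a filtered symmetric algebra (your verification that $\lambda(ab)\ge\lambda(a)+\lambda(b)$ and that the monomial basis is maximal is exactly right), and then invoke Proposition~\ref{Pro:vague convergnce de mesures}, which itself rests on Theorem~\ref{Thm:theorme cle sur equiprobability} and the Fekete-type Corollary~\ref{Cor:limite de an sur n forme forte}. The paper instead observes directly that $\nu_{n,P}$ is the pushforward under $\varphi(\mu)=\sum_i\mu_id_i$ of the equidistributed measure on the $\frac1n$-lattice points of the simplex $\Delta_m$, so $\int f\,\mathrm d\nu_{n,P}$ is a Riemann sum of $f\circ\varphi$; this gives vague convergence immediately and, crucially, identifies the limit $\nu_P$ as $\varphi_*\eta$ for $\eta$ the Lebesgue measure on $\Delta_m$. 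Not having this identification is exactly what costs you in Part~2).

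In Part~2) there is a genuine gap. To prove $\nu_P$ has no atoms you restrict to the event $\{\alpha_1+\alpha_2\ge n/m\}$ and assert its $\mu_n$-probability tends to $1$ ``by Chebyshev, $E_{\mu_n}[\alpha_1+\alpha_2]=2n/m$ with variance $O(n)$.'' The variance estimate is false for $m\ge 3$: under the uniform measure $\mu_n$ on $\{\alpha\in\mathbb Z_{\ge 0}^m:|\alpha|=n\}$ one has $\mathrm{Var}_{\mu_n}(\alpha_1)=\frac{m-1}{m^2(m+1)}n^2+O(n)$, so $\mathrm{Var}_{\mu_n}(\alpha_1+\alpha_2)=\Theta(n^2)$, and $P_{\mu_n}(\alpha_1+\alpha_2\ge n/m)$ does \emph{not} tend to $1$; for $m=3$, $(\alpha_1+\alpha_2)/n$ converges in law to the density $2t\,\mathrm dt$ on $[0,1]$ and the probability tends to $8/9$. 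The complementary event therefore contributes a positive constant, so your chain of inequalities only yields $\nu_P(]a,b])\le m(b-a)/\delta+c$ with $c>0$, and letting $b-a\to 0$ does not give atomlessness. The strategy is salvageable without the truncation: bound the expectation of the conditional probability directly,
\[\nu_{n,P}(]a,b])\le\bigl(n(b-a)/\delta+1\bigr)\,E_{\mu_n}\Bigl[\frac{1}{\alpha_1+\alpha_2+1}\Bigr],\]
and compute, using the hockey-stick identity $\sum_{k=0}^n\binom{n-k+m-3}{m-3}=\binom{n+m-2}{m-2}$, that $E_{\mu_n}\bigl[\frac{1}{\alpha_1+\alpha_2+1}\bigr]=\frac{m-1}{n+m-1}$, which gives the Lipschitz bound $\nu_P(]a,b])\le(m-1)(b-a)/\delta$ and hence kills all atoms. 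The alternative you float in your final sentence --- identify $\nu_P$ with the image of Lebesgue measure on $\Delta_m$ under the non-constant affine map $\varphi$, whose fibres are Lebesgue-null affine slices --- is precisely the paper's proof of 2), together with \cite{Bourbaki65}~IV.5~Proposition~22; but you state it only as an expectation and do not supply the identification, which your route to Part~1) does not produce.
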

\begin{proof} 1) We have
\[\begin{split}P(X,Y)&=\prod_{i=1}^m
\Big(\sum_{n\ge
0}X^nY^{nd_i}\Big)=\sum_{(n,d)\in\mathbb N\times\mathbb
Z}\Bigg(\sum_{\begin{subarray}{c}
(u_1,\cdots,u_m)\in\mathbb N^m\\
u_1+\cdots+u_m=n,\\
u_1d_1+\cdots+u_md_m=d
\end{subarray}}1\Bigg)X^nY^d\\
&=1+\sum_{\begin{subarray}{c}(n,d)\in\mathbb
Z^2\\
n>0\end{subarray}}\Bigg(\sum_{\begin{subarray}{c}
(\mu_1,\cdots,\mu_m)\in \frac{1}{n}\mathbb N^m\\
\mu_1+\cdots+\mu_m=1,\\
\mu_1d_1+\cdots+\mu_md_m=d/n
\end{subarray}}1\Bigg)X^nY^d.\end{split}\]
On the other hand,
$\displaystyle S_n(P)=\sum_{\begin{subarray}{c}(\mu_1,\cdots,\mu_m)\in\frac{1}{n}\mathbb N^m\\
\mu_1+\cdots+\mu_m=1\end{subarray}}1$. Let $\Delta_m$
be the simplex $\{(\mu_1,\cdots,\mu_m)\in\mathbb
R_+^m\;|\;\mu_1+\cdots+\mu_m=1\}$,
$\varphi:\Delta_m\rightarrow\mathbb R$ be the mapping
which sends $(\mu_1,\cdots,\mu_m)$ to
$\mu_1d_1+\cdots+\mu_md_m$. For any integer $n>0$, let
$\eta_{n,P}$ be the measure on $\Delta_m$ defined by
$\displaystyle
\eta_{n,P}=\sum_{{\mu}\in\frac{1}{n}\mathbb N^m \cap
\Delta_m}\frac{1}{S_n(P)}\delta_{{\mu}}$. We observe
that $\nu_{n,P}$ is the direct image of $\eta_{n,P}$ by
$\varphi$. Therefore, $\nu_{n,P}$ is supported by
$\varphi(\Delta_m)$. Hence for any continuous function
$f:\mathbb R\rightarrow\mathbb R$, $f$ is integrable
with respect to the measure $\nu_{n,P}$. Furthermore,
we have $\displaystyle \int_{\mathbb
R}f\mathrm{d}\nu_{n,P}=\int_{\Delta_m} (f\circ\varphi)
\mathrm{d}\eta_{n,P}$, which is the $n^{\text{th}}$
Riemann sum of the function
$f\circ\varphi:\Delta_m\rightarrow\mathbb R$. So the
sequence $\displaystyle\Big(\int_{\mathbb
R}f\mathrm{d}\nu_{n,P}\Big)_{n\ge 1}$ converges to
$\displaystyle\int_{\Delta_m}f\circ\varphi
\mathrm{d}\eta=\int_{\mathbb
R}f\mathrm{d}\varphi_*\eta$ where $\eta$ is the
Lebesgue measure on $\Delta_m$. We then obtain that the
measures $\nu_{n,P}$ converge vaguely to the measure
$\nu_P=\varphi_*\eta$.

2) The mapping $\varphi$ can be extended to
an affine mapping $\Phi$ from
$\{(\mu_1,\cdots,\mu_m)\in\mathbb
R^m\;|\;\mu_1+\cdots+\mu_m=1\}$ to $\mathbb
R$ by simply defining
$\Phi(\mu_1,\cdots,\mu_m)=\mu_1d_1+\cdots\mu_md_m$.
If $d_1=d_2=\cdots=d_m=d$, then
$P(X,Y)=(1-XY^d)^{-m}$. Therefore, for any
$n\ge 1$, $\nu_{n,P}=\nu_P=\delta_d$. The
assertion is then evident. Otherwise the
image of $\Phi$ is the whole set $\mathbb R$
and for
any point $x\in\Image\varphi$, $\varphi^{-1}(x)$ is a negligible subset
of $\Delta_m$ for the Lebesgue measure. Therefore, the
one point set $\{x\}$ is negligible for the
measure $\lambda_P$. After \cite{Bourbaki65}
IV.5 Proposition 22, since $x$ is the only
discontinuous point of the function
$\indic_{]-\infty,x]}$, we obtain that the
sequence $\displaystyle\Big(\int_{\mathbb
R}\indic_{]-\infty,x]}\mathrm{d}\nu_{n,P}
\bigg)_{n\ge 1}$  converges to $\displaystyle
\int_{\mathbb
R}\indic_{]-\infty,x]}\mathrm{d}\nu_P$.
\end{proof}

\begin{proposition}\label{Cor:convergence de mesure pour les series monome}
Suppose that $Q$ is a non-zero series in
$\mathbb Z[X,Y,X^{-1},Y^{-1}]$ with positive
coefficients, and $P\in\mathbb Z\lbr
X,Y\rbr[X^{-1},Y^{-1}]$ is of the form
$\displaystyle
P(X,Y)=Q(X,Y)\prod_{i=1}^m(1-XY^{d_i})^{-1}$.
\begin{enumerate}[1)]
\item The Borel measures $\nu_{n,P}$
converge vaguely to a Borel measure $\nu_P$
when $n\rightarrow+\infty$.
\item Define the functions
\[\Big(F_{n,P}:x\longmapsto 1-\int_{\mathbb R}\indic_{]
-\infty,x]}\mathrm{d}\nu_{n,P}\Big)_{n\ge 1}
\qquad\text{and}\qquad F_P:x\longmapsto 1-\int_{\mathbb
R} \indic_{]-\infty,x]}\mathrm{d}\nu_P.\]
\begin{enumerate}[i)]
\item If $d_1=\cdots=d_m=d$, then for any
$x\neq d$, the sequence $(F_{n,P}(x))_{n\ge
1}$ converges to $F_P(x)$.
\item If $d_i$'s are not identical,
then the sequence of functions
$(F_{n,P})_{n\ge 1}$ converges simply to
$F_P$.
\end{enumerate}
\end{enumerate}
Furthermore, if we denote by $P'$ the series
$\displaystyle
P'(X,Y)=\prod_{i=1}^m(1-XY^{d_i})^{-1}$, then
we have $\nu_P=\nu_{P'}$, and hence
$F_P=F_{P'}$.
\end{proposition}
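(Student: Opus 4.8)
The plan is to reduce the statement to the special case $Q=1$, which is the content of the previous proposition, by decomposing $\nu_{n,P}$ as a convex combination of slightly dilated and translated copies of the measures $\nu_{k,P'}$, where $P'(X,Y)=\prod_{i=1}^m(1-XY^{d_i})^{-1}$. First I would write $Q(X,Y)=\sum_{(a,b)\in S}c_{a,b}X^aY^b$ with $S\subset\mathbb Z^2$ finite and each $c_{a,b}$ a strictly positive integer. Comparing coefficients in $P=Q\,P'$ gives $a_{n,d}(P)=\sum_{(a,b)\in S}c_{a,b}\,a_{n-a,d-b}(P')$ and, summing over $d$, $S_n(P)=\sum_{(a,b)\in S}c_{a,b}\,S_{n-a}(P')$. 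Since $P'(X,1)=(1-X)^{-m}$ we have $S_k(P')=\binom{k+m-1}{m-1}$, which grows polynomially in $k$; hence $S_{n-a}(P')/S_n(P')\to1$ for each fixed $a$, so $S_n(P)/S_n(P')\to Q(1,1)>0$. In particular $S_n(P)>0$ for all large $n$, so $\nu_{n,P}$ is then a Borel probability measure.

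The key step is an explicit rewriting of $\nu_{n,P}$. Substituting $e=d-b$ in the sum defining $\nu_{n,P}$ and using the identity $\frac{e+b}{n}=\frac{n-a}{n}\cdot\frac{e}{n-a}+\frac bn$, one obtains, for $n$ large,
\[\nu_{n,P}=\sum_{(a,b)\in S}\frac{c_{a,b}\,S_{n-a}(P')}{S_n(P)}\;\tau_{b/n}\,T_{(n-a)/n}\bigl(\nu_{n-a,P'}\bigr),\]
where $\tau$ and $T$ are the translation and dilation operators of Section \ref{Sec:Convergence for symmetric algebras}. The coefficients $c_{a,b}S_{n-a}(P')/S_n(P)$ are nonnegative and sum to $1$, so this is a convex combination over the fixed finite set $S$. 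By the previous proposition $\nu_{n-a,P'}\to\nu_{P'}$ vaguely; all these measures are supported in the fixed interval $[\min_id_i,\max_id_i]$; and $(n-a)/n\to1$, $b/n\to0$. Applying Proposition \ref{Pro:convergence vague par dilatation et translation} once for the dilation and once for the translation, each term $\tau_{b/n}T_{(n-a)/n}(\nu_{n-a,P'})$ converges vaguely to $\nu_{P'}$. Since furthermore $c_{a,b}S_{n-a}(P')/S_n(P)\to c_{a,b}/Q(1,1)$ with $\sum_{(a,b)\in S}c_{a,b}/Q(1,1)=1$, I can pass to the limit term by term in $\int f\,\mathrm{d}\nu_{n,P}$ for $f\in C_c(\mathbb R)$ to conclude $\nu_{n,P}\to\nu_{P'}$ vaguely. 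This proves 1), identifies $\nu_P=\nu_{P'}$, and hence $F_P=F_{P'}$, which is the final ``furthermore'' assertion.

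For 2), the same decomposition shows that $\supp\nu_{n,P}$ is contained in a fixed bounded interval for all large $n$. Since $F_{n,P}=1-(\text{distribution function of }\nu_{n,P})$ once $\nu_{n,P}$ is a probability measure, Proposition \ref{Pro:convergence de fonction de repartition} gives a countable set $Z\subset\mathbb R$ — the set of atoms of $\nu_P=\nu_{P'}$ — such that $F_{n,P}(x)\to F_P(x)$ for every $x\notin Z$. If $d_1=\cdots=d_m=d$, the previous proposition yields $\nu_{P'}=\delta_d$, so $Z=\{d\}$ and we obtain convergence for all $x\ne d$, which is i). If the $d_i$ are not all equal, then $\nu_{P'}$ is atomless — it is the image of Lebesgue measure on $\Delta_m$ under the affine map $\varphi$, which is non-constant, so every fibre $\varphi^{-1}(x)$ is Lebesgue-negligible in $\Delta_m$, as already noted in the proof of the previous proposition — hence $Z=\emptyset$ and $F_{n,P}(x)\to F_P(x)$ for all $x$, which is ii).

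I do not expect a genuine obstacle, since the whole argument is a reduction to the case $Q=1$; the one point demanding a little attention is the simultaneous passage to the limit of the varying weights $c_{a,b}S_{n-a}(P')/S_n(P)$ and of the measures $\tau_{b/n}T_{(n-a)/n}(\nu_{n-a,P'})$, which is harmless because $S$ is finite, together with the verification that the supports remain uniformly bounded so that Propositions \ref{Pro:convergence vague par dilatation et translation} and \ref{Pro:convergence de fonction de repartition} apply.
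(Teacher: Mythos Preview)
Your proof is correct and follows essentially the same approach as the paper: both decompose $P=Q\cdot P'$, express $a_{n,d}(P)$ and $S_n(P)$ as finite sums over the monomials of $Q$ in terms of the corresponding quantities for $P'$, and reduce to the case $Q=1$ already treated. The only cosmetic difference is that you package the reduction via the operators $\tau_{b/n}$, $T_{(n-a)/n}$ and invoke Proposition~\ref{Pro:convergence vague par dilatation et translation}, whereas the paper carries out the same estimate by hand using the uniform continuity of the test function $g$ and the bound $\bigl|\tfrac{d}{n}-\tfrac{d-d'}{n-n'}\bigr|\to 0$; for part~2) you cite Proposition~\ref{Pro:convergence de fonction de repartition} while the paper appeals directly to the underlying Bourbaki result.
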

\begin{proof}
1) Suppose that $Q$ is of the form $\displaystyle
Q(X,Y)=\sum_{|n'|\le e }\sum_{|d'|\le r}
c_{n',d'}X^{n'}Y^{d'}$ where $c_{n',d'}\ge 0$. Since
$P=P'Q$, we obtain $\displaystyle
a_{n,d}(P)=\sum_{|n'|\le e }\sum_{|d'|\le
r}c_{n',d'}\,a_{n-n',d-d'}(P')$ and
\[\begin{split}S_n(P)&=\sum_{d\in\mathbb Z}a_{n,d}(P)
=\sum_{d\in\mathbb Z}\sum_{|n'|\le e
}\sum_{|d'|\le r}c_{n',d'}\,a_{n-n',d-d'}(P')\\
&=\sum_{|n'|\le e }\sum_{|d'|\le r}\sum_{d\in\mathbb
R}c_{n',d'}\,a_{n-n',d-d'}(P')=\sum_{|n'|\le e
}\sum_{|d'|\le r}c_{n',d'}\,S_{n-n'}(P').\end{split}\]
Denote by $\displaystyle C_{n'}=\sum_{|d'|\le
r}c_{n',d'}$, then we have $\displaystyle
S_n(P)=\sum_{|n'|\le e}C_{n'}S_{n-n'}(P')$. If
$g:\mathbb R\rightarrow\mathbb R$ is a continuous
function with compact support, then
\[\int_{\mathbb R}g\,\mathrm{d}\nu_{n,P}=\sum_{d\in\mathbb
Z}\frac{a_{n,d}(P)}{S_n(P)}\,g(d/n)\\
=\frac{1}{S_n(P)}\sum_{d\in\mathbb Z}\sum_{|n'|\le e
}\sum_{|d'|\le r}
c_{n',d'}\,a_{n-n',d-d'}(P')\,g(d/n).\] Notice that
\[\begin{split}&\quad\;\frac{1}{S_n(P)}
\sum_{|n'|\le e }\sum_{|d'|\le
r}\sum_{d\in\mathbb Z}
c_{n',d'}\,a_{n-n',d-d'}(P')\,g\left(\frac{d-d'}{n-n'}\right)\\
&=\frac{1}{S_n(P)}\sum_{|n'|\le e }\sum_{|d'|\le
r}c_{n',d'}S_{n-n'}(P')\int_{\mathbb R
}g\,\mathrm{d}\nu_{n-n',P'}=\frac{1}{S_n(P)}
\sum_{|n'|\le e}C_{n'}S_{n-n'}(P')\int_{\mathbb R
}g\,\mathrm{d}\nu_{n-n',P'}\end{split}\] converges to
$\int_{\mathbb R}g\,\mathrm{d}\nu_{P'}$ since
$\nu_{n,P'}$ converges vaguely to $\nu_{P'}$ when
$n\rightarrow\infty$. Finally, the function $g$ is
uniformly continuous on $\mathbb R$. For any number
$\delta>0$, there exists a number $\varepsilon>0$ such
that, for all $x,y\in\mathbb R$ such that
$|x-y|<\varepsilon$, we have $|g(x)-g(y)|<\delta$. On
the other hand, since $\displaystyle
P'=\prod_{i=1}^m(1-XY^{d_i})^{-1}$, if
$|d|>\displaystyle|n|\max_{1\le i\le m}|d_i|$, we have
$a_{n,d}(P')=0$. Hence for all integers $d,n$ such that
$|d|>\displaystyle\max_{1\le i\le m}|d_i|(|n|+e)+r$, we
have $a_{n-n',d-d'}(P')=0$ for any $|n'|\le e$ and any
$|d'|\le r$. Therefore, for all integers $n>e$,
$d\in\mathbb Z$, $|n'|\le e$ and $|d'|\le r$ such that
$a_{n-n',d-d'}(P')\neq 0$, we always have
\[\left
|\frac{d}{n}-\frac{d-d'}{n-n'}\right|=
\left|\frac{d'n-dn'}{n(n-n')}\right|
\le\frac{r}{n-n'}+\max_{1\le i\le m
}|d_i|\frac{e(n+e+r)}{n(n-n')}.
\]
Therefore, there exists an integer $N>0$ such that, for
all integers $n>N$, $d\in\mathbb Z$, $|n'|\le e$ and
$|d'|\in r$, we have either $a_{n-n',d-d'}(P')=0$, or
$\displaystyle\Big|\frac{d}{n}-\frac{d-d'}{n-n'}\Big|<\varepsilon$.
Hence we have
\[\begin{split}
&\quad\;\Bigg|\int_{\mathbb R}g\,\mathrm{d}\nu_{n,P}-
\frac{1}{S_n(P)}\sum_{|n'|\le e }\sum_{|d'|\le
r}\sum_{d\in\mathbb
Z}c_{n',d'}\,a_{n-n',d-d'}(P')\,g\left(\frac{d-d'}{n-n'}\right)\Bigg|\\
&\le\frac{1}{S_n(P)}\sum_{|n'|\le e }\sum_{|d'|\le
r}\sum_{d\in\mathbb
Z}c_{n',d'}\,a_{n-n',d-d'}(P')\left|g\left(\frac
dn\right)-g\left(\frac{d-d'}{n-n'}\right)\right|\\
&\le\frac{\delta}{S_n(P)}\sum_{|n'|\le e }\sum_{|d'|\le
r}\sum_{d\in\mathbb
Z}c_{n',d'}\,a_{n-n',d-d'}(P')=\delta.
\end{split}\]
We then deduce the vague convergence of
$\nu_{n,P}$ to $\nu_{P'}$.

2) If $d_1=\cdots=d_m=d$, then $\nu_{P}=\delta_{d}$. So
for any $x\neq d$, the set of discontinuous points of
$\indic_{]-\infty,x]}$, i.e., $\{x\}$, is negligible
for the measure $\nu_{P}$. Hence $\int_{\mathbb
R}\indic_{]-\infty,x]}\mathrm{d}\nu_{n,P}$ converges to
$\int_{\mathbb
R}\indic_{]-\infty,x]}\mathrm{d}\nu_{P}$. If $d_i$'s
are not identical, then any discrete subset of $\mathbb
R$ is negligible for the measure $\nu_{P}$, so the
sequence of functions $(F_{n,P})_{n\ge 1}$ converges
simply to the function $F_P$.
\end{proof}

\begin{remark}
With the notations of Proposition \ref{Cor:convergence
de mesure pour les series monome}, the limit measure
$\nu_P$ depends only on the vector
$(d_1,\cdots,d_m)\in\mathbb N^m$ (or simply the
equivalence class of $(d_1,\cdots,d_m)$ in $\mathbb
N^m/\mathfrak S_m$, the quotient of $\mathbb N^m$ by
the symmetric group $\mathfrak S_m$). In the following,
we denote by $\nu_{(d_1,\cdots,d_m)}$ this measure.
Actually, when $m>0$, it is a probability measure. When
$m=0$, $\nu_{\emptyset}$ is the zero measure.
\end{remark}

The following theorem is an immediate consequence of
Proposition \ref{Cor:convergence de mesure pour les
series monome}.

\begin{theorem}\label{Thm:mesure associe a un serie formel}
Let $(d_1,\cdots,d_m)\in\mathbb Z_+^m$ and
$\displaystyle P(X,Y)=\sum_{r=0}^h\sum_{\begin{subarray}{c}\alpha\subset\{1,\cdots,m\}\\
\card \alpha=r\end{subarray}}I_{\alpha}(X,Y)
\prod_{i\in\alpha}(1-XY^{d_i})^{-1}$ be a
series in $\mathbb Z\lbr
X,Y\rbr[X^{-1},Y^{-1}]$ where
\begin{enumerate}[a)]
\item the coefficients of $P$ are positive,
\item $I_\alpha\in\mathbb Z[X,Y,X^{-1},Y^{-1}]$,
\item for any $\alpha\subset\{1,
\cdots,m\}$ of cardinal $h$, the coefficients
of $I_\alpha$ are positive,
\item there exists at least
one $\alpha\subset\{1,\cdots,m\}$ of cardinal
$h$ such that $I_\alpha\neq 0$.
\end{enumerate}
Then
\begin{enumerate}[1)]
\item the Borel measures $\nu_{n,P}$
converge vaguely to a Borel measure $\nu_P$ when
$n\rightarrow+\infty$,
\item there exists a
finite subset $\Omega$ of $\mathbb R$ such
that the sequence of functions
\[\Big(F_{n,P}:x\longmapsto 1-
\int_{\mathbb R}\indic_{]-\infty,x]}\mathrm{d}\nu_{n,P}
\Big)_{n\ge 1}\] converges pointwise on $\mathbb
R\setminus\Omega$ to the function $\displaystyle
F_P:x\longmapsto 1-\int_{\mathbb R}
\indic_{]-\infty,x]}\mathrm{d}\nu_P$.
\end{enumerate}
Furthermore, if for any
$\alpha=\{i_1<\cdots<i_h\}$, we write
$d_\alpha=(d_{i_1},\cdots,d_{i_h})$, then the
limit measure $\nu_P$ equals to
$\displaystyle\sum_{\begin{subarray}{c}
\alpha\subset\{1,\cdots,m\}\\
\card\alpha=h
\end{subarray}}\frac{I_\alpha(1,1)}{S}\nu_{d_\alpha}$
where $\displaystyle
S=\sum_{\begin{subarray}{c}
\alpha\subset\{1,\cdots,m\}\\
\card\alpha=h
\end{subarray}} I_\alpha(1,1)$.
So $\nu_P$ is a probability measure when
$h>0$. If $h=0$, then $\nu_P$ is the zero
measure.
\end{theorem}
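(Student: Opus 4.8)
The plan is to use the linearity of $P\mapsto\nu_{n,P}$ in the numerator and reduce everything to Proposition \ref{Cor:convergence de mesure pour les series monome}, treating the subsets $\alpha$ of cardinality $h$ as the dominant terms and the remaining ones as a negligible error. First I would dispose of the case $h=0$ directly: then $P=I_\emptyset\in\mathbb Z[X,Y,X^{-1},Y^{-1}]$ is a Laurent polynomial, so $a_{n,d}(P)=0$ for all $d$ once $n$ exceeds its $X$-degree; hence $\nu_{n,P}$ is eventually the zero measure, $F_{n,P}$ is eventually the constant $1$, and both conclusions hold with $\nu_P=0$ and $\Omega=\emptyset$.

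Assume henceforth $h\ge 1$. Write $P=\sum_{r=0}^{h}\sum_{\card\alpha=r}P_\alpha$ with $P_\alpha=I_\alpha\cdot P'_\alpha$ and $P'_\alpha=\prod_{i\in\alpha}(1-XY^{d_i})^{-1}$, so that $a_{n,d}(P)=\sum_\alpha a_{n,d}(P_\alpha)$ is a finite sum of integers. Specializing $Y=1$ gives $\sum_n S_n(P)X^n=P(X,1)=\sum_{r=0}^{h}\big(\sum_{\card\alpha=r}I_\alpha(X,1)\big)(1-X)^{-r}$, which is of the shape \eqref{Equ:serie de Poincare} with leading term $\big(\sum_{\card\alpha=h}I_\alpha(X,1)\big)(1-X)^{-h}$: by conditions c) and d) the polynomial $\sum_{\card\alpha=h}I_\alpha(X,1)$ has positive coefficients and is non-zero, so $S:=\sum_{\card\alpha=h}I_\alpha(1,1)>0$ and the asymptotic recalled after \eqref{Equ:serie de Poincare} yields $S_n(P)=\frac{S}{(h-1)!}n^{h-1}+o(n^{h-1})$; in particular $S_n(P)>0$ for $n$ large. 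Applying the same expansion to a single $\alpha$ with $\card\alpha=h$ and $I_\alpha\neq 0$ gives $S_n(P_\alpha)=\frac{I_\alpha(1,1)}{(h-1)!}n^{h-1}+o(n^{h-1})$, hence $S_n(P_\alpha)/S_n(P)\to I_\alpha(1,1)/S$. For $\card\alpha=r\le h-1$, expanding $I_\alpha=\sum_{n',d'}c_{n',d'}X^{n'}Y^{d'}$ (a finite sum) gives $\sum_d|a_{n,d}(P_\alpha)|\le\sum_{n',d'}|c_{n',d'}|\,S_{n-n'}(P'_\alpha)$, and since $S_k(P'_\alpha)=\binom{k+r-1}{r-1}$ grows polynomially of degree $r-1\le h-2$, this is $o(n^{h-1})$. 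This crude bound, which replaces the use of Proposition \ref{Cor:convergence de mesure pour les series monome} (unavailable here because $I_\alpha$ need not have positive coefficients when $\card\alpha<h$), is the one step that requires a little care.

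For $g\in C_c(\mathbb R)$ I would then write $\int g\,\mathrm d\nu_{n,P}=\frac1{S_n(P)}\sum_\alpha\sum_d a_{n,d}(P_\alpha)g(d/n)$: the terms with $\card\alpha<h$ are bounded in absolute value by $\|g\|_{\sup}\sum_d|a_{n,d}(P_\alpha)|=o(n^{h-1})$ and thus vanish in the limit; the terms with $\card\alpha=h$ and $I_\alpha=0$ are identically zero; and for $\card\alpha=h$ with $I_\alpha\neq 0$, Proposition \ref{Cor:convergence de mesure pour les series monome} applied with $Q=I_\alpha$ (which has positive coefficients and is non-zero) gives $\frac1{S_n(P_\alpha)}\sum_d a_{n,d}(P_\alpha)g(d/n)=\int g\,\mathrm d\nu_{n,P_\alpha}\to\int g\,\mathrm d\nu_{d_\alpha}$, where by definition $\nu_{d_\alpha}=\nu_{P'_\alpha}$; multiplying by $S_n(P_\alpha)/S_n(P)\to I_\alpha(1,1)/S$ and summing over $\alpha$ proves 1) with $\nu_P=\sum_{\card\alpha=h}\frac{I_\alpha(1,1)}{S}\nu_{d_\alpha}$. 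Since $h>0$ each $\nu_{d_\alpha}$ is a probability measure and the weights $I_\alpha(1,1)/S$ sum to $1$, so $\nu_P$ is a probability measure. For 2), the same splitting applied to $F_{n,P}(x)=1-\frac1{S_n(P)}\sum_\alpha\sum_{d\le nx}a_{n,d}(P_\alpha)$ kills the $\card\alpha<h$ terms as before and reduces matters, for $\card\alpha=h$, to the identity $\frac1{S_n(P)}\sum_{d\le nx}a_{n,d}(P_\alpha)=\frac{S_n(P_\alpha)}{S_n(P)}\big(1-F_{n,P_\alpha}(x)\big)$ together with part 2) of Proposition \ref{Cor:convergence de mesure pour les series monome}, which gives $F_{n,P_\alpha}(x)\to F_{P'_\alpha}(x)$ off a set $\Omega_\alpha$ that is empty or a single point. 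Taking $\Omega$ to be the finite union of these $\Omega_\alpha$ and using $F_P(x)=1-\sum_{\card\alpha=h}\frac{I_\alpha(1,1)}{S}\nu_{d_\alpha}\big(]-\infty,x]\big)=\sum_{\card\alpha=h}\frac{I_\alpha(1,1)}{S}F_{P'_\alpha}(x)$ yields $F_{n,P}(x)\to F_P(x)$ for $x\notin\Omega$. The only genuine obstacle is the elementary estimate on the lower-cardinality terms described above; the rest is bookkeeping and appeal to the propositions already established.
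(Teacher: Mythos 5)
Your proof is correct, and it supplies precisely the bookkeeping that the paper leaves implicit when it declares the theorem ``an immediate consequence'' of Proposition~\ref{Cor:convergence de mesure pour les series monome}. The one genuinely non-routine point — which you correctly isolate — is that for $\card\alpha<h$ the Laurent polynomial $I_\alpha$ has no positivity hypothesis, so Proposition~\ref{Cor:convergence de mesure pour les series monome} cannot be applied term by term; your crude estimate $\sum_d|a_{n,d}(P_\alpha)|\le\sum_{n',d'}|c_{n',d'}|\,S_{n-n'}(P'_\alpha)=O(n^{\card\alpha-1})=o(n^{h-1})$, combined with the asymptotic $S_n(P)\sim S\,n^{h-1}/(h-1)!$ coming from $a_h(1)=S>0$, is exactly what makes the lower-order terms negligible, both for the vague convergence against $g\in C_c(\mathbb R)$ and for the distribution functions. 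The remaining parts — application of the Proposition with $Q=I_\alpha$ to the top-cardinality blocks, weights $S_n(P_\alpha)/S_n(P)\to I_\alpha(1,1)/S$, and the finite exceptional set $\Omega$ as a union of the singletons produced by part 2) of the Proposition — match what the paper intends, and the $h=0$ case is handled correctly. No gap.
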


The results obtained above, notably Theorem
\ref{Thm:ecriture de serie de poincare a deux
variables} and Theorem \ref{Thm:mesure associe a un
serie formel}, imply immediately the following theorem.

\begin{theorem} Let $K$ be a
field,
$\mathbf{f}=(f_1,f_2):\{1,\cdots,m\}\rightarrow\mathbb
N^2$ be a mapping such that $f_1\equiv 1$ and $M$ be a
finite generated bigraded $K[\mathbf{f}]$-module. If
for any integer $n\ge 1$, we denote by $\nu_n$ the
Borel measure associated to the vector space
$M_{n,\bullet}:=\bigoplus_{d\in\mathbb Z }M_{n,d}$
which is equipped with the filtration induced by the second
grading, then the sequence of Borel measures $T_{\frac
1n}\nu_n$ converges vaguely to a Borel measure $\nu$ on
$\mathbb R$. If furthermore $M_{n,\bullet}$ is non-zero
for sufficiently large $n$, then the limit measure
$\nu$ is a probability measure, and the polygons
associated to $T_{\frac 1n}\nu_n$ converge uniformly to
a concave curve defined on $[0,1]$.
\end{theorem}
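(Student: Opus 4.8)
The plan is to recognize the measures $T_{\frac1n}\nu_n$ as exactly the measures $\nu_{n,P_M}$ attached to the two–variable Poincaré series of $M$, and then to harvest the convergence statements already proved for such series, together with the polygon machinery of Section~6.

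First I would form the Poincaré series $P_M(X,Y)=\sum_{(n,d)\in\mathbb Z^2}\len_K(M_{n,d})X^nY^d$. Since $f_1\equiv 1$ and $M$ is a finitely generated $K[\mathbf f]$–module, each $M_{n,d}$ is finite dimensional and, for fixed $n$, $M_{n,d}=0$ for all but finitely many $d$; hence $M_{n,\bullet}=\bigoplus_{d\in\mathbb Z}M_{n,d}$ is finite dimensional and $P_M$ has nonnegative coefficients. Applying Theorem~\ref{Thm:ecriture de serie de poincare a deux variables} with $d_i=f_2(i)$ gives a presentation
\[P_M(X,Y)=\sum_{r=0}^{h}\ \sum_{\#\alpha=r}I_\alpha(X,Y)\prod_{i\in\alpha}(1-XY^{f_2(i)})^{-1},\]
where $h=\dim M$ and the $I_\alpha\in\mathbb Z[X,Y,X^{-1},Y^{-1}]$ satisfy the positivity and non–vanishing conditions (a)--(d) of Theorem~\ref{Thm:mesure associe a un serie formel}. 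That theorem then yields that the Borel measures $\nu_{n,P_M}$ converge vaguely to a Borel measure $\nu_{P_M}$ and that there is a finite set $\Omega\subset\mathbb R$ such that the distribution functions $F_{n,P_M}$ converge pointwise to $F_{P_M}$ on $\mathbb R\setminus\Omega$.

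Next I would invoke the observation preceding Theorem~\ref{Thm:mesure associe a un serie formel}: when $M_{n,\bullet}$ is equipped with the $\mathbb R$–filtration $\mathcal F_\lambda M_{n,\bullet}=\bigoplus_{d\ge\lambda}M_{n,d}$ — which is separated, exhaustive and left continuous, a homogeneous basis adapted to the grading being a maximal basis by Proposition~\ref{Pro:critere de base maximale} — one has $\nu_{n,P_M}=T_{\frac1n}\nu_{M_{n,\bullet}}=T_{\frac1n}\nu_n$. Therefore $(T_{\frac1n}\nu_n)_{n\ge1}$ converges vaguely to $\nu:=\nu_{P_M}$. If moreover $M_{n,\bullet}\neq0$ for $n\gg0$, then $h=\dim M>0$, so $\nu$ is a probability measure by Theorem~\ref{Thm:mesure associe a un serie formel}; alternatively, finite generation forces $M_{n,d}=0$ for $|d|>cn$ for some constant $c$, so the supports of the $T_{\frac1n}\nu_n$ lie in a fixed bounded interval and Lemma~\ref{Lem:convergence vague vers une proba} applies.

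Finally, for the polygons I would feed the functions $F_n:=F_{n,P_M}=\bigl(x\mapsto (T_{\frac1n}\nu_n)(\,]x,+\infty[\,)\bigr)$ into the Section~6 machinery. They are right continuous, decreasing, $[0,1]$–valued, and by the previous paragraph their supports are uniformly bounded, so hypothesis~(i) of the four–part proposition of Section~6 holds; and they converge pointwise to the right continuous decreasing function $F:=F_{P_M}$ off the finite (hence countable) set $\Omega$. Item~4 of that proposition then gives that $\int_0^tF_n^*(s)\,\mathrm ds$ converges uniformly on $[0,1]$ to $\int_0^tF^*(s)\,\mathrm ds$; the former is the polygon $P(T_{\frac1n}\nu_n)$ associated to $T_{\frac1n}\nu_n$, a finite combination of Dirac measures (cf.\ Proposition~\ref{Pro:le polygone associe a une filtration}), and the latter is a concave function on $[0,1]$ vanishing at the origin, namely $P(\nu)$. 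That is the asserted uniform convergence. The only point needing care is the matching of normalizations — that dilation by $1/n$ on the measure side corresponds to replacing the degree $d$ by $d/n$ in $F_n$, and that the polygon of $M_{n,\bullet}$ is indeed the primitive of the quasi–inverse of $F_n$ — but this is precisely what the identity $\nu_{n,P_M}=T_{\frac1n}\nu_{M_{n,\bullet}}$ together with $P(T_\varepsilon\nu)=\varepsilon P(\nu)$ encodes, so there is no genuine analytic difficulty beyond assembling the cited results.
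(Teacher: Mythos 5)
Your proposal is correct and follows exactly the route the paper intends: form $P_M$, invoke Theorem \ref{Thm:ecriture de serie de poincare a deux variables} to obtain the partial-fraction presentation with the positivity properties, feed it into Theorem \ref{Thm:mesure associe a un serie formel} to get vague convergence of $\nu_{n,P_M}$, identify $\nu_{n,P_M}=T_{\frac 1n}\nu_{M_{n,\bullet}}$ via the remark on the filtration $\mathcal F_\lambda M_{n,\bullet}=\bigoplus_{d\ge\lambda}M_{n,d}$, and finish with the Section~6 proposition on quasi-inverses and Lemma \ref{Lem:convergence vague vers une proba}. The paper merely states that this follows ``immediately''; your write-up simply supplies the assembly steps (including the correct observation that a basis homogeneous for the second grading is maximal, and that finite generation over $K[\mathbf f]$ with $f_1\equiv1$ gives $M_{n,d}=0$ for $|d|$ larger than a linear function of $n$, hence uniform boundedness of the rescaled supports), so this is the same argument spelled out.
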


\begin{remark}
Let $K$ be a field and  $B$ be an $\mathbb N$-filtered
graded $K$-algebra (that is to say, the jumping set is
contained in $\mathbb N$) which is of finite type over
$K$ and is generated as $K$-algebra by $B_1$. We can
introduce a bigraded $K$-algebra $\widetilde B$ by
defining $\widetilde B_{n,d}=\mathcal F_{d}B_n/\mathcal
F_{d+1}B_n$. Notice that the filtered vector spaces
$\widetilde B_{n,\bullet}$ (whose filtration is induced
by the second grading) and $B_n$ have the same associated
measure. Therefore, if $\widetilde B$ is an algebra of
finite type over $K$ which is generated by $\widetilde
B_{1,\bullet}$, then the previous theorem shows that
the normalized polygons of $B_n$ converge uniformly.
However, this condition is not satisfied in general. We
can for example consider the algebra $B=K[X]$ of
polynomials, equipped with the usual graduation and the
filtration such that $\lambda(X^n)=n-1$ for any $n\ge
1$. Then $B$ is a filtered graded algebra since
$\lambda(X^{n+m})=n+m-1>n-1+m-1=\lambda(X^n)+\lambda(X^m)$.
On the other hand, the bigraded algebra $\widetilde B$
identifies with the algebra $K[T_1,\cdots,T_n,\cdots]$,
where the bidegree of $T_n$ is $(n,n-1)$, modulo the
homogeneous ideal generated by all elements of the form
$T_nT_m$. This is not an algebra of finite type over
$K$.
\end{remark}

Finally, we shall give an example of the limit of
normalized Harder-Narasimhan polygons in relative
geometric framework. Let $k$ be a field, and $C$ be a
smooth projective curve over $k$. We denote by $K$ the
field of rational functions on $C$. Let $(E_i)_{1\le
i\le m}$ be a finite family of locally free $\mathcal
O_C$-modules of finite type which are semistable. We
suppose in addition that for any family $(n_i)_{1\le
i\le m}$ of positive integers, the $\mathcal
O_C$-module $S^{n_1}E_1\otimes\cdots\otimes S^{n_m}E_m$
is semistable. This condition is satisfied notably when
one of the following conditions is satisfied:
\begin{enumerate}[1)]
\item the $\mathcal O_C$-modules $E_1,\cdots,E_m$ are all of rank $1$;
\item $C$ is the projective space
 $\mathbb P^1$;
\item $C$ is an elliptic curve over $k$ (see
\cite{Atiyah57});
\item $k$ is of characteristic $0$.
\end{enumerate}

Let $E$ be the direct sum
$E=E_1\oplus\cdots\oplus E_m$. Let $\mathscr
B$ be the symmetric algebra of $E$, which is
a graded $\mathcal O_C$-algebra. For any
integer $n\ge 1$, we have
\[\mathscr B_n=S^nE=\bigoplus_{\begin{subarray}{c}
(d_i)_{1\le i\le m}\in\mathbb N^m\\
d_1+\cdots+d_m=n
\end{subarray}}\Big(S^{d_1}E_1\otimes\cdots
\otimes S^{d_m}E_m\Big)^{\oplus\frac{n!}{d_1!\cdots
d_m!}}.\] Denote by $B$ the graded algebra over $K$
such that $B_n=\mathscr B_{n,K}$. For any integer $1\le
i\le m$, we denote by $r_i$ the rank of $E_i$ and by
$\mu_i$ the slope of $E_i$, and we choose a base
$\mathbf{u}_i=(u_{i,j} )_{1\le i\le r_j}$ of $E_{i,K}$.
We write
$\mathbf{u}=\mathbf{u}_1\amalg\cdots\amalg\mathbf{u}_m$
and $r=r_1+\cdots+r_m$ the rank of $E$. The algebra $B$
identifies hence with the algebra of polynomials
$K[\mathbf{u}]$. If
$\alpha:\mathbf{u}\rightarrow\mathbb R$ is a mapping,
denote by $|\alpha|$ the sum
$\sum_{i=1}^m\sum_{j=1}^{r_i}\alpha(u_{ij})$. For any
integer $n\ge 1$, we denote by $\nu_n=T_{\frac
1n}\nu_{\mathscr B_n}$ and we have
\[\begin{split}\nu_{\mathscr B_n}&=\sum_{\begin{subarray}{c}(d_i)_{1\le i\le m}\in\mathbb N^m\\
d_1+\cdots+d_m=n\end{subarray}}\frac{n!}{d_1!\cdots d_m!}
\frac{\rang(S^{d_1}E_1\otimes\cdots\otimes S^{d_m}E_m)}
{\rang(S^nE)}\,\delta_{d_1\mu_1+\cdots+d_m\mu_m}\\
&=\sum_{\begin{subarray}{c}
\alpha:\mathbf{u}\rightarrow\mathbb N\\
|\alpha|=n
\end{subarray}}\frac{1}{\rang(S^nE)}\,\delta_{\sum_{i=1}^m\mu_i\sum_{j=1}^{r_i} \alpha(u_{ij}).
}
\end{split}
\]
Therefore,
$\displaystyle\nu_n=\sum_{\begin{subarray}{c}
\beta:\mathbf{u}\rightarrow \frac{1}{n}\mathbb N\\
|\beta|=1
\end{subarray}}\frac{1}{\rang(S^nE)}
\,\delta_{\sum_{i=1}^m \mu_i
\sum_{j=1}^{r_i}\beta(u_{ij}). }$ Denote by $\Delta$
the simplex of dimension $r-1$ in $\mathbb R^r$
(considered as the function space of $\mathbf{u}$ in
$\mathbb R$) defined by the relation
\[\Delta:=\{x:\mathbf{u}\rightarrow\mathbb R_{\ge 0}\;|\;|x|=1\}.\]
and by $\Phi:\Delta\rightarrow\mathbb R$ the mapping
which sends $(x:\mathbf{u}\rightarrow\mathbb
R)\in\mathbb R^r$ to
$\sum_{i=1}^m\mu_i\sum_{j=1}^{r_i}x(u_{ij})$. This is a
continuous function. For any integer $n\ge 1$, let
$\Delta^{(n)}$ be the subset of $\Delta$ of functions
valued in $n^{-1}\mathbb N$. Then $\nu_n$ is the direct
image by $\Phi|_{\Delta^{(n)}}$ of the equidistributed
probability measure $w_n$ on $\Delta^{(n)}$. By abuse
of language, we still use the expression $w_n$ to
denote the direct image of $w_n$ by the inclusion
mapping from $\Delta^{(n)}$ in $\Delta$. Then
$\nu_n=\Phi_*(w_n)$. Since the sequence of measures
$(w_n)_{n\ge 1}$ converges vaguely to the uniform
measure on $\Delta$, the limit $\nu$ of the measure
sequence $(\nu_n)_{n\ge 1}$ exists and equals to the
direct image of the uniform measure on $\Delta$ by the
mapping $\Phi$. Therefore the uniform limit of polygons
associated to $\nu_n$ exists and equals to the
``polygon'' (it is in fact a concave function)
associated to the limit measure $\nu$.

\begin{example}
Let $E$ be the direct sum of two invertible modules
$L_1$ and $L_2$. We write $\mu_1=\deg(L_1)$ and
$\mu_2=\deg(L_2)$, and we suppose that $\mu_1<\mu_2$.
In this case, $\Delta=\{(x,1-x)\;|\;0\le x\le
1\}\subset\mathbb R^2$ is parametered by $[0,1]$. The
mapping $\Phi:\Delta\rightarrow\mathbb R$ sends
$(x,1-x)$ to $\mu_1x+\mu_2(1-x)$. Therefore, the limit
measure $\nu$ is the equidistributed probability
measure on $[\mu_1,\mu_2]$. Let $f$ be the function
defined by $f(t)=\esp^{\nu}[\indic_{\{x>t\}}]$. Then we
have $\displaystyle
f(x)=\frac{1}{\mu_2-\mu_1}\Big((\mu_2-x)_+-(\mu_1-x)_+\Big)$.
The quasi-inverse of $f$ is therefore
$f^*(t)=\mu_1t+\mu_2(1-t)$. Finally, the limit of
normalized Harder-Narasimhan polygons of $S^nE$ is
given by the quadratic curve
\[\mu_2x-\frac{\mu_2-\mu_1}{2}x^2,\]
which is non-trivial in general.
\end{example}
\bibliography{chen}
\bibliographystyle{alpha}

\appendix

\section{Pseudo-filtered graded algebra}
\label{App:Pseudo-filtered graded algebra}
\hskip\parindent In this section, we propose
another generalization of filtered graded
algebras which is weaker than the notion of
$f$-quasi-filtered graded algebras. By
imposing a condition on $f$ which is stronger
than
$\displaystyle\lim_{n\rightarrow+\infty}f(n)/n=
0$, we also obtain the vague convergence of
measures associated to filtrations and hence
the uniform convergence of polygons.

\begin{definition}\label{Def:algebre graduee pseudofiltree}
Let $B=\bigoplus_{n\ge 0}B_n$ be a graded $K$-algebra
and $f:\mathbb Z_{> 0 }\rightarrow\mathbb R_{\ge 0}$ be
a function. We say that $B$ is an $f$-{\it
pseudo-filtered graded} $K$-algebra if each $B_n$ is
equipped with a decreasing $\mathbb R$-filtration such
that, for all sufficiently large integers $n,m$, we
have
\[B_{n,s}B_{m,t}\subset B_{n+m,s+t-f(n)-f(m)}.\]
If $B$ is an $f$-pseudo-filtered graded
$K$-algebra, we say that a graded $B$-module
$M=\bigoplus_{n\in\mathbb Z}M_n$ is {\it
$f$-pseudo-filtered} if for any integer $n$,
$M_n$ is equipped with a decreasing $\mathbb
R$-filtration such that, for all sufficiently
large integers $n,m$, we have
\[B_{n,s}M_{m,t}\subset M_{n+m,s+t-f(n)-f(m)}.\]
\end{definition}

\begin{remark}
Note that $B$ is an $f$-pseudo-filtered
graded $B$-module. If $f\equiv 0$, then $B$
is a filtered graded $K$-algebra and $M$ is a
filtered graded $B$-module. If $g$ is another
function dominating $f$, then $B$ is a
$g$-pseudo-filtered graded $K$-algebra and
$M$ is a $g$-pseudo-filtered graded
$B$-module.
\end{remark}

Some results which are analogues to those in
Section \ref{Sec:Quasi-filtered graded
algebras} can be stated and verified without
difficulty for pseudo-filtered graded
algebras and for pseudo-filtered graded
modules, notably the corollaries
\ref{Cor:sous algebre algebre quotient},
\ref{Cor:sousmodule module quotient} and
\ref{Cor:fonctorialite de filtrations qotiet
etc} where we only need to replace
``quasi-filtered'' by ``pseudo-filtered'' in
the statement of the results.

Let $f:\mathbb Z_{\ge 0}\rightarrow\mathbb R_{\ge 0}$
be a decreasing function, $V$ be a vector space of rank
$0<d<+\infty$ over $K$, and $B$ be the symmetric
algebra generated by $V$, equipped with the usual
graduation. We suppose that for any positive integer
$n$, $B_n$ is equipped with an $\mathbb R$-filtration
which is separated, exhaustive and left continuous such
that $B$ is an $f$-pseudo-filtered graded $K$-algebra.
Let $g$ be an increasing function which is concave and
$c$-Lipschitz. For any integer $n\ge 0$, let
$I_n=\int_{\mathbb R}g\,\mathrm{d}T_{\frac
1n}\nu_{B_n}$. Then for all sufficiently large integers
$m$ and $n$, we have
\begin{equation}\begin{split}I_{m+n}&\ge\int_\mathbb
Rg\,\mathrm{d}\Big(T_{\frac{1}{m+n}}\nu_{\mathbf{u}^{(m,n)}}\Big)
=\int_{\Delta_{m+n}^{(d)}}g
\left(\frac{1}{m+n}\lambda(u_{\gamma}^{(m,n)})\right)
\mathrm{d}\xi_{m+n}(\gamma)\\
&=\int_{\Delta_{m}^{(d)}\times\Delta_{n}^{(d)}}g
\left(\frac{1}{m+n}\lambda(u_{\alpha+\beta}^{(m,n)})\right)
\mathrm{d}\rho_{(m,n)}(\alpha,\beta)\\
&\ge\int_{\Delta_{m}^{(d)}\times\Delta_{n}^{(d)}}
g\left(\frac{1}{m+n}\lambda(u_{\alpha}u_\beta)
\right)
\mathrm{d}\rho_{(m,n)}(\alpha,\beta)\\
&\ge\int_{\Delta_{m}^{(d)}\times\Delta_{n}^{(d)}}
g\left(\frac{\lambda(u_{\alpha})+
\lambda(u_\beta)-f(n)-f(m)}{m+n}
\right) \mathrm{d}\rho_{(m,n)}(\alpha,\beta)\\
&\ge\int_{\Delta_{m}^{(d)}\times\Delta_{n}^{(d)}}
\left[g\left(\frac{\lambda(u_{\alpha})+
\lambda(u_\beta)}{m+n}
\right)-\frac{c(f(n)+f(m))}{m+n}\right]
\mathrm{d}\rho_{(m,n)}(\alpha,\beta)\\
&\ge\int_{\Delta_{m}^{(d)}\times
\Delta_{n}^{(d)}}\left[\frac{n}{m+n}
g\left(\frac{\lambda(u_{\alpha})}{n}\right)
+\frac{m}{n+m}g\left(\frac{\lambda(u_\beta)}{m}
\right)\right]\mathrm{d}\rho_{(m,n)}(\alpha,\beta)-\frac{c(f(n)+f(m))}{m+n}\\
&=\frac{n}{n+m}I_n+\frac{m}{n+m}I_m-c\,\frac{f(n)+f(m)}{m+n}.
\end{split}\end{equation}

If the sequence $(I_n)_{n\ge 0}$ is bounded from above
and if $\sum_{\alpha\ge
0}f(2^\alpha)/2^\alpha<+\infty$, then $(I_n)_{n\ge 0}$
converges, which implies that the sequence of measures
$(T_{\frac 1n}\nu_{B_n})_{n\ge 1}$ converges vaguely.
In other words, $B$ satisfies the vague convergence
condition. The convergence of $(I_n)_{n\ge 0}$ is based
on Corollary \ref{Cor:inegalite de pesudo-filtreee},
which we shall present as below.

\begin{lemma}\label{Lem:lemme technqiue sur suite}
If $f:\mathbb Z_{>0}\rightarrow\mathbb R_{\ge 0}$ is an
increasing function such that $\sum_{\alpha\ge
1}f(2^\alpha)/2^\alpha<+\infty$, then\vspace{-1mm}
\[\lim_{\alpha\rightarrow+\infty}2^{-\alpha}
\sum_{i=0}^\alpha f(2^i)=0.\]
\end{lemma}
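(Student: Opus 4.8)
The plan is to use a Kronecker-type splitting argument. Set $b_i = f(2^i)/2^i \ge 0$ for every integer $i \ge 0$; the hypothesis states precisely that the series $\sum_{i \ge 1} b_i$ converges, and one has the elementary rewriting
\[
2^{-\alpha}\sum_{i=0}^{\alpha} f(2^i) = \sum_{i=0}^{\alpha} 2^{i-\alpha}\, b_i .
\]
The key observation is that $2^{i-\alpha} \le 1$ as soon as $i \le \alpha$, so the terms of large index contribute at most the tail of a convergent series, while the terms of bounded index are crushed by the factor $2^{-\alpha}$.

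First I would fix $\varepsilon > 0$ and choose, using the convergence of $\sum_{i\ge 1} b_i$, an integer $N$ such that $\sum_{i\ge N} b_i < \varepsilon$. Then for any integer $\alpha > N$ I split the sum above at $N$. The tail $\sum_{i=N}^{\alpha} 2^{i-\alpha} b_i$ is bounded above by $\sum_{i=N}^{\alpha} b_i \le \sum_{i\ge N} b_i < \varepsilon$, using $2^{i-\alpha}\le 1$ for $i\le\alpha$. The head $\sum_{i=0}^{N-1} 2^{i-\alpha} b_i = 2^{-\alpha} C_N$, where $C_N = \sum_{i=0}^{N-1} f(2^i)$ is a fixed real number depending only on $N$; hence it tends to $0$ as $\alpha \to +\infty$, and in particular it is $< \varepsilon$ for all sufficiently large $\alpha$.

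Combining the two estimates gives $\limsup_{\alpha\to+\infty} 2^{-\alpha}\sum_{i=0}^{\alpha} f(2^i) \le 2\varepsilon$; since $\varepsilon > 0$ is arbitrary and the quantity is nonnegative, its limit is $0$. There is no genuine obstacle here: the only point worth flagging is that one should bound the tail by the tail of $\sum_i b_i$ rather than attempting to exploit the geometric factor $2^{i-\alpha}$ directly, and that the monotonicity of $f$ is not actually used — only $f \ge 0$ enters the argument.
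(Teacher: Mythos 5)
Your proof is correct, and it takes a genuinely different route from the paper's. The paper applies Abel's summation formula with $S_\alpha = \sum_{i\ge\alpha} f(2^i)/2^i$, rewriting $\sum_{i=0}^\alpha f(2^i) = S_0 - S_{\alpha+1}2^\alpha + \sum_{i=1}^\alpha S_i 2^{i-1}$, and then asserts that $2^{-\alpha}\sum_{i=1}^\alpha S_i 2^{i-1} \to 0$ because $S_i \to 0$ --- a step which itself quietly relies on the same kind of Ces\`aro-average splitting you perform explicitly. Your direct $\varepsilon$-$N$ split (bound the head $\sum_{i<N} 2^{i-\alpha} b_i = 2^{-\alpha}C_N$ by letting $\alpha\to\infty$, and bound the tail $\sum_{i=N}^{\alpha} 2^{i-\alpha}b_i$ by $\sum_{i\ge N} b_i < \varepsilon$ using $2^{i-\alpha}\le 1$) is essentially a self-contained proof of Kronecker's lemma, avoiding the Abel-summation repackaging altogether; what it buys is that every step is elementary and fully justified, at the cost of a slightly longer write-up. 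Your closing remark is also accurate and applies equally to the paper's proof: neither argument uses the monotonicity of $f$, only $f\ge 0$; monotonicity is invoked elsewhere in the appendix (notably in the step $f(2^{i-1}n)\le f(2^{\beta+i})$ of Proposition A.3), not here.
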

\begin{proof}
For any integer $\alpha\ge 0$, let
$S_\alpha=\sum_{i\ge\alpha}f(2^i)/2^i$. By Abel's
summation formula,
\[\sum_{i=0}^\alpha f(2^i)=
\sum_{i=0}^\alpha
(S_i-S_{i+1})2^i=S_0-S_{\alpha+1}2^\alpha+\sum_{i=1}^\alpha
S_i2^{i-1}.
\]
Since
$\displaystyle\lim_{\alpha\rightarrow+\infty}S_\alpha=0$,
we have $2^{-\alpha}\sum_{i=1}^\alpha S_i2^{i-1}$
converges to $0$ when $\alpha\rightarrow+\infty$, which
implies the lemma.
\end{proof}

\begin{proposition}\label{Pro:convergence de suite a croissance logarithmique}
Let $(b_n)_{n\ge 1}$ be a sequence of positive real
numbers and $f:\mathbb Z_{>0}\rightarrow\mathbb R_{\ge
0}$ be an increasing function such that
$\sum_{\alpha\ge 1}f(2^\alpha)/2^\alpha<+\infty$. If
there exists an integer $n_0>0$ such that, for any pair
$(m,n)$ of integers $\ge n_0$, we have $b_{n+m}\le
b_m+b_n+f(m)+f(n)$, then the sequence $(b_n/n)_{n\ge
1}$ has a limit in $\mathbb R_{\ge 0}$.
\end{proposition}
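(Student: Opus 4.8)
The plan is to handle first the subsequence indexed by powers of~$2$, where the doubling relation $b_{2^{\alpha+1}}\le 2b_{2^{\alpha}}+2f(2^{\alpha})$ is available, and then to bootstrap to arbitrary $n$ by decomposing $n$ along its binary expansion, invoking Lemma~\ref{Lem:lemme technqiue sur suite} to control the accumulated error.

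\emph{Step 1: powers of two.} Fix $\alpha_0$ with $2^{\alpha_0}\ge n_0$. Applying the hypothesis with $m=n=2^{\alpha}$ for $\alpha\ge\alpha_0$ gives $b_{2^{\alpha+1}}\le 2b_{2^{\alpha}}+2f(2^{\alpha})$, hence $\dfrac{b_{2^{\alpha+1}}}{2^{\alpha+1}}\le\dfrac{b_{2^{\alpha}}}{2^{\alpha}}+\dfrac{f(2^{\alpha})}{2^{\alpha}}$. Consequently $\Big(\dfrac{b_{2^{\alpha}}}{2^{\alpha}}-\sum_{i<\alpha}\dfrac{f(2^{i})}{2^{i}}\Big)_{\alpha\ge\alpha_0}$ is non-increasing; it is bounded below because $b_{2^{\alpha}}\ge 0$ and $\sum_{i\ge 0}f(2^{i})/2^{i}<+\infty$, so it converges, and since the partial sums of the series converge as well, the limit $L:=\lim_{\alpha\to+\infty}b_{2^{\alpha}}/2^{\alpha}$ exists in $\mathbb R_{\ge 0}$.

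\emph{Step 2: $\limsup_{n}b_{n}/n\le L$.} I would fix $\varepsilon>0$, choose $\alpha_{2}$ with $2^{\alpha_{2}}\ge n_{0}$, and choose $\alpha_{3}\ge\alpha_{2}$ with $b_{2^{i}}\le(L+\varepsilon)2^{i}$ for $i\ge\alpha_{3}$ (Step 1). For $n$ large, set $\alpha:=\lfloor\log_{2}n\rfloor\ge\alpha_{3}$ and write the binary expansion of $n$ as $n=r'+\sum_{l=0}^{r}2^{i_{l}}$ with $\alpha_{2}\le i_{r}<\cdots<i_{0}=\alpha$ and $0\le r'<2^{\alpha_{2}}$; put $c_{0}:=2^{i_{r}}+r'$ and $c_{k}:=2^{i_{r-k}}$ for $1\le k\le r$, so all $c_{k}\ge n_{0}$ and $\sum_{k}c_{k}=n$. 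Forming the partial sums $P_{0}=c_{0}$, $P_{k}=P_{k-1}+c_{k}$ bottom-up and applying the hypothesis $r$ times,
\[
b_{n}\le b_{c_{0}}+\sum_{k=1}^{r}b_{c_{k}}+\sum_{k=1}^{r}\big(f(P_{k-1})+f(c_{k})\big).
\]
The decisive observation is that $P_{k-1}<2^{i_{r-k+1}+1}\le 2^{i_{r-k}}=c_{k}$, so $f(P_{k-1})\le f(c_{k})$, which makes the error at most $2\sum_{j=0}^{r-1}f(2^{i_{j}})\le 2\sum_{i=0}^{\alpha}f(2^{i})$ --- one contribution per dyadic scale rather than $\alpha f(2^{\alpha})$. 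In the main term, $b_{c_{k}}\le(L+\varepsilon)2^{i_{r-k}}$ whenever $i_{r-k}\ge\alpha_{3}$ (the finitely many smaller dyadic blocks contribute a constant), while $b_{c_{0}}$ exceeds $b_{2^{i_{r}}}$ by at most a constant depending only on $\alpha_{2},n_{0}$ plus $2f(2^{\alpha+1})$, since $r'$ is bounded. Collecting and dividing by $n\ge 2^{\alpha}$,
\[
\frac{b_{n}}{n}\le(L+\varepsilon)+\frac{C_{\varepsilon}}{n}+\frac{2f(2^{\alpha+1})}{2^{\alpha}}+\frac{2}{2^{\alpha}}\sum_{i=0}^{\alpha}f(2^{i});
\]
the last three terms tend to $0$ as $n\to+\infty$, the last by Lemma~\ref{Lem:lemme technqiue sur suite}, so $\limsup_{n}b_{n}/n\le L+\varepsilon$, and then $\limsup_{n}b_{n}/n\le L$.

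\emph{Step 3: $\liminf_{n}b_{n}/n\ge L$, and conclusion.} For $n$ large, put $\alpha:=\lfloor\log_{2}n\rfloor$ and $m:=2^{\alpha+2}-n$; then $2^{\alpha+1}\le m\le 2^{\alpha+2}$, so $m\ge n_{0}$ and $b_{m}\le(L+\varepsilon)m$ by Step 2. The hypothesis applied to $n,m$ gives $b_{2^{\alpha+2}}\le b_{n}+b_{m}+2f(2^{\alpha+2})$, and with $b_{2^{\alpha+2}}\ge(L-\varepsilon)2^{\alpha+2}$ from Step 1 this yields $b_{n}\ge(L+\varepsilon)n-2\varepsilon\,2^{\alpha+2}-2f(2^{\alpha+2})$; dividing by $n\ge 2^{\alpha}$ and letting $n\to+\infty$ gives $\liminf_{n}b_{n}/n\ge L-7\varepsilon$, hence $\liminf_{n}b_{n}/n\ge L$. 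Together with Step 2, $\lim_{n}b_{n}/n=L\in\mathbb R_{\ge 0}$. The main obstacle is Step 2: arranging the binary decomposition so that the error accumulates as $\sum_{i}f(2^{i})$ rather than $\alpha f(2^{\alpha})$ (which need not be $o(n)$), which is exactly what the bottom-up telescoping with each partial sum kept below the next scale achieves, after which Lemma~\ref{Lem:lemme technqiue sur suite} closes the estimate; the bounded low-order remainder and the finitely many small dyadic blocks are routine to absorb into constants.
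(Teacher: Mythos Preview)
Your three-step strategy is sound and the use of Lemma~\ref{Lem:lemme technqiue sur suite} to kill the accumulated dyadic error is exactly the right idea. There is, however, one genuine gap in Step~2. You claim that $b_{c_0}$ exceeds $b_{2^{i_r}}$ by at most a constant (depending on $\alpha_2,n_0$) plus $2f(2^{\alpha+1})$, ``since $r'$ is bounded''. This is fine when $r'=0$ or $r'\ge n_0$ (split $c_0=2^{i_r}+r'$ using the hypothesis), but when $0<r'<n_0$ the hypothesis cannot be applied to the pair $(2^{i_r},r')$, and the boundedness of $r'$ alone does not control $b_{2^{i_r}+r'}-b_{2^{i_r}}$. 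Since $i_r$ may be arbitrarily large, this case is not covered by a finite lookup. The gap is repairable --- for instance by first reducing to $n_0=1$, or by shifting the remainder into $[n_0,n_0+2^{\alpha_2})$ and checking that the bottom-up partial sums then satisfy $P_{k-1}<2c_k$, which still gives $f(P_{k-1})\le f(2^{j+1})$ and keeps the total error bounded by $2\sum_{i\le\alpha+1}f(2^i)$ --- but it is not the routine absorption you suggest.

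By comparison, the paper's proof is structured differently. It first reduces to $n_0=1$ (and then handles general $n_0$ by a sandwich on the subsequence $(b_{n_0k})_{k\ge 1}$), which sidesteps exactly the small-remainder issue above. It then proceeds in pure Fekete style: for each fixed $n$ it shows $\limsup_{m}b_m/m\le b_n/n+4S_{\lfloor\log_2 n\rfloor+1}$ via the binary expansion of $p=\lfloor m/n\rfloor$, and concludes $\limsup\le\liminf$ directly, with no need for your Step~3. Your route --- identifying $L$ along powers of $2$ first and then proving $\limsup\le L\le\liminf$, the last via complementing $n$ to $2^{\alpha+2}$ --- is a perfectly good alternative and arguably more transparent about where the limit comes from; the cost is the extra Step~3 and the more delicate handling of the low-order remainder in Step~2.
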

\begin{proof} First let us treat the case where
$n_0=1$. Since $f$ is an increasing function
we obtain that for any $(m,n)\in\mathbb
Z_{>0}^2$,
\begin{equation}\label{Equ:sousadditivite augemente1}b_{m+n}\le
b_n+b_m+2f(m+n).\end{equation} For any
integer $\alpha\ge 0$, let
$S_\alpha=\displaystyle\sum_{i\ge\alpha}f(2^i)/2^i$.
then
$\displaystyle\lim_{\alpha\rightarrow+\infty}S_\alpha=0$.
If $2^{\beta}\le n<2^{\beta+1}$ is an
integer, we have, for any $\alpha\in\mathbb
N$,
\begin{equation}\label{Equ:sousadditivite augemente2}
b_{2^\alpha n}\le 2^\alpha
b_n+\sum_{i=1}^\alpha
2^{\alpha+1-i}f(2^{i-1}n)\le 2^\alpha
b_n+\sum_{i=1}^\alpha
2^{\alpha+1-i}f(2^{\beta+i}).
\end{equation}
Suppose that $p=\sum_{i=0}^k\epsilon_i2^i$, where
$\epsilon_i\in\{0,1\}$ for any $0\le i<k$ and
$\epsilon_k=1$. If $0\le r< n$ is another integer, we
have after \eqref{Equ:sousadditivite augemente1} the
following inequality:
\begin{equation}
b_{np+r}\le
b_{np}+b_r+2f(np+r)\le\sum_{i=0}^k\epsilon_ib_{2^in}+b_r
+2\sum_{i=0}^k\epsilon_if\Big(\sum_{j=0}^i\epsilon_j2^jn\Big)
+2f(np+r)
\end{equation}
After \eqref{Equ:sousadditivite augemente2},
we have
\begin{equation}
b_{np+r}\le\sum_{i=0}^k\epsilon_i2^ib_n+b_r+\sum_{i=1}^k\epsilon_i
\sum_{j=1}^i2^{i+1-j}f(2^{\beta+j})
+2\sum_{i=0}^k\epsilon_if(2^{i+\beta+2})+2f(2^{k+\beta+2}).
\end{equation}
Therefore,
\begin{equation*}
\begin{split}
\frac{b_{np+r}}{np+r}&\le\frac{pb_n}{np+r}+\frac{b_r}{np+r}
+2^{-k-\beta}\sum_{i=1}^k\sum_{j=1}^i2^{i+1-j}f(2^{\beta+j})\\
&\qquad+2^{-k-\beta+1}\sum_{i=0}^kf(2^{i+\beta+2})
+2^{-k-\beta+1}f(2^{k+\beta+2}).
\end{split}
\end{equation*}
Since
\[2^{-k-\beta}\sum_{i=1}^k\sum_{j=1}^i2^{i+1-j}f(2^{\beta+j})=
2^{-k-\beta}\sum_{j=1}^k\sum_{i=j}^kf(2^{\beta+j})2^{i+1-j}
\le\sum_{j=1}^kf(2^{\beta+j})2^{2-j-\beta}=
4S_{\beta+1},\] we obtain that
\begin{equation*}
\frac{b_{np+r}}{np+r}\le\frac{pb_n}{np+r}+
\frac{b_r}{np+r} +4S_{\beta+1}
+2^{-k-\beta+1}\sum_{i=0}^{k+\beta+3}f(2^{i}).
\end{equation*}
After Lemma \ref{Lem:lemme technqiue sur
suite}, we have
\[\limsup_{m\rightarrow+\infty}\frac{b_m}{m}
\le\liminf_{n\rightarrow+\infty}\Big(\frac{b_n}{n}
+4S_{\lfloor\log_2n\rfloor+1}\Big)=\liminf_{n\rightarrow+\infty}\frac{b_n}{n}.\]
Therefore, the sequence $(b_n/n)_{n\ge 1}$
converges.

For the general case, by applying the above
result on the subsequence $(b_{n_0k})_{k\ge
1}$ and the function $g(k)=f(n_0k)$, we
obtain that the sequence $(b_{n_0k}/k)_{k\ge
1}$ has a limit in $\mathbb R_{\ge 0}$. On
the other hand, if $n_0\le l<2n_0$ is an
integer, then for any integer $k\ge 1$, we
have the inequality
\begin{equation}\label{Equ:encadrement de b}b_{n_0(k+2)}-b_{2n_0-l}-f(n_0k+l)
-f(2n_0-l)\le b_{n_0k+l}\le
b_{n_0k}+b_l+f(n_0k)+f(l).\end{equation} If
we divide \eqref{Equ:encadrement de b} by
$n_0k+l$,  we obtain, by passing to the limit
$k\rightarrow+\infty$,
\[\lim_{k\rightarrow+\infty}\frac{b_{n_0k+l}}{
n_0k+l}=\lim_{k\rightarrow+\infty}\frac{b_{n_0k}}{n_0k}.\]
Since $l$ is arbitrary, the proposition is
proved.
\end{proof}

\begin{corollary}\label{Cor:inegalite de pesudo-filtreee}
Let $(a_n)_{n\ge 1}$ be a sequence of real
numbers, $f:\mathbb Z_{>0}\rightarrow\mathbb
R_{\ge 0}$ be an increasing function and
$c>0$ be a constant. Suppose that
\begin{enumerate}[1)]
\item for sufficiently large integers $n,m$,
$a_{n+m}\ge a_n+a_m-f(n)-f(m)$,
\item $a_n\le cn$ for any integer $n\ge 1$,
\item $\sum_{\alpha\ge 0}f(2^\alpha)/
{2^\alpha}<+\infty$.
\end{enumerate}
Then the sequence $(a_n/n)_{n\ge 1}$ has a
limit in $\mathbb R$.
\end{corollary}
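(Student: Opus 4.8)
The plan is to reduce the statement to Proposition \ref{Pro:convergence de suite a croissance logarithmique}, in exactly the same way that Corollary \ref{Cor:limite de an sur n forme forte} was deduced from Proposition \ref{Pro:convergence un peu faible que le lemme precedent}. First I would introduce the auxiliary sequence $b_n = cn - a_n$. By condition 2) we have $b_n \ge 0$ for every $n \ge 1$, so $(b_n)_{n\ge 1}$ is a sequence of positive real numbers in the sense required by Proposition \ref{Pro:convergence de suite a croissance logarithmique}.

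Next I would check that $(b_n)_{n\ge 1}$ is almost subadditive with error governed by $f$. For all sufficiently large integers $n,m$, condition 1) gives $a_{n+m} \ge a_n + a_m - f(n) - f(m)$, hence
\[ b_{n+m} = c(n+m) - a_{n+m} \le (cn - a_n) + (cm - a_m) + f(n) + f(m) = b_n + b_m + f(n) + f(m). \]
Thus there is an integer $n_0 > 0$ such that $b_{n+m} \le b_n + b_m + f(n) + f(m)$ whenever $n,m \ge n_0$. Since $f$ is increasing and $\sum_{\alpha \ge 1} f(2^\alpha)/2^\alpha < +\infty$ by condition 3), Proposition \ref{Pro:convergence de suite a croissance logarithmique} applies and yields that $(b_n/n)_{n\ge 1}$ converges to a limit in $\mathbb R_{\ge 0}$.

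Finally, since $b_n/n = c - a_n/n$ for every $n \ge 1$, the sequence $(a_n/n)_{n\ge 1}$ converges in $\mathbb R$, its limit being $c - \lim_{n\to+\infty} b_n/n$. I do not expect any genuine obstacle in this argument: it is the pseudo-filtered analogue of the passage from Proposition \ref{Pro:convergence un peu faible que le lemme precedent} to Corollary \ref{Cor:limite de an sur n forme forte}, and the only point requiring a word of care is that the phrase ``for sufficiently large integers $n,m$'' in condition 1) is precisely what furnishes the integer $n_0$ demanded by Proposition \ref{Pro:convergence de suite a croissance logarithmique}.
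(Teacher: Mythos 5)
Your proposal is correct and matches the paper's proof essentially word for word: both pass to the auxiliary sequence $b_n = cn - a_n$, observe it is nonnegative and satisfies $b_{n+m}\le b_n+b_m+f(n)+f(m)$ for large $n,m$, and then invoke Proposition~\ref{Pro:convergence de suite a croissance logarithmique} before undoing the substitution. There is nothing to add.
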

\begin{proof}
Consider the sequence $(b_n=cn-a_n)_{n\ge 1
}$ of positive real numbers. If $n$ and $m$
are two sufficiently large integers, we have
\[b_{n+m}=c(n+m)-a_{n+m}\le cn+cm-a_n-a_m+f(n)+f(m)
=b_n+b_m+f(n)+f(m).\] After Proposition
\ref{Pro:convergence de suite a croissance
logarithmique}, the sequence $(b_n/n)_{n\ge
1}$ has a limit in $\mathbb R$. Since
$a_n/n=c-b_n/n$, the sequence $(a_n/n)_{n\ge
1}$ also has a limit in $\mathbb R$.
\end{proof}

We establish finally the vague convergence
for normalized measures associated to a
pseudo-filtered graded algebra.
\begin{theorem} \label{Thm:convergence vague de algebre gradue pseudo filtree}
Let $f:\mathbb Z_{\ge 0}\rightarrow\mathbb R_{\ge 0}$
be an increasing function such that $\sum_{\alpha\ge
0}{f(2^\alpha)}/{2^\alpha}<+\infty$, $B$ be an integral
graded $K$-algebra of finite type over $K$, which is
generated by $B_1$. Suppose that
\begin{enumerate}[i)]
\item $d=\dim B$ is strictly positive,
\item for any positive integer $n$, $B_n$
is equipped with an $\mathbb R$-filtration
$\mathcal F$ which is separated, exhaustive
and left continuous such that $B$ is an
$f$-pseudo-filtered graded $K$-algebra,
\item $\displaystyle\limsup_{n\rightarrow+\infty}\sup_{0\neq a\in B_n}\frac{\lambda(a)}{n}<+\infty$.
\end{enumerate}
If for any integer $n> 0$, we write
$\nu_n=T_{\frac 1n}\nu_{B_n}$, then the
supports of $\nu_n$ ($n\ge 1$) are uniformly
bounded and the sequence of measures
$(\nu_n)_{\ge 1}$ converges vaguely to a
Borel probability measure on $\mathbb R$.
\end{theorem}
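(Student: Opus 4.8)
The plan is to transpose, almost word for word, the three-step proof of Theorem~\ref{Thm:convergence vague de algebre gradue filtree}, replacing every appeal to Corollary~\ref{Cor:limite de an sur n forme forte} by one to Corollary~\ref{Cor:inegalite de pesudo-filtreee}, the variant of Fekete's lemma tailored to an error term of logarithmic growth. First I would prove that the supports of the $\nu_n$ are uniformly bounded. Set $\lambda^{\max}_n=\sup_{0\neq a\in B_n}\lambda(a)$ and $\lambda^{\min}_n=\min_{a\in B_n}\lambda(a)$; both are finite real numbers because $B_n$ is a finite dimensional vector space whose filtration is exhaustive, and $\supp(\nu_n)\subset[\lambda^{\min}_n/n,\lambda^{\max}_n/n]$. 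Since $B$ is generated by $B_1$, Proposition~\ref{Pro:proprietes de la fonction lambda}~2) lets us pick $a_n\in B_n$ realising $\lambda^{\min}_n$ which is a product of $n$ elements of $B_1$; splitting $a_{n+m}$ into a nonzero factor in $B_n$ and a nonzero factor in $B_m$ and applying the pseudo-filtered inequality gives $\lambda^{\min}_{n+m}\ge\lambda^{\min}_n+\lambda^{\min}_m-f(n)-f(m)$ for all sufficiently large $n,m$. Condition iii) gives $\lambda^{\max}_n=O(n)$, so $\lambda^{\min}_n\le\lambda^{\max}_n\le cn$ for large $n$, and Corollary~\ref{Cor:inegalite de pesudo-filtreee} applied to $(\lambda^{\min}_n)_{n\ge 1}$ shows $\lambda^{\min}_n/n$ converges, hence is bounded below. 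Together with the bound on $\lambda^{\max}_n/n$ this yields the uniform boundedness of the supports, and then by Lemma~\ref{Lem:convergence vague vers une proba} it remains only to establish $\mathbf{CV}(B)$.

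\textbf{Step 1.} As in Theorem~\ref{Thm:convergence vague de algebre gradue filtree}, after a field extension we may assume $K$ infinite (this alters none of the $\nu_n$), and after replacing $\mathcal F$ by the shifted filtration $\mathcal F^c$ with $\lambda_{\mathcal F^c}(a)=\lambda_{\mathcal F}(a)+cn$ on $B_n$ — which is again $f$-pseudo-filtered and for which $T_{1/n}\nu^c_{B_n}=\tau_cT_{1/n}\nu_{B_n}$, so $\mathbf{CV}(B)$ is unchanged — we may assume $\lambda(a)-f(\mathrm{d}^\circ(a))\ge 0$ for every nonzero homogeneous $a\in B$. \textbf{Step 2.} Noether normalisation over the infinite field $K$ produces $x_1,\dots,x_d\in B_1$ generating a sub-$K$-algebra $A$ isomorphic as a graded algebra to $K[T_1,\dots,T_d]$, over which $B$ is a finite graded module; thus $A_n=S^nV$ with $V=\bigoplus_iKx_i$. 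Equipping each $A_n$ with the inverse image filtration from $B_n$ makes $A$ an $f$-pseudo-filtered graded $K$-algebra with $\lambda_A=\lambda_B|_A$, and (as $\lambda^{\max}_{A_n}\le\lambda^{\max}_{B_n}=O(n)$) the chain of inequalities displayed just before the statement of this theorem together with Corollary~\ref{Cor:inegalite de pesudo-filtreee} — the pseudo-filtered counterpart of Proposition~\ref{Pro:vague convergnce de mesures} — gives $\mathbf{CV}(A)$. For any nonzero homogeneous $a\in A$ one has $\dim A/Aa<\dim A$, so Lemma~\ref{Lem:suite exacte et la comdtion de convergence vague} gives $\mathbf{CV}(Aa)$, with the same limit measure $\nu$ as $A$.

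Still in Step~2, let $x\in B$ be homogeneous of degree $m>0$ and let $x^p+a_{p-1}x^{p-1}+\dots+a_0=0$ be its minimal unitary equation over $A$, with the $a_i$ taken homogeneous of degree $(p-i)m$ and $a_0\neq 0$ by integrality of $B$. With $y=x^{p-1}+a_{p-1}x^{p-2}+\dots+a_1$ (homogeneous of degree $(p-1)m$) one has $xy+a_0=0$, and the two-factor pseudo-filtered inequalities give, for $u\in A_n$, the bounds $\lambda(u)-f(n)\le\lambda(ux)$ and $\lambda(ux)\le\lambda(ua_0)+f(n+m)+C$ for a constant $C$ depending only on $x$. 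Through the $K$-linear isomorphisms $A_n\xrightarrow{\,u\mapsto ux\,}(Ax)_{n+m}$ and $(Ax)_{n+m}\xrightarrow{\,ux\mapsto ua_0\,}(Aa_0)_{n+mp}$ and Lemma~\ref{Lem:comparaison des mesures par un iso}, the measure $T_{1/n}\nu_{(Ax)_n}$ is squeezed between translated and dilated copies of $T_{1/n}\nu_{A_n}$ and $T_{1/n}\nu_{(Aa_0)_n}$, which by the previous paragraph converge vaguely to the same $\nu$; as the translation and dilation parameters tend to $0$ resp. $1$, Proposition~\ref{Pro:convergence vague par dilatation et translation} gives $\mathbf{CV}(Ax)$ with limit $\nu$. \textbf{Step 3.} Let $L$ be the fraction field of $A$ and choose homogeneous $x_1,\dots,x_s\in B$ forming an $L$-basis of $L\otimes_AB$; then $H:=Ax_1\oplus\dots\oplus Ax_s$ is a free graded $A$-submodule and $H':=B/H$ is $A$-torsion, so $\dim H'<\dim A=\dim B$. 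From $\mathbf{CV}(Ax_i)$ and repeated application of Lemma~\ref{Lem:suite exacte et la comdtion de convergence vague} (case $d'=d''$, then case $d'>d''$) we obtain $\mathbf{CV}(H)$ and finally $\mathbf{CV}(B)$; the limit is a probability measure by Lemma~\ref{Lem:convergence vague vers une proba} and the uniform boundedness already proved.

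The delicate point — already present in the quasi-filtered case — is the integral element step: the pseudo-filtered hypothesis only controls a product of \emph{two} homogeneous factors and only once both degrees are large, whereas the quantities to control involve the triple product $uxy=-ua_0$ and the product $ux$ with $x$ of possibly small fixed degree. The clean way around this, which I would adopt, is to observe that in Step~3 one is free to choose the basis $x_1,\dots,x_s$: replacing each $x_i$ by $x_1^{k}x_i$ for $k$ large (which, $x_1$ being invertible in $L$, is still an $L$-basis of $L\otimes_AB$) arranges that $x_i$, and hence $y$ and all intermediate degrees occurring in Step~2, have degree $\ge k$, so the two-factor bound is legitimately applicable for $n$ large; the small-degree data it leaves uncontrolled ($\lambda(y)$, $f((p-1)m)$, etc.) are absorbed into the constant $C$ above, which washes out under $T_{1/n}$. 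Once this bookkeeping is in place, every remaining step is formally identical to the proof of Theorem~\ref{Thm:convergence vague de algebre gradue filtree}.
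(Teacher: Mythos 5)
Your proposal is correct and follows essentially the same route as the paper's own (quite terse) proof, which reduces to the three-step argument of Theorem~\ref{Thm:convergence vague de algebre gradue filtree} with the two substitutions you identify: the two-factor chain of inequalities from the Appendix culminating in $(m+n)I_{m+n}\ge nI_n+mI_m-c(f(n)+f(m))$ plays the role of Proposition~\ref{Pro:suradditivite de integrale}, and Corollary~\ref{Cor:inegalite de pesudo-filtreee} replaces Corollary~\ref{Cor:limite de an sur n forme forte}. You have usefully made explicit two points that the paper leaves compressed: first, that $\mathbf{CV}(A)$ for the Noether normalisation $A$ now comes from the Appendix's two-factor estimate rather than from Proposition~\ref{Pro:vague convergnce de mesures}; and second, that the integral-element step applies the $f$-pseudo-filtered bound only to products $(ux)\cdot y$ and $u\cdot x$, whose factors must have degree at least $n_0$ before the bound is legitimate.

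The one point worth a comment is your closing paragraph. Your diagnosis — that $y$ and $x$ can have degree $< n_0$, so the two-factor bound does not directly apply — is a genuine gap in the paper's exposition (it is already present in the proof of Theorem~\ref{Thm:convergence vague de algebre gradue filtree}, which the pseudo-filtered proof simply inherits), and your remedy of replacing the $L$-basis $x_1,\dots,x_s$ of $L\otimes_AB$ by $x_1^kx_1,\dots,x_1^kx_s$ for $k$ large is valid, since $x_1$ is invertible in $L\otimes_AB=\mathrm{Frac}(B)$. A marginally cleaner variant is to multiply each $x_i$ by a power of one of the Noether coordinates $T_1\in A_1$, which keeps everything inside the graded $A$-module picture. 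Note also that once the degrees are $\ge n_0$, the extra constant $C$ in your estimate $\lambda(ux)\le\lambda(ua_0)+f(n+m)+C$ is superfluous: the Step~1 normalisation $\lambda(a)\ge f(\mathrm d^\circ(a))$ applies to $y$ regardless of its degree, so $\lambda(y)-f((p-1)m)\ge 0$ and the paper's inequality $\lambda(ux)\le\lambda(ua_0)+f(n+m)$ holds with no additive slack. This does not affect the conclusion (any fixed constant is killed by $T_{1/n}$), but the argument is tighter without it.
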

\begin{proof}
We apply the proof of Theorem
\ref{Thm:convergence vague de algebre gradue
filtree} in making some modifications. First
we replace the inequality
(\ref{Equ:estimation de lambda min}) by
$\lambda_{m+n}^{\min}=\lambda_n^{\min}+\lambda_{m}^{\min}
-f(n)-f(m)$ for all sufficiently large
integers $m,n$. After Corollary
\ref{Cor:inegalite de pesudo-filtreee}, the
sequence $(\lambda_n^{\min}/n)_{n\ge 1}$
converges, so is bounded from below.

For the first step, since $\sum_{\alpha\ge
0}f(2^\alpha)/2^\alpha<+\infty$, we have
$\displaystyle\lim_{\alpha\rightarrow+\infty}f(2^\alpha)
/2^\alpha=0$. As $f$ is an increasing function,
$\displaystyle\lim_{n\rightarrow+\infty}f(n)/n=0$.
Therefore, the first step of the proof of Theorem
\ref{Thm:convergence vague de algebre gradue filtree}
remains valid. Moreover, the third step is a formal
argument for the vague convergence condition, and
therefore works without problem. It remains to verify
that for any homogeneous element $x$ of $B$, the graded
$A$-module $Ax$, equipped with the inverse image
filtration, satisfies the vague convergence condition.
This corresponds to the second step of the proof of
Theorem \ref{Thm:convergence vague de algebre gradue
filtree}. Finally, no modification to the second step
is necessary since in inequalities
(\ref{Equ:quasi-fitlree 1}) and (\ref{Equ:quasi-fitlree
2}), involves only the product of {\bf two} homogeneous
elements in $B$.
\end{proof}

\begin{corollary}
With the notations of Theorem
\ref{Thm:convergence vague de algebre gradue
pseudo filtree}, the polygons associated to
probability measures  $\nu_n$ converge
uniformly to a concave function on $[0,1]$.
\end{corollary}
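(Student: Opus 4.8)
The plan is to reduce the statement to the general principle, already established in Section~6, that vague convergence of a sequence of Borel probability measures with uniformly bounded supports forces uniform convergence of the associated polygons. This is exactly how Corollary~\ref{Cor:convergence de polygone cas general} was deduced from Theorem~\ref{Thm:convergence vague de algebre gradue filtree}, and the present corollary stands in precisely the same relation to Theorem~\ref{Thm:convergence vague de algebre gradue pseudo filtree}; so the proof should simply re-run that argument.

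Concretely, first I would invoke Theorem~\ref{Thm:convergence vague de algebre gradue pseudo filtree} to get that the supports of the $\nu_n$ are uniformly bounded and that $(\nu_n)_{n\ge 1}$ converges vaguely to some Radon measure $\nu$; by Lemma~\ref{Lem:convergence vague vers une proba}, $\nu$ is a Borel probability measure, with support inside the common bound. Next, set $f_n(x)=\nu_n(]x,+\infty[)$ and $f(x)=\nu(]x,+\infty[)$: these are right continuous decreasing functions valued in $[0,1]$ with limits $1$ at $-\infty$ and $0$ at $+\infty$, and the uniform boundedness of the supports yields the finiteness control of hypothesis~i) of the proposition of Section~6 on uniform convergence of quasi-inverses. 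By Proposition~\ref{Pro:convergence de fonction de repartition}, the distribution functions of $\nu_n$ converge to that of $\nu$ outside a countable set $Z$, hence $f_n\to f$ on $\mathbb R\setminus Z$, which is hypothesis~ii). Applying part~4) of that proposition then gives that $t\mapsto\int_0^t f_n^*(s)\,\mathrm{d}s$ converges uniformly on $[0,1]$ to $t\mapsto\int_0^t f^*(s)\,\mathrm{d}s$. Since by definition (Proposition~\ref{Pro:le polygone associe a une filtration}) the polygon $P_n$ associated to $\nu_n$ is precisely $t\mapsto\int_0^t f_n^*(s)\,\mathrm{d}s$, this is the asserted uniform convergence $P_n\to P(\nu)$.

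Concavity of the limit is then automatic: each $P_n$ is a polygon, hence concave on $[0,1]$, and a uniform (even pointwise) limit of concave functions is concave. The whole argument is bookkeeping; the only points needing care are matching the left/right continuity conventions between the distribution function $\nu_n(]-\infty,x])$ occurring in Proposition~\ref{Pro:convergence de fonction de repartition} and the function $f_n(x)=\nu_n(]x,+\infty[)$ used to build the polygon, and verifying that uniform boundedness of the supports really supplies both finiteness conditions in hypothesis~i). I do not expect any genuine obstacle here: all the substantive work lies in Theorem~\ref{Thm:convergence vague de algebre gradue pseudo filtree}, and, upstream of that, in the logarithmic-growth variant of Fekete's lemma, Corollary~\ref{Cor:inegalite de pesudo-filtreee}.
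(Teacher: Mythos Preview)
Your proposal is correct and matches the paper's approach: the corollary is stated without proof, being an immediate consequence of Theorem~\ref{Thm:convergence vague de algebre gradue pseudo filtree} together with the Section~6 machinery (Lemma~\ref{Lem:convergence vague vers une proba}, Proposition~\ref{Pro:convergence de fonction de repartition}, and the proposition on uniform convergence of $\int_0^t f_n^*$), exactly as Corollary~\ref{Cor:convergence de polygone cas general} was deduced from Theorem~\ref{Thm:convergence vague de algebre gradue filtree}. Your unpacking of that machinery is accurate and nothing further is needed.
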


\begin{remark}
Instead of supposing that $B$ is generated by
$B_1$, if we suppose that $B_n$ is non-zero
for sufficiently large $n$, Theorem
\ref{Thm:convergence vague de algebre gradue
pseudo filtree} remains true, we have also
the uniform convergence of polygons.
\end{remark}


\end{document}